\DeclareRobustCommand{\subtitle}[1]{#1}
\title{Formal category theory in $\infty$-equipments II\subtitle{:\\\small Lax functors, monoidality and fibrations}}
\author{Jaco Ruit}
\address{Mathematisch Instituut, Universiteit Utrecht, The Netherlands}
\email{j.c.ruit@uu.nl} 
\newtheorem{thmintro}{Theorem}
\newcommand{\inewtheorem}[2]{
	\newaliascnt{#1}{thmintro}
	\newtheorem{#1}[#1]{#2}
	\aliascntresetthe{#1}
}
\newtheorem{theorem}{Theorem}
\newcommand{\jnewtheorem}[2]{
	\newaliascnt{#1}{theorem}
	\newtheorem{#1}[#1]{#2}
	\aliascntresetthe{#1}
}
\numberwithin{theorem}{section}
\theoremstyle{definition}
\numberwithin{table}{subsection}
\newcommand{\cat}[1]{\textup{#1}}
\newcommand{\op}{\mathrm{op}}
\newcommand{\map}{\mathrm{Map}}
\def\colim{\qopname\relax m{colim}}
\newcommand{\fun}{\mathrm{Fun}}
\newcommand{\FUN}{\mathrm{FUN}}
\newcommand{\Cat}{\cat{Cat}}
\newcommand{\CAT}{\cat{CAT}}
\newcommand{\lax}{\mathrm{lax}}
\renewcommand{\S}{\mathscr{S}}
\newcommand{\C}{\mathscr{C}}
\renewcommand{\D}{\mathscr{D}}
\renewcommand{\P}{\mathscr{P}}
\newcommand{\Q}{\mathscr{Q}}
\newcommand{\E}{\mathscr{E}}
\newcommand{\X}{\mathscr{X}}
\newcommand{\Y}{\mathscr{Y}}
\renewcommand{\Vert}{\mathrm{Vert}}
\newcommand{\Hor}{\mathrm{Hor}}
\newcommand{\F}{\mathscr{F}}
\newcommand{\Seg}{\mathrm{Seg}}
\newcommand{\tp}{\mathrm{t}}
\newcommand{\DblCat}{\cat{DblCat}}
\newcommand{\Equip}{\cat{Equip}}
\newcommand{\ev}{\mathrm{ev}}
\newcommand{\PSh}{\cat{PSh}}
\newcommand{\Sq}{\cat{Sq}}	
\newcommand{\id}{\mathrm{id}}
\newcommand{\cart}[3]{{#1}^\circledast{#2}{#3}_\circledast}
\newcommand{\cocart}[3]{{#1}_\circledast{#2}{#3}^\circledast}
\newcommand{\vop}{\mathrm{vop}}
\newcommand{\Con}{\mathrm{Con}}
\newcommand{\CCAT}{\mathbb{C}\mathrm{at}}
\newcommand{\SSPAN}{\mathbb{S}\mathrm{pan}}
\newcommand{\FFUN}{\mathbb{F}\mathrm{un}}
\newcommand{\DDBLCAT}{\mathbb{D}\mathrm{blCat}}
\newcommand{\hop}{\mathrm{hop}}
\newcommand{\CCON}{\mathbb{C}\mathrm{on}}
\begin{document}

\begin{abstract}
	We study the framework of $\infty$-equipments which is designed to produce well-behaved theories for different generalizations of
	$\infty$-categories in a synthetic and uniform fashion. We consider notions of (lax) functors between these equipments, closed monoidal structures on these equipments, and 
	fibrations internal to these equipments.  As a main application, we will demonstrate that the foundations of internal $\infty$-category theory can be readily obtained using this formalism. 
\end{abstract}

\maketitle
\renewcommand{\subtitle}[1]{}

\setcounter{tocdepth}{1}
\tableofcontents
	
\section{Introduction}
There are many useful generalizations of $\infty$-categories such as enriched, indexed, internal or fibered versions. These variants have tailored notions 
of (co)limits, adjunctions, point-wise Kan extensions and more, that (generally) differ from the corresponding notions for their underlying $\infty$-categories. 
Consequently, one would need to redevelop the theory for each of the variants. For example, Heine recently worked out a very complete theory of enriched $\infty$-categories \cite{Heine}. 
Weighted colimits of enriched $\infty$-categories were also studied by Hinich \cite{Hinich}.
Indexed $\infty$-category theory, also called \textit{parametrized} $\infty$-category theory, has been introduced by Barwick, Dotto, Glasman, Nardin and Shah \cite{ExposeI} and was further studied 
by Shah in \cite{Shah} and \cite{Shah2}. This theory has now been subsumed by the theory of  $\infty$-categories internal to a general $\infty$-topos 
that has been studied 
by Rasekh \cite{Rasekh}, and Martini and Wolf \cite{Martini}, \cite{MartiniWolf}. 

The goal of this work is to (further) develop the $\infty$-categorical framework, introduced in \cite{EquipI}, that is 
designed to produce these theories for the different variants of $\infty$-categories all at once. This is a generalization 
of a program set out in the context of (strict) categories \cite{StreetWalters}, \cite{Wood}, \cite{Verity}, \cite{ShulmanFramedBicats}.  The key role in the $\infty$-categorical formalism 
is played by the \textit{$\infty$-equipments} \cite[Subsection 3.2]{EquipI}. These are certain double $\infty$-categorical structures that support a good notion of 
an internal \textit{formal category theory}. This includes notions of weighted (co)limits, point-wise Kan extensions and more, see \cite[Section 6]{EquipI}. 
In this sense, every $\infty$-equipment yields \textit{a category theory} for its objects. 
A prototypical example of an $\infty$-equipment is given by the double $\infty$-category $$\CCAT_\infty$$ of $\infty$-categories, with 
functors and profunctors as vertical and 
horizontal arrows respectively \cite[Section 4]{EquipI}. The formal category theory associated with this equipment 
yields the well-known theory of $\infty$-categories. More generally, 
Haugseng \cite{HaugsengEnriched} has constructed a candidate $\infty$-equipment of enriched $\infty$-categories. We expect that its formal category theory captures the correct notions 
for enriched $\infty$-category theory.

We have also constructed an $\infty$-equipment $$\CCAT_\infty(\E)$$ 
of $\infty$-categories internal to an $\infty$-topos $\E$ in \cite[Section 5]{EquipI}. 
As evidence of the power of the equipmental approach, we will demonstrate in \ref{section.int-cat-theory} that one can obtain the foundations of internal category theory 
by interpreting the formal category theory associated with the equipment $\CCAT_\infty(\E)$. As mentioned earlier, internal category theory was previously studied by Martini and Wolf \cite{MartiniWolf} by abstracting Lurie's work \cite{HTT} so that 
it may be interpreted in any $\infty$-topos. For presheaf $\infty$-toposes (the \textit{indexed} case), this has been studied by Shah \cite{Shah} before. 
Both these approaches are fundamentally different from the one that we present in this article. The equipmental approach 
has also been used by Shulman in \cite{ShulmanEnInCats} where strict equipments appear to study indexed (more generally, enriched indexed) categories.

The main goal of this paper is to expand the toolbox of $\infty$-equipments so that we may express more aspects of category theory inside $\infty$-equipments. This will be necessary to 
give the complete foundations of internal category theory later. To this end, we will also need to develop 
more aspects of double $\infty$-categorical theory that is not yet present in the current literature.
This work crucially builds upon the material that has been developed in the prequel paper \cite{EquipI} 
and material from \cite{FunDblCats}. We will 
review some parts of this in \ref{section.prelims}.

\subsection{Lax functors} In \ref{section.lax-fun}, we extend the notion of (normal) lax functors between double categories of Grandis and Par\'e \cite{GrandisPareLimits} to double $\infty$-categories. 
Such an extension was recently also considered by Abell\'an \cite{Abellan}, albeit using a slightly different set-up than we present here. 
We will construct a double $\infty$-category $\DDBLCAT_\infty$ of double $\infty$-categories in the process. Its vertical arrows are functors and its horizontal arrows 
are lax functors. Much of the theory about lax functors between double categories goes through in the $\infty$-categorical setting. For instance, in \ref{ssection.lax-fun-cart}, we will 
generalize Shulman's result that asserts that lax functors between equipments preserve cartesian cells \cite[Proposition 6.8]{ShulmanFramedBicats}, with essentially the same proof.

We also study the notion of a lax adjunction between lax functors. The main instance of such an adjunction (covered in \ref{ex.dbl-adj-geom-morph}) occurs as follows: 
every geometric morphism $f : \E \rightarrow \F$ between $\infty$-toposes gives rise to a lax adjunction 
$$
f^* : \CCAT_\infty(\F) \rightleftarrows \CCAT_\infty(\E) : f_*.
$$
Whenever $f$ is a geometric embedding, i.e.\ when the direct image functor 
of $f$ is fully faithful, then we will see that 
$f_*$ is a double $\infty$-categorical analog of an inclusion of a reflective subcategory. We will 
develop this notion of a \textit{reflective lax inclusion} independently, as well as a \textit{local} variant, in \ref{ssection.refl-lax-functors}.

Moreover, we introduce a notion of \textit{proper} and \textit{smooth} lax functors between $\infty$-equipments in \ref{ssection.proper-smooth}. These are functors 
that preserve the left (resp.\ right) Kan extensions internal to the formal category theories (see \cite[Subsection 6.3]{EquipI}) of the $\infty$-equipments.
Our main examples of these occur as \textit{base change} functors associated to suitable $\infty$-equipments (see \ref{ssection.base-change}), and this 
gives rise to a notion of proper and smooth arrows in these $\infty$-equipments. This generalizes Lurie's notions of proper and smooth functors \cite[Subsection 4.1.2]{HTT}.

\subsection{Monoidal structures} Now that we have access to a notion of lax adjunctions, we can make sense of \textit{monoidal closed} double $\infty$-categories in \ref{section.mon-equipments}. In the (strict) double categorical literature, there 
are various definitions of monoidal double categories, see for instance the definition of Grandis-Par\'e \cite{GrandisPare} or the one of Hansen-Shulman \cite{HansenShulman}. Our definition will be a generalization of the last one; we define 
monoidal double $\infty$-categories to be monoids in the $\infty$-category of double $\infty$-categories. The \textit{closure} condition is then phrased as the existence 
 of a normal lax right adjoint to the functor that tensors with a fixed object (see \ref{def.mon-closed}). This right adjoint supplies the monoidal double $\infty$-category 
with a good notion of \textit{internal homs} that have double categorical functoriality.

We may then study the $\infty$-equipments that admit such a closed monoidal structure. There is a relatively easy way of 
obtaining such structures by assembling level-wise cartesian monoidal structures, see \ref{prop.mon-cartesian}. We will use this to demonstrate that $\CCAT_\infty(\E)$ admits a (cartesian) closed monoidal structure. 
Whenever one has an arrow $w : i \rightarrow j$ and an object $x$ in a closed monoidal $\infty$-equipment $\P$,
one can ask for the existence of a left adjoint to the restriction arrow
$$
w^* : [j,x] \rightarrow [i,x]
$$
in the underlying vertical $(\infty,2)$-category of $\P$. We will see in \ref{ssection.pke-arrows} that this problem can be studied 
using the point-wise left Kan extensions along $w$ in the formal category theory of $\P$ (see \cite[Definition 6.19]{EquipI}),  
similarly to ordinary ($\infty$-)category theory.

\subsection{Tabulations and fibrations} A large portion of the paper is devoted to developing an abstract notion of \textit{two-sided discrete fibrations} internal to double $\infty$-categories, 
and in particular, internal to $\infty$-equipments. This will be the goal of \ref{section.fibrations}. These fibrations are particular spans in the vertical fragment of a double $\infty$-category that generalize two-sided discrete fibrations between $\infty$-categories (see \ref{def.tsdfib}). 
Our notion is an adaptation of a 2-categorical development by Street \cite{StreetFib}
to a double $\infty$-categorical setting, and we will draw parallels between the two approaches in \ref{rem.tsdfib-street}. 
In our context, the definition of these fibrations is phrased using the double $\infty$-categorical variant of Grandis' and Pare\'s notion of \textit{tabulations} or \textit{tabulators} \cite{GrandisPareLimits} 
of horizontal arrows.
The double $\infty$-categories that admit all tabulations for horizontal arrows are called \textit{tabular}.
In such a double $\infty$-category, every horizontal arrow has an associated two-sided discrete fibration.
The main result of \ref{section.fibrations} is the verification 
that our two-sided discrete fibrations are so `representably' as well:

\begin{thmintro}\label{thmintro.tsdfib-rep}
	Suppose that $\P$ is a tabular double $\infty$-category.
	Let $(p,q) : e \rightarrow x \times y$ be a two-sided discrete fibration in the vertical $(\infty,2)$-category $\Vert(\P)$ of $\P$. Then $(p,q)$ induces a two-sided discrete 
	fibration of $\infty$-categories 
	$$
	(p,q)_* : \Vert(\P)(z,e) \rightarrow \Vert(\P)(z,x) \times \Vert(\P)(z,y)
	$$
	for every $z\in\P$.
\end{thmintro}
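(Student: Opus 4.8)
The plan is to leverage the fact that, by \ref{def.tsdfib}, two-sided discrete fibrations in $\Vert(\P)$ are not defined representably but through the horizontal structure of $\P$, namely via tabulations; the content of the theorem is precisely that this intrinsic definition implies the representable one. Since $\P$ is tabular, the span $(p,q) : e \rightarrow x \times y$ is equivalent to the tabulation of a horizontal arrow $m : x \nrightarrow y$, and the two-sided discreteness of $(p,q)$ is encoded as a condition on the cells exhibiting this tabulation (making $m$ behave as a ``space-valued'' horizontal arrow). The first step is therefore to unwind the universal property of the tabulation of $m$ and feed a test object $z$ into it.

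Concretely, I would apply the universal property of the tabulator to identify $\Vert(\P)(z,e) \simeq \Vert(\P)(z,\top m)$ with the $\infty$-category whose objects are triples consisting of vertical arrows $a : z \rightarrow x$ and $b : z \rightarrow y$ together with a cell $\id_z \Rightarrow m$ with frame $(a,b)$, and whose morphisms are supplied by the $2$-cells of $\Vert(\P)$ (i.e.\ by the morphisms in the mapping $\infty$-categories). This exhibits $\Vert(\P)(z,e)$ as a comma-type $\infty$-category over $\Vert(\P)(z,x) \times \Vert(\P)(z,y)$, and identifies the functor $(p,q)_*$ with the evident projection to this product.

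The second step is to recognize this comma $\infty$-category as the total $\infty$-category of a two-sided discrete fibration of $\infty$-categories. Here I would invoke the established theory of fibrations in the prototypical equipment $\CCAT_\infty$: a functor $E \rightarrow C \times D$ is a two-sided discrete fibration precisely when it is the unstraightening of a space-valued profunctor $C \nrightarrow D$, i.e.\ of a functor $C^{\op} \times D \rightarrow \S$. The fibre of $(p,q)_*$ over a pair $(a,b)$ is the space of cells $\id_z \Rightarrow m$ with frame $(a,b)$; these fibres assemble into such a profunctor $\Vert(\P)(z,x) \nrightarrow \Vert(\P)(z,y)$, with functoriality coming from composition of $2$-cells, and the required (co)cartesian lifts of morphisms are produced by the functoriality of the tabulation's universal property along vertical arrows.

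The main obstacle I anticipate is the verification of \emph{discreteness}: one must show that the internal two-sided discreteness condition on the exhibiting cells of $m$, once $\Vert(\P)(z,-)$ is applied, forces the fibres of $(p,q)_*$ to be $\infty$-groupoids rather than genuine $\infty$-categories, so that the induced profunctor is really $\S$-valued. I expect this to follow from the preservation of cartesian cells by the representable functors $\Vert(\P)(z,-)$, which is a consequence of the generalization of Shulman's result discussed in \ref{ssection.lax-fun-cart}: the discreteness of $(p,q)$ is witnessed by certain cells being cartesian, and preservation of exactly these cells translates into the fibrewise groupoidality that characterizes a two-sided discrete fibration in $\CCAT_\infty$.
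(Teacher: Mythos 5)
Your first step is indeed how the paper begins: since $\P$ is tabular, the lax adjunction $(\Delta,t)$ defining tabularity (together with \ref{prop.lax-adj-vert}) identifies $\Vert(\P)(z,e) \simeq \Vert(\{[1],\P\})(\id_z,F)$, where $F$ is the horizontal arrow tabulated by $(p,q)$; the paper then deduces \ref{thmintro.tsdfib-rep} from a general statement (\ref{thm.rep-tsdfib}) about the projection $(\ev_0,\ev_1) : \{[1],\P\} \rightarrow \P \times \P$ on all vertical mapping $\infty$-categories. However, your proposal stops precisely where the actual work begins, and the two mechanisms you invoke to finish are not available.

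First, the regularity of the span $(p,q)_*$ --- the existence of cocartesian lifts over $\Vert(\P)(z,x)$ lying over equivalences in $\Vert(\P)(z,y)$, and dually --- is not ``produced by the functoriality of the tabulation's universal property'': the universal property identifies the objects and morphisms of $\Vert(\P)(z,e)$, but it neither produces lifts nor verifies their universal property. In the paper this is the technical heart of \ref{thm.rep-tsdfib}: the candidate lift is a pasting of the given cell with a vertical 2-cell, and checking that it is cocartesian requires the combinatorial apparatus of \ref{lem.slices-mapping-cats}, \ref{lem.local-cocart-arrows-ev0} and \ref{lem.retraction-phin} (pushout-products, an explicit retraction $[1]_h \times [1;1]_v \rightarrow P_0$, and a horn-filling filtration). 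Second, your proposed discreteness argument does not typecheck. The internal discreteness of $(p,q)$ is not ``witnessed by certain cells being cartesian'': by definition it only asserts the existence of a tabulating 2-cell, and in a merely tabular $\P$ tabulating cells need not be (co)cartesian --- that they are is an additional hypothesis, namely condition (1) of \ref{thm.span-rep}, which is part of what defines fibrational equipments (\ref{def.fib-equipment}) and is not assumed in \ref{thmintro.tsdfib-rep}. Moreover, Shulman's result (\ref{prop.lax-pres-cart-cells}) concerns lax functors between double $\infty$-categories, and $\Vert(\P)(z,-)$ is not such a functor, so it cannot be applied here. The paper instead obtains fiberwise discreteness from conservativity of $(p,q)_*$ (criterion (1) of \ref{def.tsdfib}), proved by a purely combinatorial pushout-square argument (\ref{lem.loc-cons} and its corollary). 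Relatedly, your straightening step --- assembling the fibers ``by hand'' into an $\S$-valued profunctor --- cannot be carried out coherently in the $\infty$-categorical setting without first having the fibration structure in hand, which is the very thing to be proven.
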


\noindent In the paper, this result will be obtained as a corollary of \ref{thm.rep-tsdfib}. The converse of \ref{thmintro.tsdfib-rep} is not always true. The two-sided discrete fibrations rely on the horizontal fragments of double $\infty$-categories as well. 

In \ref{section.fib-equipments}, we continue to study the tabular $\infty$-equipments for which the passage from horizontal arrows 
to associated two-sided discrete fibrations is an embedding, in a particular sense. 
We first show that one may always construct a \textit{span representation} for a tabular $\infty$-equipment, generalizing work of 
Grandis-Par\'e \cite{GrandisPareSpan} and Niefield \cite{Niefield} for strict double categories.

\begin{thmintro}\label{thmintro.span-rep}
	Suppose that $\P$ is a tabular double $\infty$-category with pullbacks. Then there exists a lax 
	functor 
	$$
	\rho : \P \rightarrow \SSPAN(\Vert(\P)^{(1)})
	$$
	from $\P$ to the double $\infty$-category of spans in the vertical $\infty$-category $\Vert(\P)^{(1)}$ of $\P$.
	This functors acts as the identity on the underlying vertical $\infty$-categories and carries 
	horizontal arrows to their associated two-sided discrete fibrations.
\end{thmintro}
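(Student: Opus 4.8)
\emph{The plan} is to build $\rho$ from the adjunction that encodes tabularity. Write $\P_0 = \Vert(\P)^{(1)}$ for the vertical $\infty$-category and $\Hor(\P)$ for the $\infty$-category of horizontal arrows and cells of $\P$. Tabularity is equivalent to the assertion that the horizontal-unit functor $U \colon \P_0 \to \Hor(\P)$ admits a right adjoint $T \colon \Hor(\P) \to \P_0$, so that $T(m)$ is the tabulation of $m$. On objects and on the vertical $\infty$-category $\rho$ is the identity; on a horizontal arrow $m \colon x \to y$ it is the span $x \leftarrow T(m) \rightarrow y$ whose legs are the two vertical boundary arrows of the counit cell $\varepsilon_m \colon U(T(m)) \Rightarrow m$. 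By definition this span is the two-sided discrete fibration associated to $m$ (cf.\ the discussion preceding \ref{thmintro.tsdfib-rep}), so $\rho$ has the claimed effect on horizontal arrows.

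The lax structure measures the failure of $T$ to preserve horizontal composition, and this is where the hypothesis that $\P$ has pullbacks enters, both to form the target $\SSPAN(\P_0)$ and to construct the comparison cells. Given composable $m \colon x \to y$ and $n \colon y \to z$ with horizontal composite $m \bullet n \colon x \to z$, the composite of the spans $\rho(m)$ and $\rho(n)$ in $\SSPAN(\P_0)$ is the pullback $T(m) \times_y T(n)$ with legs to $x$ and $z$. Restricting the two counit cells along the projections and pasting them horizontally --- the middle legs agree by the defining pullback square --- yields a cell from the horizontal unit of $T(m)\times_y T(n)$ into $m \bullet n$; the universal property of $T(m\bullet n)$ converts this into the comparison vertical arrow $T(m)\times_y T(n) \to T(m\bullet n)$ over $x$ and $z$, i.e.\ the laxity cell $\rho(m)\bullet\rho(n) \Rightarrow \rho(m\bullet n)$. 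For normality one notes that $T(U_x) \simeq x$ with identity legs, since a cell from the unit of $z$ into the unit of $x$ is the same datum as a single vertical arrow $z \to x$; hence $\rho$ carries horizontal units to horizontal units.

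\emph{The genuine difficulty is coherence.} In the strict setting of Grandis--Par\'e and Niefield one finishes by checking the associativity and unit axioms for these comparison cells by hand, using the uniqueness half of the universal property. In the $\infty$-categorical setting those finitely many equations must be promoted to a coherent infinite tower of higher cells, which cannot be arranged component by component. I would therefore not define $\rho$ on separate cells at all, but assemble it directly as a horizontal arrow of $\DDBLCAT_\infty$ through the encoding of lax functors developed in \ref{section.lax-fun}: the adjunction $U \dashv T$ is transported levelwise over $\Delta^{\op}$ to produce the underlying map of simplicial objects, and the laxity records its incompatibility with the Segal (horizontal composition) maps via the pullback comparisons above. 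The point that rescues coherence is that every comparison arrow is \emph{characterized} by a representing universal property; consequently the space of fillers at each stage is contractible, so the higher coherences are not imposed but inherited, and $\rho$ is determined up to a contractible space of choices. The main obstacle I anticipate is precisely the bookkeeping of fitting the adjunction $U \dashv T$ into the chosen model for lax functors so that these representability arguments apply uniformly across all simplicial levels at once.
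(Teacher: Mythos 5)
Your construction has the right skeleton, and it is the same skeleton as the paper's: tabularity supplies a right adjoint $T$ to the degeneracy $\P_0 \rightarrow \P_1$, a horizontal arrow goes to the span underlying its tabulating 2-cell, and the laxity cells come from the comparison maps $T(m)\times_y T(n) \rightarrow T(n\circ m)$ obtained from the universal property of tabulations. You also locate the real difficulty correctly. But your resolution of that difficulty is not a proof: the assertion that ``the space of fillers at each stage is contractible, so the higher coherences are inherited'' is precisely the statement that needs to be proved, and nothing in your proposal establishes it. What the fibrewise adjunction machinery actually gives is this: the inclusion $l \colon \P_0 \times \Delta^\op \rightarrow \mathrm{Un}(\P)$ preserves cocartesian edges and (using pullbacks in $\P_0$, which are needed to tabulate composable \emph{chains}, not just single arrows) admits fibrewise right adjoints at every simplicial level, hence by \cite[Proposition 7.3.2.6]{HA} a right adjoint relative to $\Delta^\op$. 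That relative right adjoint, however, lands in $\P_0 \times \Delta^\op$, i.e.\ it is a lax functor into the vertical double $\infty$-category $(\P_0)_v$ --- not into $\SSPAN(\P_0)$. The entire content of the theorem is the conversion of this datum into a map into the span construction, and for that the paper invokes Haugseng's universal property of $\SSPAN$ \cite[Corollary 3.10]{HaugsengSpans2}: maps $\mathrm{Un}(\P) \rightarrow \mathrm{Un}(\SSPAN(\P_0))$ over $\Delta^\op$ preserving cocartesian lifts of inert morphisms correspond to plain functors $\mathrm{Un}(\P) \rightarrow \P_0$. This corepresentability theorem is exactly what makes your contractibility claim true; without it (or an equivalent result you would have to prove from scratch, essentially re-proving Haugseng's theorem), pointwise universal properties of individual comparison cells do not assemble themselves into a coherent fibred functor. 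See \ref{lem.span-rep} for how the paper carries this out.

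Separately, your normality claim is false. A 2-cell $\id_z \Rightarrow \id_x$ in a double $\infty$-category is not the datum of a single vertical arrow $z \rightarrow x$: it consists of \emph{two} vertical arrows $z \rightarrow x$ together with a 2-cell between them in $\Vert(\P)$. Accordingly, the tabulation of a horizontal unit is the arrow object $x^{[1]}$ of \ref{def.commas} (cf.\ \ref{rem.arrh-vert}), not $x$ itself; already in $\CCAT_\infty$ one has $T(\id_x) \simeq \fun([1],x)$, and the unit comparison $x \rightarrow x^{[1]}$ is an equivalence only when $x$ is groupoidal. The span representation is therefore not normal in general --- normality holds precisely under the extra hypotheses of \ref{cor.char-span-dbl-cats}, where every span, in particular $x \rightarrow x\times x$, is two-sided discrete. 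This error does not affect the statement of \ref{thmintro.span-rep}, which only asserts laxity, but the normality argument should be deleted rather than repaired.
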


\noindent This theorem appears as \ref{thm.span-rep}. Its proof makes use of a universal property of the double $\infty$-category of spans, exhibited by Haugseng in \cite{HaugsengSpans2}.

We may then characterize the tabular $\infty$-equipments $\P$ that have the property that the associated span representations 
$\rho$ are \textit{horizontally locally reflective}, in the sense that the induced functors 
$$
\rho_{x,y} : \Hor(\P)(x,y) \rightarrow \Vert(\P)^{(1)}/(x \times y) = \Hor(\SSPAN(\Vert(\P)^{(1)}))(x,y)
$$
on horizontal mapping $\infty$-categories are inclusions of reflective subcategories for $x,y \in \P$.
This characterization is also supplied by \ref{thm.span-rep}. The $\infty$-equipments that 
have this property will be called \textit{fibrational}. We will demonstrate that $\CCAT_\infty(\E)$ is fibrational for every $\infty$-topos $\E$ (see \ref{prop.int-ccat-fib}). 

Using the span representation, we develop the following theory for a fibrational $\infty$-equipments:
\begin{itemize}
	\item We show that two-sided discrete fibrations are closed under base change (\ref{prop.tsdfib-closed-pb}), and identify the \textit{comma fibrations} that classify certain horizontal arrows (\ref{def.commas}).
	\item We prove a fibrational variant of the Yoneda lemma (\ref{prop.fib-yoneda}). In a 2-categorical context, such a result 
	was proven by Street in \cite[Corollary 16]{StreetFib}. For $\infty$-cosmoses, there is a similar result proven by Riehl-Verity \cite[Theorem 5.7.1]{RiehlVerity}.
	\item For a \textit{pointed} (see \cite[Definition 6.30]{EquipI}) and fibrational $\infty$-equipment, we introduce \textit{left} and \textit{right fibrations} (\ref{def.left-right-fib}).
	\item For a pointed and cartesian closed fibrational $\infty$-equipment, we introduce \textit{slice and coslice objects} (\ref{def.slice-fibs}). We use this to show that certain weighted colimits can be computed 
	as conical limits  (\ref{cor.fib-weighted-vs-conical}). This generalizes a result of Haugseng \cite[Theorem 1.2]{HaugsengEnds} for $\infty$-categories. As a corollary, we explain that 
	left Kan extensions can be computed by conical colimits, similar to the case for ordinary $\infty$-categories (\ref{cor.fib-pke}).
	\item In \ref{ssection.compr-fact}, we characterize the pointed and fibrational $\infty$-equipments that admit \textit{comprehensive factorizations}. In such an equipment, every arrow 
	may be factored (essentially uniquely) as an initial arrow followed by a left fibration. We show that $\CCAT_\infty(\E)$ has these factorizations. In such equipments, coconical colimits may be computed 
	as initial objects in coslices (\ref{cor.colim-initial-slice}).
\end{itemize}

The span representations may also be used to obtain a characterization of the double $\infty$-categories of spans of \cite{HaugsengSpans}.
This may be viewed as double $\infty$-categorical version of a result established by Carboni, Kasangian, and Street \cite[Theorem 4]{CarboniKasangianStreet} about recognizing span bicategories.

\begin{corintro}
	Let $\P$ be a double $\infty$-category. Then the following are equivalent:
	\begin{enumerate}
		\item $\P$ is equivalent to a double $\infty$-category of spans in an $\infty$-category $\C$  with pullbacks,
		\item $\P$ is a fibrational $\infty$-equipment and  every span in $\Vert(\P)^{(1)}$ is a two-sided discrete fibration (with respect to $\P$),
		\item the span representation of \ref{thmintro.span-rep} is a strict functor and an equivalence.
	\end{enumerate}
\end{corintro}

\noindent This appears as \ref{cor.char-span-dbl-cats} in the article.

\subsection{Applications to internal category theory}\label{ssection.intro-int-ccat}

Finally, in \ref{section.int-cat-theory}, we will give the promised application of the formalism developed in this paper. Here, we 
interpret the formal category theory in the $\infty$-equipment $$\CCAT_\infty(\E)$$ of 
$\infty$-categories internal to a fixed $\infty$-topos  $\E$ to recover the fundamentals of internal category theory.
The content and strategy of \ref{section.int-cat-theory} may be briefly summarized as follows:
\begin{itemize}
\item As a first step, we will characterize the companions and conjoints in $\CCAT_\infty(\E)$ in \ref{thm.comp-conj-topos}. To prove this, we will make use of the characterization of \cite[Theorem 4.6]{FunDblCats}
of companionships and conjunctions in functor double $\infty$-categories. This will be crucial to understand the formal category theory.
\item We  show that the two-sided discrete fibrations in $\CCAT_\infty(\E)$ are precisely the ones that are representably so, providing 
a converse to \ref{thmintro.tsdfib-rep} in this particular case. 
\item We show that left and right fibrations are proper and smooth respectively; this was shown earlier by Martini \cite[Proposition 4.4.7]{Martini}. 
\item We provide an internal variant of Quillen's theorem A (\ref{prop.int-ccat-quilen-A}), see also \cite[Corollary 4.4.8]{Martini}.
\item We show that \textit{colimits weighted by right fibrations} may be computed as conical colimits. In turn, these may be viewed as initial objects in certain internal coslice categories (see \ref{cor.int-ccat-weighted-rfib}). 
This proves that the notions of colimits  in $\CCAT_\infty(\E)$ coincides with that of Martini-Wolf \cite{MartiniWolf} (\ref{cor.int-ccat-colim-mw}).
\item We give the existence conditions and computation of left Kan extensions internal to $\CCAT_\infty(\E)$ in terms of colimits, essentially as known from classical category theory (see \ref{prop.internal-pke}). This also appears in \cite{Shah} (for a special case).
\item We give existence conditions for left adjoints to internal restriction functors in terms of left Kan extensions (\ref{prop.internal-pke-functor}). This was shown by Shah \cite{Shah} (again, in a special case) and Martini-Wolf \cite{MartiniWolf} as well.
\end{itemize}

We stress that the relevant double $\infty$-category $\CCAT_\infty(\E)$ is all the input structure that we use to develop these aspects.

\subsection*{Conventions} We will use the language of $\infty$-categories as developed by Joyal and Lurie. Moreover, we  use the following notational conventions:
\begin{itemize}
	\item The  $\infty$-categories of spaces (i.e.\ $\infty$-groupoids) and $\infty$-categories are denoted by $\S$ and $\Cat_\infty$ respectively. 
	\item For an $\infty$-category $\C$, we write $\PSh(\C) := \fun(\C^\op, \S)$ for the $\infty$-category of presheaves on $\C$.
	\item We will use uppercase notation for $(\infty,2)$-categorical upgrades of particular $\infty$-categories. The double $\infty$-categorical variants will be decorated with blackboard bolds.
	For instance, in this article, we will use both the $(\infty,2)$-category $\CAT_\infty$ and the double $\infty$-category $\CCAT_\infty$ of $\infty$-categories.
	\item We will suppress the notation of an arrow, such as `$\Rightarrow$', when denoting 2-cells in double $\infty$-categories. For example, the square 
	\[
		\begin{tikzcd}
			a \arrow[d,"f"']\arrow[r,"F"] & b \arrow[d, "g"] \\ 
			c \arrow[r,"G"] & d
		\end{tikzcd}
	\] 
	denotes a 2-cell in a double $\infty$-category with $F, G$ and $f,g$ the corresponding source-target horizontal arrows and vertical arrows respectively.
\end{itemize}

\subsection*{Acknowledgements}
Firstly, I would like to thank Lennart Meier for the numerous helpful comments on and discussions about the material of this paper.
Moreover, I express my gratitude towards Jay Shah for the insightful conversations about parametrized category theory that we had in M\"unster. 
I thank Emily Riehl for the fruitful discussions about parts of the theory presented in this article.
During the writing of this paper, the author was funded by the Dutch Research Council (NWO) through the grant ``The interplay of orientations and symmetry'', grant no. OCENW.KLEIN.364.

\section{Preliminaries}\label{section.prelims}
Our first section is dedicated to reviewing the relevant material in \cite{EquipI} and \cite{FunDblCats} that will be used throughout this article. Moreover, 
we take this opportunity to set up notation and introduce conventions.

\subsection{Double \texorpdfstring{$\infty$}{∞}-categories}
In \cite{EquipI} and \cite{FunDblCats}, we have studied the following localizations of the $\infty$-category $\PSh(\Delta^{\times 2})$ of bisimplicial spaces:

\begin{table}[ht]
	\centering
	\caption{Relevant notions of double and 2-fold Segal spaces}\label{table.dss}
\renewcommand{\arraystretch}{1.2}
\begin{tabular}{| c c c | c |}
	\hline
	\multicolumn{3}{|c|}{\textbf{Reflective subcategory}} & \textbf{Local objects} \\
	\hline 
	$\Cat^2(\S)$ & $\subset$ & $\PSh(\Delta^{\times 2})$ & \textit{Double Segal spaces}\\
	\hline
	$\DblCat_\infty$ & $\subset$ & $\Cat^2(\S)$ & \textit{Double $\infty$-categories} \\
	\hline
	$\DblCat_\infty^{lc}$ & $\subset$ & $\DblCat_\infty$ & \textit{Locally complete double $\infty$-categories} \\
	\hline 
	$\Seg^2(\S)$ & $\subset$ & $\PSh(\Delta^{\times 2})$ & \textit{2-Fold Segal spaces} \\
	\hline 
	$\Seg^2(\S)^{lc}$ & $\subset$ & $\Seg^2(\S)$ & \textit{Locally complete 2-fold Segal spaces} \\
	\hline 
	$\Cat_{(\infty,2)}$ & $\subset$ & $\Seg^2(\S)^{lc}$ & \textit{$(\infty,2)$-Categories} \\
	\hline
\end{tabular}
\end{table}

\noindent For the definitions, we refer to the concise summary given in \cite[Section 2]{FunDblCats}. 

\begin{convention}
	The image of $([n], [m])$ under the Yoneda embedding $$\Delta^{\times 2} \rightarrow \PSh(\Delta^{\times 2})$$ will be denoted by 
	$[n,m]$. This is a (locally complete) double $\infty$-category. We refer to the objects $[0,0], [1,0], [0,1]$ and $[1,1]$ as the \textit{free living object (0-cell)}, the \textit{free living horizontal} and \textit{vertical arrow (1-cell)},  
	and the \textit{free living 2-cell}, respectively. 
\end{convention}

\begin{example}
	In \cite{HaugsengSpans}, Haugseng constructs the double $\infty$-category of spans $$\SSPAN(\C)$$ in an $\infty$-category $\C$ with pullbacks. 
	Its objects and vertical arrows correspond to object and morphisms in $\C$ respectively, and its horizontal arrows are given by spans of morphisms in $\C$.
\end{example}

\begin{notation}
	There are three relevant involutions on $\Cat^2(\S)$:
	\begin{enumerate}
		\item the \textit{transpose functor} $$(-)^\tp : \Cat^2(\S) \rightarrow \Cat^2(\S),$$ induced by the involution 
		$\Delta \times \Delta \rightarrow \Delta \times \Delta : ([n],[m]) \mapsto ([m], [n])$, 
		\item the \textit{horizontal opposite functor} $$(-)^\hop : \Cat^2(\S) \rightarrow \Cat^2(\S)$$ induced by the involution 
		$\Delta \times \Delta \xrightarrow{\op \times \id} \Delta \times \Delta$, 
		\item the \textit{vertical opposite functor} $$(-)^\vop : \Cat^2(\S) \rightarrow \Cat^2(\S)$$ induced by the involution 
		$\Delta \times \Delta \xrightarrow{\id \times \op} \Delta \times \Delta$.
	\end{enumerate}
	See also \cite[Definition 2.6]{EquipI}.
	Note that (2) and (3) restrict to involutions on $\DblCat_\infty$ and $\DblCat_\infty^{lc}$, but $(-)^\tp$ does not.
\end{notation}

\begin{convention}
	One may realize $\DblCat_\infty$ as a reflective subcategory of $\fun(\Delta^\op, \Cat_\infty)$ as well.
	Namely, a double $\infty$-category $\P$ can be viewed as a simplicial $\infty$-category $\P'$ so that 
	$$
	\map_{\Cat_\infty}([m], \P'_n) \simeq \map_{\DblCat_\infty}([n,m], \P),
	$$
	naturally for $[n],[m] \in \Delta$. This has been discussed in \cite[Section 2.1]{EquipI}. We will use 
	this perspective on many 
	occasions throughout this article, and the described inclusion $$\DblCat_\infty \rightarrow \fun(\Delta^\op, \Cat_\infty)$$ will be left implicit.
\end{convention}

\begin{notation}
	Recall that a (strict) 2-category is called \textit{gaunt} if every invertible 1-cell and 2-cell is an identity. 
	We will write $\mathrm{Gaunt}_2$ for the full subcategory of the category of 2-categories spanned by the gaunt ones. 
	When we restrict to this subcategory, there 
	is an easy-to-construct fully faithful functor $$\mathrm{Gaunt}_2 \rightarrow \Seg^2(\S),$$ see \cite[Construction 3.4]{FunDblCats}. We will 
	leave this inclusion implicit. Moreover, we write 
	$$[n; m_1, \dotsc, m_n] \in \mathrm{Gaunt}_2$$
	for the \textit{Joyal disk} associated to integers $n,m_1, \dotsc, m_n \geq 0$.
\end{notation}

\begin{notation}
	There are reflective inclusions 
	$$
	(-)_h, (-)_v : \Seg^2(\S) \rightarrow \Cat^2(\S)
	$$
	which both admit right adjoints 
	$$
	\Hor(-), \Vert(-) : \Cat^2(\S) \rightarrow \Seg^2(\S)
	$$
	respectively. This has been discussed in \cite[Subsection 2.2]{EquipI} and \cite[Notation 3.9]{FunDblCats}. Consequently, there are two ways to extract a 2-fold Segal space from a double Segal space $\P$: by 
	taking its \textit{horizontal fragment} $\Hor(\P)$ or \textit{vertical fragment} $\Vert(\P)$. There are relations between 
	the above functors, namely,
	$$
	X_v = X_h^{\tp, \hop} \quad \text{ and } \quad \Vert(\P) = \Hor(\P^{\hop, \tp})
	$$
	for a 2-fold Segal space $X$ and a double Segal space $\P$.
\end{notation}

\begin{notation}
	Let $x, y$ be objects of a 2-fold Segal space $X$. Then we will write $$X(x,y)$$ 
	for the Segal space of maps from $x$ to $y$. This is level-wise defined by the pullback square 
	\[
		\begin{tikzcd}[column sep = small]
			X(x,y)_n \arrow[rr]\arrow[d] && X_{1,n} \arrow[d] \\
			\{(x,y)\} \arrow[r] & X_{0,0}^{\times 2} \arrow[r,"\simeq"] & X_{0,n}^{\times 2}
		\end{tikzcd}
	\]
	for $[n] \in \Delta$. A 2-fold Segal space is locally complete if and only if all mapping Segal spaces are complete. This was shown by Haugseng \cite{HaugsengSpans}, see also \cite[Proposition 2.16]{EquipI}.
\end{notation}

\begin{remark}
	If $\P$ is a double $\infty$-category, then $\Hor(\P)$ is always locally complete (see \cite[Remark 2.19]{EquipI} and \cite[Remark 3.10]{FunDblCats}). By definition (see \cite[Table 3.1.1]{FunDblCats}), 
	a double $\infty$-category $\P$ is locally complete 
	if and only if $\Vert(\P)$ is locally complete as well.
\end{remark}

\begin{notation}\label{not.rezk}
	Let $$\Seg(\S) \subset \PSh(\Delta)$$ 
	denote the full subcategory spanned by the Segal spaces of Rezk \cite{RezkSeg}. I.e.\ it spanned by those presheaves $X : \Delta^\op \rightarrow \S$ 
	which are local with regard to the spine inclusions 
	$$
	[1] \cup_{[0]} \dotsb \cup_{[0]} [1] \rightarrow [n],
	$$
	with $n\geq 1$.
	It was shown by Joyal and Tierney \cite{JoyalTierney} that the restriction of the Yoneda embedding $\Cat_\infty \rightarrow \PSh(\Delta)$ 
	is the inclusion of the reflective subcategory of complete Segal spaces, i.e.\ those Segal spaces that are additionally local with respect to the map 
	$$
	J \rightarrow [0]
	$$
	where $J$ is the Segal set that presents the free living invertible arrow. This Segal set may be defined via the pushout square as in \cite[Definition 2.3]{EquipI}.
	There is a canonical adjunction 
	$
	\PSh(\Delta) \rightleftarrows \PSh(\Delta^{\times 2})
	$
	induced by the adjunction 
	$
	\Delta^{\times 2} \rightleftarrows \Delta,
	$
	where the left adjoint is the projection onto the first coordinate and the right adjoint is the inclusion of 
	$\Delta \times \{[0]\}$.
	The adjunction on presheaf $\infty$-categories restricts to adjunctions
	$$
	\Seg(\S) \rightleftarrows \Seg^2(\S) : (-)^{(1)}, \quad \Cat_\infty \rightleftarrows \Cat_{(\infty,2)} : (-)^{(1)},
	$$
	as one readily verifies.
	We leave the (fully faithful) left adjoint inclusions implicit in this paper.
\end{notation}

\begin{remark}
	A 2-fold Segal space $X$ is an $(\infty,2)$-category if and only if $X$ is locally complete and $X^{(1)}$ is a complete Segal space. This can 
	easily be deduced from \cite[Remark 3.12]{FunDblCats}.
\end{remark}

\begin{notation}
As shown and discussed in \cite{FunDblCats}, the $\infty$-categories $\DblCat_\infty$ and $\Seg^2(\S)$ are cartesian closed. We will use the same notation 
for the internal homs. 
For double $\infty$-categories $\P$ and $\Q$, we write
$$
\FFUN(\P,\Q)
$$
for the double $\infty$-category of functors from $\P$ to $\Q$. We write 
$$
\FUN(X,Y)
$$
for the 2-fold Segal space of functors from $X$ to $Y$.
\end{notation}

\begin{remark}
	It is shown in \cite[Proposition 3.16]{FunDblCats} that $\FFUN(\P, \Q)$ is locally complete if $\Q$ is locally complete. 
	Moreover, if $Y$ is a locally complete 2-fold Segal space (resp.\ an $(\infty,2)$-category), then $\FUN(X,Y)$ is locally complete (resp.\ an $(\infty,2)$-category).
\end{remark}

\begin{notation}
	Let $X$ be a 2-fold Segal space and $\P$ a double $\infty$-category.
	Throughout this article, we will make use of both the \textit{vertical cotensor product}
	$$
	[X, \P] := \FFUN(X_v, \P)
	$$ and \textit{horizontal cotensor product} $$\{X, \P\} := \FFUN(X_h, \P)$$
	of $X$ and $\P$. These cotensor products were defined in \cite[Definition 3.19]{FunDblCats}. They interact well with 
	taking fragments; there are  canonical equivalences
	$$
	\Vert([X,\P]) \simeq \FUN(X, \Vert(\P)) \quad \text {and} \quad \Hor(\{X,P\}) \simeq \FUN(X, \Hor(\P)),
	$$
	see \cite[Proposition 3.23]{FunDblCats}.
\end{notation}

\begin{remark}\label{rem.cotensor-infty-cat}
	If $X$ is an $\infty$-category and $\P$ a double $\infty$-category, then the vertical cotensor product $[X,\P]$ is computed by the composite 
	$$
	\Delta^\op \xrightarrow{\P} \Cat_\infty \xrightarrow{\fun(X,-)} \Cat_\infty.
	$$
	We refer to \cite[Remark 3.24]{FunDblCats} for an explanation.
\end{remark}

We also recall the following definition. Firstly, note that for horizontal arrows $F : x' \rightarrow x$ and $G: y \rightarrow y'$ of a double $\infty$-category $\P$, one may consider the two-sided 
composition functor
$$
G \circ (-) \circ F : \Hor(\P)(x,y) \rightarrow \Hor(\P)(x',y').
$$
Then, we studied the following notions in \cite[Definition 2.27]{EquipI} and \cite[Definition 6.5]{FunDblCats}:

\begin{definition}
	Let $\P$ be a double $\infty$-category. Then $\P$ is called \textit{horizontally closed} if $G \circ (-) \circ F$ is 
	a left adjoint for every two horizontal arrows $F$ and $G$ of $\P$. If, additionally, all horizontal mapping $\infty$-categories $\Hor(\P)(x,y)$, $x,y\in \P$, are presentable, then $\P$ is called \textit{horizontally locally presentable}.
\end{definition}

\begin{example}
	Let $\C$ be an $\infty$-category with finite limits. 
	Then $\C$ is locally cartesian closed if and only if $\SSPAN(\C)$ is horizontally closed. This observation appears 
	in \cite[Corollary 7.4]{EquipI}.
\end{example}

\subsection{\texorpdfstring{$\infty$}{∞}-Equipments} We may expand \ref{table.dss} with another entry that consists of 
those double $\infty$-categories that support a good notion of \textit{formal category theory}:

\begin{table}[ht]
	\centering 
	%\caption{Relevant notions of double and 2-fold Segal spaces}
\renewcommand{\arraystretch}{1.2}
\begin{tabular}{| c c c | c |}
	\hline
	\multicolumn{3}{|c|}{\textbf{Reflective subcategory}} & \textbf{Local objects} \\
	\hline 
	$\Equip_\infty$ & $\subset$ & $\DblCat_\infty^{lc}$ & \textit{$\infty$-Equipments} \\
	\hline
\end{tabular}
\end{table}

\noindent The basics of the formal category theory internal to $\infty$-equipments has been studied and developed in \cite[Section 6]{EquipI}.
We will refer to this when necessary.

Recall that the definition of an $\infty$-equipment makes use of the notion of \textit{companionships} and \textit{conjunctions} in locally complete 
double $\infty$-categories. This is a double categorical analog of adjunctions. We will not review these two notions here, but instead, refer the reader 
to \cite[Definition 4.1]{FunDblCats} and \cite[Definition 4.2]{FunDblCats} for the definitions with the used terminology.

\begin{notation}\label{not.comp-conj}
	Suppose that $f : x \rightarrow y$ is a vertical arrow of a locally complete double $\infty$-category $\P$. Then we will denote 
	its companion and conjoint horizontal arrow (if they exist) by 
	$$
	f_\circledast : x \rightarrow y \quad \text{ and } \quad f^\circledast :y \rightarrow x, 
	$$
	respectively.
\end{notation}

\begin{definition}
	A locally complete double $\infty$-category is called an \textit{$\infty$-equipment} if it admits all companions and conjoints.
\end{definition}

\begin{example}
	The prototypical example of an $\infty$-equipment is the double $\infty$-category 
	$$\CCAT_\infty$$ 
	of $\infty$-categories. Its vertical arrows are given by functors and the horizontal arrows are given by 
	\textit{correspondences}, which may be viewed as \textit{profunctors} or \textit{two-sided discrete fibrations} as well, see \cite[Section 4]{EquipI}. 
	We will write 
	$$
	\CAT_\infty := \Vert(\CCAT_\infty)
	$$
	for the vertical fragment of $\CCAT_\infty$. This is the model for $(\infty,2)$-category of $\infty$-categories that we will use throughout this 
	paper. It has been compared to Lurie's model in \cite[Subsection 4.3]{EquipI}.
	
	More generally, we have constructed the $\infty$-equipment 
	$$
	\CCAT_\infty(\E)
	$$
	of $\infty$-categories internal to an arbitrary $\infty$-topos $\E$ in \cite[Section 5]{EquipI}. This $\infty$-equipment is one of the key players in throughout this article. We will 
	review its definition in \ref{ssection.con-modules}. We write 
	$$
	\CAT_\infty(\E) := \Vert(\CCAT_\infty(\E))
	$$
	for its vertical fragment.
\end{example}

\begin{convention}
	We will refer to the vertical and horizontal arrows $\P$ in an $\infty$-equipment as the \textit{arrows} and \textit{proarrows} in $\P$ respectively.
\end{convention}

There is also another characterization of $\infty$-equipments in terms of \textit{(co)cartesian 2-cells}.

\begin{definition}[{\cite[Definition 3.19]{EquipI}}]
	A 2-cell in a double $\infty$-category $\P$ is called \textit{(co)cartesian} if it classifies a (co)cartesian arrow 
	of the source-target projection $\P_1 \rightarrow \P_0^{\times 2}$.
\end{definition}

\begin{proposition}[{\cite[Corollary 3.28]{EquipI}}]
	Let $\P$ be a locally complete double $\infty$-category. Then the following are equivalent: 
	\begin{enumerate}
		\item $\P$ is an $\infty$-equipment,
		\item the source-target projection $\P_1 \rightarrow \P_0^{\times 2}$ is a cartesian fibration,
		\item the source-target projection $\P_1 \rightarrow \P_0^{\times 2}$ is a cocartesian fibration.
	\end{enumerate}
\end{proposition}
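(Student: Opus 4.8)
The plan is to analyze the source-target projection $p : \P_1 \rightarrow \P_0^{\times 2}$ fibrewise and then match its (co)cartesian lifts with companion and conjoint data. Recall that the fibre of $p$ over a pair $(x,y)$ is the horizontal mapping $\infty$-category $\Hor(\P)(x,y)$, that a morphism of $\P_1$ is precisely a $2$-cell of $\P$, and that the degeneracies of the underlying simplicial $\infty$-category supply the unit proarrows $\id_x$ (objects of $\P_1$ lying over the diagonal) together with, for each vertical arrow $f : x \rightarrow y$, a unit $2$-cell $\id_f : \id_x \rightarrow \id_y$ lying over the diagonal morphism $(f,f)$. The whole argument rests on the observation that companions and conjoints are exactly the (co)cartesian lifts of these unit proarrows along the one-sided morphisms $(f,\id)$ and $(\id,f)$ of $\P_0^{\times 2}$.

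I would establish $(1) \Leftrightarrow (2)$ first. For $(1) \Rightarrow (2)$, given an arbitrary proarrow $H : x \rightarrow y$ and a morphism $(f,g) : (x',y') \rightarrow (x,y)$ with $f : x' \rightarrow x$ and $g : y' \rightarrow y$, I would construct a cartesian lift whose domain is the \emph{restriction} of $H$; in an equipment this restriction is produced by horizontally composing with the relevant companion and conjoint, namely as $f_\circledast \odot H \odot g^\circledast$, which exists by the Segal condition. The bulk of the work is to check that the comparison cell assembled from the companion structure cell of $f$ and the conjoint structure cell of $g$ (together with $H$) is $p$-cartesian; this reduces, via the triangle-type identities for companions and conjoints, to the required mapping-space equivalence. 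For the converse $(2) \Rightarrow (1)$, I would define $f_\circledast$ and $f^\circledast$ to be the domains of the cartesian lifts of the unit $\id_y$ along $(f,\id_y)$ and along $(\id_y,f)$ respectively. Each such lift directly provides one of the two defining cells; the complementary cell is obtained by factoring the unit $2$-cell $\id_f$ through the cartesian cell, using the decompositions $(f,f) = (f,\id_y)\circ(\id_x,f)$ and $(f,f) = (\id_y,f)\circ(f,\id_x)$ in $\P_0^{\times 2}$. The companionship and conjunction identities then fall out of the uniqueness clause of the cartesian universal property.

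Finally, I would deduce $(1)\Leftrightarrow(3)$ from $(1)\Leftrightarrow(2)$ by duality, avoiding a separate verification. The vertical opposite $(-)^\vop$ restricts to an involution on $\DblCat_\infty^{lc}$, interchanges companions and conjoints and hence preserves the class of $\infty$-equipments, and identifies the source-target projection of $\P^\vop$ with the opposite functor $p^{\mathrm{op}} : (\P_1)^{\mathrm{op}} \rightarrow (\P_0^{\mathrm{op}})^{\times 2}$. Since a functor is a cartesian fibration if and only if its opposite is a cocartesian fibration, applying the already-proven equivalence $(1)\Leftrightarrow(2)$ to $\P^\vop$ yields exactly $(1)\Leftrightarrow(3)$ for $\P$.

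I expect the main obstacle to be the homotopy-coherent bookkeeping in $(2)\Rightarrow(1)$: in the $\infty$-categorical setting, companion and conjoint structures are not merely a pair of cells satisfying equations but coherent data, so one must verify that the cell manufactured from the cartesian universal property assembles with the cartesian lift into this coherent structure rather than merely into a diagram commuting up to homotopy. Local completeness enters precisely at this point, guaranteeing that the relevant mapping Segal spaces are complete, so that the $\infty$-categorical (co)cartesian universal properties faithfully correspond to the companion and conjoint definitions of \cite{FunDblCats}.
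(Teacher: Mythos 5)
Your proposal is correct, and for the equivalence $(1)\Leftrightarrow(2)$ it is essentially the argument behind the cited source: the statement is quoted from \cite[Corollary 3.28]{EquipI}, and the present paper records its key input as \ref{prop.cart-comp-conj} (= \cite[Proposition 3.24]{EquipI}), which is exactly your claim that the pasting of the companionship counit of $f$, the identity cell of $F$, and the conjunction counit of $g$ is a cartesian 2-cell; the converse direction likewise proceeds, as you describe, by taking cartesian lifts of unit proarrows along $(f,\id)$ and $(\id,f)$, producing the complementary cell by factoring the degenerate 2-cell $\id_f$, and extracting the triangle identities from uniqueness of factorizations through a cartesian cell. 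Where you genuinely diverge is the third equivalence: the cited proposition handles the cocartesian case directly and symmetrically, verifying that the pasting of the conjunction and companionship \emph{units} around a proarrow is a cocartesian 2-cell, whereas you deduce $(1)\Leftrightarrow(3)$ from $(1)\Leftrightarrow(2)$ by applying $(-)^\vop$, using that it is an involution on $\DblCat_\infty^{lc}$ which interchanges companions and conjoints and carries the source-target projection to its opposite functor. Your duality argument is valid and more economical, but the direct verification buys something this paper actually uses downstream: the explicit identification of the cocartesian cells as unit pastings, which is invoked repeatedly (for instance in \ref{prop.lax-adj-hor}, in the proof of \ref{prop.lax-pres-cart-cells}, and in the discussion of cocartesian tabulating 2-cells in \ref{section.fib-equipments}), and which a purely formal duality argument does not hand you. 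One small caveat on your closing remark: the horizontal mapping Segal spaces of a double $\infty$-category are automatically complete, so local completeness (i.e.\ completeness of the \emph{vertical} mapping Segal spaces) is better viewed as a standing hypothesis built into the definitions of companions and conjoints from \cite{FunDblCats} than as the specific ingredient making the cartesian universal property match the companion identities; this does not affect the correctness of your argument.
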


This relied on the following verification:

\begin{proposition}[{\cite[Proposition 3.24]{EquipI}}]\label{prop.cart-comp-conj}
	Let $f : a \rightarrow x$ and $g : b \rightarrow y$ be arrows of a locally complete double $\infty$-category $\P$. Suppose that 
	$f$ admits a companion and $g$ admits a conjoint. If $F : x\rightarrow y$ is a proarrow 
	of $\P$, then the pasting of 2-cells
	\[
		\begin{tikzcd}
			a \arrow[r, "{f_\circledast}"]\arrow[d,"f"'] & x \arrow[r,	"F"]\arrow[d,equal] & y \arrow[r, "{g^\circledast}"] \arrow[d, equal] & b\arrow[d, "g"] \\
			x \arrow[r, equal] & x \arrow[r, "F"] & y \arrow[r,equal] & y,
		\end{tikzcd}
	\]
	is a cartesian 2-cell. Here, the left and right 2-cells are the companionship and conjunction counits respectively.
	Similarly, if $G : b\rightarrow a$ is a horizontal arrow of $\P$, then the pasting
	\[
		\begin{tikzcd}
			b \arrow[r, equal]\arrow[d,"g"'] & b\arrow[r,	"G"]\arrow[d,equal] & a \arrow[r, equal] \arrow[d, equal] & a\arrow[d, "f"] \\
			y \arrow[r, "g^\circledast"] & b \arrow[r, "G"] & a \arrow[r,"f_\circledast"] & x,
		\end{tikzcd}
	\]
	is a cocartesian 2-cell. Here, the left and right 2-cells are now the conjunction and companionship units.
\end{proposition}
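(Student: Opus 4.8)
The plan is to verify directly that the displayed pasting, which I will call $\alpha$, is a cartesian morphism for the source--target projection $(s,t) : \P_1 \to \P_0^{\times 2}$, working only from the defining unit/counit data of companions and conjoints. This restriction is essentially forced: the present proposition is precisely what one needs in order to later establish that $(s,t)$ is a cartesian fibration, so I cannot yet invoke that structure. I will use the pointwise criterion for cartesian edges: a 2-cell $\alpha : F' \Rightarrow F$ lying over $\phi : (a,b) \to (x,y)$ is cartesian if and only if, for every object $H : u \to v$ of $\P_1$ and every pair $(p',q') : (u,v) \to (a,b)$, pasting with $\alpha$ induces an equivalence on fibrewise mapping spaces $\map^{(p',q')}(H, F') \xrightarrow{\ \simeq\ } \map^{(fp',gq')}(H,F)$, where $\map^{(p,q)}(H,M)$ denotes the space of 2-cells $H \Rightarrow M$ with prescribed vertical boundary $(p,q)$. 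In our case $F' = f_\circledast \cdot F \cdot g^\circledast$ and $\phi = (f,g)$.

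My first move is to factor $\alpha$ so as to separate the two sides. Using the interchange law, the horizontal pasting of the companionship counit, the identity 2-cell on $F$, and the conjunction counit equals the vertical composite of the two one-sided cells
$$ f_\circledast \cdot F \cdot g^\circledast \Rightarrow f_\circledast \cdot F \quad\text{and}\quad f_\circledast \cdot F \Rightarrow F, $$
lying over $(\id_a, g)$ and $(f, \id_y)$ respectively, obtained from the conjunction counit whiskered by $\id_{f_\circledast \cdot F}$ and from the companionship counit whiskered by $\id_F$. Since $\phi = (f,\id_y)\circ(\id_a,g)$ and since cartesian morphisms for any functor are closed under composition, it suffices to show that each of these one-sided cells is cartesian.

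I will treat the companionship cell $f_\circledast \cdot F \Rightarrow F$; the conjunction cell is handled symmetrically. By the criterion, its cartesianness amounts to the natural equivalence $\map^{(p',q')}(H, f_\circledast \cdot F) \simeq \map^{(fp', q')}(H, F)$ --- that is, precomposing the left-hand vertical boundary $p'$ with $f$ corresponds to horizontally prepending the companion $f_\circledast$. Pasting with the counit gives the forward map; I produce the inverse by pasting with the companionship unit $\id_a \Rightarrow f_\circledast$, suitably whiskered, and the two round-trips are identities by the two zig-zag identities defining the companion. The analogous argument for the conjunction cell uses the conjunction unit and its zig-zag identities.

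The main obstacle is making these pasting manipulations precise in the $\infty$-categorical setting: rather than a strict bijection of 2-cells, one must exhibit the forward and backward pastings as mutually inverse maps of mapping spaces, naturally in $H$ and $(p',q')$, and the zig-zag identities must be read as the coherence data packaged in the companion/conjoint structure (as set up in \cite[Definition 4.1]{FunDblCats} and \cite[Definition 4.2]{FunDblCats}) rather than as equations; local completeness of $\P$ is what guarantees that these fibrewise mapping spaces are the expected ones. Conceptually, however, the computation is exactly the classical equipment argument. Finally, the cocartesian statement follows by the formally dual argument, interchanging the roles of units and counits and of companions and conjoints --- equivalently, by applying the cartesian case to an appropriate opposite of $\P$ under an involution that exchanges companions with conjoints and cartesian with cocartesian cells.
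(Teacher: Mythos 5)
First, a structural point: this proposition is not proved in the paper at all. It is recalled verbatim from the prequel, with the citation \cite[Proposition 3.24]{EquipI}, as part of the preliminaries; the present paper only uses it (e.g.\ in \ref{prop.lax-adj-hor} and \ref{prop.lax-pres-cart-cells}). So there is no in-paper proof to compare your argument against, and I can only assess your proposal on its own terms and against the standard argument the prequel is expected to contain.

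On those terms your outline is correct, and it is the expected argument. The reduction is sound: by interchange, the three-fold horizontal pasting equals the vertical composite of the two one-sided whiskered counit cells lying over $(\id_a,g)$ and $(f,\id_y)$, and since cartesian edges of any functor are closed under composition it suffices to treat each one-sided cell. Your fibrewise criterion is the correct unwinding of cartesianness for the projection $\P_1 \rightarrow \P_0^{\times 2}$ (a square of spaces is a pullback if and only if it induces equivalences on all fibres over the base corner), and the inverse built from the unit works: via interchange, the two round-trips are computed by pasting with the horizontal composite of unit and counit, respectively their vertical composite, which the two triangle identities identify with degenerate cells; since pasting with a fixed cell induces a map of mapping spaces depending only on the homotopy class of that cell, the round-trips are homotopic to identities, and no further naturality is needed to conclude an equivalence of spaces. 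Your treatment of the cocartesian half is also right: the involution you want is the vertical opposite $(-)^{\vop}$, which exchanges companions with conjoints, units with counits, and cartesian with cocartesian $2$-cells. The one caveat is that what you have written is a plan rather than a proof: the genuinely $\infty$-categorical content --- that the companion/conjoint data of \cite[Definitions 4.1--4.2]{FunDblCats} supplies the triangle-identity homotopies in a form that can be whiskered and pasted as above --- is exactly the part you defer, and it is precisely the part that the citation to \cite{EquipI} exists to cover.
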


The class of $\infty$-equipments interacts well with taking vertical cotensor products: 

\begin{proposition}[{\cite[Corollary 4.9]{FunDblCats}}]
	The vertical cotensor products of $\infty$-equipments are again $\infty$-equipments.
\end{proposition}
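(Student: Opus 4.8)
The plan is to verify the two defining properties of an $\infty$-equipment for the vertical cotensor $[X,\P] = \FFUN(X_v,\P)$, where $X$ is a 2-fold Segal space and $\P$ an $\infty$-equipment. Local completeness is immediate: since $\P$ is an $\infty$-equipment it is locally complete, so the remark that $\FFUN(X_v,\P)$ is locally complete whenever its target is locally complete applies verbatim. It therefore remains to show that $[X,\P]$ admits all companions and conjoints.

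For this I would invoke the characterization of $\infty$-equipments through their source-target projections, namely that a locally complete double $\infty$-category is an $\infty$-equipment precisely when its source-target projection is a cartesian fibration (equivalently a cocartesian one). Thus it suffices to prove that
$$
[X,\P]_1 \longrightarrow [X,\P]_0^{\times 2}
$$
is a cartesian fibration; this single statement already yields \emph{both} companions and conjoints. The cartesian lifts should be produced objectwise from the equipment structure of $\P$: given a proarrow $H$ of $[X,\P]$ together with a morphism in the base restricting its source along a vertical arrow $f$ and its target along a vertical arrow $g$, one forms objectwise over $X$ the restriction $g^{\circledast}\circ H\circ f_{\circledast}$ supplied by \ref{prop.cart-comp-conj} in $\P$, and checks that the resulting square is cartesian.

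To organize this objectwise recipe into an honest cartesian lift, I would exploit that cotensoring converts colimits into limits. Writing $X_v$ as a colimit of elementary cells and using that $(-)_v$ preserves colimits while $\FFUN(-,\P)$ sends colimits to limits, one presents $[X,\P]$ as a limit of cotensors $[C,\P]$ over the cells $C$. When $C$ is an $\infty$-category, such as the free vertical arrow, \ref{rem.cotensor-infty-cat} computes $[C,\P]_n \simeq \fun(C,\P_n)$, and the claim reduces to the standard fact that cotensoring a cartesian fibration by an $\infty$-category is again a cartesian fibration with pointwise cartesian edges. The main obstacle is the genuinely two-dimensional cells, most essentially the free 2-cell, for which no such levelwise simplification is available: here one must check directly that the objectwise companions and conjoints of $\P$ fit together coherently across the 2-cells of $X$, and that the limit of the resulting source-target cartesian fibrations remains cartesian with pointwise cartesian edges. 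This coherent assembly, rather than the existence of the pointwise lifts, is where the real content lies.
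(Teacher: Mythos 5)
Your reductions are all legitimate as far as they go: local completeness of $[X,\P]$ follows from \cite[Proposition 3.16]{FunDblCats}, and by \cite[Corollary 3.28]{EquipI} it then suffices to show that the source-target projection $[X,\P]_1 \rightarrow [X,\P]_0^{\times 2}$ is a cartesian fibration, with candidate lifts given objectwise by $\cart{\beta}{H}{\alpha}$. The problem is that your argument stops exactly where the actual proof has to begin: you yourself flag that over the genuinely two-dimensional cells ``one must check directly'' that the objectwise lifts cohere and that the transition functors in your limit presentation preserve the would-be cartesian edges, but neither verification is attempted. This is not a routine remainder. A horizontal arrow of $[X,\P] = \FFUN(X_v,\P)$ lying over the pointwise companions or restrictions is strictly more data than its components: it includes connecting 2-cells over every vertical arrow of $X_v$ and coherence data over every 2-cell of $X_v$, which must be manufactured from the companionship and conjunction (co)units of $\P$ by nontrivial pasting arguments (compare the manipulations in the proof of \ref{prop.lax-pres-cart-cells}), after which one must still prove that the assembled square is cartesian in $[X,\P]$. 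Note that you cannot shortcut this by testing cartesianness pointwise via \ref{prop.cart-cotensors}: the proof of that proposition uses the existence of cartesian lifts in $[X,\P]$, i.e.\ presupposes the very statement you are proving, so invoking it here would be circular.

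The missing step is precisely the content of the cited source. This paper gives no proof of the proposition; it imports \cite[Corollary 4.9]{FunDblCats}, which is deduced there from \cite[Theorem 4.6]{FunDblCats}, the complete characterization of companions and conjoints in functor double $\infty$-categories $\FFUN(\Q,\P)$ in terms of \emph{companionable} and \emph{conjointable} 2-cells: a vertical natural transformation admits a companion precisely when its components do and its naturality 2-cells satisfy those conditions. The corollary for the vertical cotensor then follows because the horizontal arrows of $X_v$ are essentially identities (the 1-morphisms of $X$ sit in the vertical direction), so the companionability and conjointability hypotheses are automatic when $\P$ is an $\infty$-equipment. To turn your outline into a proof you must either invoke that theorem, or carry out by hand the analysis over the free 2-cell that it encapsulates --- that is, do the work your final sentence defers.
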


In fact, we gave a complete description of companions and conjoints in functor double $\infty$-categories in \cite[Theorem 4.6]{FunDblCats} using the notion of 
\textit{companionable} and \textit{conjointable} 2-cells (see \cite[Definition 4.4]{FunDblCats}).

\begin{example}\label{ex.cotensors-ccat}
	As explained in \cite[Example 3.26]{FunDblCats}, there is an equivalence 
	\[
		[T^\op, \CCAT_\infty] \simeq \CCAT_\infty(\PSh(T))
	\]
	for every small $\infty$-category $T$.
\end{example}

We will 
make use of the following characterization of the (co)cartesian 2-cells in vertical cotensors:

\begin{proposition}\label{prop.cart-cotensors}
	Let $\P$ be an $\infty$-equipment and suppose that $X$ is a 2-fold Segal space. Then a 2-cell 
	\[
		\begin{tikzcd}
			h \arrow[r, "\gamma"]\arrow[d, "\alpha"'] & k \arrow[d, "\beta"] \\ 
			l \arrow[r, "\delta"] & m
		\end{tikzcd}
	\]
	in $[X,\P]$ is (co)cartesian if and only if it is (co)cartesian point-wise, i.e.\
	\[
		\begin{tikzcd}
			h(x) \arrow[r, "\gamma_x"]\arrow[d, "\alpha_x"'] & k(x) \arrow[d, "\beta_x"] \\ 
			l(x) \arrow[r, "\delta_x"] & m(x)
		\end{tikzcd}
	\]
	is (co)cartesian in $\P$ for every $x \in X$.
\end{proposition}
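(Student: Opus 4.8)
The plan is to exploit that both $\P$ and, by \cite[Corollary 4.9]{FunDblCats}, the cotensor $[X,\P]$ are $\infty$-equipments, so that (co)cartesian $2$-cells in either are detected by the canonical pastings of \ref{prop.cart-comp-conj}. Concretely, for a $2$-cell $\theta$ as in the statement, being cartesian is equivalent to $\theta$ agreeing with the restriction cell of its bottom proarrow $\delta$ along the legs $\alpha,\beta$, i.e.\ with the pasting of the companionship counit of $\alpha$, the identity cell on $\delta$, and the conjunction counit of $\beta$; dually, being cocartesian is detected by the second (unit) pasting of \ref{prop.cart-comp-conj}. The central geometric input is the family of evaluation functors $\ev_x \colon [X,\P] \to \P$ at the objects $x$ of $X$, induced by the inclusions $\{x\} \colon [0,0] \to X_v$. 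Each $\ev_x$ is a functor of double $\infty$-categories commuting with the source--target projections, and sends $\theta$ to its pointwise value $\theta_x$.

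For the forward implication I would first record that $\ev_x$ preserves companions and conjoints together with their units and counits. This is where the description of companions and conjoints in functor double $\infty$-categories \cite[Theorem 4.6]{FunDblCats} enters: since $\P$ is an equipment, every vertical arrow $\alpha$ of $[X,\P]$ is companionable and conjointable, and its companion and conjoint are computed so that $(\alpha_\circledast)_x \simeq (\alpha_x)_\circledast$ and $(\alpha^\circledast)_x \simeq (\alpha_x)^\circledast$, compatibly with the binding $2$-cells. Granting this, $\ev_x$ carries the restriction pasting of \ref{prop.cart-comp-conj} to the corresponding pasting in $\P$, hence sends cartesian cells to cartesian cells and, using the dual pasting, cocartesian cells to cocartesian cells. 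Thus a (co)cartesian $\theta$ is pointwise (co)cartesian.

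The converse is the main point. Suppose $\theta_x$ is cartesian for all $x$. Since $[X,\P]$ is an equipment, form the cartesian lift $\theta^{\mathrm{can}}$ of $\delta$ along $(\alpha,\beta)$, with domain $\gamma^{\mathrm{can}}$, and the unique comparison $2$-cell $c \colon \gamma \to \gamma^{\mathrm{can}}$ in the horizontal mapping $\infty$-category $\Hor([X,\P])(h,k)$, i.e.\ the fibre of $([X,\P])_1 \to ([X,\P])_0^{\times 2}$ over $(h,k)$, with $\theta = \theta^{\mathrm{can}} \circ c$. Applying $\ev_x$ and using the forward implication, $\ev_x(\theta^{\mathrm{can}})$ is a cartesian lift of $\delta_x$ along $(\alpha_x,\beta_x)$, so $c_x = \ev_x(c)$ is the comparison cell for the cartesian $\theta_x$ and is therefore invertible for every $x$. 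It then remains to upgrade this pointwise invertibility to invertibility of $c$, i.e.\ to show that the family $\{\ev_x\}_{x}$ is jointly conservative on $\Hor([X,\P])(h,k)$. I would deduce this from the fact that the internal hom $\FFUN(-,\P)$ turns the colimit presentation of $X_v$ by its cells $[n,m] \to X_v$ into a limit, so that $\Hor([X,\P])(h,k)$ is a limit of the horizontal mapping $\infty$-categories of the representable cotensors $\FFUN([n,m],\P)$ (using that $\Hor$ is a right adjoint and that mapping $\infty$-categories are defined by pullbacks). The projections of this limit, post-composed with evaluation at the vertices of each cell, recover the $\ev_x$, whence joint conservativity. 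Then $c$ is invertible, $\theta \simeq \theta^{\mathrm{can}}$ is cartesian, and the cocartesian case is identical using the unit pasting.

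The step I expect to be most delicate is precisely this last conservativity claim: unlike the vertical fragment, for which $\Vert([X,\P]) \simeq \FUN(X,\Vert(\P))$ gives an immediate pointwise description, the horizontal fragment of a vertical cotensor admits no such clean formula, and one must argue through the cell-by-cell limit presentation above together with the pointwise computation of companions and conjoints for a \emph{general} $2$-fold Segal space $X$ (rather than merely an $\infty$-category, cf.\ \ref{rem.cotensor-infty-cat}). Once these two ingredients are in place, the rest of the argument is formal.
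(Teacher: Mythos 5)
Your outline is the same as the paper's: both arguments reduce (co)cartesianness of $\theta$ to invertibility of the globular comparison cell $c \colon \gamma \to \cart{\beta}{\delta}{\alpha}$ supplied by \ref{prop.cart-comp-conj}, and both obtain the forward implication from the fact that each evaluation $\ev_x \colon [X,\P] \to \P$ is a strict functor of double $\infty$-categories, hence preserves companions, conjoints and their (co)units, and therefore cartesian and cocartesian cells. That half of your argument is correct (your appeal to the pointwise computation of companions via \cite[Theorem 4.6]{FunDblCats} is a somewhat heavier route to the same preservation statement, but it is fine).

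The gap is in the converse, exactly at the step you flagged as delicate. From the presentation of $X_v$ as a colimit of its cells $\sigma \colon [n,m] \to X_v$ you do obtain $\Hor([X,\P])(h,k) \simeq \lim_\sigma \Hor(\FFUN([n,m],\P))(h_\sigma,k_\sigma)$, and the projections $P_\sigma$ of this limit are jointly conservative. But your concluding inference --- that since the $\ev_x$ are recovered as vertex evaluations post-composed with the $P_\sigma$, joint conservativity follows --- runs in the wrong direction. Writing $\ev_{\sigma(j)} = V_{\sigma,j} \circ P_\sigma$, joint conservativity of the $P_\sigma$ says nothing about the composites $V_{\sigma,j} \circ P_\sigma$: to deduce that $c$ is invertible from invertibility of all $\ev_x(c)$, you must first show, for every cell $\sigma$, that the vertex evaluations $V_{\sigma,j}$ are jointly conservative on $\Hor(\FFUN([n,m],\P))(h_\sigma,k_\sigma)$, so that each $P_\sigma(c)$ is invertible. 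That, however, is precisely the statement you are trying to prove, now with $X_v$ replaced by the representable $[n,m]$ --- and since $[n,m]$ has non-degenerate horizontal arrows, it is not even an instance of the proposition being proved, so there is no induction to fall back on. The cell decomposition therefore only relocates the problem; the base case, which carries all the content, is missing. (It could be supplied, e.g.\ via $\FFUN([n,m],\P) \simeq \{[n],[[m],\P]\}$, the compatibility $\Hor(\{[n],\Q\}) \simeq \FUN([n],\Hor(\Q))$ for a double $\infty$-category $\Q$, and \ref{rem.cotensor-infty-cat}, but that is genuine additional work of the same nature as the original claim.)

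The paper closes this step without any decomposition of $X$: it reads the comparison cell as a cell of the vertical fragment and invokes the defining equivalence $\Vert([X,\P]) \simeq \FUN(X,\Vert(\P))$ of the vertical cotensor, for which invertibility of a transformation is detected on its components at the objects of $X$; functoriality of $\ev_x$ then identifies the component at $x$ with the comparison cell for $\theta_x$. This single identification is what replaces your conservativity argument, and it is the one point where your proof and the paper's genuinely diverge --- your own observation that the horizontal fragment of a vertical cotensor has no pointwise description is precisely why your route gets stuck, and why the paper works through the vertical fragment instead.
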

\begin{proof}
	We will only consider the cartesian case since the other case is similar. Note that the 2-cell of $[X, \P]$ in question is cartesian 
	if and only if the comparison vertical 2-cell $\gamma \rightarrow \cart{\beta}{\delta}{\alpha}$ is invertible in $\Vert([X,\P]) = \FUN(X, \P)$. 
	But this can be detected point-wise, so we have to check that the induced arrow $\gamma_x \rightarrow \beta_x^\circledast{\delta_x}{\alpha_{x,\circledast}}$ is invertible in $\Vert(\P)$ for every $x \in X$. 
	Here, we used that $\ev_x : [X, \P] \rightarrow \P$ is a functor and hence preserves cartesian 2-cells. 
\end{proof}

\subsection{Gray tensor product}\label{ssection.gray}
Throughout this article, we will make use of the model for the Gray tensor product for $(\infty,2)$-categories that was 
considered in \cite{FunDblCats}. This definition relied on the \textit{squares construction}: the functor
$$
\Sq : \Cat_{(\infty,2)} \rightarrow \DblCat_\infty 
$$
that carries an $(\infty,2)$-category to the double $\infty$-category of \textit{squares} in $\X$, given by 
$$
\Sq(\X)_{n,m} = \map_{\Cat_{(\infty,2)}}([n]\otimes_s [m], \X).
$$
Here $(-) \otimes_s (-)$ denotes the Gray tensor product for strict 2-categories. The joint work \cite{Squares} is dedicated 
to the study of the squares functor and its results have been used throughout \cite{EquipI} and \cite{FunDblCats}.

\begin{definition}[{\cite[Definition 5.2]{FunDblCats}}]
	The \textit{Gray tensor product} $\X \otimes \Y$ of $(\infty,2)$-categories $\X$ and $\Y$ is the unique $(\infty,2)$-category with the property that
	$$
	\map_{\Cat_{(\infty,2)}}(\X \otimes \Y, \mathscr{Z}) \simeq \map_{\DblCat_\infty}(\X_h \times \Y_v, \Sq(\mathscr{Z})).
	$$
	naturally for every $(\infty,2)$-category $\mathscr{Z}$.
\end{definition}

\begin{notation}[{\cite[Definition 5.6]{FunDblCats}}]
	Let $\X$ and $\Y$ be $(\infty,2)$-categories. Then we will consider the vertical cotensor product
	$$
	\FFUN^\lax(\X, \Y) := [\X, \Sq(\Y)],
	$$
	which has the following description:
	\begin{itemize}
		\item its objects are functors $f : \X \rightarrow \Y$,
		\item its vertical arrows $f \rightarrow g$ correspond to natural transformations from $f$ to $g$, i.e.\ maps $\alpha : [1] \times \X \rightarrow \Y$ 
		with identifications $\alpha|\{0\} \times \X \simeq f$ and $\alpha|\{1\} \times \X \simeq g$,
		\item its horizontal arrows $f \rightarrow g$ correspond to \textit{lax natural transformations} from $f$ to $g$, i.e.\ maps $\alpha : [1] \otimes \X \rightarrow \Y$ 
		with identifications $\alpha|\{0\} \otimes \X \simeq f$ and $\alpha|\{1\} \otimes \X \simeq g$.
	\end{itemize}
\end{notation}

\begin{remark}\label{rem.companions-ffun}
	The companions in $\FFUN^\lax(\X,\Y)$ are precisely those lax natural transformations $\alpha : h \rightarrow k$ that are \textit{strict}, i.e.\ 
	those for which the 2-cells inhabiting the lax naturality squares 
	\[ 
		\begin{tikzcd}
			h(x) \arrow[r, "\alpha_x"] \arrow[d, "h(f)"']& |[alias=f]| k(x) \arrow[d, "k(f)"] \\
			|[alias=t]| h(y) \arrow[r, "\alpha_y"'] & k(y)
			\arrow[from=f,to=t, Rightarrow]
		\end{tikzcd}
	\]
	associated to each arrow $f : x \rightarrow y$ in $\X$, are invertible (see \cite[Proposition 5.7]{FunDblCats}). There is a locally fully faithful companion embedding
	$$
	\Vert(\FFUN^\lax(\X,\Y)) = \FUN(\X,\Y) \rightarrow \FUN^\lax(\X,\Y) = \Hor(\FFUN^\lax(\X,\Y))
	$$
	that is the identity on objects and carries natural transformations to their associated strict lax natural transformation. This is discussed in \cite[Section 5.1]{FunDblCats} and 
	relies on the results of \cite{Squares} (cf.\ \cite[Corollary 3.13]{EquipI}).
\end{remark}

\begin{remark}\label{rem.sq-in-ffun}
	The \textit{universal property of squares} that will be proven in \cite{Squares} dictates the existence of a unique functor 
	$$
	\Sq(\FUN(\X,\Y)) \rightarrow \FFUN^\lax(\X,\Y)
	$$
	that extends the counit $\FUN(\X,\Y)_v = \Vert(\FFUN^\lax(\X,\Y))_v \rightarrow \FFUN^\lax(\X,\Y)$. The resulting extension 
	has the property that it is an equivalence onto the subobject of $\FFUN^\lax(\X,\Y)$ whose horizontal arrows are companions: the strict lax natural transformations.
\end{remark}

\subsection{Conduch\'e modules}\label{ssection.con-modules} We will give a quick recollection of the material in \cite[Section 5]{EquipI} in slightly more generality. In particular, 
we will recover the definition of the $\infty$-equipment $\CCAT_\infty(\E)$ of categories internal to $\E$ in the process. 

\begin{notation}
	For an $\infty$-category $K$, we will consider the $\infty$-category $$\Delta/K := \Cat_\infty/K \times_{\Cat_\infty} \Delta.$$ 
	It has the feature that the projection $\Delta/K \rightarrow \Delta$ is precisely the right fibration that classifies $K$, viewed as a Segal space.
\end{notation}

\begin{definition}\label{def.con-K-modules}
Let $\E$ be an $\infty$-topos. A \textit{Conduch\'e $K$-module internal to $\E$} is a functor 
$$
M : (\Delta/K)^\op \rightarrow \E 
$$
so that:
\begin{enumerate}
	\item for every $k \in K$, the restriction $(\{k\}_!)^*M$ is an $\infty$-category internal to $\E$ (see \cite[Section 7]{HaugsengSpans} or \cite[Definition 5.14]{EquipI}),
	\item for every arrow $f : [2] \rightarrow K$, the canonical map $$
	\quad\colim_{[n] \in \Delta^\op} M([n+1] \xrightarrow{0 \leq 1 \leq \dotsb \leq 1 \leq 2} [2] \xrightarrow{f} K) \rightarrow M([1] \xrightarrow{0\leq 2} [2] \xrightarrow{f} K)
	$$ 
	is an equivalence, 
\end{enumerate}
cf.\ \cite[Proposition 5.3]{EquipI}.
We will write 
$$
\Con(K; \E) \subset \fun((\Delta/K)^\op, \E)
$$ for the full subcategory spanned by these Conduch\'e modules.
\end{definition}

\begin{remark}
Note that the subcategories of Conduch\'e modules are functorial. We can consider the composite functor
$$
\Cat_\infty^\op \xrightarrow{\Delta/(-)} \widehat{\Cat}_\infty^\op \xrightarrow{\fun(-, \E)} \widehat{\Cat}_\infty
$$
and this restricts to the Conduch\'e modules level-wise, yielding a functor $\Con(-; \E) : \Cat_\infty^\op \rightarrow \widehat{\Cat}_\infty$.
\end{remark}

\begin{remark}
	Note that the Conduch\'e $[n]$-modules are precisely the \textit{complete Conduch\'e $(n+1)$-modules} of \cite[Definition 5.14]{EquipI}.
	Condition (2) of \ref{def.con-K-modules} is vacuous for $n = 0,1$ by characterization (1) of \cite[Proposition 5.3]{EquipI}.
\end{remark}

\begin{construction}
	The simplicial $\infty$-category $\CCON(K;\E)$ is obtained by restricting the functor $\Con(-; \E)$ along the composite
	$$
	\Delta \xrightarrow{(-)^\op} \Delta \xrightarrow{- \times K} \Cat_\infty.
	$$
\end{construction}

\begin{remark}
	Note that $\CCON([0]; \E)$ concides  with the double $\infty$-category $\CCAT_\infty(\E)$ of \cite[Construction 5.16]{EquipI}.
\end{remark}

We may now prove the following generalization of \cite[Corollary 5.19]{FunDblCats}:

\begin{proposition}\label{prop.con-straightening}
	The simplicial $\infty$-category $\CCON(K;\E)$ is a locally complete double $\infty$-category and there is a natural equivalence 
	$$
	\{K^\op, \CCAT_\infty(\E)\} \simeq \CCON(K; \E)
	$$
	for every $\infty$-category $K$.
\end{proposition}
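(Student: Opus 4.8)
The plan is to establish the displayed equivalence at the level of simplicial $\infty$-categories, from which the remaining assertions follow formally. Indeed, the horizontal cotensor $\{K^\op,\CCAT_\infty(\E)\}=\FFUN((K^\op)_h,\CCAT_\infty(\E))$ is a double $\infty$-category, and it is locally complete by \cite[Proposition 3.16]{FunDblCats} because $\CCAT_\infty(\E)$ is. Hence, as soon as we produce an equivalence of simplicial $\infty$-categories $\CCON(K;\E)\simeq\{K^\op,\CCAT_\infty(\E)\}$, the source is automatically a locally complete double $\infty$-category (this being a property of the underlying simplicial $\infty$-category), and the equivalence is then one of double $\infty$-categories. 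So the whole content lies in constructing this equivalence, compatibly with the functoriality in $K$.

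I would compute the left-hand side levelwise. Writing the cotensor as an internal hom and using the convention realizing $\DblCat_\infty$ inside $\fun(\Delta^\op,\Cat_\infty)$, for $[p],[m]\in\Delta$ one has $\map([m],\{K^\op,\CCAT_\infty(\E)\}_p)\simeq\map([p,m]\times(K^\op)_h,\CCAT_\infty(\E))$. Since $[p,m]\simeq[p]_h\times[m]_v$ and $[p]_h\times(K^\op)_h\simeq([p]\times K^\op)_h$ (as one checks on bisimplicial levels), the source is $C_h\times[m]_v$ with $C:=[p]\times K^\op$. Now resolve $C$ by its simplices: by the co-Yoneda lemma $C\simeq\colim_{([q]\to C)\in\Delta/C}[q]$, and carrying this colimit through the horizontal inclusion and the cartesian-closed functor $(-)\times[m]_v$ gives $C_h\times[m]_v\simeq\colim_{([q]\to C)\in\Delta/C}[q,m]$. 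Mapping into $\CCAT_\infty(\E)$ and invoking the identification $\CCAT_\infty(\E)_q=\Con([q]^\op;\E)$ converts the colimit into a limit,
\[
\map([m],\{K^\op,\CCAT_\infty(\E)\}_p)\simeq\lim_{([q]\to C)\in(\Delta/C)^\op}\map([m],\Con([q]^\op;\E)).
\]
As $\map([m],-)$ preserves limits and $C^\op=[p]^\op\times K$, the claim reduces to a descent equivalence
\[
\Con(C^\op;\E)\simeq\lim_{([q]\to C)\in(\Delta/C)^\op}\Con([q]^\op;\E),
\]
where I use the reindexing $\Delta/C\simeq\Delta/C^\op$ sending $\sigma\colon[q]\to C$ to the composite of $\sigma^\op$ with the isomorphism $[q]\cong[q]^\op$, which matches $\Con([q]^\op;\E)$ with $\Con([q];\E)$.

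The crux, and the step I expect to be the main obstacle, is this descent equivalence: it asserts that $\Con(-;\E)$ carries the colimit $C^\op\simeq\colim_{([q]\to C^\op)}[q]$ to a limit, i.e.\ that a Conduch\'e $C^\op$-module is precisely a compatible family of Conduch\'e $[q]$-modules indexed by the simplices of $C^\op$. I would prove it in two steps. First, at the level of all $\E$-valued presheaves, the identity $\Delta/C^\op\simeq\colim_{([q]\to C^\op)}\Delta/[q]$—which holds because $\Delta/(-)$ is the unstraightening of the right fibration classifying a Segal space, and so it carries the co-Yoneda presentation of $C^\op$ to the corresponding presentation of $\Delta/C^\op$—yields $\fun((\Delta/C^\op)^\op,\E)\simeq\lim_{(\Delta/C^\op)^\op}\fun((\Delta/[q])^\op,\E)$. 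Second, I would check that this ambient equivalence restricts to the Conduch\'e subcategories, and this is where locality of the defining conditions of \ref{def.con-K-modules} enters: condition (1) is tested only on the vertices $[0]\to C^\op$ and condition (2) only on the $2$-simplices $[2]\to C^\op$, and these conditions are visibly stable under restriction along simplices, so a module is Conduch\'e exactly when each of its restrictions $M|_{[q]\to C^\op}$ is. Granting this, the limit formula assembles over $[p]$ into the desired equivalence of simplicial $\infty$-categories, naturally in $K$, which by the first paragraph completes the proof.
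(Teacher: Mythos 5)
Your proof shares its skeleton with the paper's: both rest on the same two pillars, namely the co-Yoneda decomposition of the relevant double $\infty$-categorical gadget into simplices (harmless, since it can be checked at the level of presheaves on $\Delta^{\times 2}$ and one only ever maps out of it into local objects), and the assertion that $\Con(-;\E)$ is right Kan extended from its restriction to $\Delta$, i.e.\ the descent equivalence $\Con(C;\E)\simeq\lim_{([q]\to C)}\Con([q];\E)$. The difference is how this second pillar is established. The paper reruns the proof of \cite[Theorem 1.26]{AyalaFrancis}, whereas you prove descent directly in two steps: descent for the ambient presheaf categories $\fun((\Delta/C)^\op,\E)$, and then the observation that the Conduch\'e conditions of \ref{def.con-K-modules} are local (tested on vertices and $2$-simplices, and stable under restriction along simplices), so the ambient equivalence restricts to the full subcategories of Conduch\'e modules. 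The second step is correct and is a genuinely pleasant simplification available in this module-style (rather than fibration-style) formulation: the ambient functor categories do satisfy descent, so no fibration-theoretic gluing is needed, and this is a clean, essentially self-contained alternative to invoking Ayala--Francis.

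The one place where your write-up falls short of a proof is exactly where the real content sits: the identity $\Delta/C^\op\simeq\colim_{([q]\to C^\op)}\Delta/[q]$. Co-Yoneda presents the nerve of $C^\op$ as a colimit in $\PSh(\Delta)$, and unstraightening transports this to a colimit presentation in right fibrations over $\Delta$, i.e.\ in $\mathrm{RFib}(\Delta)\simeq\PSh(\Delta)$ --- but that is not what you use. To conclude that $\fun((\Delta/C^\op)^\op,\E)\simeq\lim\fun((\Delta/[q])^\op,\E)$ you need this to be a colimit in $\Cat_\infty$, and the inclusion $\mathrm{RFib}(\Delta)\subset\Cat_\infty/\Delta$ is reflective, hence a priori only limit-preserving; your phrase ``it carries the co-Yoneda presentation to the corresponding presentation'' is precisely the assertion requiring proof. (The cautionary example is $\Cat_\infty/(-)$ itself: the analogous descent statement fails for it --- e.g.\ over $[2]=[1]\cup_{[0]}[1]$ --- which is the entire reason Conduch\'e/exponentiability conditions appear in \cite{AyalaFrancis}.) The claim is nevertheless true and the gap is reparable: using the formula exhibiting the colimit of a $\Cat_\infty$-valued diagram as the localization of its cocartesian unstraightening at the cocartesian edges, one checks that for any right fibration $p:E\to\Delta$ the unstraightening of the diagram $e\mapsto\Delta_{/p(e)}$ is $E\times_{\Delta}\fun([1],\Delta)\simeq\fun([1],E)$, whose localization at the squares that are equivalences on sources is exhibited by $\ev_0$ as $E$ itself; applied to $E=\Delta/C^\op$ this yields the identity you need. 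With that lemma supplied (or cited), your argument is complete.
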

\begin{proof}
	The demonstration provided in \cite[Corollary 5.19]{FunDblCats} generalizes to this situation as well. Following this strategy, it suffices to show that 
	$$\map_{\DblCat_\infty}(X_v \times K_h, \CCAT_\infty(\E)) \simeq \map_{\Cat_\infty}(X, \Con(K^\op;\E))$$
	naturally for $\infty$-categories $X$ and $K$. One can rerun the proof of \cite[Theorem 1.26]{AyalaFrancis} to show that 
	$\Con(-;\E)$ is right Kan extended from its restriction to $\Delta$. Consequently, it suffices to check the above for $K = [n]$. In these cases, 
	the equivalence holds by construction. Note that $\CCON(K;\E)$ is locally complete since $\CCAT_\infty(\E)$ is locally complete, see \cite[Proposition 3.16]{FunDblCats}.
\end{proof}

\section{Lax functors and adjunctions}\label{section.lax-fun}

In this section, we introduce and study the notion of (normal) lax functors between double $\infty$-categories, as well as a suitable 
notion of adjunctions between lax functors.

\subsection{Lax functors} Following Grandis and Par\'e \cite{GrandisPareLimits}, we may extend the notions of lax functors and adjunctions 
of (strict) double categories using the Gray tensor product (see \ref{ssection.gray}).

\begin{definition}
	Let $\P$ and $\Q$ be double $\infty$-categories. A \textit{lax functor} $h : \P \rightarrow \Q$ between 
	double $\infty$-categories is a lax natural transformation 
	$$h : [1] \otimes \Delta^\op \rightarrow \CAT_\infty$$ from $\P$ to $\Q$ with the property 
	that for any inert map $\alpha : [n] \rightarrow [m]$, 
	the associated lax naturality square 
	\[
	\begin{tikzcd}
		\P_m \arrow[r, "h_m"]\arrow[d, "\alpha^*"'] & |[alias=f1]|\Q_m\arrow[d, "\alpha^*"] \\
		|[alias=t1]|\P_n \arrow[r, "h_n"'] & \Q_n
		\arrow[from=f1, to=t1, Rightarrow]
	\end{tikzcd}
	\] commutes. Additionally, we call $U$ \textit{normal} if this condition holds for surjective structure maps $\alpha$ as well. 
	It is called \textit{strict} if all lax naturality squares commute.
\end{definition}

\begin{remark}
	In the double categorical literature, normal lax functors between (strict) double categories are also called \textit{unitary} or \textit{normalized}. We will stick to the terminology of Shulman \cite{ShulmanFramedBicats}. 
\end{remark}

\begin{remark}
	One may also define a dual notion of \textit{oplax functors} using \textit{oplax} natural transformations. However, we will not need this notion 
	in this paper.
\end{remark}

\begin{remark}\label{rem.lax-fun-fib}
	In \cite[Theorem 5.3.1]{HHLN}, Haugseng, Hebestreit, Linskens and Nuiten prove that lax natural transformations between $\Cat_\infty$-valued functors may be viewed 
	as fibered maps between their corresponding cocartesian fibrations, not necessarily preserving cocartesian arrows. 
	For the definition of lax natural transformations that we use in this paper (see \ref{ssection.gray}), this has been proven 
	use double categorical techniques in \cite[Proposition 5.20]{EquipI}.

	This result specializes in the following fibrational perspective on lax functors between double $\infty$-categories.
	If
	$\bar{\P} \rightarrow \Delta^\op$ and $\bar{\Q} \rightarrow \Delta^\op$ are cocartesian fibrations that classify 
	double $\infty$-categories $\P$ and $\Q$, then a lax functor between $\P$ and $\Q$ is precisely a functor $\bar{\P} \rightarrow \bar{\Q}$ fibered over $\Delta^\op$ that preserves 
	cocartesian lifts of inert morphisms.
\end{remark}

\begin{notation}
	Let $h : \P \rightarrow \Q$ be a lax functor between double $\infty$-categories. Then we may take fibers in the commutative naturality square
	\[
		\begin{tikzcd}
			\P_1 \arrow[r]\arrow[d] & \Q_1 \arrow[d] \\
			\P_0^{\times 2} \arrow[r] & \Q_0^{\times 2}
		\end{tikzcd}
	\]
	above objects $x,y \in \P$. We will denote this functor by 
	$$
	h_{x,y} : \Hor(\P)(x,y) \rightarrow \Hor(\Q)(h(x), h(y)).
	$$
\end{notation}

We may construct an ambient double $\infty$-category whose objects are double $\infty$-categories, 
vertical arrows are functors and horizontal arrows are lax functors:

\begin{definition}
	The \textit{double $\infty$-category of double $\infty$-categories}
	$$\DDBLCAT_\infty$$
	is defined to be the sub double $\infty$-category 
	of the pullback 
	$$
	\Sq(\FUN(\Delta^\op_{\mathrm{in}}, \CAT_\infty)) \times_{\FFUN^\lax(\Delta^\op_{\mathrm{in}}, \CAT_\infty)} \FFUN^\lax(\Delta^\op, \CAT_\infty)
	$$
	formed using the map of \ref{rem.sq-in-ffun},
	spanned by the double $\infty$-categories. Here $\Delta_\mathrm{in}$ denotes the wide subcategory of $\Delta$ consisting 
	of the inert maps.
\end{definition}

\begin{definition}
We will write 
$$
\mathrm{DBLCAT}_\infty \quad \text{ and } \quad \mathrm{DBLCAT}_\infty^\lax 
$$
for the vertical and horizontal $(\infty,2)$-category of $\DDBLCAT_\infty$ respectively.
\end{definition} 

Note that the companions of $\DDBLCAT^\lax$ are precisely the strict lax functors, see \ref{rem.companions-ffun}. Moreover, this remark specializes in the following:

\begin{proposition}
	There is a locally fully faithful functor 
	$$
	\mathrm{DBLCAT}_\infty \rightarrow \mathrm{DBLCAT}_\infty^\lax
	$$
	that is the identity on objects and carries a functor to its associated strict lax functor.
\end{proposition}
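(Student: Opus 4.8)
The plan is to realize the desired functor as the companion embedding of $\DDBLCAT_\infty$, obtained by restricting the companion embedding of the ambient cotensor product furnished by \ref{rem.companions-ffun}. First I would apply \ref{rem.companions-ffun} with $\X = \Delta^\op$ and $\Y = \CAT_\infty$ to produce the locally fully faithful functor
$$
e : \FUN(\Delta^\op, \CAT_\infty) = \Vert(\FFUN^\lax(\Delta^\op, \CAT_\infty)) \longrightarrow \Hor(\FFUN^\lax(\Delta^\op, \CAT_\infty)) = \FUN^\lax(\Delta^\op, \CAT_\infty),
$$
which is the identity on objects and carries a natural transformation to its associated strict lax natural transformation. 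The claim will follow once I check that $e$ restricts to the relevant fragments of $\DDBLCAT_\infty$ and that local full faithfulness survives the restriction.

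Writing $\mathcal{A}$ for the pullback appearing in the definition of $\DDBLCAT_\infty$, I would next compute its two fragments, using that $\Vert(-)$ and $\Hor(-)$ are right adjoints and hence preserve this pullback. By \ref{rem.sq-in-ffun}, the comparison map $\Sq(\FUN(\Delta^\op_{\mathrm{in}}, \CAT_\infty)) \to \FFUN^\lax(\Delta^\op_{\mathrm{in}}, \CAT_\infty)$ restricts on vertical fragments to an equivalence $\FUN(\Delta^\op_{\mathrm{in}}, \CAT_\infty) \xrightarrow{\sim} \FUN(\Delta^\op_{\mathrm{in}}, \CAT_\infty)$ and on horizontal fragments to the companion embedding $c : \FUN(\Delta^\op_{\mathrm{in}}, \CAT_\infty) \to \FUN^\lax(\Delta^\op_{\mathrm{in}}, \CAT_\infty)$ onto the strict lax natural transformations. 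Consequently $\Vert(\mathcal{A}) \simeq \FUN(\Delta^\op, \CAT_\infty)$, while $\Hor(\mathcal{A})$ is identified with the sub-$(\infty,2)$-category of $\FUN^\lax(\Delta^\op, \CAT_\infty)$ on those lax natural transformations that are strict along $\Delta^\op_{\mathrm{in}} \hookrightarrow \Delta^\op$, i.e.\ the lax functors. Passing to the full subobjects spanned by the double $\infty$-categories then recovers $\mathrm{DBLCAT}_\infty = \Vert(\DDBLCAT_\infty)$ and $\mathrm{DBLCAT}_\infty^\lax = \Hor(\DDBLCAT_\infty)$ as full sub-$(\infty,2)$-categories of $\FUN(\Delta^\op, \CAT_\infty)$ and $\Hor(\mathcal{A})$ respectively.

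With these identifications, the restriction of $e$ is immediate: it is the identity on objects, so it preserves the class of double $\infty$-categories, and a strict lax natural transformation is in particular strict on inert maps, hence a lax functor landing in $\Hor(\mathcal{A})$. Since its source and target are double $\infty$-categories, $e$ therefore descends to an identity-on-objects functor $\mathrm{DBLCAT}_\infty \to \mathrm{DBLCAT}_\infty^\lax$ carrying a functor to its associated strict lax functor, equivalently to its companion, as recorded before the statement.

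The point requiring genuine care, which I expect to be the main obstacle, is local full faithfulness of this restriction. For double $\infty$-categories $\P, \Q$ the relevant component factors as
$$
\FUN(\Delta^\op, \CAT_\infty)(\P,\Q) \xrightarrow{\ \bar e\ } \Hor(\mathcal{A})(\P,\Q) \hookrightarrow \FUN^\lax(\Delta^\op, \CAT_\infty)(\P,\Q),
$$
and the composite is fully faithful by \ref{rem.companions-ffun}; by left cancellation for fully faithful functors it then suffices to show the second inclusion is fully faithful. Since mapping $\infty$-categories turn the defining pullback into a pullback of $\infty$-categories, this inclusion is the base change of the component $\FUN(\Delta^\op_{\mathrm{in}}, \CAT_\infty)(\P|_{\mathrm{in}}, \Q|_{\mathrm{in}}) \to \FUN^\lax(\Delta^\op_{\mathrm{in}}, \CAT_\infty)(\P|_{\mathrm{in}}, \Q|_{\mathrm{in}})$ of the companion embedding $c$ along the inert-restriction functor. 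The former is fully faithful by a second application of \ref{rem.companions-ffun} (now for $\Delta^\op_{\mathrm{in}}$), and fully faithful functors are stable under base change; hence the inclusion, and therefore $\bar e$, is fully faithful for every pair $\P, \Q$, which is exactly local full faithfulness.
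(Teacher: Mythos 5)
Your proof is correct and takes essentially the same route as the paper: the paper gives no separate argument at all, presenting the proposition as a direct specialization of \ref{rem.companions-ffun}, i.e.\ as the restriction of the companion embedding of $\FFUN^\lax(\Delta^\op,\CAT_\infty)$ to the sub double $\infty$-category $\DDBLCAT_\infty$. Your identification of the vertical and horizontal fragments of the defining pullback, together with the base-change and cancellation argument for local full faithfulness, supplies precisely the bookkeeping that this specialization implicitly requires.
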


\subsection{Lax adjunctions} Now that we have access to ambient $(\infty,2)$-categories of double $\infty$-categories, we can make 
sense of adjunctions between double functors:

\begin{definition}
	Let $u : \P \rightarrow \Q$ and $v : \Q \rightarrow \P$ be lax functors between double $\infty$-categories. Then 
	$(u,v)$ is called a \textit{lax adjunction} if the pair forms an adjunction in $\mathrm{DBLCAT}_\infty^\lax$. Moreover, we use 
	the following terminology:
	\begin{itemize}
		\item the adjunction is called \textit{normal} if both $u$ and $v$ are normal,
		\item the adjective `lax' is dropped if both $u$ and $v$ are strict, in which case $(u,v)$ is an adjunction in $\mathrm{DBLCAT}_\infty$.
	\end{itemize}
\end{definition}

We have the following useful level-wise characterization of lax adjunctions:

\begin{proposition}\label{prop.lax-double-adjunctions-char}
	Suppose that $u : \P \rightarrow \Q$ is a lax functor between double $\infty$-categories. Then the following assertions are equivalent: 
	\begin{enumerate}
		\item $u$ admits a lax right adjoint,
		\item $u$ is strict, each functor $u_n : \P_n \rightarrow \Q_n$ admits a right adjoint $v_n : \Q_n \rightarrow \P$, 
		and the mate transformation
		$$
		\alpha^*v_m \rightarrow  v_nu_n\alpha^*v_m \simeq v_n\alpha^*u_mv_m \rightarrow v_n\alpha^*
		$$
		is invertible for every inert map $\alpha : [n] \rightarrow [m]$,
		\item $u$ is strict and (2) holds for $n,m \in \{0,1\}$.
	\end{enumerate}
	In this case, $u$ is normal if and only if assertion (2) or (3) holds for the surjective maps $\alpha$ as well.
\end{proposition}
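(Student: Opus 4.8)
The plan is to pass to the fibrational model of lax functors from \ref{rem.lax-fun-fib} and to reinterpret lax adjunctions as relative adjunctions over $\Delta^\op$, after which the statement becomes an instance of Lurie's relative adjoint functor theorem combined with a Beck--Chevalley analysis. Write $\bar{\P}, \bar{\Q} \to \Delta^\op$ for the cocartesian fibrations classifying the simplicial $\infty$-categories underlying $\P$ and $\Q$; by \ref{rem.lax-fun-fib} a lax functor $u$ is the same as a functor $\bar{u} : \bar{\P} \to \bar{\Q}$ over $\Delta^\op$ preserving cocartesian lifts of inert maps, and $u$ is strict exactly when $\bar{u}$ preserves all cocartesian lifts. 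I would first note that the inclusion $\mathrm{DBLCAT}_\infty^\lax \hookrightarrow \FUN^\lax(\Delta^\op, \CAT_\infty)$ is locally fully faithful, being the inclusion of those lax natural transformations that satisfy the inert condition together with all $2$-cells between them (this is how $\DDBLCAT_\infty$ is carved out in its definition, using \ref{rem.sq-in-ffun}). Hence a lax adjunction $u \dashv v$ between double $\infty$-categories is precisely an adjunction in $\FUN^\lax(\Delta^\op, \CAT_\infty)$ both of whose $1$-cells are lax functors, which under the fibrational dictionary of \cite{HHLN} is a relative adjunction $\bar{u} \dashv_{\Delta^\op} \bar{v}$ over $\Delta^\op$.

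For $(1) \Rightarrow (2)$, I would use that the left adjoint of a relative adjunction always preserves cocartesian edges (which one sees by evaluating the adjunction equivalence of mapping spaces on objects lying in different fibres), so that $u$ is forced to be strict; restricting the relative adjunction to the fibre over $[n]$ yields $u_n \dashv v_n$; and the hypothesis that $v$ is a lax functor says exactly that $\bar{v}$ preserves cocartesian lifts of inert maps, which by the Beck--Chevalley criterion for relative right adjoints \cite{HTT} is the invertibility of the mate $\alpha^* v_m \to v_n \alpha^*$ for inert $\alpha$. For $(2) \Rightarrow (1)$, strictness of $u$ makes $\bar{u}$ preserve all cocartesian edges, so the fibrewise right adjoints $v_n$ allow the relative adjoint functor theorem \cite{HTT} to produce a relative right adjoint $\bar{v}$ over $\Delta^\op$ with fibres $v_n$; the same criterion shows that $\bar{v}$ preserves cocartesian lifts of inert maps precisely because the inert mates are invertible, so $v$ is a lax functor and $u \dashv v$ is the desired lax adjunction.

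The equivalence $(2) \Leftrightarrow (3)$ is a Segal reduction, one direction being restriction. For $(3) \Rightarrow (2)$ I would invoke the Segal equivalences $\P_n \simeq \P_1 \times_{\P_0} \dotsb \times_{\P_0} \P_1$ and likewise for $\Q$, under which $u_n$ is the iterated pullback of copies of $u_1$ over $u_0$. Invertibility of the mates for the two vertex maps $[0] \to [1]$ (the content of $(3)$ beyond the existence of $v_0$ and $v_1$) says exactly that $v_1$ commutes with the source and target functors $\Q_1 \to \Q_0$, so that $v_0$ and $v_1$ glue to a right adjoint $v_n$ of $u_n$ for every $n$, which by construction commutes with the spine projections; this makes every spine mate $[1] \to [n]$ invertible. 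For an arbitrary inert $\alpha : [k] \to [m]$, invertibility of its mate may be tested after the jointly conservative spine projections $\rho_j^* : \P_k \to \P_1$, and $\rho_j^*\alpha^* = (\alpha\rho_j)^*$ is again a spine map of $[m]$, so every inert mate is invertible and $(2)$ follows.

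Finally, the normality clause follows from the identical argument with ``inert'' replaced by ``surjective''. As $u$ is necessarily strict (hence normal), the lax adjunction is normal if and only if its right adjoint $v$ is, i.e.\ if and only if $\bar{v}$ preserves cocartesian lifts of surjective maps, equivalently the mates for surjective $\alpha$ are invertible; the reduction to $n,m \in \{0,1\}$ now runs through the elementary degeneracy. I expect the main obstacle to be making the fibrational dictionary of the first paragraph precise at the level of $2$-cells---matching the unit and counit of the lax adjunction with the fibred unit and counit of the relative adjunction over $\Delta^\op$, and confirming that a relative left adjoint must preserve cocartesian edges---since once this is settled the remaining steps are Beck--Chevalley bookkeeping and the Segal decomposition.
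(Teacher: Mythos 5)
Your proposal is correct, but it takes a genuinely different route from the paper. The paper's own proof of \ref{prop.lax-double-adjunctions-char} is essentially a citation: the equivalence of (1) and (2) is deferred to Haugseng's recognition theorem for adjunctions between lax natural transformations \cite[Theorem 4.6]{HaugsengLax}, as established for the present model of lax transformations in \cite[Corollary 5.11]{FunDblCats}, and the equivalence of (2) and (3) is dismissed as a routine consequence of the Segal conditions. You instead reconstruct the content of that recognition theorem from scratch: you pass through the fibrational dictionary of \ref{rem.lax-fun-fib}, identify lax adjunctions with fibered (relative) adjunctions over $\Delta^\op$ using local full faithfulness of $\mathrm{DBLCAT}_\infty^\lax$ inside $\FUN^\lax(\Delta^\op,\CAT_\infty)$, and then run Lurie's fiberwise existence criterion for relative adjoints together with a Beck--Chevalley analysis of when the relative right adjoint preserves cocartesian lifts of inerts; your mapping-space argument for why a fibered left adjoint must preserve all cocartesian edges (hence why $u$ is forced to be strict) is exactly right and is the key point hidden inside the cited theorem. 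What the paper's approach buys is brevity and a clean separation of concerns (the hard 2-categorical bookkeeping lives in \cite{FunDblCats} and \cite{HaugsengLax}); what yours buys is a self-contained argument resting only on \cite{HHLN} and \cite[Proposition 7.3.2.6]{HA}, plus an explicit Segal-gluing proof of (3)$\Rightarrow$(2) that the paper leaves implicit. Three small repairs: the relative adjoint functor theorem and the fiberwise criterion are \cite[Proposition 7.3.2.6]{HA}, not \cite{HTT}; your spine-conservativity test handles inert $\alpha : [k] \rightarrow [m]$ only for $k \geq 1$, and for the vertex inclusions $[0] \rightarrow [m]$ you should instead factor through an inert $[0] \rightarrow [1] \rightarrow [m]$ and use compositionality of mates; and note that your reading of the final clause (normality of the \emph{adjunction}, equivalently of $v$, since the strict $u$ is automatically normal) is the intended one, the statement's literal ``$u$ is normal'' being a slip in the paper.
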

\begin{proof}
	The equivalence of assertions (1) and (2) follows from Haugeng's recognition theorem for adjunctions between lax natural transformations \cite[Theorem 4.6]{HaugsengLax}; this has been shown in \cite[Corollary 5.11]{FunDblCats} for the current 
	model of lax natural transformations. One readily deduces 
	that (2) and (3) are equivalent by using the fact that $\P$ and $\Q$ satisfy the Segal conditions. 
\end{proof}

\begin{example}\label{ex.dbl-adj-geom-morph}
	Let $f = (f^*, f_*) : \E \rightarrow \F$ be a geometric morphism between $\infty$-toposes. 
	According to \cite[Proposition 5.17]{EquipI}, the inverse image functor 
	induces a functor 
	$$
	f^* := \CCAT_\infty(f^*) : \CCAT_\infty(\F) \rightarrow \CCAT_\infty(\E)
	$$
	between $\infty$-equipments of internal categories, induced by post-composing with $f^*$. Here we crucially used 
	that the inverse image preserves colimits, so in particular, geometric realizations. The direct image $f_*$ does not 
	generally preserve these realizations, so it does not give rise to a functor per se. However, 
	post-composition with $f_*$ preserves 
	Segal and completeness conditions so that characterization (3) of \ref{prop.lax-double-adjunctions-char} implies that 
	there exists a \textit{normal lax} functor 
	$$
	f_* : \CCAT_\infty(\E) \rightarrow \CCAT_\infty(\F)
	$$
	right adjoint to $f^*$.

	In the case that $f$ is given by an \'etale geometric morphism $\E/e \rightarrow \E$ 
	for some object $e \in \E$, then $f^*$ admits a further left adjoint 
	$f_! : \E/e \rightarrow \E$ given by projection. This projection does not preserve the terminal object, but Proposition 5.17 of \cite{EquipI}
	still yields a (strict) left adjoint for $f_*$, denoted by
	$$
	f_! := \CCAT_\infty(f_!) : \CCAT_\infty(\E) \rightarrow \CCAT_\infty(\F).
	$$
\end{example}

\begin{definition}\label{def.dbl-limits}
	Suppose that $I$ and $\P$ are double $\infty$-categories. Then we say that $\P$ \textit{admits all $I$-shaped limits} if the diagonal functor 
	$$
	\Delta_I : \P \rightarrow \FFUN(I, \P)
	$$
	admits a lax right adjoint $\lim_I$. We say that these limits are \textit{normal} (resp.\ \textit{strict}) if $\lim_I$ is normal (resp.\ strict). 
\end{definition}

\begin{proposition}\label{prop.dbl-limits-v-shaped}
	Let $I$ be an $\infty$-category. Then the following assertions are equivalent:
	\begin{enumerate}
		\item $\P$ has all $I_v$-shaped limits,
		\item for each $n$, the $\infty$-category $\P_n$ has $I$-shaped limits, and they 
		are preserved by the structure maps $\P_m \rightarrow \P_n$ associated to inert map $\alpha : [n] \rightarrow [m]$,
		\item (2) holds for $n,m \in \{0,1\}$.
	\end{enumerate}
	Moreover, if (2) or (3) holds for the surjective maps (resp.\ all maps) as well, then all $I_v$-shaped limits are normal (resp.\ strict).
\end{proposition}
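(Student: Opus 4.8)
The plan is to reduce the statement about $I_v$-shaped limits in the double $\infty$-category $\P$ to the level-wise characterization of lax adjunctions already established in \ref{prop.lax-double-adjunctions-char}. By \ref{def.dbl-limits}, $\P$ admits all $I_v$-shaped limits precisely when the diagonal functor $\Delta_{I_v} : \P \rightarrow \FFUN(I_v, \P)$ admits a lax right adjoint. So the first move is to unwind what $\FFUN(I_v, \P)$ and $\Delta_{I_v}$ look like level-wise. Since $I$ is an ordinary $\infty$-category and $I_v$ is its associated double $\infty$-category concentrated vertically, I expect the vertical cotensor description from \ref{rem.cotensor-infty-cat} to apply: the $n$-th level $\FFUN(I_v,\P)_n$ should be computed as $\fun(I, \P_n)$, and the diagonal $\Delta_{I_v}$ should be level-wise the diagonal $\P_n \rightarrow \fun(I, \P_n)$.

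Granting this identification, the next step is to feed the diagram $\Delta_{I_v}$ into \ref{prop.lax-double-adjunctions-char}. That proposition says $\Delta_{I_v}$ admits a lax right adjoint if and only if $\Delta_{I_v}$ is strict, each level-wise functor $(\Delta_{I_v})_n : \P_n \rightarrow \fun(I,\P_n)$ admits a right adjoint, and the relevant mate transformations for inert maps are invertible. The diagonal is always a strict functor, so that condition is automatic. A right adjoint to the level-wise diagonal $\P_n \rightarrow \fun(I,\P_n)$ is exactly a limit functor $\lim_I : \fun(I,\P_n)\rightarrow \P_n$, so the existence of these adjoints is precisely the statement that $\P_n$ has $I$-shaped limits. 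Finally, I must translate the mate-invertibility condition of \ref{prop.lax-double-adjunctions-char} into the preservation statement in assertion (2). The mate transformation associated to an inert $\alpha : [n]\rightarrow[m]$, built from the units and counits of the level-wise limit adjunctions, is invertible if and only if the structure map $\alpha^* : \P_m \rightarrow \P_n$ commutes with the limit functors, i.e.\ preserves $I$-shaped limits. This is the standard Beck–Chevalley-type reformulation: the mate $\alpha^* \lim_I \rightarrow \lim_I \alpha^*$ (induced by $\alpha^*$ commuting with the diagonals) is invertible exactly when $\alpha^*$ preserves the limits.

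This establishes the equivalence of (1) and (2). The equivalence with (3) is then immediate, since \ref{prop.lax-double-adjunctions-char} already contains the reduction from all inert maps to those with $n,m \in \{0,1\}$ via the Segal conditions, and I would simply cite that. The final clause about normal and strict limits follows the same pattern: \ref{prop.lax-double-adjunctions-char} characterizes normality by demanding the mate condition for surjective structure maps as well, which under the translation above becomes the requirement that those structure maps preserve the $I$-shaped limits; strictness corresponds to imposing the condition for all structure maps. I expect the main obstacle to be the careful verification that the abstract mate transformation of \ref{prop.lax-double-adjunctions-char}, written there as $\alpha^* v_m \rightarrow v_n u_n \alpha^* v_m \simeq v_n \alpha^* u_m v_m \rightarrow v_n\alpha^*$, coincides after the level-wise identifications with the canonical comparison $\alpha^*\lim_I \rightarrow \lim_I\alpha^*$ whose invertibility means preservation of limits; this is a diagram-chase through units and counits that is routine but must be pinned down to make the translation rigorous.
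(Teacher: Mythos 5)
Your proposal is correct and follows essentially the same route as the paper: the paper's proof consists of exactly the reduction to \ref{prop.lax-double-adjunctions-char} that you carry out, with the level-wise identification $\FFUN(I_v,\P)_n \simeq \fun(I,\P_n)$ from \ref{rem.cotensor-infty-cat}, the observation that the diagonal is strict, and the Beck--Chevalley translation of mate-invertibility into preservation of limits. The paper simply leaves these verifications implicit, so your write-up is a faithful (and more detailed) rendering of its argument.
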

\begin{proof}
	This follows from the characterizations of \ref{prop.lax-double-adjunctions-char}.
\end{proof}

\begin{example}\label{ex.int-ccat-dbl-limits}
	Let $\E$ be an $\infty$-topos. Then one may use the above characterization to verify that $\CCAT_\infty(\E)$ has all $I_v$-shaped limits for every 
	small $\infty$-category $I$. It also admits $I_h$-shaped limits. To this end, we may replace $I$ by $I^\op$ 
	and use the identification of \ref{prop.con-straightening}, so that we must demonstrate that the functor 
	$$
	\CCON([0]; \E) \rightarrow \CCON(I; \E)
	$$ 
	admits a right adjoint. In light of \ref{prop.lax-double-adjunctions-char}, it suffices to demonstrate that for $i = 0,1$ the commutative square 
	\[
		\begin{tikzcd}
			\Con([1]; \E) \arrow[r]\arrow[d, "{d_i^*}"'] & \Con([1] \times I; \E)\arrow[d, "d_i^*"] \\
			\Con([0]; \E) \arrow[r] & \Con(I; \E)
		\end{tikzcd}
	\]
	is right adjointable: i.e.\ the horizontal arrows are left adjoints and the associated mate is an equivalence.
	But the horizontal arrows are restricted from the functors
	$$
		p_j^* : \PSh((\Delta/[j])^\op, \E) \rightarrow \fun((\Delta/([j]\times I))^\op, \E),
	$$
	for appropriate choice of $j \in \{0,1\}$,
	where $p_j : \Delta/([j] \times I) \rightarrow \Delta/[j]$ denotes the pushforward. The functor $p_j^*$ admits a right adjoint 
	$p_{j,*}$ that is computed by right Kan extensions along $p_j$. Slightly informally, it may be computed by
	the formula 
	$$p_{j,*}X(f) = \lim_{g \in \map_{\Cat_\infty}([n], I)} X([n] \xrightarrow{(g,f)} [j] \times I)$$
	for $X : (\Delta/([j]\times I))^\op \rightarrow \E$ and $f : [n] \rightarrow [j]$.
	The functor $p_{j,*}$ restricts to a right adjoint 
	$$
	\Con([j] \times I; \E) \rightarrow \Con([j]; \E).
	$$
	It is not necessary to check the Conduch\'e conditions as these are vacuous for the codomains when $j=0,1$. One can readily verify that the mate is an equivalence.
\end{example}

\subsection{Lax adjunctions locally} We will briefly study the effect of a lax adjunction on the mapping $\infty$-categories 
of the vertical and horizontal fragments. 

\begin{proposition}\label{prop.lax-adj-vert}
	Suppose that $u : \P \rightleftarrows \Q : v$ is a normal lax adjunction between locally complete double $\infty$-categories with 
	counit $\epsilon$. Then the composite 
	functor 
	$$
	\Vert(\P)(x, v(y)) \rightarrow \Vert(\Q)(u(x), uv(y)) \xrightarrow{\epsilon_y \circ (-)} \Vert(\Q)(u(x), y)
	$$
	is an equivalence.
\end{proposition}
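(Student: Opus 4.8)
The plan is to upgrade the lax adjunction $u \dashv v$ to an honest adjunction of the vertical fragments $\Vert(u) \dashv \Vert(v)$ of $(\infty,2)$-categories, and then to recognize the composite in the statement as the canonical comparison map that any such adjunction induces on hom-$\infty$-categories, which is an equivalence by the triangle identities. Throughout, the composite $f \mapsto \epsilon_y \circ \Vert(u)(f)$ is exactly the map $g \mapsto \epsilon \circ Fg$ familiar from ordinary adjoint functors, so the content is to produce the adjunction one level down and to invoke the standard argument.

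First I would record the structural consequences of normality. By \ref{prop.lax-double-adjunctions-char}, the left adjoint of a lax adjunction is necessarily \emph{strict}, so $u$ induces a functor $\Vert(u) : \Vert(\P) \to \Vert(\Q)$ by functoriality of the vertical fragment. For the right adjoint I would use that the laxness of $v$ is confined to horizontal composition: by characterization (2) of \ref{prop.lax-double-adjunctions-char}, the components $v_n : \Q_n \to \P_n$ are honest functors, compatible with the inert structure maps via the invertible mates, and, by normality, with the degeneracies as well. Since the vertical fragment only retains vertical arrows and $2$-cells with trivial horizontal boundaries, and horizontal composites of identities are computed strictly (this is exactly where normality enters), $v$ descends to an honest functor $\Vert(v) : \Vert(\Q) \to \Vert(\P)$; in the fibrational picture of \ref{rem.lax-fun-fib} this says that $v$ acts strictly on precisely the data that $\Vert(-)$ sees.

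Next I would descend the unit $\eta$ and counit $\epsilon$. A priori these are $2$-cells in the horizontal $(\infty,2)$-category $\DBLCAT_\infty^\lax$, but their components are vertical arrows: at each $y \in \Q$ the component $\epsilon_y : uv(y) \to y$ is the value at $y$ of the level-$0$ counit $u_0 v_0 \to \id_{\Q_0}$ of the adjunction $u_0 \dashv v_0$ supplied by \ref{prop.lax-double-adjunctions-char}. I would then check that $\eta$ and $\epsilon$ assemble into genuine $2$-natural transformations $\id \to \Vert(v)\Vert(u)$ and $\Vert(u)\Vert(v) \to \id$ and that the triangle identities survive the passage to vertical fragments, exhibiting $\Vert(u) \dashv \Vert(v)$. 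The main obstacle is precisely this descent: the adjunction data live in the horizontal fragment, where $v$ is only lax, so one must verify that they restrict coherently to non-lax transformations on the vertical fragments. This is where normality is indispensable, and where the invertibility of the mates in \ref{prop.lax-double-adjunctions-char} is used to secure the naturality of the restricted counit.

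Finally I would conclude by the standard behaviour of an adjunction of $(\infty,2)$-categories on hom-$\infty$-categories: an adjunction $F \dashv G$ with unit $\eta$ and counit $\epsilon$ induces, for all objects, an equivalence
\[
\Vert(\P)(x, v(y)) \simeq \Vert(\Q)(u(x), y),
\]
implemented by $f \mapsto \epsilon_y \circ \Vert(u)(f)$ with inverse $g \mapsto \Vert(v)(g) \circ \eta_x$, the two round-trips being identified with the identities through the triangle identities and the naturality of $\eta$ and $\epsilon$. Since the composite in the statement is literally $f \mapsto \epsilon_y \circ \Vert(u)(f)$ (first applying $\Vert(u)$ on hom-$\infty$-categories, then post-composing with $\epsilon_y$), it is an equivalence, as claimed.
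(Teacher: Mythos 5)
Your strategy---upgrade the normal lax adjunction to an honest adjunction $\Vert(u) \dashv \Vert(v)$ of vertical fragments and then quote the standard hom-equivalence---is sound in outline, and you correctly identify the two operative facts: $u$ is strict, and normality of $v$ is what controls everything on the vertical side. The problem is that the step you defer with ``I would then check that $\eta$ and $\epsilon$ assemble into genuine 2-natural transformations \dots and that the triangle identities survive'' is not a check but the entire mathematical content, and in this setting it is a \emph{construction} of homotopy-coherent data. Concretely: even to have $\Vert(v)$ you must show that the lax naturality constraint of $v$ at an \emph{arbitrary} simplicial operator $\alpha : [m] \rightarrow [n]$ (not just inert or surjective ones --- the active maps encode horizontal composition) becomes invertible on degenerate objects; this does follow by factoring $s_m = s_n \circ \alpha$ and using normality twice together with the composition compatibility of the constraints, after which one still needs the companion embedding of \ref{rem.companions-ffun} to convert a lax transformation with invertible constraints into a genuine one. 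The same issue recurs, with more coherence to track, for $\eta$, $\epsilon$ and the triangle identities. Finally, the closing step --- that an adjunction of $(\infty,2)$-categories induces equivalences on hom-$\infty$-categories via $f \mapsto \epsilon_y \circ u(f)$ --- is true, but it is not established in this paper or its toolkit, so it would need its own argument or an external citation. None of this makes the approach wrong, but as written the crux is asserted rather than proved.

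The paper's own proof sidesteps every global coherence question by arguing level-wise, and it is worth seeing why that suffices. Since an equivalence of Segal spaces is detected on each space of $n$-simplices, it is enough to show that each map $\Vert(\P)(x,v(y))_n \rightarrow \Vert(\Q)(u(x),y)_n$ is an equivalence of spaces. Identifying $\Vert(\P)(x,y)_n \simeq \map_{\P_n}(s^*x, s^*y)$ for the surjection $s : [n] \rightarrow [0]$, the composite in question becomes
$$
\map_{\P_n}(s^*x, v_n(s^*y)) \rightarrow \map_{\Q_n}(u_n(s^*x), u_nv_n(s^*y)) \xrightarrow{\epsilon_{s^*y}\circ(-)} \map_{\Q_n}(u_n(s^*x), s^*y),
$$
\emph{provided} one knows that the whiskered counit $s^*\epsilon_y$ agrees with the level-$n$ counit $\epsilon_{s^*y}$; that single compatibility is exactly what normality of $v$ (commutation of the lax square at $s$) delivers, and then the composite is the adjunction equivalence for the level-wise adjunction $(u_n, v_n)$ supplied by \ref{prop.lax-double-adjunctions-char}. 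Note that this route uses only the counit and the level-wise adjunctions: the functor $\Vert(v)$, the unit, and the triangle identities --- precisely the objects your argument must construct coherently --- never appear. If you want to rescue your approach, prove the compatibility $s^*\epsilon_y \simeq \epsilon_{s^*y}$ first; you will find that at that point you have essentially proved the proposition directly, without ever needing the global adjunction.
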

\begin{proof}
It will suffice to show that for each $n \geq 0$, the map of spaces 
$$
\Vert(\P)(x, v(y))_n \rightarrow \Vert(\Q)(u(x), uv(y))_n \xrightarrow{\epsilon_y \circ (-)} \Vert(\Q)(u(x), y)_n
$$
is an equivalence. Let $s : [n] \rightarrow [0]$ denote the unique degeneracy map. On account of \cite[Remark 2.19]{EquipI}, this composite 
may be identified with the composite 
$$
\map_{\P_n}(s^*x, s^*v_0(y)) \rightarrow \map_{\Q_n}(s^*u_0(x), s^*u_0v_0(y)) \xrightarrow{s^*\epsilon_y \circ (-)} \map_{\Q_n}(s^*u_0(x), s^*y).
$$
Since $s$ is surjective and $v$ is normal, there is a commutative diagram
\[
	\begin{tikzcd}[row sep = tiny, column sep = large]
		\Q_0\arrow[dd,"s^*"'] \arrow[dr, bend right = 10pt, "v_0"'name=f1]\arrow[rr,equal, bend left = 15pt, ""'name=t1] & & \Q_0\arrow[dd,"s^*"]\\ 
		& |[alias=m1]|\P_0\arrow[ur, bend right = 10pt, "u_0"'] & \\
		\Q_n \arrow[dr, bend right = 10pt, "v_n"'name=f2]\arrow[rr,equal, bend left = 15pt, ""'name=t2] & & \Q_n \\
		& |[alias=m2]|\P_n \arrow[ur, "u_n"', bend right = 10pt] &
		\arrow[from=f1,to=t1, Rightarrow, shorten <= 5pt, shorten >= 2pt]
		\arrow[from=f2,to=t2, Rightarrow, shorten <= 5pt, shorten >= 2pt]
		\arrow[from=m1, to=m2, "s^*", crossing over] 
	\end{tikzcd}
\]
in $\CAT_\infty$ where the top 2-cell is given by $\epsilon|[1;1] \times \{0\}$ and the bottom 2-cell is given by $\epsilon|[1;1] \times \{n\}$.
Thus the whiskered counit $s^*\epsilon_y :  s^*u_0v_0(y) \rightarrow s^*y$ in $\Q_n$ coincides with $\epsilon_{s^*y} : u_nv_n(s^*y) \rightarrow s^*y$,
and the above composite is given by
$$
\map_{\P_n}(s^*x, v_n(s^*y)) \rightarrow \map_{\Q_n}(u_n(s^*x), u_nv_n(s^*y)) \xrightarrow{\epsilon_{s^*y} \circ (-)} \map_{\Q_n}(u_n(s^*x), s^*y).
$$
This is an equivalence since $\epsilon_{s^*y}$ is the counit for the adjuction $(u_n, v_n)$ at $s^*y$.
\end{proof}

The situation on horizontal fragments turns out to be more subtle, see also Shulman's discussion in \cite[Appendix B]{ShulmanFramedBicats}.
However, when we restrict our attention to equipments, we may still recover induced local adjunctions:

\begin{proposition}\label{prop.lax-adj-hor}
	Suppose that we have a lax adjunction 
	$$
	u : \P \rightleftarrows \Q : v
	$$
	between $\infty$-equipments.
	Let $f : u(x) \rightarrow y$ and $g : u(x') \rightarrow y'$ be arrows in $\Q$ with 
	adjunct arrows $f^\sharp : x \rightarrow v(y)$ and 
	$g^\sharp : x' \rightarrow v(y')$ in $\P$ respectively. Then there is an induced adjunction
	$$
		g_\circledast{u(-)}f^\circledast: \Hor(\P)(x,x') \rightleftarrows \Hor(\Q)(y,y')	: g^{\sharp,\circledast}{v(-)}f^\sharp_\circledast.
	$$
\end{proposition}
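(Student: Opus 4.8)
The plan is to recognise the two functors as fibrewise (co)cartesian transports along the source--target projections, and to deduce the adjunction from the level-one adjunction underlying the lax adjunction. Since $u$ admits a lax right adjoint, \ref{prop.lax-double-adjunctions-char} tells us that $u$ is strict and that each $u_n : \P_n \to \Q_n$ has a right adjoint $v_n : \Q_n \to \P_n$, with the mates along inert maps invertible; in particular $u_1 \dashv v_1 : \P_1 \rightleftarrows \Q_1$, where $v_1$ records the action of $v$ on proarrows and $2$-cells. As $\P$ and $\Q$ are $\infty$-equipments, the source--target projections $\partial^\P : \P_1 \to \P_0^{\times 2}$ and $\partial^\Q : \Q_1 \to \Q_0^{\times 2}$ are both cartesian and cocartesian fibrations, whose fibre over $(x,x')$ (respectively $(y,y')$) is the horizontal mapping $\infty$-category $\Hor(\P)(x,x')$ (respectively $\Hor(\Q)(y,y')$). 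By \ref{prop.cart-comp-conj}, the proarrow $g_\circledast u(H) f^\circledast$ is the cocartesian pushforward of $u(H)$ along $(f,g) : (u(x),u(x')) \to (y,y')$, while $g^{\sharp,\circledast} v(K) f^\sharp_\circledast$ is the cartesian pullback of $v(K)$ along $(f^\sharp,g^\sharp) : (x,x') \to (v(y),v(y'))$. Writing $L$ and $R$ for the two functors in the statement, this identifies $L$ with $(f,g)_! \circ u_1$ and $R$ with $(f^\sharp,g^\sharp)^\ast \circ v_1$ on fibres.

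Given this description, I would prove $L \dashv R$ by a natural equivalence of mapping spaces. For $H \in \Hor(\P)(x,x')$ and $K \in \Hor(\Q)(y,y')$, the universal property of the cocartesian lift gives an equivalence between $\map_{\Hor(\Q)(y,y')}(L(H),K)$ and the subspace of $\map_{\Q_1}(u(H),K)$ lying over $(f,g)$ under $\partial^\Q$ --- that is, the space of $2$-cells $u(H) \Rightarrow K$ with left and right boundary $f$ and $g$. Dually, the universal property of the cartesian lift identifies $\map_{\Hor(\P)(x,x')}(H,R(K))$ with the subspace of $\map_{\P_1}(H,v(K))$ lying over $(f^\sharp,g^\sharp)$, the space of $2$-cells $H \Rightarrow v(K)$ with boundary $f^\sharp$ and $g^\sharp$. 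It therefore suffices to produce a natural equivalence between these two spaces of boundary-constrained $2$-cells.

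This is the crux, and it is where the adjunction $u_1 \dashv v_1$ is used. The natural adjunction equivalence $\map_{\Q_1}(u(H),K) \simeq \map_{\P_1}(H,v(K))$ lies over the adjunction equivalence of $u_0^{\times 2} \dashv v_0^{\times 2}$ between $\map_{\Q_0^{\times 2}}(u_0^{\times 2}\partial^\P H,\partial^\Q K)$ and $\map_{\P_0^{\times 2}}(\partial^\P H, v_0^{\times 2}\partial^\Q K)$, via the boundary maps. Indeed, $\partial^\Q u_1 = u_0^{\times 2}\partial^\P$ because $u$ is strict, and invertibility of the mates along the two inert endpoint inclusions $[0] \to [1]$ yields $\partial^\P v_1 \simeq v_0^{\times 2}\partial^\Q$; hence both the square of left adjoints and the square of right adjoints commute, so we have a commuting square of adjunctions. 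Consequently the transpose of a morphism over the boundary $(f,g)$ is a morphism over the transpose of $(f,g)$ under $u_0 \dashv v_0$, which is exactly the pair of adjunct arrows $(f^\sharp,g^\sharp)$. Passing to the fibres over $(f,g)$ and $(f^\sharp,g^\sharp)$ then supplies the desired equivalence of $2$-cell spaces.

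Composing the three equivalences yields a natural equivalence $\map_{\Hor(\Q)(y,y')}(L(H),K) \simeq \map_{\Hor(\P)(x,x')}(H,R(K))$ in the variables $H$ and $K$, which exhibits the adjunction $L \dashv R$. I expect the main obstacle to be the verification in the third paragraph: namely, checking carefully that the level-one adjunction is genuinely fibered over the level-zero adjunction --- equivalently, that the invertible mates assemble into a commuting square of adjunctions --- so that restricting to fibres is legitimate and the boundary constraint transposes to the prescribed adjunct arrows. Once this is in place, the remaining content is the formal universal property of (co)cartesian cells recorded in \ref{prop.cart-comp-conj}.
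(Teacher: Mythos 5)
Your proposal is correct and follows essentially the same route as the paper's proof: identify both mapping spaces as fibers of the level-one mapping-space maps over the boundary projections via \ref{prop.cart-comp-conj}, then use strictness of $u$ together with the invertible mates along the inert maps $d_i : [0] \to [1]$ (i.e.\ the compatibility of the level-zero and level-one adjunctions) to see that the adjunction equivalence $\map_{\Q_1}(u(F),G) \simeq \map_{\P_1}(F, v(G))$ lies over the level-zero equivalence, carrying the fiber over $(f,g)$ to the fiber over $(f^\sharp, g^\sharp)$. The paper packages the middle step as explicit commutative diagrams of units rather than your phrase ``commuting square of adjunctions,'' but the content is identical.
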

\begin{proof}
	Let $F : x \rightarrow x'$ be a proarrow of $\P$.
	In light of \ref{prop.cart-comp-conj}, we obtain a (natural) pullback square
	\[
		\begin{tikzcd}
			\map_{\Hor(\Q)(y, y')}(\cocart{g}{u(F)}{f}, G) \arrow[r]\arrow[d] & \map_{\Q_1}(u(F), G) \arrow[d] \\
			\ast \arrow[r, "{\{(f,g)\}}"] & \map_{\Q_0}(u(x),y) \times \map_{\Q_0}(u(x'), y').
		\end{tikzcd}
	\]
	Since the maps 
	$d_i : [0] \rightarrow [1]$ are inert, there is a commutative diagram 
	\[
	\begin{tikzcd}[row sep = tiny, column sep = large]
		\P_1\arrow[dd,"d_i^*"'] \arrow[dr, bend right = 10pt, "u_1"'name=t1]\arrow[rr,equal, bend left = 15pt, ""'name=f1] & & \P_1\arrow[dd,"d_i^*"]\\ 
		& |[alias=m1]|\Q_1\arrow[ur, bend right = 10pt, "v_1"'] & \\
		\P_0 \arrow[dr, bend right = 10pt, "u_0"'name=t2]\arrow[rr,equal, bend left = 15pt, ""'name=f2] & & \P_0 \\
		& |[alias=m2]|\Q_0 \arrow[ur, "v_0"', bend right = 10pt] &
		\arrow[from=f1,to=t1, Rightarrow, shorten <= 2pt, shorten >= 5pt]
		\arrow[from=f2,to=t2, Rightarrow, shorten <= 2pt, shorten >= 5pt]
		\arrow[from=m1, to=m2, "d_i^*", crossing over] 
	\end{tikzcd}
\]
	in $\CAT_\infty$, where the top and bottom 2-cell are units of the level-wise adjunctions of $(u,v)$.
	Consequently, we obtain a commutative diagram
	\[
		\begin{tikzcd}
			\map_{\Q_1}(u(F), G) \arrow[r, "\simeq"]\arrow[d] & \map_{\P_1}(F,v(G)) \arrow[d] \\ 
			\map_{\Q_0}(u(x),y) \times \map_{\Q_0}(u(x'), y') \arrow[r, "\simeq"] & \map_{\P_0}(x, v(y)) \times \map_{\P_0}(x', v(y')),
		\end{tikzcd}
	\]
	where the horizontal maps are given by applying $v$ and restricting along units. Combining this with the above, 
	we obtain a pullback square
	\[
		\begin{tikzcd}
			\map_{\Hor(\Q)(q, q')}(\cocart{g}{u(F)}{f}, G) \arrow[r]\arrow[d] & \map_{\P_1}(F, v(G)) \arrow[d] \\
			\ast \arrow[r, "{(f^\sharp, g^\sharp)}"] & \map_{\Q_0}(x,v(y)) \times \map_{\P_0}(x', v(y')).
		\end{tikzcd}
	\]
	But this pullback square also computes $\map_{\Hor(\P)(x,x')}(F, g^{\sharp,\circledast}{v(G)}f^\sharp_\circledast)$.
\end{proof}

\begin{remark}\label{rem.lax-adj-hor}
	In particular, we may apply \ref{prop.lax-adj-hor} to the identity adjunction 
	$$
	\id : \P \rightleftarrows \P : \id,
	$$
	so that we obtain an adjunction 
	$$
		\cocart{g}{(-)}{f} : \Hor(\P)(x,x') \rightleftarrows \Hor(\P)(y,y')	: \cart{g}{(-)}{f}
	$$
	for every two arrows $f : x \rightarrow y$ and $g : x' \rightarrow y'$. This was also observed in \cite[Remark 3.34]{EquipI}.
\end{remark}

\subsection{Preservation of cartesian cells}\label{ssection.lax-fun-cart}

One may already deduce that a normal lax right adjoint between $\infty$-equipments preserves all companions and conjoints using \ref{prop.lax-adj-hor}. 
But a stronger statement is true, namely, we have the following generalization of a result by Shulman in the context of (strict) double categories \cite[Proposition 6.8]{ShulmanFramedBicats}: 

\begin{proposition}\label{prop.lax-pres-cart-cells}
	Let $h : \P \rightarrow \Q$ be a lax functor between locally complete double $\infty$-categories. Suppose 
	that $f : a \rightarrow x$ and $g : b \rightarrow y$ are vertical arrows of $\P$ so that: 
	\begin{itemize}
		\item $f$ and $h(f)$ admit a companion in $\P$ and $\Q$ respectively,
		\item $g$ and $h(g)$ admit a conjoint in $\P$ and $\Q$ respectively.
	\end{itemize} Then any cartesian cell $c$ in $\P$ of the form
	\[
		\begin{tikzcd}
		a \arrow[d, "f"'] \arrow[r, "G"] & b \arrow[d, "g"] \\ 
		x \arrow[r, "F"] & y
		\end{tikzcd}
	\]
	is carried to a cartesian cell in $\Q$ by $h$.
\end{proposition}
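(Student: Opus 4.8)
The plan is to follow Shulman's strategy \cite[Proposition 6.8]{ShulmanFramedBicats} and reduce everything to the explicit description of cartesian cells supplied by \ref{prop.cart-comp-conj}. Since a lax functor acts by a genuine functor $h_n : \P_n \to \Q_n$ on each simplicial level, it carries invertible cells to invertible cells; as any two cartesian cells over $F$ with sides $f$ and $g$ differ by such an invertible comparison on their sources, I may assume that $c$ is precisely the canonical pasting of \ref{prop.cart-comp-conj}, built from the companionship counit of $f$, the identity cell on $F$, and the conjunction counit of $g$. Because $h(f)$ admits a companion and $h(g)$ admits a conjoint, the same proposition, applied inside $\Q$, produces a cartesian cell $\bar{c}$ with source $\cocart{h(f)}{h(F)}{h(g)}$, target $h(F)$, and sides $h(f), h(g)$. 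The goal is then to identify $h(c)$ with $\bar{c}$.

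By the universal property of the cartesian cell $\bar{c}$ there is a unique vertical comparison $\psi : h(\cocart{f}{F}{g}) \Rightarrow \cocart{h(f)}{h(F)}{h(g)}$ with $\bar{c} \circ \psi = h(c)$, and it suffices to prove that $\psi$ is invertible. To this end I would construct an explicit candidate inverse $\theta$. The lax composition comparison $\mu$ of $h$ supplies a cell $h(f_\circledast) \odot h(F) \odot h(g^\circledast) \Rightarrow h(\cocart{f}{F}{g})$, so it remains to compare $h(f_\circledast)$ with $h(f)_\circledast$ and $h(g^\circledast)$ with $h(g)^\circledast$. Whiskering the image under $h$ of the companionship unit of $f$ with the unit laxity cell $\iota_a : U_{h(a)} \Rightarrow h(U_a)$ yields a companion unit for $h(f_\circledast)$ relative to $h(f)$; combining this with the companion structure (in particular the counit) of the genuine companion $h(f)_\circledast$ produces a globular comparison $h(f)_\circledast \Rightarrow h(f_\circledast)$. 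The dual manipulation with the conjunction unit of $g$ produces $h(g)^\circledast \Rightarrow h(g^\circledast)$. Composing these two comparisons with $\mu$ defines $\theta : \cocart{h(f)}{h(F)}{h(g)} \Rightarrow h(\cocart{f}{F}{g})$. This is exactly the point at which the hypotheses enter, and it explains their asymmetry: the companion is needed for the left leg $f$ and the conjoint for the right leg $g$.

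Finally, I would verify that $\theta$ is a two-sided inverse to $\psi$. Since $\bar{c}$ is cartesian, a globular cell over it is determined by its composite with $\bar{c}$, so it is enough to check $h(c) \circ \theta = \bar{c}$ together with the symmetric identity; both should follow from the triangle identities for the companion of $f$ and the conjoint of $g$, combined with the coherence of the laxity cells $\mu$ and $\iota$. I expect this diagram chase to be the main obstacle: it is the $\infty$-categorical incarnation of Shulman's argument, and the delicate point is to confirm that the non-invertible unit laxity cells $\iota$ cancel against the companion and conjoint data, so that no normality assumption on $h$ is required. Once $\psi$ is shown to be invertible, $h(c)$ differs from $\bar{c}$ by an invertible vertical cell and is therefore cartesian, as desired.
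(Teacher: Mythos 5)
Your proposal follows essentially the same route as the paper's proof: both reduce $c$ to the canonical pasting of \ref{prop.cart-comp-conj}, compare $h(c)$ against the corresponding canonical cartesian cell in $\Q$ via its universal property, and invert the comparison by pasting the lax compositor with globular cells $h(f)_\circledast \Rightarrow h(f_\circledast)$ and $h(g)^\circledast \Rightarrow h(g^\circledast)$ built from the companion/conjoint units, counits and the unit laxity cells (the paper obtains these two cells from the cocartesianness of the units, which amounts to the same thing as your explicit counit pasting). The concluding diagram chase you defer is precisely the one carried out in the paper, using the triangle identities together with the lax-functor coherences recorded in \ref{rem.notation-lax}, and indeed requires no normality hypothesis.
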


Let us first highlight two corollaries of this result. The proof follows below.

\begin{corollary}\label{cor.lax-fun-eq-pres-cart-cells}
	Let $h : \P \rightarrow \Q$ be a lax functor between $\infty$-equipments. Then $h$ preserves cartesian cells. 
	Consequently, if $f : a\rightarrow x$, $g : b \rightarrow y$ are arrows in $\P$ then there is a natural equivalence 
	$$
	h(\cart{g}{F}{f}) \simeq \cart{h(g)}{h(F)}{h(f)}
	$$
	for every proarrow $F : x \rightarrow y$ in $\P$.
\end{corollary}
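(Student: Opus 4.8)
The plan is to read off both assertions directly from \ref{prop.lax-pres-cart-cells}. The key observation is that the hypotheses of that proposition become automatic once $\P$ and $\Q$ are $\infty$-equipments: by definition an $\infty$-equipment admits all companions and conjoints. Hence for \emph{any} vertical arrows $f : a \to x$ and $g : b \to y$ of $\P$, the arrows $f$ and $h(f)$ automatically admit companions (in $\P$ and $\Q$ respectively) and $g$ and $h(g)$ automatically admit conjoints. Since every cartesian cell of $\P$ has some such pair of vertical arrows as its boundary, \ref{prop.lax-pres-cart-cells} applies verbatim and shows that $h$ carries it to a cartesian cell in $\Q$. This establishes the first claim that $h$ preserves cartesian cells.

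For the explicit formula I would start from the universal cartesian cell exhibiting the restriction. By \ref{prop.cart-comp-conj}, combined with the existence of companions and conjoints in the equipment $\P$, there is a cartesian cell with bottom boundary $F$, vertical legs $f$ and $g$, and top boundary the restriction $\cart{g}{F}{f} = g^\circledast F f_\circledast$. Applying $h$ and invoking the first claim produces a cartesian cell in $\Q$ whose bottom is $h(F)$, whose legs are $h(f)$ and $h(g)$, and whose top boundary is $h(\cart{g}{F}{f})$. On the other hand, the defining universal property of restriction says that a cartesian cell over $h(F)$ with legs $h(f)$ and $h(g)$ is exactly what exhibits its top boundary as $\cart{h(g)}{h(F)}{h(f)}$. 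Since cartesian cells with fixed boundary data are terminal, comparing the two yields a canonical equivalence $h(\cart{g}{F}{f}) \simeq \cart{h(g)}{h(F)}{h(f)}$.

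The one point requiring genuine care is the naturality in $F$. Both sides are functors $\Hor(\P)(x,y) \to \Hor(\Q)(h(a),h(b))$, namely $h_{a,b} \circ \cart{g}{(-)}{f}$ and $\cart{h(g)}{(-)}{h(f)} \circ h_{x,y}$, and I must verify that the comparison is a natural equivalence, not merely a pointwise one. I would argue this by letting the restriction cells of the previous paragraph vary with $F$: they are the components of a single natural transformation attached to the restriction functor $\cart{g}{(-)}{f}$ (the counit-type transformation implicit in \ref{rem.lax-adj-hor}), so that applying the functor $h$ produces the comparison transformation in one stroke, with \ref{prop.lax-pres-cart-cells} guaranteeing that each component is invertible. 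I expect this bookkeeping --- assembling the whole family of cartesian cells at once and checking that the resulting map is the canonical natural comparison --- to be the only real work; the cartesianness of each image cell is immediate from \ref{prop.lax-pres-cart-cells}.
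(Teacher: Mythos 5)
Your proof is correct and follows exactly the route the paper takes: the paper's own proof is the one-line observation that the corollary "follows directly from \ref{prop.lax-pres-cart-cells} and \ref{prop.cart-comp-conj}", which is precisely your argument --- the companion/conjoint hypotheses of \ref{prop.lax-pres-cart-cells} are vacuous for $\infty$-equipments, and the identification $h(\cart{g}{F}{f}) \simeq \cart{h(g)}{h(F)}{h(f)}$ comes from applying $h$ to the cartesian cell of \ref{prop.cart-comp-conj} and invoking essential uniqueness of cartesian lifts. Your extra attention to naturality (assembling the cartesian cells into the counit-type transformation of \ref{rem.lax-adj-hor} before applying $h$) is a sound way to fill in a detail the paper leaves implicit.
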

\begin{proof}
	This follows directly from \ref{prop.lax-pres-cart-cells} and \ref{prop.cart-comp-conj}.
\end{proof}

\begin{corollary}\label{cor.normal-lax-fun-eq-pres-compconj}
	Let $h : \P \rightarrow \Q$ be a normal lax functor between $\infty$-equipments. Then $h$ preserves companions and conjoints, i.e.\ 
	$h(f_\circledast) \simeq h(f)_\circledast$ and $h(f^\circledast) \simeq h(f)^\circledast$ for every arrow $f : x\rightarrow y$ in $\P$.
\end{corollary}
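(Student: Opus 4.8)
The plan is to reduce the statement to the preservation of cartesian cells established in \ref{cor.lax-fun-eq-pres-cart-cells}, the only extra ingredient being that a \emph{normal} lax functor preserves horizontal units.

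First I would record that in an $\infty$-equipment the companion and conjoint of an arrow $f : x \rightarrow y$ arise as restrictions of horizontal unit proarrows. Specializing \ref{prop.cart-comp-conj} (with $F = \id_y$ together with the trivial (co)unit of an identity arrow) shows that the companionship counit
\[
\begin{tikzcd}
x \arrow[d, "f"'] \arrow[r, "{f_\circledast}"] & y \arrow[d, equal] \\
y \arrow[r, equal] & y
\end{tikzcd}
\]
is a cartesian cell, and dually that the conjunction counit --- the cartesian cell with top edge $f^\circledast$, left edge the identity on $y$, right edge $f$ and bottom edge the horizontal unit on $y$ --- is cartesian as well. These cells exhibit $f_\circledast$ and $f^\circledast$ as the restrictions of the horizontal unit on $y$ along $f$.

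Next I would apply $h$ to the companionship counit. By \ref{cor.lax-fun-eq-pres-cart-cells}, a lax functor between $\infty$-equipments preserves cartesian cells, so the image is again a cartesian cell in $\Q$: its top edge is $h(f_\circledast)$, its left edge is $h(f)$, its right edge is the identity vertical arrow on $h(y)$ (since the component functor of $h$ on objects and vertical arrows preserves identities), and its bottom edge is $h(\id_y)$, the image of the horizontal unit on $y$.

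The hard part --- and the only place where normality is used --- is to identify $h(\id_y)$ with the horizontal unit on $h(y)$. This is exactly the normality condition applied to the surjective degeneracy $s : [1] \rightarrow [0]$: the associated lax naturality square commutes, so that $h(\id_y) \simeq \id_{h(y)}$ as horizontal arrows. Granting this, the image cell has precisely the boundary of the companionship counit of $h(f)$ in $\Q$; since $\Q$ is an equipment and cartesian cells with a fixed boundary are unique up to equivalence, comparing top edges yields $h(f_\circledast) \simeq h(f)_\circledast$. The identical argument applied to the conjunction counit gives $h(f^\circledast) \simeq h(f)^\circledast$, which completes the proof.
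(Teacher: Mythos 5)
Your proposal is correct and takes essentially the same route as the paper: the corollary is stated there without an explicit proof, as an immediate consequence of \ref{cor.lax-fun-eq-pres-cart-cells}, and your argument fills in exactly the intended deduction. Namely, you exhibit $f_\circledast$ and $f^\circledast$ as cartesian restrictions of the horizontal unit via \ref{prop.cart-comp-conj}, apply preservation of cartesian cells, use normality at the surjective degeneracy $[1] \rightarrow [0]$ to identify $h(\id_y) \simeq \id_{h(y)}$, and conclude by uniqueness of cartesian cells with a fixed boundary.
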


The proof of Shulman of \ref{prop.lax-pres-cart-cells} in the case of double categories is via chasing through various pastings of 2-cells. 
We will see that the same proof applies in the $\infty$-categorical context as well.
First, we introduce auxiliary notation that will ease the bookkeeping involved in the proof of \ref{prop.lax-pres-cart-cells}.

\begin{notation}\label{not.lax-naturality-grids}
	Let $h : \P \rightarrow \Q$ be a lax functor between $\infty$-equipments. Suppose 
	that $\sigma : [m,0] \rightarrow \P$ is a functor. Then we may view $\sigma$ as an object of the $\infty$-category $\P_m$.
	Now if $\alpha : [n] \rightarrow [m]$ is a structure map, then $h$ provides a lax naturality square 
	\[
	\begin{tikzcd}
		\P_m \arrow[r, "h_m"]\arrow[d, "\alpha^*"'] & |[alias=f1]|\Q_m\arrow[d, "\alpha^*"] \\
		|[alias=t1]|\P_n \arrow[r, "h_n"'] & \Q_n.
		\arrow[from=f1, to=t1, Rightarrow]
	\end{tikzcd}
	\] 
	The component of the natural transformation at $\sigma$ corresponds to a map $[1] \rightarrow \Q_n$. In turn, 
	this may be viewed as a functor from $[n,1]$ to $\Q$. We will denote this by
	$$
	h(\sigma, \alpha) : [n, 1] \rightarrow \Q.
	$$
	Note that this functor restricts to
	to $\alpha^*h_m(\sigma)$ on $[n, \{0\}]$ and 
	to $h_n(\alpha^*\sigma)$ on $[n, \{1\}]$. 
\end{notation}
	
\begin{remark}\label{rem.notation-lax}
	There is compatibility in the functors of \ref{not.lax-naturality-grids}. Suppose that there is a further structure map 
	$\beta : [i] \rightarrow [n]$. Then the lax naturality square associated with the composite $\alpha\beta$ 
	is given by the outer square in the pasting 
	\[
	\begin{tikzcd}
		\P_m \arrow[r, "h_m"]\arrow[d, "\alpha^*"'] & |[alias=f1]|\Q_m\arrow[d, "\alpha^*"] \\
		|[alias=t1]|\P_n \arrow[r, "h_n"] \arrow[d, "\beta^*"'] & |[alias=f2]|\Q_n\arrow[d, "\beta^*"] \\ 
		|[alias=t2]| \P_i \arrow[r, "h_i"'] & \Q_i.
		\arrow[from=f1, to=t1, Rightarrow]
		\arrow[from=f2, to=t2, Rightarrow]
	\end{tikzcd}
	\] 
	Unraveling the definitions, this means that $h(\sigma, \alpha\beta)$ can be obtained by vertically pasting 
	the grids $h(\sigma, \alpha) \circ [\beta, 1]$ and $h(\alpha^*\sigma, \beta)$. 
	More precisely,
	one may consider the commutative square
	\[ 
		\begin{tikzcd}[column sep = large]
			{[i,0]} \arrow[rr, "{[i,\{0\}]}"]\arrow[d, "{[i,\{1\}]}"'] && {[i,1] } \arrow[d, "{h(\alpha^*\sigma, \beta)}"] \\ 
			{[i,1]} \arrow[r, "{[\beta, 1]}"] & {[n,1]} \arrow[r, "{h(\sigma, \alpha)}"] &  \P,
		\end{tikzcd}
	\]
	which induces a map $[i,2] \rightarrow \P$ by the universal property 
	of the pushout. If we restrict this map to 
	$[i, \{0\leq 1\}]$, then this is equivalent to $h(\sigma, \alpha\beta)$.
\end{remark}

\begin{proof}[Proof of \ref{prop.lax-pres-cart-cells}]
    We closely follow the proof by Shulman \cite[Proposition 6.8]{ShulmanFramedBicats} for the case of (strict) equipments.
    In view of \ref{prop.cart-comp-conj}, the cartesian cell $c$ of $\P$ decomposes as
        \[
            c = \begin{tikzcd}
                a \arrow[r, "{f}_\circledast"name=f1]\arrow[d,"f"'] & x \arrow[r,	"F"]\arrow[d,equal] & y \arrow[r, "g^\circledast"name=f2] \arrow[d, equal] & b\arrow[d, "g"] \\
                x \arrow[r, equal, ""name=t1] & x \arrow[r, "F"] & y \arrow[r,equal, ""name=t2] & y,
            \end{tikzcd}
        \]
    where the left and right cells are given by the companion and conjoint counit respectively.
    There is a similarly formed cartesian 2-cell
    \[
            c' = \begin{tikzcd}
                h(a) \arrow[r, "h(f)_\circledast"name=f1]\arrow[d,"{h(f)}"'] & h(x) \arrow[r,	"hF"]\arrow[d,equal] & h(y) \arrow[r, "h(g)^\circledast"name=f2] \arrow[d, equal] & h(b)\arrow[d, "{h(g)}"] \\
                h(x) \arrow[r, equal, ""name=t1] & h(x) \arrow[r, "hF"] & h(y) \arrow[r,equal, ""name=t2] & h(y)
            \end{tikzcd}
        \]
    in $\Q$. We will write $G'$ for the source horizontal arrow of $c'$. 
    The universal property of cartesian cells (see \cite[Corollary 3.21]{EquipI}) asserts that there exists a (essentially) unique 
    comparison 2-cell 
    \[
        \alpha = \begin{tikzcd}
            h(a) \arrow[r, "hG"]\arrow[d,equal] & h(b) \arrow[d,equal] \\
            h(a) \arrow[r, "G'"] & h(b)
        \end{tikzcd}
    \]
    so that 
    \[
    \begin{tikzcd}
        h(a) \arrow[r, "hG"name=h1]\arrow[d,equal] & h(b) \arrow[d,equal] \\
        h(a) \arrow[r, "G'"name=h2]\arrow[d, "{h(f)}"'] & h(b) \arrow[d, "{h(g)}"] \\
        h(x) \arrow[r, "hF"'name=h3] & h(y). 
		\arrow[from=h1,to=h2,phantom, "\scriptstyle{\alpha}"]
		\arrow[from=h2,to=h3,phantom, "\scriptstyle{c'}"]
    \end{tikzcd}
    \simeq 
    \begin{tikzcd}
        h(a) \arrow[r, "hG"name=h1]\arrow[d, "{h(f)}"'] & h(b) \arrow[d, "{h(g)}"] \\
        h(x) \arrow[r, "hF"'name=h2] & h(y). 
		\arrow[from=h1,to=h2,phantom, "\scriptstyle{h(c)}"]
    \end{tikzcd}
    \]
    We will show that $\alpha$ is an equivalence in $\Hor(\Q)(h(a), h(b))$ by exhibiting an 
    inverse. This then proves the lemma.
    
   Recall that companionship and conjunction units are \textit{cocartesian}, see \cite[Proposition 3.24]{EquipI}. 
   Consequently, we may use the universal property of cocartesian cells so that we obtain 2-cells
    \[
        \gamma = \begin{tikzcd}
            h(a) \arrow[r, "h(f)_\circledast"]\arrow[d,equal] & h(x)\arrow[d,equal] \\
            h(a) \arrow[r, "h(f_\circledast)"] & h(x),
        \end{tikzcd}
        \quad 
        \delta = \begin{tikzcd}
            h(y) \arrow[r, "h(g)^\circledast"]\arrow[d,equal] & h(b)\arrow[d,equal] \\
            h(y) \arrow[r, "h(g^\circledast)"] & h(b),
        \end{tikzcd}
    \]
    that satisfy the relations
    \[
        \begin{tikzcd}
            h(a) \arrow[r, equal]\arrow[d, equal] & h(a)\arrow[d, "{h(f)}"] \\
            h(a) \arrow[r, "h(f)_\circledast"name=h1]\arrow[d,equal] & h(x)\arrow[d,equal] \\
            h(a) \arrow[r, "h(f_\circledast)"'name=h2] & h(x)
			\arrow[from=h1, to=h2, phantom, "\scriptstyle{\gamma}"]
        \end{tikzcd}	
        \simeq 
        \begin{tikzcd}
            h(a) \arrow[r, equal, ""name=f1]\arrow[d, equal] & h(a)\arrow[d, equal] \\
            h(a) \arrow[r, "h(\id_a)"'name=t1]\arrow[d,equal] & h(a)\arrow[d,"{h(f)}"] \\
            h(a) \arrow[r, "h(f_\circledast)"'] & h(x),
            \arrow[from=f1,to=t1,phantom, "\scriptstyle{h(\id_a, s_0)}"]
        \end{tikzcd}
        \quad
        \begin{tikzcd}
            h(b) \arrow[r, equal]\arrow[d, "{h(g)}"'] & h(b)\arrow[d, equal] \\
            h(y) \arrow[r, "h(g)^\circledast"name=h1]\arrow[d,equal] & h(b)\arrow[d,equal] \\
            h(y) \arrow[r, "h(g^\circledast)"'name=h2] & h(b),
			\arrow[from=h1, to=h2, phantom, "\scriptstyle{\delta}"]
        \end{tikzcd}	
        \simeq 
        \begin{tikzcd}
            h(b) \arrow[r, equal,""name=f1]\arrow[d, equal] & h(b)\arrow[d, equal] \\
            h(b) \arrow[r, "h(\id_b)"'name=t1]\arrow[d,"{h(g)}"'] & h(b)\arrow[d,equal] \\
            h(y) \arrow[r, "h(g^\circledast)"] & h(b).
            \arrow[from=f1,to=t1,phantom, "\scriptstyle {h(\id_b, s_0)}"]
        \end{tikzcd}	
    \]
	Here, the top unmarked 2-cells are given by the companionship/conjunction units. The bottom unmarked 2-cells are given 
	by images of companionship/conjunction units under $h$.
    Now we may form the candidate for the inverse to $\alpha$ by the pasting
    \[
        \beta := \begin{tikzcd}
        h(a) 	\arrow[r, "h(f)_\circledast"name=h1]\arrow[d,equal]& h(x) \arrow[d, equal]\arrow[r,"hF"] & h(y) \arrow[d,equal]\arrow[r, "h(g)^\circledast"name=h3] & h(b)\arrow[d,equal]\\
        h(a) \arrow[r, "h(f_\circledast)"name=h2]\arrow[d,equal] & h(x) \arrow[r, "hF"name=f1] & h(y) \arrow[r, "h(g^\circledast)"name=h4] & h(b)\arrow[d,equal] \\
        h(a) \arrow[rrr, "hG"'name=t1] &&& h(b).
        \arrow[from=f1,to=t1, phantom, "\scriptstyle{h((f_\circledast, F, g^\circledast), d_2d_1)}"]
		\arrow[from=h1,to=h2, phantom, "\scriptstyle{\gamma}"]
		\arrow[from=h3,to=h4, phantom, "\scriptstyle{\delta}"]
        \end{tikzcd}
    \]
	We will verify that $\beta\alpha \simeq \id_{hG}$ and $\alpha\beta \simeq \id_{G'}$ in two steps.
    
    Let us first look at the vertical composition $\beta\alpha$. We use the defining feature of $\alpha$ and the conjunction and companionship triangle identities 
	to rewrite this as
    \[
        \beta\alpha \simeq 
            \begin{tikzcd}
                h(a) \arrow[r, equal]\arrow[d,equal] & h(a) \arrow[d, "{h(f)}"']\arrow[r, "hG"name=h1] & h(b)\arrow[d, "{h(g)}"]\arrow[r,equal] & h(b) \arrow[d,equal] \\
                h(a) 	\arrow[r, "h(f)_\circledast"name=h3]\arrow[d,equal]& h(x) \arrow[d, equal]\arrow[r,"hF"name=h2] & h(y) \arrow[d,equal]\arrow[r, "h(g)^\circledast"name=h5] & h(b)\arrow[d,equal]\\
                h(a) \arrow[r, "h(f_\circledast)"name=h4]\arrow[d,equal] & h(x) \arrow[r, "hF"name=f1] & h(y) \arrow[r, "h(g^\circledast)"name=h6] & h(b)\arrow[d,equal] \\
                h(a) \arrow[rrr, "hG"'name=t1] &&& h(b).
				\arrow[from=f1,to=t1, phantom, "\scriptstyle{h((f_\circledast, F, g^\circledast), d_2d_1)}"]
				\arrow[from=h1, to=h2, phantom, "\scriptstyle{h(c)}"]
				\arrow[from=h3,to=h4, phantom, "\scriptstyle{\gamma}"]
				\arrow[from=h5,to=h6, phantom, "\scriptstyle{\delta}"]
            \end{tikzcd}
	\] 
	Next, we use the definition of $\gamma$ and $\delta$ to rewrite this as
	\[ 
        \beta\alpha \simeq
            \begin{tikzcd}
                h(a) \arrow[r, equal, ""name=h1]\arrow[d,equal] & h(a) \arrow[d, equal]\arrow[r, "hG"] & h(b)\arrow[d, equal]\arrow[r,equal, ""name=h3] & h(b) \arrow[d,equal] \\
                h(a) 	\arrow[r, "h(\id_a)"name=h2]\arrow[d,equal]& h(a) \arrow[d, "{h(f)}"']\arrow[r,"hG"name=h5] & h(b) \arrow[d,"{h(g)}"]\arrow[r, "h(\id_b)"name=h4] & h(b)\arrow[d,equal]\\
                h(a) \arrow[r, "h(f_\circledast)"]\arrow[d,equal] & h(x) \arrow[r, "hF"name=f1] & h(y) \arrow[r, "h(g^\circledast)"] & h(b)\arrow[d,equal] \\
                h(a) \arrow[rrr, "hG"'name=t1] &&& h(b).
				\arrow[from=f1,to=t1, phantom, "\scriptstyle{h((f_\circledast, F, g^\circledast), d_2d_1)}"]
				\arrow[from=h1,to=h2,phantom, "\scriptstyle{h(\id_a, s_0)}"]
				\arrow[from=h3,to=h4,phantom, "\scriptstyle{h(\id_b, s_0)}"]
				\arrow[from=h5, to=f1, phantom, "\scriptstyle{h(c)}"]
            \end{tikzcd}
    \]
    This is precisely given by
    \[
        \beta\alpha\simeq \begin{tikzcd}
                h(a) \arrow[r, equal, ""name=h1]\arrow[d,equal] & h(a) \arrow[d, equal]\arrow[r, "hG"] & h(b)\arrow[d, equal]\arrow[r,equal, ""name=h3] & h(b) \arrow[d,equal] \\
                h(a) 	\arrow[r, "h(\id_a)"name=h2]\arrow[d,equal]& h(a) \arrow[r,"hG"name=f1] & h(b) \arrow[r, "h(\id_b)"name=h4] & h(b)\arrow[d,equal]\\
                h(a) \arrow[rrr, "hG"'name=t1] &&& h(b).
                \arrow[from=f1,to=t1, phantom, "\scriptstyle{h((\id_a, G, \id_b), d_2d_1)}"]
				\arrow[from=h1,to=h2,phantom, "\scriptstyle{h(\id_a, s_0)}"]
				\arrow[from=h3,to=h4,phantom, "\scriptstyle{h(\id_b, s_0)}"]
            \end{tikzcd}
    \]
    Note that the pasting of the top row of 2-cells is equivalent to $h(G, s_0s_3) \circ [d_2d_1, 1]$.
	Thus \ref{rem.notation-lax} implies that the total pasting of the top and bottom rows  is given by $h(G, d_2d_1s_0s_3) \simeq \id_{hG}$.
    
    Next, we will consider the vertical composite $\alpha \beta$ and show that it is equivalent to $\id_{G'}$. 
    To this end, the universal property of the cartesian cell $c'$
    implies that it suffices to verify that the pasting 
    \[
    c'\alpha\beta \simeq \begin{tikzcd}
        h(a) 	\arrow[r, "h(f)_\circledast"name=h1]\arrow[d,equal]& h(x) \arrow[d, equal]\arrow[r,"hF"] & h(y) \arrow[d,equal]\arrow[r, "h(g)^\circledast"name=h3] & h(b)\arrow[d,equal]\\
        h(a) \arrow[r, "h(f_\circledast)"name=h2]\arrow[d,equal] & h(x) \arrow[r, "hF"name=h5] & h(y) \arrow[r, "h(g^\circledast)"name=h4] & h(b)\arrow[d,equal] \\
        h(a) \arrow[rrr, "hG"name=f1]\arrow[d, "{h(f)}"'] &&& h(b)\arrow[d, "{h(g)}"] \\
        h(x) \arrow[rrr, "hF"'name=t1] &&& h(y)
		\arrow[from=f1,to=t1, phantom, "\scriptstyle{h(c)}"]
		\arrow[from=h1,to=h2, phantom, "\scriptstyle{\gamma}"]
		\arrow[from=h3,to=h4, phantom, "\scriptstyle{\delta}"]
		\arrow[from=h5,to=f1, phantom, "\scriptstyle{h((f_\circledast, F, g^\circledast), d_2d_1)}"]
        \end{tikzcd}
    \]
    recovers $c'$. This can be rewritten as
    \[
    c'\alpha\beta \simeq \begin{tikzcd}
        h(a) 	\arrow[r, "h(f)_\circledast"name=h1]\arrow[d,equal]& h(x) \arrow[d, equal]\arrow[r,"hF"] & h(y) \arrow[d,equal]\arrow[r, "h(g)^\circledast"name=h3] & h(b)\arrow[d,equal]\\
        h(a) \arrow[r, "h(f_\circledast)"'name=h2]\arrow[d,"{h(f)}"'] & h(x) \arrow[d,equal]\arrow[r, "hF"name=f1] & h(y)\arrow[d,equal] \arrow[r, "h(g^\circledast)"'name=h4] & h(b)\arrow[d, "{h(g)}"] \\
        h(x) \arrow[r, "h(\id_x)"]\arrow[d,equal] & h(x) \arrow[r, "hF"name=f1] & h(y) \arrow[r, "h(\id_y)"] & h(y)\arrow[d,equal] \\
        h(x) \arrow[rrr, "hF"'name=t1] &&& h(y).
        \arrow[from=f1,to=t1, phantom, "\scriptstyle{h((\id_x, F, \id_y), d_2d_1)}"]
		\arrow[from=h1,to=h2, phantom, "\scriptstyle{\gamma}"]
			\arrow[from=h3,to=h4, phantom, "\scriptstyle{\delta}"]
        \end{tikzcd}
    \]
	In light of the definition of $\gamma$ and the companionship triangle identities, we obtain
	\[
		\gamma \simeq \begin{tikzcd}
            h(a) \arrow[r, equal]\arrow[d, equal] & h(a)\arrow[d, "{h(f)}"] \arrow[r,"h(f)_\circledast"] & h(x)\arrow[d,equal]\\
            h(a) \arrow[r, "h(f)_\circledast"name=h1]\arrow[d,equal] & h(x)\arrow[d,equal] \arrow[r,equal] & h(x)\arrow[d,equal] \\
            h(a) \arrow[r, "h(f_\circledast)"'name=h2] & h(x) \arrow[r,equal] & h(x)
			\arrow[from=h1, to=h2, phantom, "\scriptstyle{\gamma}"]
        \end{tikzcd}		
		\simeq 
		\begin{tikzcd}
            h(a) \arrow[r, equal, ""name=f1]\arrow[d, equal] & h(a)\arrow[d, equal] \arrow[r, "h(f)_\circledast"] & h(x) \arrow[d,equal] \\
            h(a) \arrow[r, "h(\id_a)"'name=t1]\arrow[d,equal] & h(a)\arrow[d,"{h(f)}"] \arrow[r, "h(f)_\circledast"] & h(x)\arrow[d,equal]\\
            h(a) \arrow[r, "h(f_\circledast)"'] & h(x) \arrow[r,equal] & h(x).
            \arrow[from=f1,to=t1,phantom, "\scriptstyle{h(\id_a, s_0)}"]
        \end{tikzcd}
	\]
    Consequently, we compute that 
    \[
        \begin{tikzcd}
        h(a) 	\arrow[r, "h(f)_\circledast"name=h1]\arrow[d,equal]& h(x) \arrow[d, equal] \\
        h(a) \arrow[r, "h(f_\circledast)"'name=h2]\arrow[d,"{h(f)}"'] & h(x) \arrow[d,equal] \\
        h(x) \arrow[r, "h(\id_x)"'] & h(x)
		\arrow[from=h1,to=h2, phantom, "\scriptstyle{\gamma}"]
        \end{tikzcd}
        \simeq 
		\begin{tikzcd}
            h(a) \arrow[r, equal, ""name=f1]\arrow[d, equal] & h(a)\arrow[d, equal] \arrow[r, "h(f)_\circledast"] & h(x) \arrow[d,equal] \\
            h(a) \arrow[r, "h(\id_a)"'name=t1]\arrow[d,equal] & h(a)\arrow[d,"{h(f)}"] \arrow[r, "h(f)_\circledast"] & h(x)\arrow[d,equal]\\
            h(a) \arrow[r, "h(f_\circledast)"']\arrow[d,"{h(f)}"'] & h(x) \arrow[r,equal]\arrow[d,equal] & h(x) \arrow[d,equal] \\
			h(x) \arrow[r, "h(\id_x)"'] & h(x) \arrow[r,equal] & h(x)
            \arrow[from=f1,to=t1,phantom, "\scriptstyle{h(\id_a, s_0)}"]
        \end{tikzcd}
		\simeq 
		\begin{tikzcd}
            h(a) \arrow[r, equal, ""name=f1]\arrow[d, equal] & h(a)\arrow[d, equal] \arrow[r, "h(f)_\circledast"] & h(x) \arrow[d,equal] \\
            h(a) \arrow[r, "h(\id_a)"'name=t1]\arrow[d,"{h(f)}"'] & h(a)\arrow[d,"{h(f)}"] \arrow[r, "h(f)_\circledast"] & h(x)\arrow[d,equal]\\
            h(a) \arrow[r, "h(\id_x)"']& h(x) \arrow[r,equal]& h(x) 
			\arrow[from=f1,to=t1,phantom, "\scriptstyle{h(\id_a, s_0)}"].
        \end{tikzcd}
    \]
   Hence, it follows that
    \[
        \begin{tikzcd}
			h(a) 	\arrow[r, "h(f)_\circledast"name=h1]\arrow[d,equal]& h(x) \arrow[d, equal] \\
			h(a) \arrow[r, "h(f_\circledast)"'name=h2]\arrow[d,"{h(f)}"'] & h(x) \arrow[d,equal] \\
			h(x) \arrow[r, "h(\id_x)"'] & h(x)
			\arrow[from=h1,to=h2, phantom, "\scriptstyle{\gamma}"]
			\end{tikzcd}
			\simeq 
        \begin{tikzcd}
            h(a) \arrow[d, "{h(f)}"'] \arrow[r, "h(f)_\circledast"] & h(x) \arrow[d,equal] \\
            h(x)\arrow[d,equal] \arrow[r, equal, ""name=f1] & h(x) \arrow[d,equal]\\
            h(x)\arrow[r,"h(\id_x)"'name=t1] & h(x)
			\arrow[from=f1,to=t1,phantom, "\scriptstyle{h(\id_x, s_0)}"].
        \end{tikzcd}
    \]
    A similar formula may be obtained for the vertical composite 
    of $\delta$ with the image of the conjunction unit associated with $g$. With these computations, 
	we now deduce that 
    \[
    c'\beta\alpha \simeq \begin{tikzcd}
        h(a) 	\arrow[r, "h(f)_\circledast"]\arrow[d,"{h(f)}"']& h(x) \arrow[d, equal]\arrow[r,"hF"] & h(y) \arrow[d,equal]\arrow[r, "h(g)^\circledast"] & h(b)\arrow[d,"{h(g)}"]\\
        h(x) \arrow[r, equal, ""name=h1]\arrow[d,equal] & h(x) \arrow[d,equal]\arrow[r, "hF"] & h(y)\arrow[d,equal] \arrow[r, equal, ""name=h3] & h(y) \arrow[d,equal]\\
        h(x) \arrow[r, "h(\id_x)"name=h2]\arrow[d,equal] & h(x) \arrow[r, "hF"name=f1] & h(y) \arrow[r, "h(\id_y)"name=h4] & h(y)\arrow[d,equal] \\
        h(x) \arrow[rrr, "hF"'name=t1] &&& h(y).
		\arrow[from=f1,to=t1, phantom, "\scriptstyle{h((\id_x, F, \id_y), d_2d_1)}"]
		\arrow[from=h1,to=h2,phantom, "\scriptstyle{h(\id_x, s_0)}"]
		\arrow[from=h3,to=h4,phantom, "\scriptstyle{h(\id_y, s_0)}"]
        \end{tikzcd}
    \]
    Note that the composite of the middle row of 2-cells is equivalent to $h(F, s_0s_3) \circ [d_2d_1, 1]$. 
	It follows again from \ref{rem.notation-lax} that the composite of the middle and bottom rows is given by 
	$h((\id_x, F, \id_y), d_2d_1s_0s_3) = \id_{hF}$. This shows that $c'\beta\alpha$ is equivalent to $c'$, as desired.
\end{proof}

\section{Reflective, proper and smooth lax functors} 

In this section, we will highlight four different kinds of lax functors:
\begin{enumerate}
	\item the \textit{reflective} ones: these are generalizations of reflective inclusions of $\infty$-categories,
	\item the \textit{horizontally locally reflective} ones: these induce reflective inclusions on horizontal mapping $\infty$-categories,
	\item the \textit{proper} ones: these are functors that interact well with formal category theory, in the sense that they preserve all internal left Kan extensions,
	\item the \textit{smooth} ones: these are dual to (3).
\end{enumerate}
We will see that (2), (3) and (4) all contain (1). We will discuss some examples as well.

\subsection{Reflective lax functors}\label{ssection.refl-lax-functors} We commence by discussing variants (1) and (2) in this subsection.

\begin{definition}
	A lax functor $i : \Q \rightarrow \P$ between double $\infty$-categories is called a \textit{reflective inclusion} if $i$ 
	is normal, fully faithful level-wise, and admits a left adjoint.
\end{definition}

\begin{example}\label{ex.geom-emb-cotensor}
	On account of \ref{ex.dbl-adj-geom-morph}, every geometric embedding between $\infty$-toposes gives rise to reflective inclusion between the respective $\infty$-equipments 
	of internal categories. 
	
	Since every $\infty$-topos $\E$ admits a geometric embedding $i : \E \rightarrow \PSh(T)$ into a presheaf topos for some small $\infty$-category $T$, 
	this particularly implies that there is a reflective inclusion
	$$
	i_* : \CCAT_\infty(\E) \rightarrow [T^\op, \CCAT_\infty]
	$$
	into a vertical cotensor product of $\CCAT_\infty$.
	Here, we used the identifications of \ref{ex.cotensors-ccat}.
	This is an important example, because this allows us to reduce questions 
	about the equipment $\CCAT_\infty(\E)$ to the equipment $\CCAT_\infty$ using the theory developed in \cite{FunDblCats}. We will see this principle in action on a few occasions.
\end{example}

Reflective inclusions between equipments enjoy the following properties: 

\begin{proposition}\label{prop.refl-incl-formal-cat-theory}
	Let $i : \Q \rightarrow \P$ be a reflective inclusion between $\infty$-equipments. Then $i$ has the following properties:
	\begin{enumerate} 
		\item it preserves and reflects cartesian cells,
		\item it reflects cocartesian cells,
		\item it preserves and reflects fully faithful arrows  (see \cite[Definition 6.1]{EquipI}),
		\item it reflects exact squares (see \cite[Definition 6.24]{EquipI}).
	\end{enumerate}
\end{proposition}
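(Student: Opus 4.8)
The plan is to reduce all four assertions to the behaviour of $i$ on cartesian and cocartesian cells, exploiting that $i$ is normal and level-wise fully faithful. I would begin by recording two consequences of the level-wise hypothesis. First, since each $i_n$ is fully faithful, the induced functors $i_{x,y} : \Hor(\Q)(x,y) \to \Hor(\P)(i(x),i(y))$ on horizontal mapping $\infty$-categories are fully faithful, hence conservative: a mapping space $\map_{\Hor(\Q)(x,y)}(G,G')$ is the fibre of $\map_{\Q_1}(G,G') \to \map_{\Q_0}(x,x) \times \map_{\Q_0}(y,y)$ over the identities, and this fibre is preserved by the fully faithful $i_1$ and $i_0$. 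Second, because $\Q$ and $\P$ are equipments, the source--target projections $p_\Q : \Q_1 \to \Q_0^{\times 2}$ and $p_\P : \P_1 \to \P_0^{\times 2}$ are both cartesian and cocartesian fibrations, and the square relating them through $i_1$ and $i_0^{\times 2}$ strictly commutes (the naturality squares of a lax functor at inert maps commute, and $p$ is assembled from the inert maps $[0]\to[1]$).

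From the second point I would deduce reflection of both kinds of cells at once. An arrow $\phi$ of $\Q_1$ is $p_\Q$-(co)cartesian exactly when the usual mapping-space test square against each object of $\Q_1$ is a pullback; applying the fully faithful $i_1$ and $i_0^{\times 2}$ identifies this test square with the test square for $i_1(\phi)$ evaluated against objects in the essential image of $i_1$. Thus if $i_1(\phi)$ is $p_\P$-(co)cartesian, its test square is a pullback against \emph{all} objects of $\P_1$, in particular against those in the image, so the test square for $\phi$ is a pullback against all objects and $\phi$ is $p_\Q$-(co)cartesian. This gives the reflection half of (1) and all of (2). Preservation of cartesian cells in (1) is precisely \ref{cor.lax-fun-eq-pres-cart-cells}; note that this argument does \emph{not} deliver preservation of cocartesian cells, which is why (2) only asserts reflection.

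For (3) I would invoke the characterization of \cite[Definition 6.1]{EquipI} that an arrow $f : x \to y$ is fully faithful exactly when the canonical cell exhibiting the horizontal unit $(\id_x)_\circledast$ as the restriction $\cart{f}{(\id_y)_\circledast}{f}$ is cartesian. Since $i$ is normal it preserves horizontal units and companions/conjoints (\ref{cor.normal-lax-fun-eq-pres-compconj}) and restrictions (\ref{cor.lax-fun-eq-pres-cart-cells}), so by functoriality it carries the canonical comparison cell for $f$ to the corresponding canonical cell for $i(f)$. As $i$ both preserves and reflects cartesian cells by (1), the comparison cell for $f$ is cartesian if and only if that for $i(f)$ is; hence $i$ preserves and reflects fully faithful arrows.

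Finally, for (4) I would unwind \cite[Definition 6.24]{EquipI}: exactness of a square is the cocartesianness of a canonical comparison cell built from the square's filling 2-cell together with companion/conjoint data and restriction cells. Using normality and preservation of restrictions (\ref{cor.normal-lax-fun-eq-pres-compconj}, \ref{cor.lax-fun-eq-pres-cart-cells}), I would check that $i$ carries the canonical comparison cell of a square in $\Q$ to the canonical comparison cell of its image in $\P$; reflection of cocartesian cells from (2) then shows the image square is exact only if the original is. The main obstacle lies exactly here: the exactness cell involves cocartesian (unit) data, which $i$ does \emph{not} preserve, so one obtains reflection but not preservation, and the delicate point is to verify that $i$ genuinely identifies the two canonical cells rather than only mapping one to the other through a noninvertible laxity constraint. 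Settling this requires combining the bookkeeping of \ref{not.lax-naturality-grids} with the fact, from \ref{cor.lax-fun-eq-pres-cart-cells}, that the restriction composites occurring in the comparison cell are strictly preserved, even though arbitrary horizontal composites are not.
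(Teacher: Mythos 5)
Your items (1)--(3) are correct, and your argument for reflection of (co)cartesian cells is genuinely different from the paper's. The paper gets reflection essentially for free from the reflective structure: the left adjoint $L$ to $i$ is strict (by \ref{prop.lax-double-adjunctions-char}, a lax functor admitting a lax right adjoint is strict), hence preserves companions, conjoints and (co)cartesian cells, and since $L_1 i_1 \simeq \id_{\Q_1}$ and $L_0 i_0 \simeq \id_{\Q_0}$, any cell of $\Q$ whose image under $i$ is (co)cartesian is recovered, already (co)cartesian, by applying $L$. Your route instead uses only that $i$ is level-wise fully faithful and that the squares over the source--target projections commute (which holds for any lax functor, the projections being built from inerts), and then checks the mapping-space test squares against objects in the image of $i_1$. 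This is valid, and it proves slightly more: reflection of (co)cartesian cells holds for any level-wise fully faithful lax functor, with no left adjoint in sight. Item (3) is handled the same way in both proofs: normality identifies $i(s_0 f)$ with $s_0(i(f))$, and (1) transfers cartesianness of this cell back and forth.

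The problem is item (4), which you leave unfinished: you flag as the ``main obstacle'' the question of whether $i$ identifies the canonical exactness cell of a square in $\Q$ with that of its image in $\P$, and you propose to attack it with the bookkeeping of \ref{not.lax-naturality-grids}. That obstacle is not real, and the bookkeeping is unnecessary, because the exactness cell of \cite[Definition 6.24]{EquipI} is produced by a purely \emph{vertical} factorization, in which no horizontal composite --- hence no laxity constraint --- ever appears. Concretely, a lax square $\alpha$ with vertical boundary data $(v,f,g,w)$ is a 2-cell in $\Q$ from $\id_a$ to $\id_d$, and it factors uniquely as a vertical paste of a comparison cell $\id_a \rightarrow \cart{w}{}{g}$ over $(v,f)$ followed by the cartesian cell $\cart{w}{}{g} \rightarrow \id_d$ over $(g,w)$; exactness means precisely that this comparison cell is cocartesian. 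Now vertical pasting of 2-cells is nothing but composition of morphisms in the $\infty$-category $\Q_1$, which the functor $i_1$ preserves on the nose; normality gives $i(\id_a) \simeq \id_{i(a)}$ and $i(\id_d) \simeq \id_{i(d)}$; and by (1) the image of the bottom cell is again cartesian, with horizontal source $i(\cart{w}{}{g}) \simeq \cart{i(w)}{}{i(g)}$. By uniqueness of such factorizations, the image under $i$ of the factorization of $\alpha$ \emph{is} the canonical factorization of $i(\alpha)$, so $i$ carries the exactness cell of $\alpha$ to the exactness cell of $i(\alpha)$, and reflection of exactness then follows from (2) exactly as you intended. Without this observation your proof of (4) is incomplete, since you have only stated the difficulty rather than resolved it.
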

\begin{proof}
	Let $L$ be the left adjoint to $i$. Then we have that $L_1i_1 \simeq \id_{\Q_1}$ and $L_0i_0 \simeq \id_{\Q_0}$. Since $L$ is strict, it preserves companions, conjoints and (co)cartesian cells. We have shown in \ref{ssection.lax-fun-cart} 
	that $i$ preserves companions, conjoints and cartesian cells. Now assertions (1) and (2) follow immediately from these observations.
	Note that (3) follows directly from the definition of fully faithful arrows, (1) and the 
	fact that normal lax functors preserve the identity 2-cells associated to \textit{vertical} arrows. 
	
	Let us show assertion (4). Suppose that we have a lax commutative square 
	\[
		\alpha = \begin{tikzcd}
			a \arrow[r, "f"]\arrow[d,"v"'] &|[alias=f]| b \arrow[d, "w"] \\ 
			|[alias=t]|c \arrow[r, "g"'] & d 
			\arrow[from=f,to=t, Rightarrow]
		\end{tikzcd}
	\]
	in $\Vert(\Q)$. Then this factors uniquely as a vertical composition
	\[
		\begin{tikzcd}
			a \arrow[d, "v"'] \arrow[r, equal] & a \arrow[d, "f"] \\
			c \arrow[r, "\cart{w}{}{g}"]\arrow[d, "g"'] & b\arrow[d,"w"] \\
			d \arrow[r,equal] & d
		\end{tikzcd}
	\] 
	in $\Q$ where the bottom 2-cell is cartesian. By definition, the lax square $\alpha$ is exact if and only if the top 
	2-cell in this factorization is cocartesian. On account of (2), it suffices to check this for its image under $i$ in $\P$. Since the (vertical) factorization of
	above is preserved by $i$ and the image of the bottom 2-cell is still cartesian on account of (1), this is precisely the condition for the image $i(\alpha)$ to be exact. 
\end{proof}

As announced, there is also a local variant on reflective functors:

\begin{definition}\label{def.hor-loc-refl}
	A lax functor $i : \Q \rightarrow \P$ between double $\infty$-categories is called \textit{horizontally locally reflective} if the following two 
	conditions are met:
	\begin{enumerate}
		\item for every two 
		objects $x,y \in \Q$, the induced functor on horizontal mapping $\infty$-categories 
			$$
			i_{x,y} : \Hor(\Q)(x,y) \rightarrow \Hor(\P)(i(x), i(y))
			$$
		is an inclusion of a reflective subcategory,
		\item for every three objects $x,y,z \in \Q$, the natural transformation in the lax naturality square
		\[
		\begin{tikzcd}
			\Hor(\Q)(y,z) \times \Hor(\Q)(x,y) \arrow[r]\arrow[d] & |[alias=t]| \Hor(\Q)(x,z) \arrow[d] \\
			|[alias=f]| \Hor(\P)(i(y), i(z)) \times \Hor(\P)(i(x), i(y)) \arrow[r]  & \Hor(\P)(i(x),i(z))
			\arrow[from=f,to=t,Rightarrow, shorten <= 10pt, shorten >= 10pt]
		\end{tikzcd}
		\] 
		is a local equivalence with respect to $i_{x,z}$ component-wise.
	\end{enumerate} 
	In this case and context, we will refer to the local objects and local equivalences of the reflective inclusions $i_{x,y}$
		as the \textit{local horizontal arrows} and \textit{2-cells} respectively. 
\end{definition}

\begin{remark}
	In the context of \ref{def.hor-loc-refl}, the 2-cell in the lax square can be recovered as follows. The lax naturality square 
	associated to the face map $d_1$ gives a natural transformation 
	$$
	[1] \times \P_2 \rightarrow \Q_1
	$$ 
	from $d_1^*i_2$ to $i_1d_1^*$. This fits in a commutative square
	\[
		\begin{tikzcd}
			{[1]} \times \P_2 \arrow[r]\arrow[d]\arrow[d] & \Q_1 \arrow[d] \\
			\P_0^{\times 2} \arrow[r, "i_0^{\times 2}"] & \Q_0^{\times 2},
		\end{tikzcd}
	\]
	where the right arrow is the source-target projection, and the left arrow is given by the projection $[1] \times \P_2 \rightarrow \P_2$ followed 
	by $(\{0\}^*, \{2\}^*) : \P_2 \rightarrow \P_0^{\times 2}$. 
	Taking fibers above $(x,z)$, we obtain a natural transformation 
	$$
	[1] \times (\P_2 \times_{\P_0^{\times 2}} \{(x,z)\}) \rightarrow \Hor(\Q)(Ix, Iz).
	$$
	The desired natural transformation is given by the restriction 
	$$
	[1] \times (\Hor(\P)(y,z) \times \Hor(\P)(x,y)) \rightarrow [1] \times (\P_2 \times_{\P_0^{\times 2}} \{(x,z)\})  \rightarrow \Hor(\Q)(Ix, Iz).
	$$
	Here we used that $\P_2 \simeq \P_1 \times_{\P_0} \P_1$.
\end{remark}

\begin{example}
	Let $\E$ be an $\infty$-topos. 
	In \ref{section.fib-equipments}, we will construct a horizontally locally reflective lax functor 
	$$
	\CCAT_\infty(\E) \rightarrow \SSPAN(\Cat_\infty(\E))
	$$
	that acts as the identity on objects and arrows, and carries proarrows to their classifying \textit{internal two-sided discrete fibrations}.
\end{example}

We have the following global characterization of horizontal locally reflective inclusions:

\begin{proposition}\label{prop.hor-refl-char-equipments}
	Suppose that $i : \Q \rightarrow \P$ is a lax functor between double $\infty$-categories. Let 
	us write 
	$$j : \Q_1 \rightarrow \P_1 \times_{\P_0^{\times 2}} \Q_0^{\times 2}$$
	for the fibered functor above $\Q_0^{\times 2}$ induced by $i_1$. Then $i$ is horizontally locally reflective if the following two conditions are met: 
	\begin{enumerate}[label=(\arabic*')]
		\item $j$ is fully faithful and admits a left adjoint fibered above $\Q_0^{\times 2}$,
		\item the natural transformation $[1] \times \Q_2 \rightarrow \P_1$ that sits in the lax naturality square 
		\[
		\begin{tikzcd}
			\Q_2 \arrow[r, "i_2"]\arrow[d,"d_1^*"'] &|[alias=f]| \P_2\arrow[d,"d_1^*"] \\
			|[alias=t]|\Q_1 \arrow[r,"i_1"'] & \P_1, 
			\arrow[from=f,to=t,Rightarrow]
		\end{tikzcd}
		\]
		and necessarily factors through $j$, is local with respect to $j$ component-wise.
	\end{enumerate}
	The converse statement is true if $\P$ and $\Q$ are $\infty$-equipments.
\end{proposition}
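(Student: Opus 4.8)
The plan is to regard the two clauses of \ref{def.hor-loc-refl} as the fiberwise shadows of (1') and (2'), and then to treat the two implications separately, reserving the equipment hypothesis for the converse. First I would record the structural observation that, for objects $x,y\in\Q$, the functor $i_{x,y}$ is exactly the fiber of $j$ over the point $(x,y)\in\Q_0^{\times 2}$: the fiber of $\P_1\times_{\P_0^{\times 2}}\Q_0^{\times 2}$ over $(x,y)$ is $\P_1\times_{\P_0^{\times 2}}\{(i(x),i(y))\}=\Hor(\P)(i(x),i(y))$, and $j$ restricts there to $i_{x,y}$. Likewise, using the Segal identification $\Q_2\simeq\Q_1\times_{\Q_0}\Q_1$, taking the fiber of the $d_1$-lax naturality $2$-cell of (2') over a triple $(x,y,z)\in\Q_0^{\times 3}$ returns precisely the composition $2$-cell of clause (2).

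With this in hand the forward implication is formal and needs no equipment hypothesis. Full faithfulness is stable under base change, and a left adjoint fibered over $\Q_0^{\times 2}$ restricts to a left adjoint on each fiber; so if (1') holds then every $i_{x,y}$ is a reflective inclusion, which is clause (1). Since the fiberwise reflector is the restriction of the global one, a $2$-cell is inverted by the $i_{x,z}$-localization exactly when it is $j$-local, and therefore the $j$-locality of the $d_1$-$2$-cell in (2') restricts fiberwise to the $i_{x,z}$-locality demanded by clause (2).

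For the converse I assume $\P$ and $\Q$ are $\infty$-equipments and must upgrade the fiberwise data of (1)--(2) to the global statements (1')--(2'). Granting (1'), clause (2') follows from (2) by running the fiberwise bookkeeping above in reverse, since every point of $\Q_2$ determines a triple of objects. The real work is (1'). Here I would use the characterization of equipments by (co)cartesian fibrations \cite[Corollary 3.28]{EquipI}: the source--target projection makes $\Q_1\to\Q_0^{\times 2}$ a cartesian fibration, and $E:=\P_1\times_{\P_0^{\times 2}}\Q_0^{\times 2}\to\Q_0^{\times 2}$ is one by base change. As $i$ is a lax functor between equipments it preserves cartesian cells (\ref{cor.lax-fun-eq-pres-cart-cells}), so $j$ carries cartesian edges to cartesian edges over $\Q_0^{\times 2}$. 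A map of cartesian fibrations preserving cartesian edges and fiberwise fully faithful is fully faithful, giving the first half of (1'); for the second half I would apply the standard fiberwise criterion for relative left adjoints: the reflectors $L_{x,y}$ assemble into a left adjoint fibered over $\Q_0^{\times 2}$ if and only if, for each edge $\beta$ of $\Q_0^{\times 2}$, the Beck--Chevalley mate $L_{x,y}\,\beta^*\to\beta^*L_{x',y'}$ comparing reflection with cartesian transport is invertible.

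Checking this mate is the crux, and is where clause (2) enters. An edge $\beta$ is a pair of arrows $(f,g)$, and by \ref{prop.cart-comp-conj} its cartesian transport on proarrows is the restriction $F\mapsto\cart{i(g)}{F}{i(f)}$. Normality of $i$ gives $i(g)^\circledast\simeq i(g^\circledast)$ and $i(f)_\circledast\simeq i(f_\circledast)$ (\ref{cor.normal-lax-fun-eq-pres-compconj}), so cartesian transport is horizontal composition with proarrows coming from $\Q$. Clause (2) is exactly the statement that the reflectors are compatible with such horizontal composites---the composition laxity cells are local equivalences---so inserting the (local) unit $F\to i\,L_{x,y}(F)$ and applying (2) on each side identifies $L_{x,y}(\cart{i(g)}{F}{i(f)})$ with $\cart{g}{L_{x',y'}(F)}{f}$, i.e.\ the mate is invertible. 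I expect the one genuinely delicate point to be verifying that horizontal composition with a companion or conjoint preserves $j$-local equivalences, so that the middle factor $F$ may legitimately be replaced by its reflection; this is again supplied by clause (2), applied one side at a time. Granting this, the relative left adjoint exists and (1') is complete.
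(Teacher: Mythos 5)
Your forward implication coincides with the paper's argument and is fine. The genuine gap is in the converse, in the step from the fiberwise condition (1) to the fibered condition (1'). Condition (1') only asks for a left adjoint to $j$ that is \emph{fibered} over $\Q_0^{\times 2}$, i.e.\ a relative left adjoint; it does not ask that this left adjoint preserve cartesian edges. For a functor between cartesian fibrations that preserves cartesian edges --- which $j$ does, since $i$ preserves cartesian cells by \ref{cor.lax-fun-eq-pres-cart-cells} --- a relative left adjoint exists as soon as fiberwise left adjoints exist: this is exactly \cite[Proposition 7.3.2.6]{HA}, and it is all the paper invokes; no Beck--Chevalley condition enters. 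The ``standard fiberwise criterion'' you state, requiring invertibility of the mates $L_{x,y}\beta^* \to \beta^* L_{x',y'}$, is instead the criterion for the assembled left adjoint to be a \emph{map of cartesian fibrations}, a strictly stronger property than what (1') demands.

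Moreover, that stronger property genuinely fails under the hypotheses of the proposition, so the part of your argument that purports to verify the mate condition cannot be repaired. Take $\Q = \CCAT_\infty$, $\P = \SSPAN(\Cat_\infty)$ and $i = \rho$ the span representation, which is horizontally locally reflective by \ref{prop.int-ccat-fib}; here $L_{x,y}$ carries a span $(p,q)$ to $\cocart{q}{}{p}$, cartesian transport in $\P$ is pullback of spans, and cartesian transport in $\Q$ is restriction of proarrows. Let $\beta = (f,f)$ with $f : \{0,1\} \hookrightarrow [1]$ the inclusion of the endpoints, and test the mate on the diagonal span $\Delta$ of $[1]$. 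Reflecting first gives $L(\Delta) \simeq \id_{[1]}$, whose restriction $\cart{f}{\id_{[1]}}{f}$ has the nonempty value $\map_{[1]}(0,1) \simeq \ast$ at the pair $(0,1)$; pulling back first gives the diagonal span of the discrete category $\{0,1\}$, whose reflection $\id_{\{0,1\}}$ has empty value there. So the mate $\id_{\{0,1\}} \to \cart{f}{\id_{[1]}}{f}$ is not invertible, even though all hypotheses of the converse hold. Concretely, the step you yourself flagged as delicate --- that horizontal composition with $i(f_\circledast)$ and $i(g^\circledast)$ preserves $j$-local equivalences --- is precisely what fails in this example: clause (2) of \ref{def.hor-loc-refl} only says that the laxity cells $i(G)\circ i(F) \to i(G\circ F)$ are local, and this does not imply that composition with a fixed $i(G)$ preserves local equivalences. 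The repair is to drop the mate condition entirely and quote \cite[Proposition 7.3.2.6]{HA} directly, as the paper does.
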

\begin{proof}
If (1') holds, then it is readily verified that condition (2') is equivalent to condition (2) of \ref{def.hor-loc-refl}. 
Condition (1) of this definition is precisely the assertion that 
$i$ is a fully faithful right adjoint on the fibers above $\Q_0^{\times 2}$. Hence, if (1') is met then (1) follows.

In the case that $\P$ and $\Q$ are $\infty$-equipments, then (1) implies (1'). This follows from the observation 
that $i$ preserves cartesian arrows on account of \ref{cor.lax-fun-eq-pres-cart-cells} 
and the fiberwise characterization of fibered adjoints between cartesian fibrations \cite[Proposition 7.3.2.6]{HA}.
\end{proof}

\begin{corollary}
	Every reflective inclusion of double $\infty$-categories is horizontally locally reflective.
\end{corollary}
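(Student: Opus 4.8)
The plan is to verify that a reflective inclusion $i : \Q \rightarrow \P$ satisfies the two conditions (1') and (2') of \ref{prop.hor-refl-char-equipments}; since the implication we need from that proposition carries no equipment hypothesis, this suffices. Write $L$ for the left adjoint of $i$. By \ref{prop.lax-double-adjunctions-char}, the functor $L$ is strict and each level $L_n : \P_n \rightarrow \Q_n$ is left adjoint to $i_n : \Q_n \rightarrow \P_n$; as $i$ is fully faithful level-wise, each $i_n$ exhibits $\Q_n$ as a reflective subcategory of $\P_n$ with reflector $L_n$, and the counit $L i \simeq \id_\Q$ is invertible.

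To establish (1'), I would first note that the square relating the source-target projections $\P_1 \rightarrow \P_0^{\times 2}$ and $\Q_1 \rightarrow \Q_0^{\times 2}$ along $i_1$ and $i_0^{\times 2}$ commutes, since the two vertex inclusions $[0] \rightarrow [1]$ are inert and $i$ is a lax functor. Because $i_0^{\times 2}$ is fully faithful, the projection $\pi : \P_1 \times_{\P_0^{\times 2}} \Q_0^{\times 2} \rightarrow \P_1$ is fully faithful (a base change of $i_0^{\times 2}$) and identifies the pullback with the full subcategory of $\P_1$ on those proarrows whose source-target lies in the essential image of $i_0^{\times 2}$. From $i_1 = \pi \circ j$ and full faithfulness of $i_1$ and $\pi$ it follows that $j$ is fully faithful. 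Restricting $L_1$ to this full subcategory gives the candidate fibered reflector $\bar L$: a proarrow $p$ over $(x,y) \in \Q_0^{\times 2}$ is sent to $L_1(p)$, whose source-target is $L_0^{\times 2}(s,t)(p) \simeq L_0^{\times 2} i_0^{\times 2}(x,y) \simeq (x,y)$, so $\bar L$ lies over $\Q_0^{\times 2}$. Full faithfulness of $j$ yields $\bar L \dashv j$, and the unit $p \rightarrow i_1 L_1(p)$ projects under the source-target map to the unit of $L_0^{\times 2} \dashv i_0^{\times 2}$ at the point $(s,t)(p)$, which lies in the essential image of $i_0^{\times 2}$ and is therefore inverted. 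Hence the adjunction is fibered over $\Q_0^{\times 2}$, giving (1').

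For (2'), I would show that the component at each $\sigma \in \Q_2$ of the lax naturality 2-cell for the composition face map $d_1$ is inverted by $\bar L$, i.e.\ by $L_1$. The key observation is that $L$ is strict, so the lax naturality 2-cells of the composite $L \circ i$ are obtained by applying $L$ level-wise to those of $i$; but $L \circ i \simeq \id_\Q$ and $\id_\Q$ is strict, so all lax naturality 2-cells of $L \circ i$ are equivalences. Applying $L_1$ to the $d_1$-component of $i$ at $\sigma$ and using $L_1 i_1 \simeq \id$ and $L_2 i_2 \simeq \id$ therefore produces an equivalence, which is exactly the required locality. Invoking \ref{prop.hor-refl-char-equipments} then concludes that $i$ is horizontally locally reflective.

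The step I expect to be most delicate is the fibered-ness of the reflector in (1'): one must check not merely that $\bar L \dashv j$ holds, but that the unit becomes invertible after projecting to $\Q_0^{\times 2}$, which is precisely where the reflectivity of $i_0^{\times 2}$ (the unit being invertible on its essential image) is used. The verification in (2') is conceptually clean once the comparison cells of $L \circ i$ are identified with those of the identity, and the remaining bookkeeping with the face maps is routine.
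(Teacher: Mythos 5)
Your proof is correct and takes essentially the same route as the paper: both verify conditions (1') and (2') of \ref{prop.hor-refl-char-equipments}, with the fibered reflector given by restricting $L_1$ to $\P_1 \times_{\P_0^{\times 2}} \Q_0^{\times 2}$. The only minor divergence is in (2'), where you deduce that $L_1$ inverts the naturality cell of $i$ at $d_1$ from strictness of $L$ together with $L \circ i \simeq \id_\Q$, whereas the paper identifies that cell explicitly as the mate $d_1^*i_2 \rightarrow i_1L_1d_1^*i_2 \simeq i_1d_1^*L_2i_2 \simeq i_1d_1^*$ (citing \cite[Corollary 5.11]{FunDblCats}) and applies $L_1$ to that formula directly.
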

\begin{proof}
	Let $i : \Q \rightarrow \P$ be a reflective inclusion of double $\infty$-categories. Then this admits a left adjoint $L : \P \rightarrow \Q$. In this case, 
	the composite functor $$K : \P_1 \times_{\P_0^{\times 2}} \Q_0^{\times 2} \rightarrow \P_1 \xrightarrow{L_1} \Q_1$$ 
	is a left adjoint to the functor $j$ of \ref{prop.hor-refl-char-equipments} so that $Kj \simeq \id$, and this is fibered above $\Q_0^{\times 2}$. 
	Thus condition (1') of \ref{prop.hor-refl-char-equipments} is fulfilled. Now, the 2-cell filling the lax naturality square \[
		\begin{tikzcd}
			\Q_2 \arrow[r, "i_2"]\arrow[d,"d_1^*"'] &|[alias=f]| \P_2\arrow[d,"d_1^*"] \\
			|[alias=t]|\Q_1 \arrow[r,"i_1"'] & \P_1, 
			\arrow[from=f,to=t,Rightarrow]
		\end{tikzcd}
		\] 
	is precisely the mate 
	$$
	d_1^*i_2 \rightarrow i_1L_1d_1^*i_2 \xrightarrow{\simeq} i_1d_1^*L_2i_2 \xrightarrow{\simeq} i_1d_1^*, 
	$$
	see \cite[Corollary 5.11]{FunDblCats}. Thus if we apply $L_1$ to this mate, we obtain an equivalence. This implies that condition (2') of 
	\ref{prop.hor-refl-char-equipments} is met.
\end{proof}

Our main interest in horizontally locally reflective lax functors stems from the following observation:

\begin{proposition}\label{prop.hor-refl-strong}
	Suppose that $i : \Q \rightarrow \P$ is a horizontally locally reflective lax functor. Let 
	$W : x \rightarrow y$ be a horizontal arrow and $z$ be an object of $\Q$. Suppose that the post-composition functor 
	$$
	iW \circ (-) : \Hor(\P)(i(z), i(x)) \rightarrow \Hor(\P)(i(z), i(y))
	$$ 
	has a right adjoint ${}^{iW}{(-)}$.
	Then the following assertions are equivalent:
	\begin{enumerate}
		\item the functor $iW \circ (-)$ preserves local horizontal 2-cells,
		\item the functor ${}^{iW}{(-)}{}$ preserves local horizontal arrows, 
		\item the functor
		$$
		W \circ (-) : \Hor(\Q)(z,x) \rightarrow \Hor(\Q)(z,y)
		$$ 
		admits a right adjoint ${}^{W}{(-)}{}$ as well.
	\end{enumerate}
	In this case, there is a commutative diagram 
	\[
		\begin{tikzcd}
			\Hor(\Q)(z,y) \arrow[r, "{}^{W}{(-)}{}"]\arrow[d] &  \Hor(\Q)(z,x)\arrow[d] \\
			\Hor(\P)(i(z), i(y)) \arrow[r, "{}^{iW}{(-)}{}"] & \Hor(\P)(i(z),i(x)).
		\end{tikzcd}
	\]
	Moreover, there is a similar statement for functors of pre-composition with $W$.
\end{proposition}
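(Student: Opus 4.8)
The plan is to recognise the three assertions as the familiar obstructions to descending the adjunction $iW \circ (-) \dashv {}^{iW}(-)$ along the reflective inclusions $i_{z,x}$ and $i_{z,y}$. Write $L$ and $L'$ for the reflectors (left adjoints) of $i_{z,x}$ and $i_{z,y}$, and abbreviate $F := iW\circ(-)$ and $U := {}^{iW}(-)$. The first step is to extract from condition (2) of \ref{def.hor-loc-refl} the key identity
\[
    W \circ (-) \simeq L' \circ F \circ i_{z,x}
\]
on $\Hor(\Q)(z,x)$. Indeed, for $G \in \Hor(\Q)(z,x)$ the lax comparison $iW \circ i_{z,x}(G) \to i_{z,y}(W\circ G)$ is a local 2-cell; since $L'$ inverts local 2-cells and satisfies $L' i_{z,y} \simeq \id$, applying $L'$ produces the displayed equivalence. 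Under this identity the conditions become: (1) $F$ preserves local 2-cells, (2) $U$ preserves local objects, and (3) the functor $L' F i_{z,x}$ is a left adjoint.

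I would then prove $(1)\Leftrightarrow(2)$ by a formal orthogonality argument, independent of the double categorical context. For a reflective inclusion the local objects are exactly those right orthogonal to the local 2-cells, and the local 2-cells are exactly those left orthogonal to the local objects. Hence $U(M)$ is a local horizontal arrow for every local $M \in \Hor(\P)(i(z),i(y))$ precisely when $\map(s, U(M))$ is an equivalence for every local 2-cell $s$ and every such $M$; transposing along $F \dashv U$ this reads $\map(Fs, M) \simeq \ast$-equivalence for all local $M$, which is exactly the assertion that $Fs$ is a local 2-cell. This is condition (1).

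For $(2)\Rightarrow(3)$ and the commutative square, condition (2) says that $U$ carries the essential image of $i_{z,y}$ into that of $i_{z,x}$, so the composite $U \circ i_{z,y}$ factors through $i_{z,x}$, yielding a functor ${}^{W}(-) : \Hor(\Q)(z,y) \to \Hor(\Q)(z,x)$ with $i_{z,x}\circ {}^{W}(-) \simeq U \circ i_{z,y}$; this is the asserted diagram. Using the key identity and full faithfulness of $i_{z,x}$, one checks directly that $\map(W\circ G, H) \simeq \map(G,\,{}^{W}H)$ for $G \in \Hor(\Q)(z,x)$ and $H \in \Hor(\Q)(z,y)$, so that ${}^{W}(-)$ is right adjoint to $W\circ(-)$.

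The hard part will be $(3)\Rightarrow(2)$. Here I would argue by mate calculus: the lax comparison square, together with the two adjunctions $F \dashv U$ and $W\circ(-) \dashv {}^{W}(-)$, produces a canonical comparison 2-cell $i_{z,x}\circ{}^{W}(-) \Rightarrow U\circ i_{z,y}$, and it suffices to show that this mate is invertible, for its source is a local object and invertibility then forces $U\circ i_{z,y}$ to land among the local objects, which is (2). The delicate point is that invertibility of this mate amounts to the agreement of the two composite left adjoints $\bigl(W\circ(-)\bigr)\circ L$ and $L'\circ F$, each equipped with the displayed right adjoint; this agreement is in turn equivalent to (1). The way to break the apparent circularity is to test the mate only against local objects: the comparison 2-cell is assembled from local 2-cells, hence is inverted by the reflector $L$, and then the orthogonality established in the second step shows that being inverted by $L$ together with being detected by the local objects already forces an equivalence. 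This is the step where horizontal local reflectivity is used in an essential way, beyond the level-wise reflective inclusions. Finally, the statement for pre-composition $(-)\circ W$ follows by running the entire argument inside the horizontal opposite $\P^{\hop}$, under which pre- and post-composition with proarrows are interchanged.
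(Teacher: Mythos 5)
Your first three steps are correct and, modulo packaging, they are exactly the paper's own argument. The paper proves (1)$\Leftrightarrow$(2) by the same orthogonality remark (it is dismissed there as "clear"), and it obtains (2)$\Rightarrow$(3) together with the commutative square from the chain of natural equivalences
\[
\map_{\Hor(\Q)(z,y)}(W \circ F, G) \simeq \map_{\Hor(\P)(i(z),i(y))}(i(W\circ F), iG) \simeq \map(iW \circ iF, iG) \simeq \map(iF, {}^{iW}(iG)),
\]
which uses precisely your three ingredients: full faithfulness of $i_{z,y}$, locality of the lax comparison cell (your key identity $W\circ(-) \simeq L' \circ (iW\circ(-))\circ i_{z,x}$ is an equivalent repackaging of this), and the adjunction $iW\circ(-) \dashv {}^{iW}(-)$. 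The reduction of the precomposition statement to the horizontal opposite $\P^{\hop}$ also matches.

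The genuine gap is in your step (3)$\Rightarrow$(2). Your way out of the circularity you correctly identified --- that the mate $\theta_G : i_{z,x}({}^{W}G) \to {}^{iW}(iG)$ is "assembled from local 2-cells, hence inverted by the reflector, and being inverted by the reflector together with being detected by the local objects already forces an equivalence" --- invokes a principle that is false. A 2-cell that is a local equivalence and has local source need not be invertible: invertibility follows only when the \emph{target} is also known to be local, and locality of the target ${}^{iW}(iG)$ is exactly assertion (2), the thing to be proven. For a concrete failure of the principle, consider the reflective inclusion $\S \subset \Cat_\infty$: the endpoint inclusion $\ast \to [1]$ is a local equivalence with local source, yet it is not an equivalence and its target is not local. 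Unwinding your mate, $\theta_G$ is invertible for all $G$ if and only if $iW\circ(-)$ carries the reflection units $M \to i_{z,x}L(M)$ to local 2-cells, which is a generating case of assertion (1); so your argument still assumes what it is trying to prove, and orthogonality alone cannot produce it from (3). Be aware that this direction is also the one the paper treats most tersely (it reads (2)$\Leftrightarrow$(3) off the displayed chain, although the chain by itself only identifies the restriction along $i_{z,x}$ of the presheaf represented by ${}^{iW}(iG)$, and an object of $\Hor(\P)(i(z),i(x))$ is not determined by that restriction). Your instinct that this is the hard part is sound, but the resolution you propose does not close it; any correct argument must supply extra input forcing the invertibility of $\theta_G$, beyond conditions (3) and the formal localization calculus.
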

\begin{proof}
It is clear that (1) and (2) are equivalent. Recall that 
the natural transformation in the lax square
\[
	\begin{tikzcd}
		\Hor(\Q)(z,x)\arrow[r] \arrow[d, "W \circ (-)"'] & |[alias=f]|\Hor(\P)(i(z),i(x)) \arrow[d, "iW \circ (-)"] \\
		|[alias=t]|\Hor(\Q)(z, y) \arrow[r] & \Hor(\P)(i(z), i(y))
		\arrow[from=f,to=t,Rightarrow]
	\end{tikzcd}
\]
is a local 2-cell component-wise. Consequently, 
we obtain  a chain of natural equivalences
\begin{align*}
\map_{\Hor(\Q)(z,y)}(W \circ F, G) &\simeq \map_{\Hor(\P)(i(z), i(y))}(i(W \circ F), iG)  \\
&\xleftarrow{\simeq} \map_{\Hor(\P)(i(z), i(y))}(iW \circ iF, iG) \\ &\simeq \map_{\Hor(\P)(i(z),i(x))}(iF, {}^{iW}{iG}{}).
\end{align*}
for $F \in \Hor(\Q)(z,x)$ and $G \in \Hor(\Q)(z,y)$.
From this, it follows that (2) and (3) are equivalent.
\end{proof}

\begin{remark}\label{rem.hor-refl-strong}
	Note that if $i : \Q \rightarrow \P$ is a reflective inclusion of double $\infty$-categories then  
	condition (1) of \ref{prop.hor-refl-strong} is automatically met.
\end{remark}

\begin{corollary}\label{cor.refl-hor-closed}
	The following two classes of double $\infty$-categories are closed under reflective inclusions:
	\begin{itemize}
		\item the horizontally closed ones,
		\item the horizontally locally presentable ones.
	\end{itemize}
\end{corollary}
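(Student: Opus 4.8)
The plan is to reduce both statements to Proposition~\ref{prop.hor-refl-strong}. Recall that every reflective inclusion is horizontally locally reflective, so the proposition applies to any reflective inclusion $i : \Q \rightarrow \P$; moreover, by Remark~\ref{rem.hor-refl-strong} its condition~(1) is automatically satisfied for such an $i$. The second observation is that the two-sided composition functor $G \circ (-) \circ F$ is the composite of a post-composition with $G$ and a pre-composition with $F$, which commute, so a double $\infty$-category is horizontally closed precisely when every post-composition functor $W \circ (-)$ and every pre-composition functor $(-) \circ W$ admits a right adjoint.

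For the first class, assume $\P$ is horizontally closed and fix a proarrow $W : x \rightarrow y$ and an object $z$ of $\Q$. Since $\P$ is horizontally closed, the functor $iW \circ (-) : \Hor(\P)(i(z), i(x)) \rightarrow \Hor(\P)(i(z), i(y))$ is a left adjoint and so admits a right adjoint, which is exactly the running hypothesis of Proposition~\ref{prop.hor-refl-strong}. As condition~(1) holds by Remark~\ref{rem.hor-refl-strong}, the equivalent condition~(3) shows that $W \circ (-) : \Hor(\Q)(z,x) \rightarrow \Hor(\Q)(z,y)$ admits a right adjoint. The companion statement of Proposition~\ref{prop.hor-refl-strong} for pre-composition treats $(-) \circ W$ identically. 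By the first paragraph, $\Q$ is horizontally closed.

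For the second class, assume $\P$ is horizontally locally presentable, hence in particular horizontally closed; by the previous paragraph $\Q$ is horizontally closed, and it remains to see that every mapping $\infty$-category $\Hor(\Q)(x,y)$ is presentable. As $i$ is horizontally locally reflective, condition~(1) of Definition~\ref{def.hor-loc-refl} realizes $\Hor(\Q)(x,y)$ as a reflective subcategory of $\Hor(\P)(i(x), i(y))$, and the latter is presentable by assumption. A reflective subcategory of a presentable $\infty$-category is presentable as soon as the associated localization is accessible \cite[Proposition 5.5.4.15]{HTT}, which would yield the claim.

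The point that requires genuine care is precisely this accessibility, and I expect it to be the main obstacle. I would establish it by producing the reflector from the level-wise structure of the adjunction: writing $L$ for the left adjoint of $i$, Proposition~\ref{prop.lax-double-adjunctions-char} shows that $L$ is strict with $L_n \dashv i_n$, so each $i_n : \Q_n \rightarrow \P_n$ is a reflective inclusion of $\infty$-categories and the reflector on $\Hor(\P)(i(x), i(y))$ is induced by the colimit-preserving functor $L_1$. What remains is to verify that the inclusion $i_1$ preserves sufficiently filtered colimits, equivalently that the local horizontal arrows are closed under such colimits; granting this, the localization is accessible and the cited result applies. The remaining verifications, namely the factorization of $G \circ (-) \circ F$ and the fact that composites of left adjoints are left adjoints, are purely formal.
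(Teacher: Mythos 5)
Your treatment of the first bullet coincides with the paper's: the corollary is stated there without proof, as an immediate consequence of \ref{prop.hor-refl-strong} and \ref{rem.hor-refl-strong}, and your factorization of $G \circ (-) \circ F$ into a post-composition followed by a pre-composition, each handled by that proposition (with its condition (1) granted by the remark, and the pre-composition case covered by the proposition's final sentence), is exactly the intended argument. This part is complete and correct.

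For the second bullet you are in fact more careful than the paper, which tacitly uses that a reflective subcategory of a presentable $\infty$-category is again presentable. As you note, this requires the localization to be accessible. Be aware, however, that the verification you defer --- that $i_1$, equivalently each $i_{x,y}$, preserves sufficiently filtered colimits --- cannot be extracted from the hypotheses: normality, level-wise fully faithfulness, and the existence of a (strict) left adjoint impose no accessibility constraint whatsoever on the inclusion, so this is genuinely additional information rather than a routine remaining check. Your conditional phrasing is therefore where the matter actually stands at this level of generality, and the same caveat applies to the paper's unproved statement. The issue is harmless in the paper's one application of the presentability clause, namely \ref{cor.int-ccat-hor-closed}: there the reflective inclusion is $i_* : \CCAT_\infty(\E) \rightarrow [T^\op, \CCAT_\infty]$ arising from a geometric embedding (\ref{ex.geom-emb-cotensor}), whose direct image is an accessible functor; the induced reflections on horizontal mapping $\infty$-categories are then accessible, and presentability follows from \cite[Proposition 5.5.4.15]{HTT}.
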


\begin{corollary}\label{cor.int-ccat-hor-closed}
	If $\E$ is an $\infty$-topos, then $\CCAT_\infty(\E)$ is horizontally locally presentable.
\end{corollary}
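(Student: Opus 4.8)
The plan is to reduce the statement to a property of the vertical cotensor product $[T^\op, \CCAT_\infty]$ and then verify that property directly. First I would invoke \ref{ex.geom-emb-cotensor}, which supplies a reflective inclusion $i_* : \CCAT_\infty(\E) \rightarrow [T^\op, \CCAT_\infty]$ for a small $\infty$-category $T$, together with \ref{cor.refl-hor-closed}, which tells us that horizontal local presentability is inherited along reflective inclusions. This immediately reduces the problem to showing that $[T^\op, \CCAT_\infty]$ is horizontally locally presentable.

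Next I would prove the more general claim that $[X, \P]$ is horizontally locally presentable whenever $X$ is a small $\infty$-category and $\P$ is a horizontally locally presentable $\infty$-equipment, and specialize to $X = T^\op$ and $\P = \CCAT_\infty$ (the base case $\CCAT_\infty$ being horizontally locally presentable because its horizontal mapping $\infty$-categories are presheaf $\infty$-categories on which profunctor composition is colimit-preserving in each variable). Using \ref{rem.cotensor-infty-cat}, I would identify $[X,\P]_n \simeq \fun(X, \P_n)$ compatibly with the simplicial structure, so that the source-target projection of $[X,\P]$ is $\fun(X,-)$ applied to that of $\P$. It follows that for $h,k \in [X,\P]_0 = \fun(X,\P_0)$ the horizontal mapping $\infty$-category $\Hor([X,\P])(h,k)$ is the $\infty$-category of sections over $X$ of the fibration $(h,k)^*\P_1 \rightarrow X$ obtained by pulling back the source-target projection $\P_1 \rightarrow \P_0^{\times 2}$ along $(h,k)$.

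To establish presentability of these mapping $\infty$-categories, I would use that $\P$, being an $\infty$-equipment, has a source-target projection that is a cocartesian fibration whose fibers are the $\Hor(\P)(x,y)$ and whose cocartesian pushforwards are given by composing with companions and conjoints, hence are colimit-preserving (as left adjoints, cf.\ \ref{rem.lax-adj-hor}). Thus the pulled-back fibration is classified by a functor $X \rightarrow \mathrm{Pr}^L$, and since $X$ is small its $\infty$-category of sections $\Hor([X,\P])(h,k)$ is presentable, with colimits computed fiberwise. For horizontal closedness, I would then observe that the two-sided composition $G \circ (-) \circ F$ on these section $\infty$-categories is induced fiberwise by the compositions $G(x) \circ (-) \circ F(x)$ in $\P$, each a left adjoint by horizontal closedness of $\P$ and hence colimit-preserving; since colimits of sections are fiberwise, $G \circ (-) \circ F$ preserves all colimits between presentable $\infty$-categories and so admits a right adjoint by the adjoint functor theorem.

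I expect the main obstacle to be the identification and presentability of the horizontal mapping $\infty$-categories as section $\infty$-categories of a $\mathrm{Pr}^L$-valued fibration; once this is in place, horizontal closedness follows formally from the pointwise nature of the composition together with the adjoint functor theorem. Some care is needed to confirm that the fiberwise-computed colimits really are the colimits in the section $\infty$-category, which is exactly what the colimit-preserving cocartesian transport guarantees.
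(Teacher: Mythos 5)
Your proposal is correct, and its skeleton is exactly the paper's proof: choose a geometric embedding $i : \E \rightarrow \PSh(T)$, use the induced reflective inclusion $i_* : \CCAT_\infty(\E) \rightarrow [T^\op, \CCAT_\infty]$ of \ref{ex.geom-emb-cotensor}, and invoke \ref{cor.refl-hor-closed} to reduce to the cotensor. The one place you diverge is the final ingredient: the paper disposes of the cotensor case by citing \cite[Theorem 6.9]{FunDblCats}, which asserts horizontal local presentability of such cotensors, whereas you prove it directly. Your direct argument is sound: by \ref{rem.cotensor-infty-cat} the cotensor is computed level-wise, so $\Hor([X,\P])(h,k)$ is the $\infty$-category of sections of the cocartesian fibration $(h,k)^*\P_1 \rightarrow X$; since $\P$ is an equipment, the transport functors are the functors $\cocart{g}{(-)}{f}$, which are left adjoints by \ref{rem.lax-adj-hor} (their right adjoints being $\cart{g}{(-)}{f}$), so the fibration is classified by a functor $X \rightarrow \mathrm{Pr}^L$ and its section $\infty$-category is presentable with fiberwise colimits (a standard fact about lax limits of $\mathrm{Pr}^L$-valued diagrams, which you should cite or prove); horizontal closedness then follows from the pointwise nature of composition plus the adjoint functor theorem. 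What the paper's route buys is brevity, by black-boxing prior work; what yours buys is self-containedness and a slightly more general statement ($[X,\P]$ is horizontally locally presentable for any small $\infty$-category $X$ and any horizontally locally presentable $\infty$-equipment $\P$), which is essentially the content of the theorem the paper cites, established by the same fibrational mechanism.
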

\begin{proof}
	To see this, we choose a geometric embedding $i : \E \rightarrow \PSh(T)$, where $T$ is some 
	small $\infty$-category. Then we obtain an induced reflective inclusion
	$i_* : \CCAT_\infty(\E) \rightarrow [T^\op, \CCAT_\infty]$, so that the result follows from \ref{cor.refl-hor-closed} and \cite[Theorem 6.9]{FunDblCats}.
\end{proof}

\subsection{Proper and smooth lax functors}\label{ssection.proper-smooth} We will now investigate two other classes 
of lax functors that contain the reflective inclusions. The definition 
makes use of the
notion of exact squares as developed in \cite[Definition 6.24]{EquipI}.

\begin{definition}\label{def.proper-pres-cones}
	Let $v : \Q \rightarrow \P$ be a lax functor between $\infty$-equipments. Then $v$ is called \textit{proper} 
	if it is normal and admits a left adjoint $u : \P \rightarrow \Q$ so that for any arrow 
	$f : x\rightarrow y$ in $\Q$, the commutative naturality square 
	\[
		\begin{tikzcd}
			uv(x) \arrow[d,"uv(f)"']\arrow[r] & |[alias=f]|x\arrow[d,"f"] \\
			|[alias=t]| uv(y) \arrow[r] & y
			%\arrow[from=f,to=t, Rightarrow, "\sim"{sloped,swap,pos=0.35}]
		\end{tikzcd}
	\]
	provided by the counit of this adjunction, is exact in $\Q$.
	Dually, $v$ is called \textit{smooth} if it is normal and admits a left adjoint $u : \P \rightarrow \Q$ so that for any arrow 
	the commutative square 
	\[
		\begin{tikzcd}
			uv(x) \arrow[r,"uv(f)"]\arrow[d] & |[alias=f]|uv(y)\arrow[d] \\
			|[alias=t]|x \arrow[r,"f"] & y,
			%\arrow[from=f,to=t, Rightarrow, "\sim"{sloped,swap,pos=0.35}]
		\end{tikzcd}	
	\]
	is exact for every arrow $f : x\rightarrow y$.
\end{definition}

\begin{remark}
	This terminology is chosen so that the lax functors between 
	the slice equipments of $\CCAT_\infty$ given by base change along a functor  will be proper (resp.\ smooth) if and only if 
	the functor is proper (resp.\ smooth) in the sense of Cisinski \cite{Cisinski}. We will explain this in the next subsection.
\end{remark}

\begin{example}\label{ex.flat-geom-emb}
	Every reflective inclusion of double $\infty$-categories is proper and smooth as the counits will be equivalences in this case.
\end{example}

The main feature of proper lax functors is that they preserve the \textit{proarrows of cones under diagrams} (see \cite[Definition 6.9]{EquipI}) in the associated formal category theories. 

\begin{proposition}\label{prop.proper-pres-cones}
	Let $v : \Q \rightarrow \P$ be a proper lax functor between $\infty$-equipments. Suppose that $f : i \rightarrow x$ and $w : i \rightarrow j$ are arrows 
	of $\Q$ so that there exists a proarrow $C$ of $w^\circledast$-cones under $f$.
	Then $vC$ is a proarrow of $v(w)^\circledast$-cones under $v(f)$.
\end{proposition}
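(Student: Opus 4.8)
The plan is to reduce the statement to a single assertion about exactness. Recall from \cite[Definition 6.9]{EquipI} that the structure exhibiting $C$ as a proarrow of $w^\circledast$-cones under $f$ is encoded by a universal cone cell, built from $f$, $w$ and $C$ together with the companion $f_\circledast$ and the conjoint $w^\circledast$. Using the characterization of cartesian cells in \ref{prop.cart-comp-conj}, this universal property can be rephrased as an exactness condition, in the sense of \cite[Definition 6.24]{EquipI}, on an associated lax square in $\Q$. Thus it suffices to produce the analogous cone cell on $vC$ and to verify that the corresponding lax square, now assembled from $v(f)$, $v(w)$ and $vC$ in $\P$, is again exact.

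First I would transport the purely \emph{formal} part of this data through $v$. Since $v$ is normal, \ref{cor.normal-lax-fun-eq-pres-compconj} yields equivalences $v(f)_\circledast \simeq v(f_\circledast)$ and $v(w)^\circledast \simeq v(w^\circledast)$, so that the companion and conjoint occurring in the square are preserved. Likewise, \ref{cor.lax-fun-eq-pres-cart-cells} guarantees that every cartesian cell appearing in the cone cell of $C$ — in particular the companionship and conjunction counit cells of \ref{prop.cart-comp-conj} through which the cone is factored — is carried by $v$ to a cartesian cell, and that $v$ commutes with the formation of restrictions $\cart{g}{F}{f}$. Applying $v$ to the universal cone cell of $C$ and composing with these comparison equivalences therefore produces a canonical candidate cone cell on $vC$, reducing the proposition to the assertion that its associated lax square is exact in $\P$.

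The exactness of this image square is the crux, and it is precisely the point at which a general lax functor fails: whereas cartesian cells pass through $v$ freely, the cocartesian — equivalently, exact — content of a cone, which is its genuinely colimit-theoretic part, is not preserved by lax functors in general. This is where properness enters. By \ref{def.proper-pres-cones}, $v$ carries a left adjoint $u$ (which is strict, by the characterization of \ref{prop.lax-double-adjunctions-char}) whose counit naturality squares are exact. I would exploit this through the induced local adjunction on horizontal mapping $\infty$-categories supplied by \ref{prop.lax-adj-hor}: transposing along $u \dashv v$ identifies maps into $vC$ with maps into $C$, hence with $w^\circledast$-cones under $f$ by the universal property of $C$, and the remaining task is to match these with $v(w)^\circledast$-cones under $v(f)$. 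This matching is exactly a Beck--Chevalley-type comparison; it becomes an equivalence after pasting in the exact counit squares furnished by properness and invoking the stability of exact squares under pasting from \cite[Section 6]{EquipI}. The main obstacle is thus the verification of this final exactness, which is resolved precisely because the counit squares of a proper functor are exact; carrying out the identification naturally in the test proarrow then shows that $vC$ represents $v(w)^\circledast$-cones under $v(f)$, as required.
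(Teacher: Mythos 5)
Your closing paragraph is, in substance, the paper's own proof: the paper establishes the statement by exactly the chain of natural equivalences you outline there --- transpose $\map(F,vC) \simeq \map(\epsilon_{j,\circledast}u(F)\epsilon_x^\circledast, C)$ along the local adjunction of \ref{prop.lax-adj-hor} (taken at the counits $\epsilon_x$, $\epsilon_j$, whose adjunct arrows are identities), apply the universal property of $C$, use exactness of the counit naturality square at $w$ (this is precisely where properness enters) as the Beck--Chevalley equivalence $w^\circledast\epsilon_{j,\circledast} \simeq \epsilon_{i,\circledast}(uv(w))^\circledast$, commute the strict left adjoint $u$ past horizontal composition and conjoints, transpose back, and conclude with $v(f^\circledast)\simeq v(f)^\circledast$ from \ref{cor.normal-lax-fun-eq-pres-compconj}, all naturally in the test proarrow $F : v(x) \rightarrow v(j)$. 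Naturality in $F$ then exhibits $vC$ as the proarrow of $v(w)^\circledast$-cones under $v(f)$, just as you say.

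One caveat about your set-up: the reduction announced in your first two paragraphs is not well-formed, and had you pursued it literally the argument would have stalled. The universal property of a proarrow of cones is a representability (right-lifting) statement about maps of proarrows, $\map(G, C) \simeq \map(w^\circledast \circ G, f^\circledast)$; it cannot be rephrased as exactness of ``an associated lax square'' in the sense of \cite[Definition 6.24]{EquipI}, because exact squares are lax squares of \emph{vertical} arrows, while $C$ and $vC$ are general proarrows --- such a square only exists when $C$ is a conjoint, i.e.\ in the Kan-extension situation of the corollary that follows this proposition. Since your transposition argument in the final paragraph never actually uses this framing, the proof goes through regardless, but the first two paragraphs should be deleted or rewritten as the representability reduction they are implicitly replaced by.
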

\begin{proof}
This readily follows from \ref{prop.lax-adj-hor} and \ref{cor.normal-lax-fun-eq-pres-compconj}. 
Let $u : \P \rightarrow \Q$ be the
left adjoint to $v$ and write $\epsilon$ for the counit of the adjunction. 
Now for any proarrow 
$F : v(x) \rightarrow v(j)$ in $\Q$, we obtain a of chain natural equivalences
\begin{align*}
\map_{\Hor(\P)(v(x), v(j))}(F, vC)&\simeq \map_{\Hor(\Q)(x,j)}({\epsilon_{j,\circledast}}{u(F)}\epsilon_x^\circledast, C) \\
 &\simeq \map_{\Hor(\Q)(x,i)}(w^\circledast\epsilon_{j,\circledast}u(F)\epsilon_x^\circledast, f^\circledast) \\
 &\simeq \map_{\Hor(\Q)(x,i)}(\epsilon_{i,\circledast}u(v(w))^\circledast u(F)\epsilon_x^\circledast, f^\circledast) \\
 &\simeq \map_{\Hor(\Q)(x,i)}(\epsilon_{i,\circledast}u(v(w)^\circledast F)\epsilon_x^\circledast, f^\circledast) \\
 &\simeq \map_{\Hor(\P)(v(x),v(i))}({v(w)}^\circledast F, v(f^\circledast)) \\ 
 &\simeq \map_{\Hor(\P)(v(x),v(i))}(v(w)^\circledast\circ F, v(f)^\circledast).
\end{align*}
This entails that $vC$ is a proarrow of $v(w)^\circledast$-cones under $v(f)$. 
\end{proof}

The above proposition entails that proper lax functors preserve \textit{left Kan extensions} (see \cite[Definition 6.19]{EquipI}) 
and their \textit{units} (see \cite[Construction 6.20]{EquipI}).

\begin{corollary}
	Let $v : \Q \rightarrow \P$ be a 
	proper lax functor between $\infty$-equipments. Suppose that the 2-cell
	\[
		\alpha = \begin{tikzcd}
			|[alias=f]|i \arrow[r,"f"] \arrow[d, "w"'] & x \\
			j \arrow[ur, "g"'name=t]
			\arrow[from=f,to=t, Rightarrow, shorten >= 6pt]
		\end{tikzcd}
	\]
	is the unit in $\Vert(\Q)$ of a left Kan extension of $f$ along $w$. Then the image
	\[
		v(\alpha) = \begin{tikzcd}
			|[alias=f]|v(i) \arrow[r,"v(f)"] \arrow[d, "v(w)"'] & v(x) \\
			v(j) \arrow[ur, "v(g)"'name=t]
			\arrow[from=f,to=t, Rightarrow, shorten >= 6pt]
		\end{tikzcd}
	\]
	is the unit of a left Kan extension of $v(f)$ along $v(w)$ in $\Vert(\P)$.
\end{corollary}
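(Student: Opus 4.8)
The plan is to run the corollary entirely through the cone characterization of left Kan extensions from \cite[Definition 6.19]{EquipI} and \cite[Construction 6.20]{EquipI}, feeding it into \ref{prop.proper-pres-cones}. Recall that to say $\alpha$ is the unit of a left Kan extension of $f$ along $w$ is precisely to say that the conjoint $g^\circledast$, equipped with the cone structure encoded by $\alpha$ through the companionship and conjunction data, is a proarrow of $w^\circledast$-cones under $f$. Concretely, $g^\circledast$ represents the functor $F \mapsto \map_{\Hor(\Q)(x,i)}(w^\circledast \circ F, f^\circledast)$, and the universal cone is the one corresponding to $\id_{g^\circledast}$ under this representing equivalence.

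First I would invoke \ref{prop.proper-pres-cones} with $C = g^\circledast$. This immediately yields that $v(g^\circledast)$ is a proarrow of $v(w)^\circledast$-cones under $v(f)$. Since $v$ is normal, \ref{cor.normal-lax-fun-eq-pres-compconj} supplies an equivalence $v(g^\circledast) \simeq v(g)^\circledast$, so that $v(g)^\circledast$ is the proarrow of $v(w)^\circledast$-cones under $v(f)$. By \cite[Definition 6.19]{EquipI} this is exactly the statement that $v(g)$ is a left Kan extension of $v(f)$ along $v(w)$, which already proves the existence half of the claim.

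The remaining task is to identify the unit of this left Kan extension with $v(\alpha)$. By \cite[Construction 6.20]{EquipI}, the unit is extracted from the universal cone, i.e.\ from the cone on $v(g)^\circledast$ corresponding to $\id_{v(g)^\circledast}$ under the representing equivalence. Here I would trace $\id_{v(g^\circledast)}$ backwards through the explicit chain of natural equivalences appearing in the proof of \ref{prop.proper-pres-cones}: that chain is set up to be compatible with applying $v$ to the $\Q$-side, so it carries $\id_{v(g^\circledast)}$ to the image under $v$ of the universal cone on $g^\circledast$, namely the cone encoding $\alpha$. Consequently the universal cone on $v(g)^\circledast$ encodes $v(\alpha)$, whence $v(\alpha)$ is the desired unit.

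The hard part will be precisely this last bookkeeping step: one must check that the equivalences of \ref{prop.proper-pres-cones}, together with the companionship and conjunction coherences furnished by normality (\ref{cor.normal-lax-fun-eq-pres-compconj}) and by the local adjunction of \ref{prop.lax-adj-hor}, send the identity cone to the image of the identity cone --- equivalently, that $v$ commutes with the passage between the vertical unit $\alpha$ and its proarrow-theoretic avatar. All of these are instances of naturality of the constructions already in play, so I expect no genuinely new input beyond \ref{prop.proper-pres-cones}; the effort lies entirely in confirming the coherence.
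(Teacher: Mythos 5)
Your proposal is correct and follows essentially the same route as the paper: apply \ref{prop.proper-pres-cones} with $C = g^\circledast$, use \ref{cor.normal-lax-fun-eq-pres-compconj} to identify $v(g^\circledast) \simeq v(g)^\circledast$, and then identify the unit by tracing the identity cone through the chain of equivalences in the proof of \ref{prop.proper-pres-cones}. The paper likewise leaves this last identification as an inspection of those equivalences, so your bookkeeping concern is exactly where the paper's proof also ends.
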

\begin{proof}
	By definition, the conjoint of $g$ is the proarrow of cones under $f$ weighted by the conjoint $w^\circledast$. Consequently, 
	\ref{cor.normal-lax-fun-eq-pres-compconj} and \ref{prop.proper-pres-cones} imply that the conjoint of $v(g)$ is the proarrow 
	of cones under $v(f)$ weighted by the conjoint $v(w)^\circledast$. Thus $v(g)$ is the left Kan extension of $v(f)$ along $v(w)$. 
	The claim that the associated unit is precisely given by $v(\alpha)$ may be verified by inspection using the equivalences in the proof of \ref{prop.proper-pres-cones}.
\end{proof}

\begin{remark}
	There are dual statements for smooth lax functors: these preserve proarrows of cones \textit{over} diagrams and \textit{right} Kan extensions.
\end{remark}

\subsection{Example: change of base}\label{ssection.base-change} The main instances 
of proper and smooth lax functors are given by change of base functors. We will discuss this here and give some examples.
We first define slice double categories:

\begin{definition}\label{def.slices}
	Let $I$ be an $(\infty,2)$-category. Suppose that $f : I_v \rightarrow \P$ is a functor between double $\infty$-categories. Then we define the (vertical) 
	\textit{slice double $\infty$-category} $\P/f$  by the pullback square
	\[
		\begin{tikzcd}
			\P/f\arrow[d] \arrow[r] & {[[1] \times I, \P]}\arrow[d,"{(\ev_1, \ev_0)}"] \\
			\P \arrow[r] & {[I, \P]^{\times 2}}
		\end{tikzcd}
	\]
	where the bottom functor is given by the functor that selects $f$ and the diagonal functor.
\end{definition}

\begin{remark}
	A special case of this construction was discussed in \cite[Section 3.3]{FunDblCats}. Since $\infty$-equipments 
	are closed under vertical cotensor products and limits, it follows that slices of $\infty$-equipments are again 
	$\infty$-equipments.
\end{remark}

\begin{remark}\label{rem.level-wise-slice}
	In the context of \ref{def.slices}, if $I$ is an $\infty$-category then the slice double $\infty$-category $\P/f$ is computed 
	as a level-wise slice. In that case, we have $(\P/f)_n = \P_n/s^*f$ where $s$ is the degeneracy map $[n] \rightarrow [0]$.
\end{remark}

\begin{example}
	If $E$ is an object of an $\infty$-topos $\E$, then we can view $E$ as a category internal to $\E$: as the simplicial object in $\E$ that is constantly equal to $E$. See also \ref{ssection.int-cat-gen-obj}. 
	One may verify that there is a canonical equivalence 
	$$
	\CCAT_\infty(\E)/E \simeq \CCAT_\infty(\E/E).
	$$
\end{example}

\begin{definition}
	A double $\infty$-category $\P$ is said to \textit{admit (resp.\ normal, strict) pullbacks} if $\P$ admits (resp.\ normal, strict) limits 
	shaped by the free span category $\Sigma = \{s \leftarrow \top \rightarrow t\}$.
\end{definition}

\begin{proposition}
	Suppose that $\P$ is a double $\infty$-category with (normal, strict) pullbacks. Then for every arrow $f : x \rightarrow y$ in $\P$, there is a (normal, strict) lax adjunction 
	$$
	f_! : \P/x \rightleftarrows \P/y : f^*,
	$$
	where the left adjoint $f_!$ is the pushforward functor $\P/x \simeq \P/f \rightarrow \P/y$.
\end{proposition}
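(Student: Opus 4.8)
The plan is to reduce the entire statement to a level-wise computation and then invoke the characterization of lax adjunctions from \ref{prop.lax-double-adjunctions-char}. Since $\P/x$ and $\P/y$ are slices over objects, i.e.\ over diagrams indexed by the $\infty$-category $[0]$, \ref{rem.level-wise-slice} shows that they are computed level-wise: $(\P/x)_n = \P_n/s^*x$ and $(\P/y)_n = \P_n/s^*y$, where $s : [n]\rightarrow [0]$ is the degeneracy. Under the identification $\P/x \simeq \P/f$ (sending a cone over $x \xrightarrow{f} y$ to its component at $x$, the target component being forced), the pushforward $f_!$ is given in simplicial degree $n$ by post-composition with $s^*f : s^*x \rightarrow s^*y$. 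Because $\alpha^*(s^*f \circ g) \simeq s^*f \circ \alpha^*(g)$ for every structure map $\alpha$ (using that $\alpha^* s^* \simeq s^*$ on objects, so $\alpha^*$ carries $s^*f$ to $s^*f$), this post-composition is natural in $[n]$ and assembles into a \emph{strict} functor $f_! : \P/x \rightarrow \P/y$.

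Next I would produce the level-wise right adjoints. Since $\P$ admits $\Sigma$-shaped limits, \ref{prop.dbl-limits-v-shaped} shows that each $\P_n$ admits genuine pullbacks, and these are preserved by the structure maps $\alpha^* : \P_m \rightarrow \P_n$ for inert $\alpha$. In any $\infty$-category with pullbacks, post-composition with a fixed arrow is left adjoint to pullback along that arrow; hence $(f_!)_n = s^*f \circ (-)$ admits a right adjoint $(f^*)_n := (s^*f)^*$ given by pullback along $s^*f$. This is the candidate for the level-wise pullback functor $f^*$.

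It then remains to verify the mate condition of \ref{prop.lax-double-adjunctions-char}(3). For an inert map $\alpha : [n]\rightarrow [m]$, the naturality square of the strict functor $f_!$ has the restriction functors $\alpha^*$ as its vertical legs, and the associated mate $\alpha^* (f^*)_m \rightarrow (f^*)_n \alpha^*$ is exactly the Beck--Chevalley comparison between pullback and restriction. This is an equivalence precisely because $\alpha^*$ preserves pullbacks and fixes $s^*f$: concretely, $\alpha^*\bigl(s^*x \times_{s^*y} b\bigr) \simeq s^*x \times_{s^*y} \alpha^*b$. As it suffices to check this for $n,m\in\{0,1\}$, \ref{prop.lax-double-adjunctions-char} yields that $f_!$ admits the lax right adjoint $f^*$, producing the asserted lax adjunction.

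Finally I would treat the normal and strict variants in the same stroke: if $\P$ has normal (resp.\ strict) pullbacks, then by \ref{prop.dbl-limits-v-shaped} the structure maps $\alpha^*$ preserve pullbacks for surjective (resp.\ all) maps $\alpha$, so the Beck--Chevalley computation above shows the mate is invertible for those $\alpha$ as well, and \ref{prop.lax-double-adjunctions-char} then gives that $f^*$ is normal (resp.\ strict). The main obstacle is the mate (Beck--Chevalley) verification, but it reduces cleanly to preservation of pullbacks by the structure maps, which is exactly what the hypothesis on $\P$ supplies; the only real care needed is the bookkeeping identifying $f_!$ with level-wise post-composition so that its level-wise right adjoint is recognizably pullback.
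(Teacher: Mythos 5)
Your proof is correct and takes essentially the same route as the paper's, which is a three-line version of your argument: identify $f_!$ level-wise with post-composition by $s^*f$ via \ref{rem.level-wise-slice}, and then apply characterization (3) of \ref{prop.lax-double-adjunctions-char}, the mate condition being exactly your Beck--Chevalley comparison, invertible because the structure maps preserve pullbacks. The only cosmetic point is that for the normal/strict variants you invoke the converse of the ``Moreover'' in \ref{prop.dbl-limits-v-shaped}, which is stated there in one direction only; the needed converse is supplied by the ``if and only if'' for normality in \ref{prop.lax-double-adjunctions-char}.
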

\begin{proof}
	Note that the equivalence $\P/x \simeq \P/f$ follows from \ref{rem.level-wise-slice}. In particular, $f_!$ may be identified with $(s^*f)_! : \P_n/s^*x \rightarrow \P_n/s^*y$ level-wise.
	The result now readily follows from characterization (3) in \ref{prop.lax-double-adjunctions-char}.
\end{proof}

\begin{example}
	We have the following examples of double $\infty$-categories that admit strict pullbacks:
	\begin{enumerate}
		\item the $\infty$-equipment $\CCAT_\infty(\E)$ of categories internal to an $\infty$-topos $\E$ (see \ref{ex.int-ccat-dbl-limits}),
		\item the double $\infty$-category $\SSPAN(\C)$ of spans in an $\infty$-category $\C$ with finite limits.
	\end{enumerate}
\end{example}

\begin{proposition}\label{prop.proper-smooth-arrows}
	Suppose that $f$ is an arrow in an $\infty$-equipment $\P$ that admits normal pullbacks. Then the following assertions are equivalent:
	\begin{enumerate}
		\item the pullback functor $f^* : \P/y \rightarrow \P/x$ is proper,
		\item for any commutative diagram 
		\[
		\begin{tikzcd}
			i' \arrow[r]\arrow[d] & j' \arrow[r]\arrow[d] & x\arrow[d] \\
			i \arrow[r] & j \arrow[r] & y
		\end{tikzcd}
		\]
		of pullback squares in $\Vert(\P)$, the left square is exact.
	\end{enumerate}
	Similarly, the following assertions are equivalent: 
	\begin{enumerate}[label=(\arabic*')]
		\item the pullback functor $f^* : \P/y \rightarrow \P/x$ is smooth,
		\item for any commutative diagram 
		\[
		\begin{tikzcd}
			i' \arrow[r]\arrow[d] & i\arrow[d] \\
			j' \arrow[r]\arrow[d] & j\arrow[d] \\
			x \arrow[r] & y
		\end{tikzcd}
		\]
		of pullback squares in $\Vert(\P)$, the top square is exact.
	\end{enumerate}
\end{proposition}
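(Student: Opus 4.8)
The plan is to unfold the definition of properness into an exactness condition inside the slice $\P/y$ and then transport this condition along the canonical projection $\pi \colon \P/y \to \P$. By \ref{def.proper-pres-cones}, the pullback functor $f^*$ is proper exactly when, for every arrow $\phi \colon (a \to y) \to (b \to y)$ of $\P/y$, the naturality square at $\phi$ of the counit $\epsilon \colon f_! f^* \to \id_{\P/y}$ of the adjunction $f_! \dashv f^*$ constructed above is exact in $\P/y$. By \ref{rem.level-wise-slice} the slice $\P/y$ is computed level-wise as the slices $\P_n/s^* y$; in particular $f_! f^*(a \to y)$ has underlying object the pullback $a \times_y x$, and the counit component $\epsilon_{(a \to y)}$ projects in $\Vert(\P)$ to the pullback projection $a \times_y x \to a$.

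The workhorse is the projection $\pi \colon \P/y \to \P$, $(a \to y) \mapsto a$, which is one of the legs of the defining pullback in \ref{def.slices} and hence a \emph{strict} functor of double $\infty$-categories. Being strict, $\pi$ preserves companions, conjoints and all (co)cartesian cells. Moreover, on horizontal mapping $\infty$-categories $\pi$ is level-wise the domain projection of a slice $\infty$-category, and is therefore conservative. I claim these two facts imply that $\pi$ preserves and reflects exactness. Indeed, by the factorization used in the proof of \ref{prop.refl-incl-formal-cat-theory}, exactness of a square amounts to the invertibility of a comparison $2$-cell assembled from the cartesian cell of the factorization and the cocartesian comparison (hence from companionship and conjunction units and counits); since $\pi$ preserves all of this structure it carries the exactness comparison of a square to the exactness comparison of its image, and conservativity of $\pi$ on horizontal mapping $\infty$-categories lets us detect its invertibility after applying $\pi$.

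With this in place, $(1) \Leftrightarrow (2)$ reduces to a matching of data. Applying $\pi$ to the counit square at $\phi \colon (i \to y) \to (j \to y)$ produces precisely the left-hand square of a diagram of the shape in $(2)$: the right-hand square is the pullback of $j \to y$ along $f \colon x \to y$, the remaining arrow of the bottom row is $\pi(\phi) \colon i \to j$, and the pasting lemma makes both the left square and the outer rectangle pullbacks. Conversely, fixing $f$, every diagram as in $(2)$ is determined by the arrow $(i \to y) \to (j \to y)$ recorded by its arrow $i \to j$, so this assignment is a bijection between arrows of $\P/y$ and such diagrams. Since $\pi$ preserves and reflects exactness, the counit square at $\phi$ is exact in $\P/y$ if and only if the associated left square is exact in $\P$, and as $\phi$ ranges over all arrows these left squares range over all diagrams of the form $(2)$; this yields $(1) \Leftrightarrow (2)$. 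The smooth case $(1') \Leftrightarrow (2')$ runs through the identical mechanism: properness and smoothness both invoke the counit of $f_! \dashv f^*$, but \ref{def.proper-pres-cones} records the two naturality squares in transposed orientations, so projecting along $\pi$ translates the smoothness condition into the exactness of the squares appearing in $(2')$, once more matched bijectively with the arrows of $\P/y$.

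I expect the main obstacle to be the claim that $\pi$ preserves and reflects exactness — concretely, checking that $\pi$ sends the exactness comparison $2$-cell to that of the image square and that $\pi$ is conservative on the horizontal mapping $\infty$-categories $\Hor(\P/y)(X,Y)$. Closely tied to this is the bookkeeping of which edges of the slice counit square become companions and which become conjoints, so that the Beck--Chevalley direction carried by the projected square is correctly aligned with the exactness condition on the left (resp.\ top) squares of $(2)$ (resp.\ $(2')$); getting this orientation right is exactly what pins down which of the two dual conditions corresponds to properness and which to smoothness.
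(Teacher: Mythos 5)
Your proposal is correct and follows essentially the same route as the paper: unfold \ref{def.proper-pres-cones} for the normal lax adjunction $f_! \dashv f^*$, note that $f_!f^*$ is computed by base change along $f$ so that the counit naturality square at an arrow of $\P/y$ is exactly the left (resp.\ top) square of a diagram as in condition (2) (resp.\ (2')), and observe that these squares range bijectively over all such diagrams. The paper's proof is just this identification stated in two lines; your explicit verification that the strict slice projection $\pi : \P/y \rightarrow \P$ preserves and reflects exact squares (via preservation of companions, conjoints and (co)cartesian cells plus conservativity on horizontal mapping $\infty$-categories) supplies the transfer between exactness in $\P/y$ and exactness in $\Vert(\P)$, a step the paper treats as immediate.
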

\begin{proof}
	Our assumptions assure that $f^*$ is always normal. Spelling out the definition, $f^*$ is proper if and only if for every arrow $j' \rightarrow j$ in $\P$, the square 
	\[
		\begin{tikzcd}
			f_!f^*j' \arrow[r]\arrow[d] & j'\arrow[d] \\
			f_!f^*j \arrow[r] & j
		\end{tikzcd}
	\]
	is exact. This is precisely assumption (2). The demonstration for the smooth case is analogous.
\end{proof}

\begin{definition}\label{def.proper-smooth-arrows}
	Let $f$ be an arrow in an $\infty$-equipment $\P$ that admits pullbacks. Then $f$ is called \textit{proper} if
	the equivalent conditions (1) and (2) of \ref{prop.proper-smooth-arrows} are met. Dually, $f$ is called \textit{smooth}  
	if conditions (1') and (2') of this proposition are satisfied.
\end{definition}

\begin{example}
	Suppose that $\P = \CCAT_\infty$. Then a functor between $\infty$-categories is proper (resp.\ smooth) if and only if 
	it is proper (resp.\ smooth) in the sense of Cisinski \cite[Definition 4.4.41]{Cisinski}. This was observed in \cite[Example 6.28]{EquipI}. 
\end{example}

\begin{example}\label{ex.proper-point-wise}
	Let $\P$ be an $\infty$-equipment that admits normal pullbacks. If $X$ is an $\infty$-category, then one can readily 
	check that the cotensor $\infty$-equipment $[X, \P]$ admits normal pullbacks as well using \ref{prop.lax-double-adjunctions-char} and \ref{rem.cotensor-infty-cat}. An arrow $\alpha : h \rightarrow k$ in $[X,\P]$ is proper (resp.\ smooth) if and only if 
	$\alpha_x : h(x) \rightarrow k(x)$ is proper (resp.\ smooth) for each $x \in X$. This can 
	be deduced from \ref{prop.cart-cotensors} as follows. Since the evaluations 
	$$
	[X,\P] \rightarrow \P, \quad x \in X,
	$$
	jointly reflect and preserve (co)cartesian cells, an argument similar to in \ref{prop.refl-incl-formal-cat-theory}
	shows that the evaluations jointly reflect and preserve exact squares. Then one uses that pullbacks are computed level-wise as well.
\end{example}

\section{Monoidal \texorpdfstring{$\infty$}{∞}-equipments}\label{section.mon-equipments}

Suppose that we have an arrow $w : i \rightarrow j$ and an object $x$ in an 
$\infty$-equipment $\P$. In \cite[Proposition 6.22]{EquipI}, 
we explained that a left adjoint to the restriction functor $w^* : \Vert(\P)(j,x) \rightarrow \Vert(\P)(i,x)$ 
exists when $x$ admits all left Kan extensions along $w$. In this case, 
the values of the left adjoint are computed by these Kan extensions.
The goal of this section
is to obtain a similar \textit{internal} result  where the vertical mapping $\infty$-categories of $\P$ are now replaced 
by suitable internal homs in $\P$. 

\subsection{Monoidal double \texorpdfstring{$\infty$}{∞}-categories}

To access these internal  
hom objects, we will need to introduce the notion 
of a \textit{closed monoidal structure} on a double $\infty$-category. In the strict context, this has been studied by (among others) Grandis-Par\'e \cite{GrandisPare} and Hansen-Shulman \cite{HansenShulman}.

\begin{definition}
	A \textit{monoidal double $\infty$-category} is a monoid in the $\infty$-category $\DblCat_\infty$. 
	Concretely, it is 
	a simplicial double $\infty$-category $$\P^{\otimes} : \Delta^\op \rightarrow \DblCat_\infty$$ so that 
	so that the structure maps of $\P^{\otimes}$  give rise to an equivalence
	$$
	\P^{\otimes}([n]) \xrightarrow{\simeq} \P^{\otimes}(\{0\leq 1\}) \times \dotsb \times \P^{\otimes}(\{n-1\leq n\})
	$$
	for all $n \geq 0$.
	The object $\P^{\otimes}([1])$ is called the \textit{underlying double $\infty$-category} of $\P^{\otimes}$.
\end{definition}

\begin{remark}
	Recall that the functors $\Vert(-)$ and $\Hor(-)$ preserve products, and hence lift to functors 
	between respective monoid $\infty$-categories. Consequently, if a double $\infty$-category $\P$ has a monoidal structure, 
	we obtain an induced monoidal structure on its vertical and horizontal fragments as well.
\end{remark}

\begin{example}
	If a double $\infty$-category $\P$ admits a monoidal structure, then there is an induced point-wise monoidal 
	structure on $\FFUN(X, \P)$ for every double $\infty$-category $X$. This follows again from the fact $\FFUN(X,-)$ preserves products.
\end{example}

\begin{notation}
	If $\C$ is an $\infty$-category with finite products, we will write 
	$$
	\mathrm{Mon}(\C) \subset \fun(\Delta^\op, \C) 
	$$
	for the $\infty$-category of monoids in $\C$.
\end{notation}

The following observation is useful:

\begin{lemma}\label{lem.mon-dbl-cat}
	There is a canonical fully faithful functor 
	$$
	\mathrm{Mon}(\DblCat_\infty) \rightarrow \fun(\Delta^\op, \mathrm{Mon}(\Cat_\infty))
	$$
	whose essential image is given by the functors $R : \Delta^\op \rightarrow \mathrm{Mon}(\Cat_\infty)$  
	with the property that the composite $$\Delta^\op \xrightarrow{R} \mathrm{Mon}(\Cat_\infty) \xrightarrow{\ev_{[1]}} \Cat_\infty$$ 
	is a double $\infty$-category.
\end{lemma}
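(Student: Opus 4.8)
The plan is to exhibit the functor through the Fubini equivalence for iterated functor $\infty$-categories, after unwinding both sides into full subcategories of bifunctors $\Delta^\op \times \Delta^\op \rightarrow \Cat_\infty$. Recall from the Convention that the inclusion $\DblCat_\infty \hookrightarrow \fun(\Delta^\op, \Cat_\infty)$ is fully faithful and, being reflective, closed under products, so that products of double $\infty$-categories are computed level-wise. Applying $\fun(\Delta^\op, -)$ and restricting to the full subcategory of monoids yields a fully faithful embedding
$$
\mathrm{Mon}(\DblCat_\infty) \hookrightarrow \fun(\Delta^\op, \fun(\Delta^\op, \Cat_\infty)) \simeq \fun(\Delta^\op \times \Delta^\op, \Cat_\infty),
$$
where I regard the first (outer) factor as the monoidal direction and the second as the double-categorical direction. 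Under this embedding a bifunctor $F$ lies in the essential image precisely when (a) for each fixed $[n]$ in the second variable, $F(-,[n]) : \Delta^\op \rightarrow \Cat_\infty$ is a monoid --- this is the monoid Segal condition in $\DblCat_\infty$, which by level-wise computation of products is equivalent to checking it after evaluating at each $n$ --- and (b) for each fixed $[m]$ in the first variable, $F([m],-) : \Delta^\op \rightarrow \Cat_\infty$ is a double $\infty$-category.

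Next I interchange the two $\Delta^\op$ factors via the canonical symmetry equivalence of $\fun(\Delta^\op \times \Delta^\op, \Cat_\infty)$ and recognize the target. A bifunctor satisfying condition (a) becomes, after the swap, exactly a functor $R : \Delta^\op \rightarrow \mathrm{Mon}(\Cat_\infty)$, since (a) says that the value of $R$ at each $n$ is a monoid in the now-inner monoidal variable, with no constraint in the double-categorical variable. Thus the composite
$$
\mathrm{Mon}(\DblCat_\infty) \hookrightarrow \fun(\Delta^\op \times \Delta^\op, \Cat_\infty) \xrightarrow{\ \simeq\ } \fun(\Delta^\op, \mathrm{Mon}(\Cat_\infty))
$$
is defined and lands in the displayed full subcategory. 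It is fully faithful, being the corestriction of a fully faithful functor (a full inclusion followed by an equivalence) to a full subcategory containing its image. It remains to identify the essential image with those $R$ for which $\ev_{[1]} \circ R$ is a double $\infty$-category.

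For this I translate condition (b) into a statement about $R$. Writing $G := \ev_{[1]} \circ R : \Delta^\op \rightarrow \Cat_\infty$ for the underlying simplicial $\infty$-category, the monoid structure on the values of $R$ furnishes Segal equivalences $R(n)([m]) \simeq R(n)([1])^{\times m}$; these are natural in $n$ because the Segal maps constitute a natural transformation on $\mathrm{Mon}(\Cat_\infty)$, so precomposing with $R$ gives an equivalence $\ev_{[m]} \circ R \simeq G^{\times m}$ of functors $\Delta^\op \rightarrow \Cat_\infty$. Condition (b) demands that $\ev_{[m]} \circ R$ be a double $\infty$-category for every $m$. If $G$ is a double $\infty$-category then so is each $G^{\times m}$, since $\DblCat_\infty$ is closed under products and the empty product is the terminal double $\infty$-category; conversely the case $m=1$ recovers $G$. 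Hence (b) holds for all $m$ if and only if it holds for $m=1$, i.e.\ if and only if $\ev_{[1]} \circ R$ is a double $\infty$-category, as claimed. The only genuinely delicate point is this last reduction --- specifically the naturality in $n$ of the decomposition $\ev_{[m]} \circ R \simeq G^{\times m}$; everything else is formal bookkeeping with the Fubini equivalence and closure of $\DblCat_\infty$ under products.
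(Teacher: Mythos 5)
Your proof is correct and takes essentially the same route as the paper's: both pass through the coordinate-swap (Fubini) equivalence on $\fun(\Delta^{\op,\times 2}, \Cat_\infty)$, identify $\mathrm{Mon}(\DblCat_\infty)$ as the full subcategory of monoids in $\fun(\Delta^\op,\Cat_\infty)$ whose values are double $\infty$-categories, and use closure of $\DblCat_\infty$ under level-wise products to reduce that condition to the evaluation at $[1]$. The paper phrases this more tersely via the restricted equivalence $\mathrm{Mon}(\fun(\Delta^\op,\Cat_\infty)) \simeq \fun(\Delta^\op,\mathrm{Mon}(\Cat_\infty))$, but the content, including the naturality of the Segal decomposition you single out as the delicate point, is identical.
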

\begin{proof}
	Note that we have a canonical equivalence 
	$$
	\mathrm{Mon}(\fun(\Delta^\op, \Cat_\infty)) \xrightarrow{\simeq} \fun(\Delta^\op, \mathrm{Mon}(\Cat_\infty)),
	$$
	that is restricted from the functor $\fun(\Delta^{\op, \times 2}, \Cat_\infty) \rightarrow \fun(\Delta^{\op,\times 2}, \Cat_\infty)$ 
	induced by swapping the coordinates of $\Delta^{\op, \times 2}$. 
	Now, we note that the inclusion $\DblCat_\infty \rightarrow \fun(\Delta^\op, \Cat_\infty)$ induces a fully faithful 
	functor $$\mathrm{Mon}(\DblCat_\infty) \rightarrow \mathrm{Mon}(\fun(\Delta^\op, \Cat_\infty))$$ whose essential 
	image consists of those monoids $M \in \mathrm{Mon}(\fun(\Delta^\op, \Cat_\infty))$ so that the evaluations $M([n])$ are double $\infty$-categories. 
	But this can be checked at $n=1$ since double $\infty$-categories are closed under products. 
	Thus the above constructed equivalence $
	\mathrm{Mon}(\fun(\Delta^\op, \Cat_\infty)) \simeq \fun(\Delta^\op, \mathrm{Mon}(\Cat_\infty))
	$
	restricts as described in the proposition.
\end{proof}

We will use this observation in the following result, which 
offers a convenient way of obtaining monoidal structures by assembling level-wise cartesian structures:

\begin{proposition}\label{prop.mon-cartesian}
	Suppose that $\P$ is a double $\infty$-category with all strict limits shaped by finite sets.
	Then there exists a monoidal structure $\P^\times$ on $\P$ so that the restriction 
	$$
	\Delta^\op \xrightarrow{\P^\times} \DblCat_\infty \subset \fun(\Delta^\op, \Cat_\infty) \xrightarrow{\ev_{[n]}} \Cat_\infty
	$$
	encodes the cartesian monoidal structure on $\P_n$.
\end{proposition}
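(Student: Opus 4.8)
The plan is to assemble the monoidal structure one simplicial level at a time and then repackage it with Lemma~\ref{lem.mon-dbl-cat}. First I would unpack the strictness hypothesis. Applying Proposition~\ref{prop.dbl-limits-v-shaped} to the finite discrete $\infty$-categories $S$ that index finite products, the assumption that $\P$ admits \emph{strict} limits shaped by finite sets amounts to two facts: every $\infty$-category $\P_n$ has finite products, and \emph{every} structure map $\alpha^* : \P_m \to \P_n$ (for an arbitrary $\alpha : [n] \to [m]$ in $\Delta$, not merely the inert ones) preserves finite products. Equivalently, viewing $\P$ as a simplicial $\infty$-category $\Delta^\op \to \Cat_\infty$, it factors through the (non-full) subcategory $\Cat_\infty^{\mathrm{fp}} \subset \Cat_\infty$ spanned by the $\infty$-categories with finite products and the finite-product-preserving functors.

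Next I would invoke the functoriality of the cartesian monoidal structure. The assignment that sends an $\infty$-category $\C$ with finite products to its cartesian monoid $[m] \mapsto \C^{\times m}$ (which satisfies the Segal conditions since the Segal maps are equivalences) is functorial in finite-product-preserving functors: such an $F : \C \to \D$ induces $F^{\times m} : \C^{\times m} \to \D^{\times m}$ compatibly with the Segal maps. This is precisely the functoriality of the cartesian (symmetric, hence in particular associative) monoidal structure \cite[Section 2.4.1]{HA}, and yields a functor $(-)^\times : \Cat_\infty^{\mathrm{fp}} \to \mathrm{Mon}(\Cat_\infty)$. Composing with the factorization of the previous step produces
$$
R : \Delta^\op \xrightarrow{\P} \Cat_\infty^{\mathrm{fp}} \xrightarrow{(-)^\times} \mathrm{Mon}(\Cat_\infty), \qquad R([n]) = \P_n^\times,
$$
whose value at each $[n]$ is exactly the cartesian monoid on $\P_n$.

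Then I would feed $R$ into Lemma~\ref{lem.mon-dbl-cat}. Since the underlying $\infty$-category of the cartesian monoid $\C^\times$ is naturally $\C$, the composite $\ev_{[1]} \circ R$ is identified with $\P$ itself, which is a double $\infty$-category by hypothesis. Hence $R$ lies in the essential image of the fully faithful functor $\mathrm{Mon}(\DblCat_\infty) \to \fun(\Delta^\op, \mathrm{Mon}(\Cat_\infty))$ of Lemma~\ref{lem.mon-dbl-cat}, so it is the image of an essentially unique monoid $\P^\times \in \mathrm{Mon}(\DblCat_\infty)$, i.e.\ a monoidal double $\infty$-category. Tracing through the coordinate-swap equivalence used to construct that functor, one reads off $\P^\times([m])_n = R([n])([m]) = \P_n^{\times m}$; in particular the composite $\Delta^\op \xrightarrow{\P^\times} \DblCat_\infty \subset \fun(\Delta^\op, \Cat_\infty) \xrightarrow{\ev_{[n]}} \Cat_\infty$ is exactly $R([n]) = \P_n^\times$, as required. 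Taking $m=1$ shows $\P^\times([1])_n = \P_n$, confirming that $\P^\times$ is a monoidal structure on $\P$.

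The step I expect to be the main obstacle is the second one: producing $(-)^\times$ as a genuinely coherent $\infty$-functor landing in associative monoids. Level-wise this is the elementary observation that product-preserving functors induce monoidal functors for the cartesian structures, but assembling it into a single functor on $\Cat_\infty^{\mathrm{fp}}$ (and verifying the Segal/monoid conditions persist after post-composition) requires the machinery of \cite[Section 2.4.1]{HA}. Care is also needed in the first step to confirm that \emph{strict} in the hypothesis genuinely delivers product-preservation for all structure maps, and not only the inert ones, since this is exactly what allows $\P$ to factor through $\Cat_\infty^{\mathrm{fp}}$.
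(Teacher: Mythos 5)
Your proof is correct and takes essentially the same route as the paper: it uses \ref{prop.dbl-limits-v-shaped} to factor $\P$, viewed as a simplicial $\infty$-category, through the $\infty$-category of $\infty$-categories with finite products and product-preserving functors, invokes the functoriality of the cartesian monoidal structure from \cite[Corollary 2.4.1.9]{HA}, and repackages the result via \ref{lem.mon-dbl-cat}. You merely spell out in detail what the paper compresses into two sentences.
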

\begin{proof}
	If we view $\P$ as a functor $\P : \Delta^\op \rightarrow \Cat_\infty$ then the assumptions and \ref{prop.dbl-limits-v-shaped} 
	imply that $\P$ factors through the $\infty$-category of $\infty$-categories with finite products and finite product preserving functors between them.
	Hence, the desired result follows from \cite[Corollary 2.4.1.9]{HA} and \ref{lem.mon-dbl-cat}.
\end{proof}

\begin{definition}
	In the context of \ref{prop.mon-cartesian}, the double $\infty$-category $\P$ is said to have all strict finite products and  
	$\P^\times$ is called the \textit{cartesian monoidal structure} on $\P$.
\end{definition}

\begin{example}\label{ex.ccat-monoidal}
	Let $\C$ be an $\infty$-category. Note that $\Cat_\infty/\C$ admits all finite products. If $E$ and $E'$ 
	are fibered over $\C$, then their product is given by the pullback square
	\[
		\begin{tikzcd}
			E \times_{\C} E' \arrow[r]\arrow[d] & E' \arrow[d] \\
			E \arrow[r] & \C.
		\end{tikzcd}
	\]
	Since Conduch\'e fibrations are closed under base change and the composition of two Conduch\'e fibrations is again a Conduch\'e fibration, 
	we therefore deduce that the full subcategory $\Con(\C) \subset \Cat_\infty/\C$ of Conduch\'e fibrations is closed under finite products. Moreover, 
	if $f : \C \rightarrow \D$ is functor, then the pullback functor 
	$\Con(\D) \rightarrow \Con(\C)$ preserves these finite products. Consequently, 
	\ref{prop.dbl-limits-v-shaped} implies that
	 $\CCAT_\infty$ has all strict products. Hence $\CCAT_\infty$ can be endowed with the cartesian monoidal structure.

	Note that, from the perspective 
	of profunctors, 
	the product of 
	two proarrows $F : \mathscr{A} \rightarrow \mathscr{B}$ and $G : \C \rightarrow \D$ in $\CCAT_\infty$ 
	is given by the profunctor 
	$$
(\mathscr{B} \times \D)^\op \times (\mathscr{A} \times \C) \rightarrow \S : (b,d,a,c) \mapsto F(b,a) \times G(d,c).
$$ 
\end{example}

\begin{definition}\label{def.mon-closed}
	A monoidal structure $\P^\otimes$ on a double $\infty$-category $\P$ is called  \textit{closed} if for all objects $x$ of $\P$, the functor 
	$
	x \otimes - : \P \rightarrow \P
	$
	admits a normal lax right adjoint 
	$$
	[x, -] : \P \rightarrow \P.
	$$
\end{definition} 

\begin{example}\label{ex.ccat-closed-monoidal}
	The cartesian monoidal structure of \ref{ex.ccat-monoidal} is closed. To see this, we 
	note that $\Cat_\infty/\C$ is both tensored and cotensored over $\Cat_\infty$. 
	For $X \in \Cat_\infty$, we we have an adjunction 
$$
 X \times - : \Cat_\infty/\C \rightleftarrows \Cat_\infty/\C : [X,-].
$$
The left adjoint is given by taking fibered products with the projection $X \times \C \rightarrow \C$.
The right adjoint carries an $\infty$-category $E$ fibered over $\C$ to the pullback 
\[
	\begin{tikzcd}
		{[X,E]} \arrow[r]\arrow[d] & \fun(X, E) \arrow[d] \\
		\C \arrow[r] & \fun(X,\C),
	\end{tikzcd}
\]
where the bottom map is the diagonal. This cotensor is compatible with pullbacks: if $f : \C \rightarrow \D$ 
is a functor, then the canonical map
$$
f^*[X, E] \rightarrow [X, f^*E]
$$
is an equivalence for every $\infty$-category $E$ fibered over $\D$. Recall 
that $(\CCAT_\infty)_{n} = \Cat_\infty/[n]$ for $n = 0, 1$. 
Hence, the above observations and \ref{prop.lax-double-adjunctions-char} imply that 
the cartesian structure on $\CCAT_\infty$ is closed.

From the perspective of profunctors, the cotensor 
is computed as follows. If $X \in \Cat_\infty$, then the cotensor 
of a proarrow $F : \C \rightarrow \D$ with $X$ carries two functors
$f : X \rightarrow \mathscr{C}$ and $g : X \rightarrow \mathscr{D}$ to the end 
$$
[X,F](g,f) = \int_{y,x \in X} F(g(y), f(x)).
$$
\end{example}

\begin{example}\label{ex.int-ccat-cart-closed}
	Taking the viewpoint of Conduch\'e modules (see \ref{ssection.con-modules}), we may generalize \ref{ex.ccat-monoidal} and \ref{ex.ccat-closed-monoidal} 
	to conclude that  that $\CCAT_\infty(\E)$ is a cartesian closed double $\infty$-category for every $\infty$-topos $\E$.
\end{example}

\subsection{Kan extension arrows in monoidal equipments}\label{ssection.pke-arrows}

Let us fix a closed monoidal $\infty$-equipment $\P$. If $w : i \rightarrow j$ is an arrow in $\P$, 
then we obtain a restriction arrow 
$$
w^* : [j,x] \rightarrow [i,x]
$$
for every object $x$ of $\P$. It has the defining property that it is adjunct to the composite arrow
$$i \otimes [j,x] \xrightarrow{w \otimes [j,x]} j \otimes [j,x] \xrightarrow{} x,$$
where the right arrow is the evaluation counit.
We will study left adjoints to $w^*$ in terms of the left Kan extensions inside the formal category 
theory of $\P$ \cite[Section 6.3]{EquipI}.

In the case that $w^*$ is a right adjoint, then we may always compute left Kan extensions along $w$ (if they exist) as 
values of the left adjoint:

\begin{proposition}\label{prop.internal-restr-vertical}
	Suppose that $w^*$ admits a left adjoint
	$$
	w_! : [i,x] \rightarrow [j,x] 
	$$
	in $\Vert(\P)$. If $f : i \otimes y \rightarrow x$ is an arrow that 
	admits a left Kan extension $g : j \otimes y \rightarrow x$ along $w \otimes y$, 
	then $g$ is adjunct to the composite arrow 
	$$
	y \xrightarrow{f^\sharp} [i,x] \xrightarrow{w_!} [j,x].
	$$
	Here $f^\sharp$ denotes the adjunct arrow of $f$.
	Moreover, the unit $f \rightarrow g(w \otimes y)$ associated with this left Kan extension is adjunct to the whiskering 
	$f^\sharp \rightarrow w^*w_!f^\sharp$ of the unit 
	of $(w_! , w^*)$ with $f^\sharp$.
\end{proposition}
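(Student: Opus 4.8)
The plan is to establish both assertions by a Yoneda argument in the vertical mapping $\infty$-category $\Vert(\P)(y,[j,x])$, comparing the two candidate arrows $w_!f^\sharp$ and $g^\sharp$ (where $g^\sharp : y \to [j,x]$ is the adjunct of $g$) by identifying the functors they corepresent. The transport between the internal homs $[i,x], [j,x]$ and the tensors $i\otimes y, j\otimes y$ is furnished by \ref{prop.lax-adj-vert}: since $\P$ is closed monoidal, $i\otimes-\dashv[i,-]$ and $j\otimes-\dashv[j,-]$ are normal lax adjunctions between locally complete double $\infty$-categories (the left adjoints are strict, the right adjoints normal lax by \ref{def.mon-closed}), so the un-adjunction maps furnish equivalences $\Vert(\P)(y,[i,x]) \simeq \Vert(\P)(i\otimes y, x)$ and $\Vert(\P)(y,[j,x])\simeq\Vert(\P)(j\otimes y, x)$ carrying $f^\sharp \mapsto f$ and $g^\sharp \mapsto g$.

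The main step — and the one I expect to be the crux — is a compatibility lemma identifying the internal restriction $w^*$ with the external restriction along $w\otimes y$: for every $k : j\otimes y \to x$ with adjunct $k^\sharp$, there is an equivalence
\[
w^*\circ k^\sharp \simeq (k\circ(w\otimes y))^\sharp,
\]
natural in $k$. To prove it I would un-adjunct the left-hand side under $i\otimes-\dashv[i,-]$, obtaining $\mathrm{ev}^i_x\circ(i\otimes(w^*k^\sharp))$, and then use the defining property of $w^*$ recorded before the statement, namely $\mathrm{ev}^i_x\circ(i\otimes w^*)\simeq\mathrm{ev}^j_x\circ(w\otimes[j,x])$, together with the interchange law $(w\otimes[j,x])\circ(i\otimes k^\sharp)\simeq(j\otimes k^\sharp)\circ(w\otimes y)$ for the monoidal structure; this rewrites the un-adjunct as $\mathrm{ev}^j_x\circ(j\otimes k^\sharp)\circ(w\otimes y)\simeq k\circ(w\otimes y)$. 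The delicate part is assembling these identities coherently in the $\infty$-categorical setting rather than merely on homotopy categories; I would phrase $w^*$ through its counit description and keep careful track of the naturality in $k$ throughout.

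With the lemma in hand, the first claim follows by chaining natural equivalences. For $k : j\otimes y \to x$,
\begin{align*}
\map_{\Vert(\P)(y,[j,x])}(w_!f^\sharp, k^\sharp)
&\simeq \map_{\Vert(\P)(y,[i,x])}(f^\sharp, w^*k^\sharp)
\simeq \map_{\Vert(\P)(y,[i,x])}(f^\sharp, (k\circ(w\otimes y))^\sharp) \\
&\simeq \map_{\Vert(\P)(i\otimes y, x)}(f, k\circ(w\otimes y))
\simeq \map_{\Vert(\P)(j\otimes y, x)}(g, k) \\
&\simeq \map_{\Vert(\P)(y,[j,x])}(g^\sharp, k^\sharp),
\end{align*}
using successively the adjunction $w_!\dashv w^*$, the lemma, the tensor--hom equivalence, the universal property of the left Kan extension $g$ with its unit $\eta : f\to g\circ(w\otimes y)$ (\cite[Definition 6.19, Construction 6.20]{EquipI}), and the tensor--hom equivalence again. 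Each equivalence is natural in $k$, and every object of $\Vert(\P)(y,[j,x])$ is equivalent to some $k^\sharp$; hence the Yoneda lemma yields $w_!f^\sharp\simeq g^\sharp$, that is, $g$ is adjunct to $w_!f^\sharp$.

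For the \emph{moreover}, I would trace the identity through the chain with $k = g$. Under the bottom equivalence $\id_{g^\sharp}$ corresponds to $\id_g$, which the Kan-extension step sends to the unit $\eta$, which the tensor--hom equivalence sends to its adjunct $\eta^\sharp : f^\sharp \to (g\circ(w\otimes y))^\sharp \simeq w^*g^\sharp$. On the other end of the chain, the identity $\id_{w_!f^\sharp}$ corresponds under $w_!\dashv w^*$ to the whiskered unit $f^\sharp\to w^*w_!f^\sharp$ of $(w_!, w^*)$. Since the equivalence $w_!f^\sharp\simeq g^\sharp$ matches $\id_{w_!f^\sharp}$ with $\id_{g^\sharp}$, these two images coincide; transporting along $w_!f^\sharp\simeq g^\sharp$ then identifies $\eta^\sharp$ with the whiskering of the unit of $(w_!, w^*)$ with $f^\sharp$, which is exactly the asserted description of the Kan extension unit.
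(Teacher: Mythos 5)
Your proposal is correct and follows essentially the same route as the paper. The paper's proof has exactly two steps: the commutative square identifying the post-composition functor $(w^*)_* : \Vert(\P)(y,[j,x]) \rightarrow \Vert(\P)(y,[i,x])$ with the external restriction $(w\otimes y)^*$ under the tensor--hom equivalences --- this is your compatibility lemma, and your interchange argument is the verification the paper leaves to the reader --- followed by a citation of \cite[Proposition 6.22]{EquipI}. Your Yoneda chain and the tracing of identities to match units is an inline re-derivation of that cited result, so the skeleton is identical; you have simply unfolded the citation.

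The one point needing repair is an attribution. The step
$$
\map_{\Vert(\P)(i\otimes y,x)}\bigl(f, k\circ(w\otimes y)\bigr) \simeq \map_{\Vert(\P)(j\otimes y,x)}(g,k)
$$
is not immediate from \cite[Definition 6.19]{EquipI}: point-wise left Kan extensions are defined there by requiring the conjoint $g^\circledast$ to be the proarrow of $(w\otimes y)^\circledast$-cones under $f$, a universal property among \emph{proarrows}, not among arrows $k$. Deducing the weak universal property you invoke requires the conjoint calculus, e.g.\ the chain
\begin{align*}
\map_{\Vert(\P)(j\otimes y,x)}(g,k) &\simeq \map_{\Hor(\P)(x,j\otimes y)}(k^\circledast, g^\circledast) \simeq \map_{\Hor(\P)(x,i\otimes y)}\bigl((w\otimes y)^\circledast \circ k^\circledast, f^\circledast\bigr) \\
&\simeq \map_{\Vert(\P)(i\otimes y,x)}\bigl(f, k\circ(w\otimes y)\bigr),
\end{align*}
using the locally fully faithful (contravariant) conjoint embedding and $(k\circ(w\otimes y))^\circledast \simeq (w\otimes y)^\circledast \circ k^\circledast$, together with the identification of this equivalence with post-composition by the unit of \cite[Construction 6.20]{EquipI} --- which is precisely what the prequel's Proposition 6.22 (the result the paper cites) packages. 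So either supply this derivation or cite that proposition directly, at which point your proof literally becomes the paper's. As written, the fact is used with the wrong justification, but it is true and available in the prequel, so this is a citation gap rather than a mathematical one.
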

\begin{proof}
	One may readily verify that there is a commutative diagram 
	\[
			\begin{tikzcd}
				\Vert(\P)(y, [j,x]) \arrow[r, "(w^*)_*"] \arrow[d, "\simeq"'] & \Vert(\P)(y, [i,x]) \arrow[d, "\simeq"] \\
				\Vert(\P)(j \otimes y, x) \arrow[r, "(w\otimes y)^*"] & \Vert(\P)(i \otimes y,x),
			\end{tikzcd}
	\]
	of vertical mapping $\infty$-categories. Thus the desired result follows from \cite[Proposition 6.22]{EquipI}.
\end{proof}

We would now like to obtain a converse to the above observation and 
give criteria for the existence of a left adjoint $w_!$ in terms of the existence of left Kan extensions. 
To that end, we may use \cite[Corollary 6.6]{EquipI}. This asserts that 
a left adjoint  to $w^*$ exists if and only if the companion 
$$
w^{\ast}_\circledast : [j,x] \rightarrow [i,x]
$$
is a conjoint as well. The main take-away of this subsection 
is the following computation of the latter proarrow:

\begin{proposition}\label{prop.computation-proarrow-lke}
	Let $f : i \otimes y \rightarrow x$ be an arrow of $\P$ 
	so that the proarrow 
	$$
	C : x \rightarrow j \otimes y
	$$
	of $(w \otimes y)^\circledast$-cones under $f$ exists. Then there is a cartesian 2-cell 
	\[
		\begin{tikzcd}
			{[j,x]} \arrow[r, "f^{\sharp,\circledast}w^*_\circledast"]\arrow[d,equal] & y \arrow[d]\\
			{[j,x]} \arrow[r, "{[j,C]}"] & {[j, j \otimes y]}
		\end{tikzcd}
	\] 
	in $\P$ where the right vertical arrow is the unit. In particular, if $f$ admits a left Kan extension $g : j \otimes y \rightarrow x$ along 
	$w \otimes y$ then $f^{\sharp,\circledast}w^*_\circledast$ is the conjoint of the arrow $g^\sharp: y\rightarrow [j,x]$ adjunct to $g$.
\end{proposition}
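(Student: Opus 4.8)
The plan is to verify that the displayed 2-cell is cartesian by means of the universal property of cartesian cells \cite[Corollary 3.21]{EquipI}: since $\P$ is an $\infty$-equipment the restriction $\eta_y^\circledast{[j,C]} = \cart{\eta_y}{[j,C]}{\id}$ already exists together with its defining cartesian cell, so it suffices to produce, naturally in a proarrow $K : [j,x] \rightarrow y$, an equivalence
\[
\map_{\Hor(\P)([j,x],y)}(K, f^{\sharp,\circledast}w^*_\circledast) \simeq \map_{\Hor(\P)([j,x],y)}(K, \eta_y^\circledast{[j,C]}).
\]
Pulling the restriction cell back along this equivalence then yields the asserted cartesian cell. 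First I would record the two local adjunctions coming from closedness. Because $\P$ is closed (\ref{def.mon-closed}), both $j\otimes - \dashv [j,-]$ and $i\otimes - \dashv [i,-]$ are normal lax adjunctions of $\infty$-equipments, so \ref{prop.lax-adj-hor} applies. Feeding in the evaluation counit $\ev_k : k\otimes[k,x]\rightarrow x$ on the source side and the identity of $k\otimes y$ on the target side (whose adjuncts are $\id_{[k,x]}$ and the unit $\eta^k_y : y \rightarrow [k,k\otimes y]$), I obtain adjunctions $\mathrm{LA}_k \dashv \mathrm{RA}_k$ on horizontal mapping $\infty$-categories with $\mathrm{LA}_k(L) = (k \otimes L)\ev_k^\circledast$ and $\mathrm{RA}_k(N) = \eta^k_y{}^\circledast[k,N]$ for $k\in\{i,j\}$; in particular $\eta_y^\circledast{[j,C]} = \mathrm{RA}_j(C)$ with $\eta_y = \eta^j_y$.

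The core of the argument is then a chain of natural equivalences in $K$. I would apply the $j$-adjunction to rewrite $\map(K, \mathrm{RA}_j(C)) \simeq \map(\mathrm{LA}_j(K), C)$, then invoke the defining cone property of $C$ (\cite[Definition 6.9]{EquipI}), which gives $\simeq \map((w\otimes y)^\circledast \mathrm{LA}_j(K), f^\circledast)$. Next I would simplify $(w\otimes y)^\circledast\mathrm{LA}_j(K) \simeq \mathrm{LA}_i(K w^{*,\circledast})$, apply the $i$-adjunction to move $\mathrm{LA}_i$ across as $\simeq \map(Kw^{*,\circledast}, \mathrm{RA}_i(f^\circledast))$, identify $\mathrm{RA}_i(f^\circledast)\simeq f^{\sharp,\circledast}$, and finally use the companion/conjoint restriction adjunction along $w^*$ from \ref{rem.lax-adj-hor} to rewrite $\map(Kw^{*,\circledast}, f^{\sharp,\circledast}) \simeq \map(K, f^{\sharp,\circledast}w^*_\circledast)$. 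Note that moving $\mathrm{LA}_i$ to its right adjoint at this stage is exactly what lets me avoid proving any full-faithfulness statement for $\mathrm{LA}_i$.

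The two load-bearing computations are the simplification and the identification of $\mathrm{RA}_i(f^\circledast)$. For the simplification I would use that the monoidal product applied to the arrow $w : i\to j$ and the proarrow $K$ furnishes a commutative square, whose conjoint form reads $(w\otimes y)^\circledast(j\otimes K)\simeq(i\otimes K)(w\otimes[j,x])^\circledast$; then the defining property of the restriction arrow $w^*$, namely that $\ev_j\circ(w\otimes[j,x])$ and $\ev_i\circ(i\otimes w^*)$ agree, together with contravariant functoriality of conjoints and the strictness of $i\otimes-$ (which preserves conjoints and horizontal composition), collapses the expression to $(i\otimes(Kw^{*,\circledast}))\ev_i^\circledast = \mathrm{LA}_i(Kw^{*,\circledast})$. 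The identification is clean: since $[i,-]$ preserves conjoints (\ref{cor.normal-lax-fun-eq-pres-compconj}) and $[i,f]\circ\eta^i_y = f^\sharp$ is precisely the adjunct of $f$, functoriality of conjoints yields $\mathrm{RA}_i(f^\circledast) = \eta^i_y{}^\circledast[i,f]^\circledast = ([i,f]\eta^i_y)^\circledast = f^{\sharp,\circledast}$.

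I expect the main obstacle to be not conceptual but the careful bookkeeping of these conjoint and companion identities in the simplification step, and the correct specialization of the data in the two applications of \ref{prop.lax-adj-hor} so that the left and right adjoints genuinely become $\mathrm{LA}_k$ and $\mathrm{RA}_k$ and the relevant conjoints line up. Once the main equivalence is in place, the final ``in particular'' is immediate: when the left Kan extension $g : j\otimes y \rightarrow x$ of $f$ along $w\otimes y$ exists one has $C \simeq g^\circledast$ by the definition of left Kan extensions in terms of cone proarrows (\cite[Definition 6.19]{EquipI}), and the same identification applied with $(i,f)$ replaced by $(j,g)$ shows $f^{\sharp,\circledast}w^*_\circledast\simeq\eta_y^\circledast[j,g^\circledast] = \mathrm{RA}_j(g^\circledast)\simeq(g^\sharp)^\circledast$, so that $f^{\sharp,\circledast}w^*_\circledast$ is the conjoint of $g^\sharp$.
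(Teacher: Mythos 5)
Your proposal is correct and takes essentially the same route as the paper's own proof: both establish the natural equivalence $\map(K, f^{\sharp,\circledast}w^*_\circledast)\simeq \map(K,\eta_y^\circledast[j,C])$ by combining the two local adjunctions supplied by \ref{prop.lax-adj-hor} for the closed structure, the defining property of the cone proarrow $C$, the defining property of $w^*$, the restriction adjunction of \ref{rem.lax-adj-hor}, and the interchange/conjoint-preservation identities for $\otimes$ and $[i,-]$. The only differences are organizational (you run the chain from the $\eta_y^\circledast[j,C]$ end and instantiate \ref{prop.lax-adj-hor} at the generic evaluation/unit arrows, handling $f^{\sharp,\circledast}$ and $w^*_\circledast$ by separate identifications, whereas the paper feeds $(w^*)^\sharp$ and $f$ directly into its first application of \ref{prop.lax-adj-hor}), so the two arguments pass through equivalent objects.
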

\begin{proof}
	For brevity, we will denote the proarrow $f^{\sharp,\circledast}w^*_\circledast$ by $G$. Let us write 
	$\eta$ and $\epsilon$ for the unit and counit of the adjunction $(j\otimes -, [j,-])$ respectively.
	Applying \ref{prop.lax-adj-hor} to the adjunction $(i\otimes - , [i, -])$, we obtain equivalences
	\begin{align*}
	\map_{\Hor(\P)([j,x], y)}(F, G) &\simeq \map_{\Hor(\P)(x, x)}(f_\circledast(i\otimes F){(w^*)^{\sharp,\circledast}}, \id_x) \\ 
	&\simeq \map_{\Hor(\P)(x, i\otimes y)}({(i\otimes F)}{(w^*)}^{\sharp,\circledast}, f^\circledast),
	\end{align*}
	natural in every proarrow $F :[j,x] \rightarrow y$. In the last step, we used the observation of \ref{rem.lax-adj-hor}.
	Since the adjunct map $(w^*)^\sharp : i \otimes [j,x] \rightarrow x$ is given by the composite
	$$
	i \otimes [j,x] \xrightarrow{w \otimes [j,x]} j \otimes [j,x] \xrightarrow{\epsilon_x} x,
	$$
	we deduce that
	$$
	{(i\otimes F)}{(w^*)^{\sharp, \circledast}} \simeq {{(i\otimes F)}(w \otimes [j,x])}^\circledast\epsilon_x^\circledast.
	$$
	Functors between double $\infty$-categories preserve companions and conjoints; in particular the functor 
	$(-) \otimes (-) : \P^{\times 2} \rightarrow \P$. It thus follows that
	$$
	{(i\otimes F)}{(w \otimes [j,x])^\circledast} \simeq {(i\otimes F)}{(w^\circledast \otimes [j,x])} \simeq  w^\circledast \otimes F  \simeq (w \otimes y)^{\circledast}{(j \otimes F)}.
	$$
	It follows that
	\begin{align*}
		\map_{\Hor(\P)([j,x], y)}(F, G) &\simeq \map_{\Hor(\P)(x, i\otimes y)}({{(w\otimes y)^\circledast}{(j \otimes F)}}{\epsilon_x^\circledast}, f^\circledast).
	\end{align*}
	By definition, $C$ is the proarrow of ${(w\otimes y)}^\circledast$-cones under $f$, hence we obtain 
	an equivalence
	\begin{align*}
		\map_{\Hor(\P)([j,x], y)}(F, G) &\simeq \map_{\Hor(\P)(x, j\otimes y)}({(j \otimes F)}{\epsilon_x^\circledast}, C) \\
		 &\simeq \map_{\Hor(\P)([j,x], y)}(F, \eta_y^\circledast{[j,C]}{}).
	\end{align*}
	natural in $F$. In the last step, we made use of \ref{prop.lax-adj-hor} once again. This proves the proposition.
\end{proof}

\begin{remark}
	In \ref{section.int-cat-theory}, we will see an application of \ref{prop.computation-proarrow-lke} to the $\infty$-equipment $\CCAT_\infty(\E)$, 
	where $\E$ is an arbitrary $\infty$-topos. This then yields the existence results of functors of Kan extensions internal to $\E$.
\end{remark}

We will now discuss an example where one can deduce a familiar existence result for the left adjoint to $w^*$ in terms of Kan extensions. To this end, 
we introduce an auxiliary assumption: 
\begin{itemize}
	\item[($\ast$)] A proarrow $F : x \rightarrow y$ in $\P$ is a conjoint if and only if $p^\circledast F$ is a conjoint for every $p : 1 \rightarrow y$.
\end{itemize}
Here $1$ denotes the unit of the monoidal structure on $\P$. 

\begin{example}
	The above assumption holds for \textit{strongly pointed} (see \cite[Definition 6.37]{EquipI}) cartesian equipments such as $\CCAT_\infty$.
\end{example}

\begin{corollary}
	Suppose that assumption ($\ast$) holds.
	If every arrow $f : i \rightarrow x$ in $\P$ admits 
	a left Kan extension along $w$, then $w^*$ admits a left adjoint $w_!$.
\end{corollary}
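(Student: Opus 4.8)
The plan is to use the criterion recalled just above from \cite[Corollary 6.6]{EquipI}: the arrow $w^* : [j,x] \to [i,x]$ admits a left adjoint if and only if its companion $w^*_\circledast : [j,x] \to [i,x]$ is also a conjoint. So the entire problem reduces to proving that $w^*_\circledast$ is a conjoint, and for this I would feed the proarrow $F = w^*_\circledast$ into assumption ($\ast$). By ($\ast$), it suffices to show that $p^\circledast w^*_\circledast$ is a conjoint for every point $p : 1 \to [i,x]$.

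First I would identify these points concretely. By the $(i \otimes -, [i,-])$ tensor--hom adjunction together with $i \otimes 1 \simeq i$, a point $p : 1 \to [i,x]$ is the same datum as an arrow $f : i \to x$; write $p = f^\sharp$. This is precisely the class of arrows for which the hypothesis of the corollary guarantees a left Kan extension along $w$.

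Now I would apply \ref{prop.computation-proarrow-lke} with the choice $y = 1$. Since $w \otimes 1 \simeq w$, the assumption that each $f : i \to x$ admits a left Kan extension $g : j \to x$ along $w$ is exactly the input that proposition requires, and it then identifies $p^\circledast w^*_\circledast = f^{\sharp,\circledast} w^*_\circledast$ with the conjoint of the adjunct arrow $g^\sharp : 1 \to [j,x]$. In particular $p^\circledast w^*_\circledast$ is a conjoint for every $p$, so the $\text{``if''}$ direction of ($\ast$) yields that $w^*_\circledast$ is itself a conjoint; the criterion of \cite[Corollary 6.6]{EquipI} then produces the desired left adjoint $w_!$.

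Since the substantive content is already carried by \ref{prop.computation-proarrow-lke} and ($\ast$), the proof is mostly an assembly of earlier results, and the only genuine care needed — the mildest of obstacles — is the adjunct bookkeeping: verifying that the unit identifications $i \otimes 1 \simeq i$, $j \otimes 1 \simeq j$ and $w \otimes 1 \simeq w$ make the points $1 \to [i,x]$ correspond to arrows $f : i \to x$ with $p = f^\sharp$, and that a \emph{left Kan extension along $w$} agrees with a \emph{left Kan extension along $w \otimes 1$}, so that \ref{prop.computation-proarrow-lke} applies verbatim at $y = 1$.
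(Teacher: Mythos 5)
Your proposal is correct and follows essentially the same route as the paper's own proof: reduce via \cite[Corollary 6.6]{EquipI} to showing $w^*_\circledast$ is a conjoint, invoke assumption ($\ast$) to test against points $f^\sharp : 1 \to [i,x]$, and apply \ref{prop.computation-proarrow-lke} at $y = 1$ (using that a left Kan extension $g$ of $f$ along $w$ supplies the required proarrow of cones $C = g^\circledast$) to identify $f^{\sharp,\circledast}w^*_\circledast$ as the conjoint of $g^\sharp$. The paper is terser about the unit identifications $i \otimes 1 \simeq i$ and $w \otimes 1 \simeq w$ that you spell out, but the argument is the same.
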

\begin{proof}
	By assumption, $w^*$ admits a left adjoint if and only if 
	for every arrow $f^\sharp : 1 \rightarrow [i,x]$ the proarrow $f^{\sharp,\circledast}w^*_\circledast : [j,x] \rightarrow 1$ is a conjoint. 
	Let $g : j \rightarrow x$ be the left Kan extension along $w$ of the adjunction arrow $f : i \rightarrow x$. Then 
	\ref{prop.computation-proarrow-lke} asserts that 
	$$
	f^{\sharp, \circledast}w^*_\circledast \simeq \eta_1^\circledast[j,g^\circledast] \simeq([j,g]\eta_1)^\circledast
	$$
	where $\eta_1 : 1 \rightarrow [j,j]$ is the arrow adjunct to the identity $j \rightarrow j$.
\end{proof}

\begin{corollary}
	Suppose that assumption ($\ast$) holds and that the unit $1$ is the terminal object of $\Vert(\P)$.
	Let $i$ and $x$ be objects of $\P$. Then the following assertions are equivalent:
	\begin{enumerate}
		\item $x$ admits all conical colimits shaped by $i$ (see \cite[Definition 6.30]{EquipI}),
		\item the diagonal arrow
		$$
		\Delta : x \rightarrow [i,x]
		$$
		admits a left adjoint.
	\end{enumerate}
	If these two equivalent conditions are met, then the left adjoint to the arrow (2) carries an arrow $f : i \rightarrow x$ to its conical colimit $\colim f$.
\end{corollary}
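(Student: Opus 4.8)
The plan is to recognise $\Delta$ as a restriction arrow and then to run the machinery of \ref{ssection.pke-arrows} for the terminal map. Since $1$ is terminal in $\Vert(\P)$, there is a (unique) arrow $w : i \rightarrow 1$, and the equivalence $1 \otimes (-) \simeq \id$ gives $[1,x] \simeq x$, under which the evaluation counit $1 \otimes [1,x] \rightarrow x$ is the identity. First I would check that under this equivalence the diagonal $\Delta : x \rightarrow [i,x]$ is the restriction arrow $w^* : [1,x] \rightarrow [i,x]$: by definition $w^*$ is adjunct to $i \otimes [1,x] \xrightarrow{w \otimes [1,x]} 1 \otimes [1,x] \rightarrow x$, which under $[1,x] \simeq x$ is exactly the projection $i \otimes x \rightarrow 1 \otimes x \simeq x$ to which $\Delta$ is adjunct. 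Second, I would record that a conical colimit of $f : i \rightarrow x$ shaped by $i$ is precisely a left Kan extension of $f$ along $w$ (both being the terminal-weighted colimit), so that (1) is the assertion that every $f : i \rightarrow x$ admits a left Kan extension along $w$. Note also that, since $i \otimes 1 \simeq i$, the points $p : 1 \rightarrow [i,x]$ are exactly the diagrams $f : i \rightarrow x$ via $p = f^\sharp$.

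The heart of the argument is a pointwise statement that I would extract from the proof of \ref{prop.computation-proarrow-lke}, specialised to $j = 1$ and $y = 1$. Running that chain of equivalences up to --- but not including --- the step that invokes the proarrow of cones $C$, and simplifying with $1 \otimes (-) \simeq \id$ and \ref{prop.lax-adj-hor}, yields a natural equivalence
\[
\map_{\Hor(\P)(x,1)}(F, f^{\sharp,\circledast}w^*_\circledast) \simeq \map_{\Hor(\P)(x,i)}(w^\circledast \circ F, f^\circledast)
\]
for every proarrow $F : x \rightarrow 1$. The right-hand side is the very functor whose representing object defines the proarrow of $w^\circledast$-cones under $f$; hence $f^{\sharp,\circledast}w^*_\circledast$ is that proarrow, which therefore always exists (with no presentability hypothesis on $\P$). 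By \cite[Definition 6.19]{EquipI}, $f$ then admits a left Kan extension along $w$ if and only if $f^{\sharp,\circledast}w^*_\circledast$ is a conjoint, in which case it is the conjoint of $\colim f$.

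It remains to assemble the equivalence. By \cite[Corollary 6.6]{EquipI}, assertion (2) holds if and only if the companion $w^*_\circledast$ is a conjoint; by assumption ($\ast$) applied with $y = [i,x]$, this holds if and only if $p^\circledast w^*_\circledast = f^{\sharp,\circledast}w^*_\circledast$ is a conjoint for every point $p = f^\sharp$; and by the previous paragraph this is in turn equivalent to the existence of $\colim f$ for every $f : i \rightarrow x$, i.e.\ to (1). (The implication (1) $\Rightarrow$ (2) is also exactly the preceding corollary applied to $w$.) For the last clause, if (2) holds with left adjoint $w_!$, then \cite[Corollary 6.6]{EquipI} identifies $w^*_\circledast \simeq w_!^\circledast$, so that $f^{\sharp,\circledast}w^*_\circledast \simeq (w_! f^\sharp)^\circledast \simeq (\colim f)^\circledast$ and hence $w_! f^\sharp \simeq \colim f$; that is, $w_!$ sends the diagram $f$ to its conical colimit.

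I expect the only real obstacle to be bookkeeping: extracting the universal-property equivalence from the proof of \ref{prop.computation-proarrow-lke} without circularly presupposing that $C$ exists, and keeping the directions of the various proarrows straight through the repeated identifications $1 \otimes (-) \simeq \id$, $[1,x] \simeq x$ and $i \otimes 1 \simeq i$.
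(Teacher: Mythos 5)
Your proposal is correct and follows essentially the same route as the paper: the paper's proof likewise identifies $\Delta$ with restriction along the terminal arrow $w : i \rightarrow 1$, invokes \ref{prop.computation-proarrow-lke} to identify $f^{\sharp,\circledast}\Delta_\circledast$ with the proarrow of coconical cones under $f$, and then concludes via assumption ($\ast$) and \cite[Corollary 6.6]{EquipI}. Your extra care in extracting the natural equivalence from the \emph{proof} of \ref{prop.computation-proarrow-lke} (rather than its statement, which presupposes that the cone proarrow $C$ exists) is precisely what is needed to make the implication (2) $\Rightarrow$ (1) rigorous, a point the paper's terse three-sentence proof glosses over.
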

\begin{proof}
	The diagonal arrow $\Delta$ is given by restriction along the terminal arrow $i \rightarrow 1$. For every 
	 element $f^\sharp : 1 \rightarrow [i,x]$, $f^{\sharp,\circledast}{\Delta}_\circledast$ is the proarrow of coconical cones under the adjunct arrow $f : i \rightarrow x$ on account of \ref{prop.computation-proarrow-lke}.
	 The desired result follows from this computation.
\end{proof}

\section{Tabulations and fibrations in double \texorpdfstring{$\infty$}{∞}-categories}\label{section.fibrations}

The horizontal arrows of $\CCAT_\infty$, i.e.\ the correspondences between $\infty$-categories, may equivalently be viewed as
\textit{two-sided discrete fibrations} between $\infty$-categories  (we will review this notion shortly). 
This fibrational perspective follows from the work of Ayala and Francis \cite[Section 4]{AyalaFrancis} 
where they exhibit an explicit equivalence between the $\infty$-category $(\CCAT_\infty)_1 = \Cat_\infty/[1]$ of horizontal arrows 
and the $\infty$-category of two-sided discrete fibrations. One may find a further discussion of this equivalence in \cite[Subsection 4.1]{EquipI}.

The combined goal of the current section and \ref{section.fib-equipments} is to demonstrate that the situation 
for $\CCAT_\infty$ is an instance of a more general and purely double categorical phenomenon. 
Namely, we will formulate a notion of two-sided discrete fibrations 
internal to any suitable double $\infty$-category in this section.
Then we continue in \ref{section.fib-equipments} by 
characterizing those double $\infty$-categories for which the horizontal arrows may equivalently be viewed 
as these internal two-sided discrete fibrations.
For these double $\infty$-categories one may build a nice theory of two-sided discrete fibrations, including let and right fibrations as special cases.

As preparation for what follows, let us review the already established notion of two-sided fibrations between $\infty$-categories, together with an intermediate notion:

\begin{definition}\label{def.regular-span}
	A span  $(p,q) : E \rightarrow \C \times \D$ of $\infty$-categories is called  
	\textit{regular} if the following two conditions are met:
	\begin{enumerate}
		\item $p$ is a cocartesian fibration so that its 
		cocartesian arrows lie over equivalences in $\D$,
		\item $q$ is a cartesian fibration so that its cartesian arrows lie over equivalences in $\C$.
	\end{enumerate}
\end{definition}

\begin{remark}
	This terminology was introduced by Yoneda \cite{Yoneda} in the strict categorical setting. 
	The regular spans were called \textit{curved orthofibrations} in \cite{HHLN}.
\end{remark}

\begin{definition}\label{def.tsdfib}
	A  regular span $(p,q) : E \rightarrow \C \times \D$ between $\infty$-categories is called  
	a \textit{two-sided discrete fibration} if one of the equivalent conditions of \cite[Proposition 2.3.13]{HHLN} is met: 
	\begin{enumerate}
		\item the functor $(p,q)$ is conservative,
		\item for each $c \in \C$ and $d\in \D$, the fiber $E_{c,d}$ is a space,
		\item for each $d \in \D$, the functor $ E \times_\D \{d\} \rightarrow \C$ is a left fibration,
		\item for each $c \in \C$, the functor $\{c\} \times_\C E \rightarrow \D$ is a right fibration.
	\end{enumerate}
\end{definition}

\subsection{The horizontal arrow double \texorpdfstring{$\infty$}{∞}-category} Let $\P$ be a locally complete double $\infty$-category. We will commence by investigating the double $\infty$-category 
$$
\{[1], \P\} = \FFUN([1]_h, \P)
$$
of horizontal arrows in $\P$. This object plays an important role in the formalism 
of internal fibrations that we will introduce later in this section.  We note the 
horizontal arrow double $\infty$-category is again locally complete by \cite[Proposition 3.16]{FunDblCats}.

\begin{example}
	Let $\E$ be an $\infty$-topos.
	On account of \ref{prop.con-straightening}, the double $\infty$-category of horizontal arrows in $\CCAT_\infty(\E)$ can be identified with 
	the double $\infty$-category $$\CCON([1]^\op; \E)$$ 
	of Conduch\'e $[1]^\op$-modules internal to $\E$.
\end{example}	

\begin{example}
	As a simplicial $\infty$-category, the double $\infty$-category $\{[1], \CCAT_\infty\}$ 
	may level-wise be described as the full subcategory 
	$$
	\{[1], \CCAT_\infty\}_n \subset \Cat_\infty/([n] \times [1]^\op)
	$$
	spanned by the Conduch\'e fibrations. This follows from \cite[Corollary 5.19]{FunDblCats}.
\end{example}

The double $\infty$-category of horizontal arrows comes with a projection 
$$
(\ev_0, \ev_1): \{[1], \P\} \rightarrow \P \times \P
$$
which enjoys the following fibrational property:

\begin{theorem}\label{thm.rep-tsdfib}
	The projection $(\ev_0, \ev_1)$ induces a two-sided discrete fibration
	$$
	\Vert(\{[1], \P\})(F,G) \rightarrow \Vert(\P)(x,x') \times \Vert(\P)(y,y')
	$$
	between vertical mapping $\infty$-categories for every two horizontal arrows $F : x \rightarrow y$ and $G : x' \rightarrow y'$ of $\P$.
\end{theorem}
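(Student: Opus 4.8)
The plan is to make the functor $(p,q)$ explicit and then verify the conditions of \ref{def.tsdfib} directly. First I would unwind the two mapping $\infty$-categories. Writing $s\colon [n]\to[0]$ for the degeneracy, the identification $\Vert(\mathbb{Q})(a,b)_n\simeq\map_{\mathbb{Q}_n}(s^*a,s^*b)$ of \cite[Remark 2.19]{EquipI} (already used in the proof of \ref{prop.lax-adj-vert}), applied to $\mathbb{Q}=\{[1],\P\}$, together with the computation $(\{[1],\P\})_0\simeq\P_1$ and the Segal conditions, identifies $E:=\Vert(\{[1],\P\})(F,G)$ with the $\infty$-category whose objects are the $2$-cells (squares) $\sigma$ of $\P$ with horizontal source $F$ and horizontal target $G$, and whose morphisms $\sigma\to\sigma'$ are the vertical cylinders between such squares, i.e.\ pairs of globular vertical $2$-cells on the left and right boundaries together with a filler witnessing their compatibility. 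Under this identification $p(\sigma)$ and $q(\sigma)$ are the left and right vertical boundaries of $\sigma$, lying in $\Vert(\P)(x,x')$ and $\Vert(\P)(y,y')$ respectively, and $p,q$ act on a cylinder by extracting its two boundary $2$-cells.

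Next I would check discreteness, i.e.\ condition (2) of \ref{def.tsdfib}. The fibre of $(p,q)$ over a pair $(f,g)$ of vertical arrows is the space of $2$-cells of $\P$ with the fully prescribed boundary $(F,G,f,g)$; this is a space because such $2$-cells form the fibre of the projection from $2$-cells to their boundary data, and a cylinder lying over a pair of identities $(\id_f,\id_g)$ is forced to be invertible by the same boundary rigidity. Hence each fibre $E_{f,g}$ is an $\infty$-groupoid and $(p,q)$ is conservative.

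The core of the argument is to exhibit the regular span structure of \ref{def.regular-span}. I would show that $q$ is a cartesian fibration whose cartesian morphisms lie over equivalences in $\Vert(\P)(x,x')$, and dually that $p$ is a cocartesian fibration whose cocartesian morphisms lie over equivalences in $\Vert(\P)(y,y')$. Given a square $\sigma$ with right boundary $g$ and a vertical $2$-cell $\beta\colon g'\to g$ in $\Vert(\P)(y,y')$, the cartesian lift at $\sigma$ is obtained by whiskering $\sigma$ with $\beta$ along its right boundary, yielding a square $\sigma'$ with boundary $(F,G,f,g')$ together with a cylinder $\sigma'\to\sigma$ having right component $\beta$ and trivial left component. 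Its cartesianness---that every cylinder into $\sigma$ whose $q$-image factors through $\beta$ factors uniquely through $\sigma'$---follows from the universal property of this whiskering, which is precisely the statement that the relevant mapping spaces of $E$ are the spaces of fillers computed above; since the lift alters only the right boundary, its $p$-image is an identity, hence an equivalence. The cocartesian lifts for $p$ are produced symmetrically by whiskering on the left boundary.

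With conservativity and the regular span structure in hand, \ref{def.tsdfib} (equivalently, the equivalence of conditions established in \cite{HHLN}) yields that $(p,q)$ is a two-sided discrete fibration. The main obstacle is the combination of the first and third steps: the morphisms of $E$ are genuine $2$-cells of the cotensor double $\infty$-category $\{[1],\P\}$, so both the concrete description of $E$ and the whiskering operations used to build the (co)cartesian lifts, together with their universal properties, must be extracted from the double-categorical structure of $\P$---its Segal conditions and local completeness---rather than postulated. I expect the most delicate point to be translating a globular $2$-cell of $\{[1],\P\}$ faithfully into boundary data of $\P$, which is what underwrites every subsequent verification.
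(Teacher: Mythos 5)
Your plan correctly identifies the shape of the argument: the objects of $\Vert(\{[1],\P\})(F,G)$ are the 2-cells of $\P$ with horizontal boundary $(F,G)$, the morphisms are cylinders, and the candidate (co)cartesian lifts are obtained by pasting a vertical 2-cell onto the left or right boundary of a square --- this matches the remark following the theorem, which describes exactly these pastings. Your conservativity step is also essentially the paper's: what you call ``boundary rigidity'' is made precise there via the criterion of \ref{lem.loc-cons} together with the fact that the square exhibiting $[1]_h\times[1]_v$ as the pushout of $[1]_h\times[1;1]_v$ and $\{0,1\}_h\times[1]_v$ along $\{0,1\}_h\times[1;1]_v$ is cocartesian in $\DblCat_\infty$.

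The genuine gap is in your third step. You assert that cartesianness of the whiskered cell ``follows from the universal property of this whiskering, which is precisely the statement that the relevant mapping spaces of $E$ are the spaces of fillers computed above.'' This is circular: that universal property is exactly what the theorem requires one to prove, and it is not a formal consequence of the description of $E$. In the $\infty$-categorical setting, (co)cartesianness of an arrow is a condition on the entire slice, i.e.\ on mapping data in every simplicial dimension, not a unique-factorization property of morphisms against single cylinders. This is where the paper's technical work lives: \ref{lem.local-cocart-arrows-ev0} translates cocartesianness of an arrow $\alpha : [1]_h\times[1;1]_v \rightarrow \P$ into the requirement that the comparison maps $\phi_n : [1]_h\times[1;1]_v\cup_{P_0}P_n \rightarrow [1]_h\times[1;n+1]_v$ induce equivalences on fibers over $\alpha$ (using \ref{lem.slices-mapping-cats} to identify slices with mapping spaces of Joyal disks, and reducing to $n=1$), and then \ref{lem.retraction-phin} supplies the actual content: a retraction $r$ of $P_0 \rightarrow [1]_h\times[1;1]_v$ such that $r_*\phi_n$ is an equivalence, proven by an explicit filtration of $[1]\times[2]$ by shuffles and horn fillings showing the relevant inclusion lies in the saturation of the Segal maps. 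Nothing in your outline produces this verification; without it the whiskered cell is only a candidate lift, so the proposal does not yet prove the theorem. You do flag this as ``the most delicate point,'' which is accurate, but flagging it is not a substitute for an argument. One further simplification worth adopting from the paper: it suffices to prove the cocartesian half of regularity for $p$, since the cartesian half for $q$ follows by applying the result to $\P^\hop$, using $\{[1],\P^\hop\}\simeq\{[1]^\op,\P\}^\hop$; this halves the combinatorial work rather than treating the two sides ``symmetrically'' as you propose.
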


\begin{remark}
	In the context of \ref{thm.rep-tsdfib}, we will see that the cocartesian pushforward of an object $\alpha$ of $\Vert(\{[1], \P\})(F,G)$, 
	described by a 2-cell 
	\[
		\alpha = \begin{tikzcd}
			x \arrow[d,"f"'] \arrow[r, "F"] & y \arrow[d,"g"] \\
			x \arrow[r, "G"] & y'
		\end{tikzcd}
	\]
	along a vertical 2-cell $\beta : f \rightarrow f'$ in $\Vert(\P)(x,x')$ corresponds to the composite
	\[
		\begin{tikzcd}
			x \arrow[r, equal, ""name=f]\arrow[d, "f'"'] &  x \arrow[d,"f"] \arrow[r, "F"] & y \arrow[d,"g"] \\
			x' \arrow[r,equal, ""'name=t] & x' \arrow[r, "G"] & y'. 
			\arrow[from=f, to=t, phantom, "\scriptstyle\beta"]
		\end{tikzcd}
	\]
	Similarly, the cartesian pullback of $\alpha$ along a vertical 2-cell $\gamma : g' \rightarrow g$ is given by the pasting 
	\[
		\begin{tikzcd}
		  x \arrow[d,"f"'] \arrow[r,"F"] & y \arrow[d,"g"'] \arrow[r,equal, ""name=f] & y \arrow[d,"g'"] \\
			 x' \arrow[r,"G"] & y' \arrow[r,equal, ""'name=t] & y'. 
			\arrow[from=f, to=t, phantom, "\scriptstyle\gamma"]
		\end{tikzcd}
	\]
\end{remark}

This result essentially follows from double $\infty$-categorical combinatorics. Its demonstration will be in two steps: first, we will show that the above span is conservative and then show that it is regular.
Let us first start by introducing the following characterization of \textit{locally conservative functors}:

\begin{lemma}\label{lem.loc-cons}
	Let $f : \X \rightarrow \Y$ be a functor between $(\infty,2)$-categories. Then the following assertions are equivalent: 
	\begin{enumerate}
		\item $f$ is locally conservative, i.e.\ the induced functor $\X(x,y) \rightarrow \Y(fx, fy)$ between mapping $\infty$-categories is conservative for all $x, y \in \X$,
		\item the canonical map 
		$$
		\map([1], \X) \rightarrow \map([1;1], \X) \times_{\map([1;1], \Y)} \map([1], \Y)
		$$
		is an equivalence.
	\end{enumerate}
\end{lemma}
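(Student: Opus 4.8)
The plan is to recognize both the source and the pullback target of the canonical map as subspaces of the space $\map([1;1],\X)$ of $2$-cells of $\X$, and then to identify the condition that the comparison is an equivalence with the statement that $f$ reflects invertible $2$-cells.

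First I would spell out the two maps in the square. The vertical map $\map([1],\X) \to \map([1;1],\X)$ is precomposition with the collapse $2$-functor $[1;1] \to [1]$ that is the identity on objects and sends the hom-category $[1]$ of $[1;1]$ to $[0]$; fibrewise over $\X_0^{\times 2}$ it is the degeneracy $\X(x,y)_0 \to \X(x,y)_1$, carrying a $1$-cell $\phi$ to the identity $2$-cell $\id_\phi$. The horizontal maps are postcomposition with $f$. Since $f$ is a functor of $(\infty,2)$-categories the square commutes, and this is what produces the canonical comparison into the pullback.

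Next I would invoke local completeness: as $\X$ and $\Y$ are $(\infty,2)$-categories, every mapping Segal space $\X(x,y)$ and $\Y(fx,fy)$ is complete. Completeness says precisely that the degeneracy is a monomorphism onto the subspace of equivalences, so $\map([1],\X) \to \map([1;1],\X)$ is a monomorphism with image the subspace $\map([1;1],\X)^{\mathrm{inv}}$ of invertible $2$-cells (those $\sigma$ that are equivalences in the mapping $\infty$-category determined by their boundary), and likewise for $\Y$. Because the bottom map $\map([1],\Y) \to \map([1;1],\Y)$ is therefore a monomorphism onto $\map([1;1],\Y)^{\mathrm{inv}}$, the pullback
$$
P := \map([1;1],\X) \times_{\map([1;1],\Y)} \map([1],\Y)
$$
is equivalent to the subspace of $\map([1;1],\X)$ consisting of those $2$-cells $\sigma$ for which $f(\sigma)$ is invertible.

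Under these identifications the canonical map is the inclusion
$$
\map([1;1],\X)^{\mathrm{inv}} \hookrightarrow \{\sigma \in \map([1;1],\X) : f(\sigma) \text{ invertible}\}
$$
of subspaces of $\map([1;1],\X)$, well-defined because functors preserve invertible $2$-cells. This inclusion is an equivalence if and only if every $2$-cell $\sigma$ with $f(\sigma)$ invertible is itself invertible. Finally I would note that invertibility of a $2$-cell is detected fibrewise over $\X_0^{\times 2}$: $\sigma$ is invertible exactly when it is an equivalence in the mapping $\infty$-category $\X(x,y)$ indexed by its source and target objects. Hence the reflection condition holds for all $\sigma$ if and only if each $\X(x,y) \to \Y(fx,fy)$ reflects equivalences, i.e.\ is conservative, which is assertion (1). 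The one genuinely delicate point is the bookkeeping in the middle step: identifying $P$ correctly hinges on the bottom map being a monomorphism onto the invertible $2$-cells, and this is exactly where Rezk completeness of the mapping Segal spaces is used.
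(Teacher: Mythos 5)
Your proof is correct and rests on the same two ingredients as the paper's own argument: the comparison map lives over pairs of objects, and Rezk completeness of the mapping Segal spaces (which holds since $\X$, $\Y$ are $(\infty,2)$-categories) identifies degenerate $2$-cells with invertible ones. The paper reduces to fibers over $\map([0],\X)^{\times 2}$ immediately and reads off the conservativity characterization there, whereas you first identify both sides globally as subspaces of $\map([1;1],\X)$ and only pass to fibers at the end to detect invertibility — the same argument in a different order, with your version spelling out the completeness step that the paper leaves implicit.
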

\begin{proof}
	This follows because the map in (2) fits in a commutative triangle
	\[
		\begin{tikzcd}[column sep= small]
		\map([1], \X) \arrow[rr]\arrow[dr] &&  \map([1;1], \X) \times_{\map([1;1], \Y)} \map([1], \Y) \arrow[dl] \\ 
		& \map([0], \X)^{\times 2},
		\end{tikzcd}
	\]
	hence it will be an equivalence if and only if it is an equivalence on fibers above a pair of objects $(x,y)$ of $\X$. But on fibers, this is precisely 
	the map 
	$$
	\X(x,y)_0 \rightarrow \X(x,y)_1 \times_{\Y(fx, fy)_1} \Y(fx,fy)_0,
	$$
	which is an equivalence if and only if $\X(x,y) \rightarrow \Y(fx, fy)$ is locally conservative.
\end{proof}

\begin{corollary}
	The span of \ref{thm.rep-tsdfib} is conservative.
\end{corollary}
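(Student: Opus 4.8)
The plan is to deduce the corollary from \ref{lem.loc-cons}. The span in question is the functor induced on mapping $\infty$-categories by the map of $(\infty,2)$-categories
\[
(\ev_0,\ev_1)_* : \Vert(\{[1], \P\}) \rightarrow \Vert(\P) \times \Vert(\P),
\]
obtained by applying $\Vert(-)$ to the projection $(\ev_0,\ev_1)$ and using that $\Vert(-)$ preserves products. Indeed, over a pair of objects $(F,G)$ with endpoints $(x,y)$ and $(x',y')$, this functor restricts to exactly the span of \ref{thm.rep-tsdfib}. It therefore suffices to show that $(\ev_0,\ev_1)_*$ is locally conservative, so I would verify condition~(2) of \ref{lem.loc-cons} for $f = (\ev_0,\ev_1)_*$.

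First I would rewrite both sides of that condition using the adjunction $(-)_v \dashv \Vert(-)$ together with the cotensor adjunction for $\FFUN$. Since $\{[1],\P\} = \FFUN([1]_h, \P)$, this identifies $\map([1],\Vert(\{[1],\P\})) \simeq \map_{\DblCat_\infty}([1]_v \times [1]_h, \P)$ and $\map([1;1],\Vert(\{[1],\P\})) \simeq \map_{\DblCat_\infty}([1;1]_v \times [1]_h, \P)$, while $\map([1],\Vert(\P)\times\Vert(\P)) \simeq \map_{\DblCat_\infty}([1]_v,\P)^{\times 2}$ and $\map([1;1],\Vert(\P)\times\Vert(\P)) \simeq \map_{\DblCat_\infty}([1;1]_v,\P)^{\times 2}$. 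Using the identification $[1]_v \times [1]_h \simeq [1,1]$ of the free living $2$-cell, condition~(2) becomes the assertion that
\[
\map_{\DblCat_\infty}([1,1],\P) \rightarrow \map_{\DblCat_\infty}([1;1]_v \times [1]_h,\P) \times_{\map_{\DblCat_\infty}([1;1]_v,\P)^{\times 2}} \map_{\DblCat_\infty}([1]_v,\P)^{\times 2}
\]
is an equivalence, the maps being induced by the collapse $[1;1] \rightarrow [1]$ and the two endpoint inclusions of $[1]_h$.

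Since $\map_{\DblCat_\infty}(-,\P)$ carries colimits to limits, this equivalence holds for \emph{every} double $\infty$-category $\P$ as soon as the square
\[
\begin{tikzcd}
{[1;1]_v^{\sqcup 2}} \arrow[r]\arrow[d] & {[1;1]_v \times [1]_h}\arrow[d]\\
{[1]_v^{\sqcup 2}}\arrow[r] & {[1,1]}
\end{tikzcd}
\]
is a pushout of double $\infty$-categories, where the horizontal maps are the endpoint inclusions of $[1]_h$ and the vertical maps are two copies of the collapse $[1;1]_v \rightarrow [1]_v$ together with the collapse $[1;1]_v \times [1]_h \rightarrow [1,1]$. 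Concretely, a functor $[1;1]_v \times [1]_h \rightarrow \P$ encodes a horizontal morphism between two vertical $2$-cells of $\P$, and forming the displayed fibre product amounts to requiring that the two vertical $2$-cells obtained by evaluating at $0,1 \in [1]_h$ are identities; I would then argue that the vertical naturality of such a morphism forces its two constituent squares to coincide, so that the data collapses to a single $2$-cell of $\P$, compatibly with the structure maps and hence matching $\map_{\DblCat_\infty}([1,1],\P)$.

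The main obstacle is precisely this last collapse: verifying carefully, at the level of $2$-fold Segal spaces, that trivializing the two boundary vertical $2$-cells of a functor out of $[1;1]_v \times [1]_h$ identifies it with a single square, that is, that the square above is a pushout. Everything else is a formal manipulation of the adjunctions relating fragments and cotensors.
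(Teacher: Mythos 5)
Your proposal is correct and takes essentially the same approach as the paper: both arguments apply criterion (2) of \ref{lem.loc-cons} to the induced functor on vertical fragments, rewrite all four mapping spaces via the $(-)_v \dashv \Vert(-)$ and cotensor adjunctions (using $[1]_h \times [1]_v \simeq [1,1]$), and thereby reduce the corollary to the single assertion that the square relating $\{0,1\}_h \times [1;1]_v$, $\{0,1\}_h \times [1]_v$, $[1]_h \times [1;1]_v$ and $[1]_h \times [1]_v$ is a pushout of double $\infty$-categories. The step you single out as the ``main obstacle'' is exactly the fact the paper itself does not reprove but cites, namely this pushout property, which is established in the proof of Proposition 3.16 of \cite{FunDblCats}.
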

\begin{proof}
	This follows directly from the fact that the commutative square
	\[
		\begin{tikzcd}
			\{0,1\}_h \times [1;1]_v \arrow[r]\arrow[d] & \{0,1\}_h \times [1]_v\arrow[d] \\
			{[1]_h} \times [1;1]_v \arrow[r] & {[1]_h} \times [1]_v
		\end{tikzcd}
	\]
	is a pushout square of double $\infty$-categories, see the proof of \cite[Proposition 3.16]{FunDblCats}.
\end{proof}

Our next goal is to verify that the span of \ref{thm.rep-tsdfib} is regular. To this end, we make some preparatory definitions.

\begin{construction}
	Let $n \geq 0$, then we define the double $\infty$-category $P_n$ by the pushout square
	\[
		\begin{tikzcd}
			{\{0\}_h} \times [1;n]_v \arrow[r]\arrow[d, "{[1;d_1]}"']  & {[1]_h} \times [1;n]_v \arrow[d] \\
			{\{0\}_h} \times [1;n+1]_v\arrow[r] & P_n.
		\end{tikzcd}
	\]
	The canonical cofibered comparison map
	\[
	\begin{tikzcd}
		& {[1]_h \times [1;1]_v}\arrow[dl]\arrow[dr, "{[1]_h \times [1;\{0\leq 1\}]_v}"] \\
	{[1]_h} \times [1;1]_v \cup_{P_0} P_n \arrow[rr, "\phi_n"] && {[1]_h} \times [1;n+1]_v 
	\end{tikzcd}
	\]
	will play a role in what follows. 
\end{construction}

 With this, we may formulate the following characterization of cocartesian arrows:

\begin{lemma}\label{lem.local-cocart-arrows-ev0}
	Let $F : x \rightarrow y$ and $G : x' \rightarrow y'$ be horizontal arrows of $\P$. Suppose that 
	$\alpha : [1]_h \times [1;1]_v \rightarrow \P$ classifies an arrow of $\Vert(\{[1], \P\})(F,G)$.
	Then the following are equivalent:
	\begin{enumerate}
		\item the arrow $\alpha$ is cocartesian with respect to the projection 
	$$
	\Vert(\{[1], \P\})(F,G) \rightarrow \Vert(\P)(x,x'), 
	$$
		\item the map 
		$$
		\map({[1]_h} \times [1;n+1]_v, \P) \rightarrow \map({[1]_h} \times [1;1]_v \cup_{P_0} P_n , \P) 
		$$
		induced by $\phi_n$ is an equivalence on the fiber above $\alpha$ for all $n$,
		\item assertion (2) holds for $n=1$.
	\end{enumerate}
\end{lemma}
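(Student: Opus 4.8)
The plan is to recognize condition (2) as the standard chain-level criterion for $\alpha$ to be cocartesian along the projection induced by $\ev_0$, and then to cut the range of $n$ down to the single case $n=1$ by a Segal-style induction.

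First I would record the relevant identifications of mapping spaces. Using the cartesian closedness of $\DblCat_\infty$ and the cotensor adjunction $\map_{\DblCat_\infty}(Y, \{[1],\P\}) \simeq \map_{\DblCat_\infty}(Y \times [1]_h, \P)$, together with the description of the levels of a vertical mapping Segal space as maps out of the Joyal disks $[1;m]_v$, one obtains natural equivalences
\[
\Vert(\{[1],\P\})(F,G)_m \simeq \mathrm{fib}_{(F,G)} \map_{\DblCat_\infty}([1]_h \times [1;m]_v, \P), \qquad \Vert(\P)(x,x')_m \simeq \mathrm{fib}_{(x,x')} \map_{\DblCat_\infty}([1;m]_v, \P).
\]
Writing $\mathcal{E} := \Vert(\{[1],\P\})(F,G)$ and $\mathcal{B} := \Vert(\P)(x,x')$ and letting $\pi$ denote the projection induced by $\ev_0$, the defining pushout for $P_n$ identifies $\mathrm{fib}\,\map(P_n, \P)$ with the fibre product $\mathcal{E}_n \times_{\mathcal{B}_n} \mathcal{B}_{n+1}$ recording an $\mathcal{E}$-chain of length $n$ lying over the $[1;d_1]$-composite of a base chain of length $n+1$. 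Under these identifications the map induced by $\phi_n$, restricted to the fibre over $\alpha$, becomes exactly the comparison map
\[
\mathcal{E}_{n+1} \times_{\mathcal{E}_1} \{\alpha\} \longrightarrow \mathcal{E}_n \times_{\mathcal{B}_n} \bigl( \mathcal{B}_{n+1} \times_{\mathcal{B}_1} \{\pi(\alpha)\} \bigr),
\]
where $\mathcal{E}_1$ and $\mathcal{B}_1$ are taken via the first edge. Its being an equivalence for all $n \geq 1$ is precisely the recognition criterion for $\alpha$ to be $\pi$-cocartesian, so this establishes the equivalence of (1) and (2).

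The implication (2) $\Rightarrow$ (3) is the special case $n=1$. For (3) $\Rightarrow$ (2) I would induct on $n$, with $n=1$ as the hypothesis. Since the inclusion $(-)_v : \Seg^2(\S) \rightarrow \Cat^2(\S)$ is a left adjoint, it preserves the Segal decomposition of the Joyal disk in the $2$-cell direction, yielding a pushout $[1;n+1]_v \simeq [1;n]_v \cup_{[1;0]_v} [1;1]_v$ of double $\infty$-categories that peels off the \emph{top} $2$-cell (away from the first cell, where $\alpha$ lives, and away from the composition built into $[1;d_1]$). The analogous decompositions apply to the constituents $[1]_h \times [1;n]_v$ and $\{0\}_h \times [1;n+1]_v$ of $P_n$, and hence to $P_n$ itself. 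This exhibits both the source and the target of $\phi_n$ as pushouts assembled from those of $\phi_{n-1}$ and one elementary top-cell piece; applying $\map_{\DblCat_\infty}(-, \P)$ sends these pushouts to pullbacks of spaces, and a pasting of pullback squares then deduces the $\phi_n$-equivalence from the $\phi_{n-1}$-equivalence together with the base case.

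The main obstacle is the bookkeeping in this last step: one must verify that the Segal decomposition of $[1;n+1]_v$ is compatible both with the $[1;d_1]$-gluing built into the definition of $P_n$ and with the comparison $\phi_n$, so that $\phi_n$ genuinely factors as the claimed gluing of $\phi_{n-1}$ with the elementary piece. Concretely this amounts to identifying the relevant cube of double $\infty$-categories and checking that its faces are pushout squares; once that is in place, the space-level argument is formal.
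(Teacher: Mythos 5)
Your overall route is the same as the paper's: unwind condition (2) into a chain-level lifting condition, recognize that condition (over all $n$) as the slice-definition of a cocartesian arrow, and then use Segal decompositions of the Joyal disks to collapse the range of $n$. Your inductive step for (3)~$\Rightarrow$~(2), peeling off the \emph{last} $2$-cell so that $\phi_n$ becomes a cobase change of $\phi_{n-1}$ and everything reduces to $\phi_1$, is a correct variant of the paper's reduction (which peels from the front in a single step); granting the usual fact that the product bifunctor preserves the relevant Segal-local equivalences, that part of your argument is sound.

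There is, however, a concrete error in your identification of the fibre in the step (1)~$\Leftrightarrow$~(2). The pushout $[1]_h\times[1;1]_v\cup_{P_0}P_n$ glues along \emph{both} components of $P_0$: the component $\{0\}_h\times[1;1]_v$, which forces the base $(n+1)$-chain to begin with $\pi(\alpha)$ (you record this), and the component $[1]_h\times[1]_v$, which forces the $\mathcal{E}$-chain of length $n$ to begin at the source object $\alpha_0:=\alpha|[1]_h\times[1;\{0\}]_v$ (you omit this). The fibre of $\map([1]_h\times[1;1]_v\cup_{P_0}P_n,\P)$ over $\alpha$ is therefore
\[
\bigl(\mathcal{E}_n\times_{\mathcal{E}_0}\{\alpha_0\}\bigr)\times_{\mathcal{B}_n\times_{\mathcal{B}_0}\{\pi(\alpha_0)\}}\bigl(\mathcal{B}_{n+1}\times_{\mathcal{B}_1}\{\pi(\alpha)\}\bigr),
\]
and not $\mathcal{E}_n\times_{\mathcal{B}_n}\bigl(\mathcal{B}_{n+1}\times_{\mathcal{B}_1}\{\pi(\alpha)\}\bigr)$. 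The difference is not cosmetic: the codomain you display also contains pairs whose $\mathcal{E}$-chain starts at an arbitrary point of the fibre $\mathcal{E}_0\times_{\mathcal{B}_0}\{\pi(\alpha_0)\}$, and such pairs are never in the image of the comparison map, since any $d_1$-restriction of a chain extending $\alpha$ starts at $\alpha_0$. So the condition you wrote down is in general strictly stronger than cocartesianness and is also not condition (2); your asserted chain of equivalences has two broken links that happen to compensate. Once the constraint is restored, the condition at level $n$ is exactly the assertion that the canonical map $\alpha/\mathcal{E}\rightarrow \alpha_0/\mathcal{E}\times_{\pi(\alpha_0)/\mathcal{B}}\pi(\alpha)/\mathcal{B}$ is an equivalence in simplicial degree $n-1$; ranging over $n\geq1$ this is the slice-criterion defining cocartesian arrows, and this levelwise unwinding of slices is precisely what the paper isolates as \ref{lem.slices-mapping-cats}. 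With that correction, your argument, including the induction, goes through.
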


To prove the above lemma, we make use of the following immediate observation:

\begin{lemma}\label{lem.slices-mapping-cats}
	Let $x,y$ be objects in an $(\infty,2)$-category $\X$. Suppose that  $\alpha : [1;m] \rightarrow \X$ classifies an arrow 
	$[m] \rightarrow \X(x,y)$. Then there is a natural pullback square
	\[
	\begin{tikzcd}
	\map_{\Cat_\infty}([n], \alpha/\X(x,y)) \arrow[r]\arrow[d] & \map_{\Cat_{(\infty,2)}}([1; m + n + 1], \X) \arrow[d] \\
	\ast \arrow[r, "\alpha"] & \map_{\Cat_{(\infty,2)}}([1;m], \X).
	\end{tikzcd}
	\]
\end{lemma}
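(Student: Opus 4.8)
The plan is to reduce the square to a statement purely about the mapping $\infty$-categories of $\X$, and then to recognise it as the defining universal property of the coslice. First I would identify the right-hand vertical map as restriction along the inclusion of Joyal disks $\iota : [1;m] \hookrightarrow [1;m+n+1]$ that is the identity on the two objects $0,1$ and on hom-posets is the \emph{front} inclusion $[m] \hookrightarrow [m+n+1]$. Here I use the identification $[m+n+1] \cong [m] \star [n]$ of linear orders, under which $\iota$ realises $[m]$ as the first join factor; the object $\alpha : [1;m] \to \X$ is precisely a pair of objects $(x,y) = (\alpha(0),\alpha(1))$ together with a functor $[m] \to \X(x,y)$.

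Next I would pass to fibers over the pair of objects $(x,y)$. Recall that evaluating at the Joyal disk $[1;k]$ recovers the bidegree $(1,k)$ space, $\map_{\Cat_{(\infty,2)}}([1;k], \X) \simeq \X_{1,k}$, and that this is fibered over $\X_{0,0}^{\times 2}$ by source and target. Since $\iota$ fixes the two objects, the whole square lies over $\X_{0,0}^{\times 2}$. Selecting $(x,y)$ and using that $\X(x,y)$ is complete (as $\X$ is an $(\infty,2)$-category), the fiber of $\map_{\Cat_{(\infty,2)}}([1;k],\X)$ over $(x,y)$ is $\X(x,y)_k \simeq \map_{\Cat_\infty}([k], \X(x,y))$. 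Hence the square of the lemma is a pullback if and only if
\[
\begin{tikzcd}
\map_{\Cat_\infty}([n], \alpha/\X(x,y)) \arrow[r]\arrow[d] & \map_{\Cat_\infty}([m+n+1], \X(x,y)) \arrow[d] \\
\ast \arrow[r, "\alpha"] & \map_{\Cat_\infty}([m], \X(x,y))
\end{tikzcd}
\]
is one, where the bottom map selects $\alpha : [m] \to \X(x,y)$ and the right map is restriction along the front inclusion $[m] \hookrightarrow [m+n+1] \cong [m] \star [n]$.

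Finally I would recognise this reduced square as the join--slice adjunction for $\infty$-categories: functors $[m] \star [n] \to \X(x,y)$ that restrict to $\alpha$ on the first join factor correspond naturally to functors $[n] \to \alpha/\X(x,y)$ into the coslice (undercategory) under $\alpha$. This is exactly the universal property defining $\alpha/\X(x,y)$, so the reduced square is cartesian, naturally in $[n]$ and in $\alpha$; tracing back the identifications above then yields the stated pullback square.

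The main obstacle is purely bookkeeping, concentrated in the first two paragraphs: one must verify that $\map_{\Cat_{(\infty,2)}}([1;k], \X)$ computes the bidegree $(1,k)$ space $\X_{1,k}$ with fiber the $k$-simplices $\X(x,y)_k$ of the mapping $\infty$-category, and that the Joyal-disk inclusion $\iota$ genuinely corresponds to the front inclusion of $[m]$ into the join $[m] \star [n]$. Once these identifications are in place the conclusion is immediate from the standard undercategory adjunction, which is why the statement is recorded as an immediate observation.
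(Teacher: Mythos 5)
Your proof is correct. The paper supplies no argument for this lemma (it is recorded as an ``immediate observation''), and your proof fills in exactly the intended one: identify the right-hand vertical map as restriction along the Joyal-disk inclusion $[1;m]\rightarrow[1;m+n+1]$ induced by the front inclusion $[m]\rightarrow[m]\star[n]$, pass to fibers over the pair $(x,y)$ so that $\map_{\Cat_{(\infty,2)}}([1;k],\X)$ contributes $\X(x,y)_k\simeq\map_{\Cat_\infty}([k],\X(x,y))$ --- the same fiberwise technique the paper uses to prove \ref{lem.loc-cons} --- and then conclude by the join--coslice universal property of $\alpha/\X(x,y)$.
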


\begin{proof}[Proof of \ref{lem.local-cocart-arrows-ev0}]
	Throughout this proof, let us write $\alpha_0 := \alpha|[1]_h \times [1;\{0\}]_v$ and write $\beta, \beta_0$ for the images 
	of $\alpha$ and $\alpha_0$ under $\ev_0$ respectively. 
	Note that (2) is vacuous for $n=0$. Now, (1) holds precisely if the commutative square
	\[
		\begin{tikzcd}
			\alpha/\Vert(\{[1], \P\})(F,G) \arrow[r]\arrow[d] & \alpha_0/\Vert(\{[1], \P\})(F,G)\arrow[d]\\
			\beta/\Vert(\P)(x,x') \arrow[r] & \beta_0/\Vert(\P)(x,x')
		\end{tikzcd}
	\]
	is a pullback square. On account of \ref{lem.slices-mapping-cats}, this is true if and only if for every $n \geq 0$, the commutative square 
	\[
		\begin{tikzcd}
		\map_{\DblCat_\infty}([1]_h \times [1; n+2]_v, \P) \arrow[r]\arrow[d] & \map_{\DblCat_\infty}([1]_h \times [1; n+1]_v, \P) \arrow[d] \\
		\map_{\DblCat_\infty}(\{0\}_h \times [1; n+2]_v, \P) \arrow[r] & \map_{\DblCat_\infty}(\{0\}_h \times [1; n+1]_v, \P)
		\end{tikzcd}
	\]	
	is a pullback square after taking the fibers in each corner above the appropriate restrictions of $\alpha$. 
	In turn, this assertion can be precisely translated into the condition that $\phi_{n+1}$ induces an equivalence above the fiber of $\alpha$. This is precisely (2). 

	To show that (2) and (3) are equivalent, we note that there is a canonical equivalence of double $\infty$-categories 
	$$
	[1]_h \times [1;2]_v \cup_{[1]_h \times [1]_v} [1]_h \times [1;n-2]_v \xrightarrow{\simeq} [1]_h \times [1;n]_v
	$$
	for $n \geq 2$. One can use this to prove that the canonical map
	$
	P_2 \cup_{[1]_h \times [1]_v} [1;n-2]_v \rightarrow P_n
	$
	is an equivalence as well. This in turn implies that the comparison map $\phi_n$ is a pushout along $\phi_2$ for $n \geq 2$. 
\end{proof}

\begin{lemma}\label{lem.retraction-phin}
	The inclusion $i : P_0 \rightarrow [1]_h \times [1;1]_v$ admits a retraction $r : [1]_h \times [1;1]_v \rightarrow P_0$ 
	so that the cobase change of $\phi_n$ along $r$ induces an equivalence 
	$$
	r_*\phi_n: P_n \rightarrow P_0 \cup_{[1]_h \times [1;1]_v} [1]_h \times [1;n+1]_v
	$$
	of double $\infty$-categories. Moreover, the restriction $r|\{1\}_h \times [1;1]_v$ factors through the 
	degeneracy map $[1;1]_v \rightarrow [1]_v$. 
\end{lemma}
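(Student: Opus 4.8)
The plan is to build the retraction $r$ by hand and then reduce the equivalence to a pushout-pasting computation in which the explicit form of $r$ is used only once. Throughout write $A := [1]_h \times [1;1]_v$, $X := [1]_h \times [1;n+1]_v$, and $S := A \cup_{P_0} P_n$ for the source of $\phi_n$.

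\emph{Constructing $r$.} Since $A$, $P_0$ and their structure maps lie in the image of $\mathrm{Gaunt}_2 \to \Seg^2(\S)$, I would define $r$ as a strict functor on the underlying finite gaunt double categories. Writing $\sigma = [1;s_0]_v : [1;1]_v \to [1]_v$ for the degeneracy that collapses the free $2$-cell (split by the face $[1;d_1]_v$), I would take $r$ to be the identity on $\{0\}_h \times [1;1]_v$ and to equal $\sigma$ followed by the structural inclusion $[1]_h \times [1]_v \hookrightarrow P_0$ on $\{1\}_h \times [1;1]_v$, interpolating over $[1]_h$ so that the image lands in $P_0 = \{0\}_h \times [1;1]_v \cup_{\{0\}_h \times [1]_v} [1]_h \times [1]_v$. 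Both $r i = \id_{P_0}$ (as $i$ lands in the locus where $r$ is the identity) and the moreover clause (that $r|\{1\}_h \times [1;1]_v$ factors through $\sigma$) can then be read off directly from the construction; the only thing to check is well-definedness on generators, against the composites available in $P_0$.

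\emph{Formal reduction.} Unwinding ``cobase change along $r$'': the two legs of $\phi_n$ receive canonical maps out of $A$, namely the pushout inclusion into $S$ on the source and $[1]_h \times [1;\{0 \leq 1\}]_v$ on the target $X$, and $r_*\phi_n$ is the result of applying $(-) \cup_A P_0$ (along $r$) to $\phi_n$. Using the pasting law for pushouts together with $r i = \id$, I would identify the cobase-changed source $S \cup_A P_0$ with $P_n$ (the inclusion $P_n \hookrightarrow S$ induces the identification), while the target is $P_0 \cup_A X$ by definition. Hence it remains to show that the induced map $P_n \to P_0 \cup_A X$ is an equivalence, equivalently that $P_0 \cup_A X \simeq P_n$.

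\emph{The equivalence and the main obstacle.} For this I would apply the Segal decomposition $[1]_h \times [1;n+1]_v \simeq [1]_h \times [1;1]_v \cup_{[1]_h \times [1]_v} [1]_h \times [1;n]_v$ in the $2$-cell direction, whose initial piece is exactly the image of $[1]_h \times [1;\{0 \leq 1\}]_v$; pasting then cancels the copy of $A$ and presents $P_0 \cup_A X$ as $P_0 \cup_{[1]_h \times [1]_v} [1]_h \times [1;n]_v$, with attaching map obtained from $r$. On the other side, decomposing the position-$0$ copy $\{0\}_h \times [1;n+1]_v$ inside $P_n$ in the same way and pasting exhibits $P_n$ as a pushout of the same shape, and one is left to check that the two attaching maps agree. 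This last matching is where the moreover clause is essential: the degeneration of the free $2$-cell at horizontal position $1$ is precisely what reconciles the face $[1;d_1]$ used to define $P_n$ (which omits the interior vertex of the $2$-cell poset) with the Segal splitting. Alternatively, once $r$ is fixed I could reduce to small $n$ as in the proof of \ref{lem.local-cocart-arrows-ev0}, using $P_2 \cup_{[1]_h \times [1]_v} [1]_h \times [1;n-2]_v \simeq P_n$, and deduce $r_*\phi_n$ from $r_*\phi_2$ by a further cobase change. The formal reduction is routine; the genuine difficulty, and the step I expect to be the main obstacle, is the bookkeeping of the Joyal-disk $2$-cell posets needed both to verify well-definedness of $r$ and to match the two attaching maps.
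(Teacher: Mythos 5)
Your construction of the retraction and your formal reduction are essentially the paper's: the paper also collapses the $2$-cell at horizontal position $1$, building $r$ from the unique fibered retraction $r' : [1]\times[1] \to A'_0$ of the inclusion of the $2$-simplex $A'_0 = \{(0,0)\le(0,1)\le(1,1)\}$ that sends $(1,0)$ to $(1,1)$. One caveat on your first step: $P_0$ as defined (a pushout of presheaves, resp.\ of double $\infty$-categories) is \emph{not} on the nose a gaunt object — its presheaf model is the horn $\Lambda^1[2]$-shaped object $A_0$, which lacks the composite square. Identifying $P_0$ with the gaunt model (where your strict functor $r$ lives) is exactly the paper's observation that $A_0 \to A'_0$ is a pushout along $\Lambda^1[2]\to[2]$, hence lies in the saturation of (\textit{Seg}); your assertion that $P_0$ ``lies in the image of $\mathrm{Gaunt}_2$'' presupposes this and should be proved, not asserted.

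The genuine gap is in your third step, and it is fatal as written. Your target-side manipulation is fine, but the source-side one is not: the defining pushout of $P_n$ attaches $[1]_h\times[1;n]_v$ along the face $[1;d_1]$, whose image consists of the hom-vertices $\{0,2,\dots,n+1\}$ and in particular contains the hom-edges $0 \to j$ for $j \ge 2$. These edges cross the cut of the Segal splitting $[1;n+1] \simeq [1;\{0\le 1\}] \cup_{[1;\{1\}]} [1;\{1,\dots,n+1\}]$, so the attaching map of $P_n$ does not factor through either piece, and there is no pasting of pushouts available at the cellular level that ``exhibits $P_n$ as a pushout of the same shape.'' One can of course invert the Segal equivalence and rewrite $P_n$ as an $\infty$-categorical pushout along a non-cellular composite map, but then ``checking that the two attaching maps agree'' is not bookkeeping — it is precisely the assertion of the lemma, since that composite map is defined by composing cells that $P_n$ does not contain. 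The paper supplies the missing content by first converting everything to simplicial sets over $[1]$ via the pushout-product $(-)\boxtimes(\{0,1\}\to[1])_v$ (so that $(-)_h$ and $\boxtimes$ transport (\textit{Seg})-saturation), and then proving the key case by an explicit filtration of $B_1 = [1]\times[2]\cup_{[1]\times[1]}A'_0$: a shuffle decomposition into three $3$-simplices and a five-step filtration whose steps are pushouts along $\Lambda^1[2]$, $\Lambda^2[3]$, and outer horns $\Lambda^3[3]$ with degenerate last edge, each local for Segal spaces. Your proposal contains no substitute for this computation — you defer it as ``the main obstacle'' — and your fallback (reducing general $n$ to a small case by cobase change, which is indeed how both you and the paper handle the induction) still leaves that base case unproved.
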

\begin{proof}
	Throughout this proof, we will denote the pushout-product functor for arrows in $\PSh(\Delta^{\times 2})$ by $$(-) \boxtimes (-) : \fun([1], \PSh(\Delta^{\times 2}))^{\times 2} \rightarrow \PSh(\Delta^{\times 2}).$$ We may express $[1;n]_h$ 
	as the pushout-product
	$[1;n]_h = (\{0,1\} \rightarrow [1])_h \boxtimes ([n] \rightarrow [0])_v$ so that we obtain the formula
	$$[1;n]_v = [1;n]_h^{\tp, \hop} =  ([n]^\op \rightarrow [0])_h  \boxtimes (\{0,1\} \rightarrow [1])_v.$$ 
	Consequently, we may construct a commutative diagram
	\[
		\begin{tikzcd}
		(A_n \rightarrow [1])_h \boxtimes (\{0,1\}\rightarrow [1])_v \arrow[r]\arrow[d] & ([1] \times [n+1] \rightarrow [1])_h \boxtimes (\{0,1\}\rightarrow [1])_v \arrow[d] \\
		P_n \arrow[r] &{ [1]_h \times [1;n+1]_v}
		\end{tikzcd}
	\]
	in $\PSh(\Delta^{\times 2})$ where 	$A_n := [1] \times [n] \cup_{\{0\}\times[n]} \{0\} \times [n+1]$. The maps that form this pushout and the top map  in the above square
	are now induced by the face map $$d_{n} : [n] \rightarrow [n+1].$$ The right arrow in this diagram is an equivalence, and the left arrow lies in the saturation of 
	the maps (\textit{Seg}) and (\textit{hc}) of \cite[Table 3.1.1]{FunDblCats}.

	We may concretely identify $P_0$. Consider the simplicial subset $[2] \simeq A'_0 \subset [1] \times [1]$ spanned by the 2-simplex 
	$(0,0) \leq (0,1) \leq (1,1)$. Then we obtain a factorization
	$$
	A_0 \rightarrow A'_0 \xrightarrow{i'} [1] \times [1].
	$$
	and the left arrow is a pushout along the $\Lambda^1[2] \rightarrow [2]$.  Consequently, the inclusion 
	$$
	(A_0 \rightarrow [1])_h \boxtimes  (\{0,1\}\rightarrow [1])_v \rightarrow (A'_0\rightarrow [1])_h \boxtimes (\{0,1\}\rightarrow [1])_v
	$$
	lies in the saturation of (\textit{Seg}) of \cite[Table 3.1.1]{FunDblCats}. Since the right-hand side is a double $\infty$-category, we may 
	assume that this is precisely $P_0$. Now, we 
	the desired retraction $r : [1]_h \times [1;1]_v \rightarrow P_0$ is induced by the (unique) fibered retraction 
	\[ 
		\begin{tikzcd}[column sep=tiny]
			{[1]} \times [1] \arrow[rr ,"r'"]\arrow[dr]&& A'_0\arrow[dl] \\
			& {[1]}
		\end{tikzcd}
	\]
	of $i'$ that carries $(1,0)$ to $(1,1)$.

	The proposition follows if we manage to show that the map
	$$
	(A_1 \cup_{[1] \times [1]} A'_0  \rightarrow [1])_h \boxtimes (\{0,1\}\rightarrow [1])_v \rightarrow ([1] \times [2] \cup_{[1] \times [1]} A'_0 \rightarrow [1])_h \boxtimes (\{0,1\}\rightarrow [1])_v
	$$
	lies in the saturation of (\textit{Seg}) of \cite[Table 3.1.1]{FunDblCats}. To this end, let us write 
	$$
	q : [1] \times [2]  \rightarrow [1] \times [2] \cup_{[1] \times [1]} A'_0 =: B_1
	$$
	for the canonical (quotient) map of simplicial sets.
	Recall that $[1] \times [2]$ is generated by the 3-simplices 
	$\sigma_1, \sigma_2, \sigma_3$ that correspond to the shuffles $(0,0) \leq (0,1) \leq (0,2) \leq (1,2)$, 
	$(0,0)\leq (0,1) \leq (1,1) \leq (1,2)$ and $(0,0) \leq (1,0) \leq (1,1) \leq (1,2)$ respectively. The images 
	under $q$ are still non-degenerate and also generate $B_1$. Let $\tau_1$ and $\tau_2$ denote the 
	2-simplices $(0,0) \leq (1,1) \leq (1,2)$ and $(1,0) \leq (1,1) \leq (1,2)$. 
	Let $F_{a,b} \subset B_1$ denote the smallest simplicial subset containing $A_1$ 
	together with the simplices $q(\tau_1), \dotsc, q(\tau_b)$ 
	and $q(\sigma_1), \dotsc, q(\sigma_b)$.

	With these definitions, we obtain a filtration 
	$$
	A_1 \cup_{[1] \times [1]} A'_0 = F_{0,0} \rightarrow F_{1,0} \rightarrow F_{2,0} \rightarrow F_{2,1} \rightarrow F_{2,2} \rightarrow F_{2,3} = B_1
	$$
	such that:
	\begin{enumerate}
		\item the inclusions $F_{0,0} \rightarrow F_{1,0}$ and $F_{1,0} \rightarrow F_{2,0}$ are pushouts along $\Lambda^1[2] \rightarrow [2]$,
		\item the inclusion $F_{2,0} \rightarrow F_{2,1}$ is a pushout along $\Lambda^2[3] \rightarrow [3]$,
		\item the inclusions $F_{2,1} \rightarrow F_{2,2}$ and $F_{2,2} \rightarrow F_{2,3}$ are both pushouts along $\Lambda^3[3] \rightarrow [3]$
		so that the restrictions to $\{2 \leq 3\}$ select the same degenerate edge of $B_1$.
	\end{enumerate}
	All five maps are pushouts of maps that are local to the (1-dimensional) Segal spaces in $\PSh(\Delta)$ (see \ref{not.rezk}). For (1), this follows by definition. 
	For (2), this follows from \cite[Lemma 3.5]{JoyalTierney}. For (3), this can be deduced from the proof of \cite[Lemma 11.10]{RezkSeg}.
	It now follows that the total composite $A_1 \cup_{[1] \times [1]} A'_0 \rightarrow B_1$ is local with regard to the inclusion $\Seg(\S) \subset \PSh(\Delta)$. 
	
	Consequently, the horizontal arrows in the commutative square 
	\[
	\begin{tikzcd}
		(A_1 \cup_{[1] \times [1]} A'_0)_h \arrow[d]\arrow[r] & (B_1)_h \arrow[d] \\
		{[1]}_h \arrow[r,equal] & {[1]}_h
	\end{tikzcd}
	\] are contained in the saturation of (\textit{Seg}) on account of the discussion in \ref{not.rezk}, and the fact that $(-)_h$ preserves 
	the equivalences local to the inclusion $\Seg^2(\S) \subset \PSh(\Delta^{\times 2})$. Now, we will use that the product bifunctor of $\PSh(\Delta^{\times 2})$ preserves 
	the morphisms in the saturation of (\textit{Seg}) in both variables, see \cite[Proposition 3.16]{FunDblCats} and \cite[Lemma 2.12]{FunDblCats}. 
	This implies that the pushout-product bifunctor $(-) \boxtimes (-)$ carries morphisms level-wise contained 
	in the saturation of (\textit{Seg}), in both variables, to morphisms in the saturation of (\textit{Seg}). 
	In particular, 
	when  we view the above square as an arrow 
	in $\fun([1], \PSh(\Delta^{\times 2}))$ and apply the functor $(-) \boxtimes (\{0,1\} \rightarrow [1])_v$ to it, we will remain in the saturation of (\textit{Seg}).
\end{proof}

\begin{proof}[Proof of \ref{thm.rep-tsdfib}]
	It remains to show that the span of the theorem is regular. We will only show property (1) of \ref{def.regular-span} since property (2) is formally dual.
	Namely, it may 
	be obtained from (1) by looking at the horizontal opposite $\P^\hop$ of $\P$, for which 
	the horizontal arrow double $\infty$-category is computed as $\{[1], \P^\hop\} = \{[1]^\op, \P\}^\hop$. 
	
	Let 
	$i$ and $r$ be as in \ref{lem.retraction-phin}. Suppose that $\beta : \{0\}_h \times [1;1]_v \rightarrow \P$ classifies an arrow of $\Vert(\P)(x,x')$ and $\alpha_0 : [1]_h \times [1]_v \rightarrow \P$ 
	classifies object of $\Vert(\{[1], \P\})(F,G)$, so that $\beta_0 := \{0\}_h \times [1]_v$ coincides with the restriction of $\alpha_0$
	to $\{0\}_h \times [1;\{0\}]_v$. Consider the candidate $\alpha$ defined by the composite
	$$
	[1]_h \times [1;1]_v \xrightarrow{r} P_0 \xrightarrow{\alpha'} \P,
	$$
	where the second map $\alpha'$ is the unique map determined by $\alpha_0$ and $\beta$. Note that $\alpha|\{1\}_v \times [1;1]_v$ is an equivalence in $\Vert(\P)(y,y')$ on account of the property 
	of the retract $r$ exhibited in \ref{lem.retraction-phin}. 
	It remains to show that $\alpha$ is cocartesian. To this end, we will make use of the characterization of \ref{lem.local-cocart-arrows-ev0}: we have to show that the map 
	$$
		\phi_n^* : \map({[1]_h} \times [1;n+1]_v, \P) \rightarrow \map({[1]_h} \times [1;1]_v \cup_{P_0} P_n , \P) 
	$$
	induced by $\phi_n$ is an equivalence at the fiber above $\alpha$. But this is precisely the fiber of the map 
	$$
		(r_*\phi_n)^* : \map(P_0 \cup_{[1]_h \times [1;1]_v} [1]_h \times [1;n+1]_v, \P) \rightarrow \map(P_n , \P) 
	$$
	induced by the cobase change $r_*\phi_n$ of $\phi_n$ at the fiber above $\alpha'$. But $r_*\phi_n$ is an equivalence on account of \ref{lem.retraction-phin} so that the result follows.
\end{proof}

\begin{remark}\label{rem.arrh-vert}
	The vertical mapping $\infty$-categories of $\{[1], \P\}$ are usually hard to access. There are some cases one can say more. Namely, 
	one may compute that 
	$$
	\Vert(\{[1], \P\})(\id_x, \id_y) \simeq \fun([1], \Vert(\P)(x,y))
	$$
	for $x,y \in \P$. However, we will not give a demonstration of this here.
\end{remark}

\subsection{Tabulations}
To define the notion of two-sided discrete fibrations internal to a double $\infty$-category, we will make use of the 
previously discussed horizontal arrow double $\infty$-category and one final ingredient: Grandis'\ and Par\'e's notion of \textit{tabulations} \cite{GrandisPareLimits}, \cite{GrandisPareSpan}.

\begin{definition}\label{def.tabulation}
	We say that a locally complete double $\infty$-category $\P$ is \textit{tabular} if it admits 
	limits shaped by the free horizontal arrow $[1]_h$. That is, whenever the diagonal functor 
	$$
	\Delta : \P \rightarrow \{[1], \P\}
	$$
	admits a lax right adjoint $t : \{[1], \P\} \rightarrow \P$. A \textit{tabulation} for a horizontal arrow $F:x\rightarrow y$ is the 
	data of an object $e$ of $\P$ 
	together with a so-called \textit{tabulating 2-cell} 
	\[
		\begin{tikzcd}
			e \arrow[r,equal]\arrow[d] & e \arrow[d] \\ 
			x \arrow[r, "F"] & y
		\end{tikzcd}
	\]
	in $\P$ that defines a counit for the adjunction $(\Delta, t)$: the composite map
	$$
	\map_{\P_0}(z, e) \rightarrow \map_{\P_1}(\id_z, \id_e) \rightarrow \map_{\P_1}(\id_z, F)
	$$
	should be an equivalence for any object $z$ of $\P$. In particular, we then have that $e \simeq tF$.
\end{definition}

\begin{remark}\label{rem.cotabs}
	There is a dual notion of \textit{cotabulations} in $\P$, these are precisely tabulations in the vertical opposite 
	$\P^\vop$.
\end{remark}

\begin{remark}
	Originally, tabulations were termed \textit{tabulators} by Grandis and Par\'e. We will use the terminology of Koudenburg \cite{Koudenburg}
	that is also used in the 2-categorical literature \cite{CarboniKasangianStreet}.
\end{remark}

\begin{example}
	The $\infty$-equipment $\CCAT_\infty(\E)$ associated to an $\infty$-topos $\E$ is tabular in light of \ref{ex.int-ccat-dbl-limits}.
\end{example}

\begin{example}
	If $F : \C \rightarrow \D$ 
	is a proarrow between $\infty$-categories given by a correspondence $E \rightarrow [1]$, then its tabulating 2-cell is given by
	\[
		\begin{tikzcd}
			\fun_{/[1]}([1], E) \arrow[r,equal]\arrow[d, "\ev_1"'] & \fun_{/[1]}([1], E) \arrow[d, "\ev_0"] \\ 
			\C \arrow[r, "F"] & \D
		\end{tikzcd}
	\]
	that is classified by the canonical map of correspondences $$[1] \times \fun_{/[1]}([1], E) \rightarrow E.$$ The span
	$\C \leftarrow \fun_{/[1]}([1], E) \rightarrow \D$ is precisely the two-sided discrete fibration associated to the correspondence $E \rightarrow [1]$ under the identifications discussed in \cite[Section 4.1]{EquipI}, see also the work of Ayala-Francis \cite[Section 4.1]{AyalaFrancis}.
	The cotabulation of $F$ is given by the 2-cell
	\[
		\begin{tikzcd}
			\C \arrow[r, "F"]\arrow[d] & \D\arrow[d] \\
			E \arrow[r, equal] & E
		\end{tikzcd}
	\]
	where the vertical arrows are inclusions of fibers, corresponding to the map of correspondences $E \rightarrow [1] \times E$.
\end{example}

\begin{definition}
	Suppose that $\P$ is a tabular locally complete double $\infty$-category. Then 
	a span $$(p,q) : e \rightarrow x \times y$$ in $\Vert(\P)^{(1)}$ is called a \textit{two-sided discrete fibration} (internal to $\P$) 
	if there exists a 2-cell of the form
	\[
		\begin{tikzcd}
			e \arrow[r,equal]\arrow[d,"p"'] & e \arrow[d, "q"] \\ 
			x \arrow[r, "F"] & y
		\end{tikzcd}
	\]
	in $\P$ that is tabulating.
\end{definition}

\begin{remark}\label{rem.tsdfib-street}
There are more strategies for defining suitable generalizations of two-sided discrete fibrations of $(\infty-)$categories. In a stricter context, 
this was done by Street \cite{StreetFib} where the candidates internal to a suitable \textit{2-category} $\X$ are introduced:
\begin{enumerate}
	\item the so-called \textit{covering spans} in $\X$,
	\item the more general notion of \textit{discrete split bifibrations} in $\X$, this is a representable notion:\ a span is a discrete split bifibration if and only if 
	it induces a two-sided discrete fibration on mapping categories.
\end{enumerate}
There is no discrepancy between definitions (1) and (2) if $\X$ is given by the 2-category of categories (cf.\ \cite[Remark 3.44]{StreetFib2}, \cite[Proposition 4.10]{CarboniJohnsonStreetVerity}), 
but this is generally not the case.

One can interpret our definition as a double $\infty$-categorical variant on definition (1). In the 2-categorical literature, 
usually approach (2) is taken to define two-sided discrete fibrations. We depart from this strategy as this definition would only depend on the vertical fragment of the double $\infty$-categories that we consider. We stress
that our definition of two-sided discrete fibrations depends on the horizontal categorical direction as well.
\end{remark}

The following result shows that two-sided discrete fibrations internal to a suitable double $\infty$-category are also two-sided discrete fibrations representably:

\begin{proposition}\label{prop.tsdfib-rep}
	Suppose that $\P$ is a tabular locally complete double $\infty$-category. If $(p,q) : e \rightarrow x \times y$ 
	is a two-sided discrete fibration in $\P$, then 
	$$
	\Vert(\P)(z,e) \rightarrow \Vert(\P)(z,x)\times \Vert(\P)(z,y)
	$$ 
	is a two-sided discrete fibration for every $z \in \P$.
\end{proposition}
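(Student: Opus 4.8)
The plan is to deduce this as a quick corollary of \ref{thm.rep-tsdfib} by re-expressing the span $(p,q)_*$ through the horizontal arrow double $\infty$-category $\{[1],\P\}$. Write $F : x \rightarrow y$ for the proarrow witnessed by the tabulating 2-cell $\tau$, so that $\tau$ is a vertical arrow from $\id_e$ to $F$ in $\{[1],\P\}$ whose image under $(\ev_0,\ev_1)$ is the span $(p,q)$. The first step is to upgrade the defining property of the tabulation to an equivalence of vertical mapping $\infty$-categories. Since $\tau$ is the counit $\epsilon_F$ of the adjunction $(\Delta,t)$ at $F$, post-composition with $\tau$ should induce an equivalence
\[
\Vert(\P)(z,e) \xrightarrow{\ \simeq\ } \Vert(\{[1],\P\})(\id_z, F), \qquad c \longmapsto \tau \circ \Delta(c),
\]
for every $z \in \P$. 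On underlying spaces this is exactly the condition imposed in \ref{def.tabulation}; to promote it to an equivalence of the full Segal spaces one invokes the universal property of the counit of the lax adjunction $(\Delta,t)$, i.e.\ the refinement supplied (in the spirit of) \ref{prop.lax-adj-vert}.

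Next I would identify the two projections explicitly. Unravelling $\tau \circ \Delta(c)$ as the vertical pasting of the doubled arrow $\Delta(c)$ (which has top $\id_z$, bottom $\id_e$, and both legs equal to $c$) on top of the tabulating 2-cell $\tau$ produces a 2-cell from $\id_z$ to $F$ whose left and right legs are $p \circ c$ and $q \circ c$. Hence its image under $(\ev_0, \ev_1)$ is the pair $(p \circ c, q \circ c)$, so that under the equivalence above the span
\[
(p,q)_* : \Vert(\P)(z,e) \rightarrow \Vert(\P)(z,x) \times \Vert(\P)(z,y)
\]
is identified with the projection
\[
(\ev_0,\ev_1) : \Vert(\{[1],\P\})(\id_z, F) \rightarrow \Vert(\P)(z,x) \times \Vert(\P)(z,y).
\]
Applying \ref{thm.rep-tsdfib} to the pair of horizontal arrows $\id_z : z \rightarrow z$ and $F : x \rightarrow y$ shows that the latter is a two-sided discrete fibration of $\infty$-categories, and therefore so is $(p,q)_*$, as desired.

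I expect the main obstacle to be the first step: passing from the space-level tabulation property recorded in \ref{def.tabulation} to an equivalence of the entire vertical mapping $\infty$-categories $\Vert(\P)(z,e) \simeq \Vert(\{[1],\P\})(\id_z, F)$. This is where the genuine $(\infty,2)$-categorical content lies, since one must argue that the counit $\tau$ detects not only vertical arrows into $e$ but also the higher simplices of $\Vert(\P)(z,-)$; the $\CCAT_\infty$ case, where $e$ is the total space of the classifying two-sided discrete fibration and a functor $z \to e$ is the same datum as a 2-cell $\id_z \Rightarrow F$, is the prototype to keep in mind. Once this Segal-space refinement is established, the remaining bookkeeping is routine, as \ref{thm.rep-tsdfib} already carries all of the fibrational content.
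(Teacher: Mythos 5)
Your proposal is correct and is essentially the paper's own proof: the paper likewise observes that the tabulating 2-cell, being a counit for the lax adjunction $(\Delta, t)$, induces an equivalence $\Vert(\P)(z,e) \simeq \Vert(\{[1],\P\})(\id_z, F)$ (this is exactly the refinement of \ref{prop.lax-adj-vert} you invoke), and then concludes by applying \ref{thm.rep-tsdfib}. The additional bookkeeping you carry out — identifying $(p,q)_*$ with the projection $(\ev_0,\ev_1)$ on $\Vert(\{[1],\P\})(\id_z,F)$ — is precisely the content left implicit in the paper's two-line argument.
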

\begin{proof}
	By assumption, there exists a 2-cell \[
		\begin{tikzcd}
			e \arrow[r,equal]\arrow[d,"p"'] & e \arrow[d, "q"] \\ 
			x \arrow[r, "F"] & y,
		\end{tikzcd}
	\]
	that forms a counit for the adjunction $(\Delta, t)$ so that
	$$
	\Vert(\P)(z,e) \simeq \Vert(\{[1], \P\})(\id_z, F).
	$$
	The desired result can now be deduced from \ref{thm.rep-tsdfib}.
\end{proof}

Some double functors interact well with tabulations:

\begin{proposition}\label{prop.lax-radj-pres-tabs}
	Normal lax right adjoint functors between tabular locally complete double $\infty$-categories 
	 preserve tabulating 2-cells, and, in particular, preserve two-sided discrete fibrations.
\end{proposition}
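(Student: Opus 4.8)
The plan is to verify the defining universal property of tabulations for the image $2$-cell directly. Write $v : \Q \to \P$ for the normal lax right adjoint, let $u : \P \to \Q$ be its left adjoint (necessarily strict, by \ref{prop.lax-double-adjunctions-char}), and let $\epsilon : uv \to \id_\Q$ be the counit. Fix a tabulating $2$-cell $\tau$ for a proarrow $F : x \to y$ of $\Q$, with tabulation object $e$ and legs $p, q$. Since $v$ is normal it preserves identity horizontal arrows, so $v(\tau)$ is again a $2$-cell of the shape required for a tabulation of $v(F)$, with legs $v(p), v(q)$. It therefore suffices to prove that for every object $w$ of $\P$ the map
\[
\Vert(\P)(w, v(e)) \longrightarrow \Vert(\{[1], \P\})(\id_w, v(F))
\]
obtained by composing with $v(\tau)$ is an equivalence.

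First I would promote the adjunction $u \dashv v$ to the horizontal arrow double $\infty$-categories, obtaining a normal lax adjunction $\{[1], u\} \dashv \{[1], v\}$, where $\{[1], v\} = \FFUN([1]_h, v)$ sends a proarrow $F$ (an object of $\{[1], \Q\}$) to $v(F)$ and whose counit is induced by $\epsilon$. Granting this, \ref{prop.lax-adj-vert} applies to both $u \dashv v$ and $\{[1], u\} \dashv \{[1], v\}$, and, combined with the tabulation property of $\tau$, yields a chain of equivalences
\begin{align*}
\Vert(\P)(w, v(e)) &\xrightarrow{\ \simeq\ } \Vert(\Q)(u(w), e) \\
&\xrightarrow{\ \simeq\ } \Vert(\{[1], \Q\})(\id_{u(w)}, F) \\
&\xrightarrow{\ \simeq\ } \Vert(\{[1], \P\})(\id_w, v(F)),
\end{align*}
where the first and last maps are instances of \ref{prop.lax-adj-vert} (using $\{[1], v\}(F) = v(F)$ and $\{[1], u\}(\id_w) = \id_{u(w)}$, the latter since $u$ is strict) and the middle one is the tabulation equivalence for $\tau$ at $z = u(w)$. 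A naturality check comparing $\epsilon$, its cotensor $\{[1], \epsilon\}$, and the image $2$-cell $v(\tau)$ then identifies this composite with composition by $v(\tau)$; this is a diagram chase that can be settled by inspection, in the spirit of the proof of \ref{prop.tsdfib-rep}. The final clause is then immediate: a two-sided discrete fibration in $\Q$ is witnessed by a tabulating $2$-cell, whose image under $v$ is again tabulating and hence witnesses a two-sided discrete fibration in $\P$.

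The main obstacle is the promotion in the first step, i.e.\ producing the induced normal lax adjunction $\{[1], u\} \dashv \{[1], v\}$, or equivalently the statement that cotensoring by $[1]_h$ preserves normal lax adjunctions. I would verify this through the level-wise criterion of \ref{prop.lax-double-adjunctions-char}: since $\FFUN([1]_h, -)$ is a right adjoint, its values are assembled from limits, and these preserve both the existence of the level-wise right adjoints of $u$ and the invertibility of the relevant mate transformations along inert and (as $v$ is normal) surjective structure maps, so the criterion descends from $u \dashv v$. With this in hand the proposition also follows more abstractly by the calculus of mates in $\mathrm{DBLCAT}_\infty^\lax$: the commuting square of strict left adjoints $\Delta_\Q \circ u \simeq \{[1], u\} \circ \Delta_\P$ has an invertible mate $v \circ t_\Q \simeq t_\P \circ \{[1], v\}$ between the corresponding lax right adjoints, exhibiting $v(e) = v(t_\Q F)$ as the tabulation $t_\P(v(F))$ and transporting the counit accordingly.
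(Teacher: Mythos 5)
Your strategy has a genuine gap at its load-bearing step: the ``promotion'' of the lax adjunction $u \dashv v$ to an adjunction $\{[1],u\} \dashv \{[1],v\}$ between horizontal-arrow double $\infty$-categories. For a \emph{lax} functor $v$, the horizontal cotensor $\{[1],v\} = \FFUN([1]_h,v)$ is not even defined: the levels of $\{[1],\P\}$ are assembled by gluing along \emph{horizontal composition} maps, e.g.\ $\{[1],\P\}_1 \simeq \P_2 \times_{d_1^*,\,\P_1,\,d_1^*} \P_2$, and $d_1 : [1] \rightarrow [2]$ is neither inert nor surjective, so the lax naturality squares of $v$ along these structure maps commute only up to a non-invertible $2$-cell. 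Hence the component-wise candidate $v_2 \times_{v_1} v_2$ does not exist as a functor between these pullbacks, and nothing in your argument produces a right adjoint to $\{[1],u\}_1 = u_2 \times_{u_1} u_2$ by other means. For the same reason your justification via \ref{prop.lax-double-adjunctions-char} (``the level-wise right adjoints and mates descend through the limits assembling $\FFUN([1]_h,-)$'') breaks down: right adjoints pass to such pullbacks only when the gluing squares are right adjointable, i.e.\ when the mates $d_1^* v_2 \rightarrow v_1 d_1^*$ are invertible, which is exactly the strictness you do not have. (It fails, for instance, for the direct image $f_*$ of \ref{ex.dbl-adj-geom-morph}, a key intended example: $f_*$ is only lax precisely because it fails to commute with horizontal composition.) Since both the last arrow of your chain of equivalences and your ``more abstract'' mate argument presuppose this promoted adjunction, the proof does not go through as written. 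A secondary issue: your middle equivalence uses the tabulation property of $\tau$ as an equivalence of vertical mapping $\infty$-categories $\Vert(\Q)(u(w),e) \simeq \Vert(\{[1],\Q\})(\id_{u(w)},F)$, whereas \ref{def.tabulation} only provides an equivalence of mapping \emph{spaces} $\map_{\Q_0}(z,e) \simeq \map_{\Q_1}(\id_z,F)$; this upgrade also requires an argument.

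The statement is in fact much more elementary than your machinery suggests, precisely because the defining universal property of a tabulating $2$-cell lives entirely in $\P_0$ and $\P_1$. The paper's proof simply runs your chain at the level of mapping spaces: $\map_{\P_0}(z,v(e)) \simeq \map_{\Q_0}(u(z),e) \simeq \map_{\Q_1}(\id_{u(z)},F) \simeq \map_{\P_1}(\id_z,vF)$, using only the level-wise adjunctions $(u_0,v_0)$ and $(u_1,v_1)$, strictness of $u$ (so that $u_1(\id_z) \simeq \id_{u(z)}$), and normality of $v$ (so that $v(\id_e) \simeq \id_{v(e)}$ and the two composites are identified, cf.\ \ref{prop.lax-adj-vert}). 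No cotensor of a lax functor, and no upgrade of the tabulation property beyond the level of spaces, is needed; if you replace your chain by this mapping-space version, your argument becomes essentially the paper's proof.
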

\begin{proof}
	Let $v : \Q \rightarrow \P$ be such a right adjoint with left adjoint $u : \P \rightarrow \Q$. 
	Suppose that $\alpha$ is a tabulating 2-cell in $\Q$, i.e.\ it is 
	given by a morphism $\id_e \rightarrow F$ in $\Q_1$ as in \ref{def.tabulation}. Since $v$ is normal,  $v(\alpha)$ is a morphism $\id_{v(e)} \simeq v(\id_e) \rightarrow vF$ in $\P_1$. Moreover, by normality, 
	the composite
	$$
	\map_{\P_0}(z,v(e)) \rightarrow \map_{\P_1}(\id_{z}, \id_{v(e)}) \rightarrow \map_{\P_1}(\id_{z}, vF)
	$$
	is equivalent to the composite 
	$$
	\map_{\Q_0}(u(z),e) \rightarrow \map_{\Q_1}(\id_{u(z)}, \id_{e}) \rightarrow \map_{\Q_1}(\id_{u(z)}, F),
	$$
	cf.\ \ref{prop.lax-adj-vert}. The latter composite map is an equivalence by assumption.
\end{proof}

\begin{proposition}\label{prop.geom-pres-tabs}
	Let $f : \E \rightarrow \F$ be a geometric morphism between $\infty$-toposes. Then the functor $f^*$ and the lax functor $f_*$ (see \ref{ex.dbl-adj-geom-morph}) preserve tabulating 2-cells and 
	two-sided discrete fibrations.
\end{proposition}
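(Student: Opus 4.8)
The plan is to treat the two functors separately, since they play opposite roles in the lax adjunction of \ref{ex.dbl-adj-geom-morph}. For $f_*$ there is nothing to do beyond bookkeeping: $f_*$ is a normal lax functor right adjoint to $f^*$, and both $\CCAT_\infty(\E)$ and $\CCAT_\infty(\F)$ are tabular locally complete double $\infty$-categories, so \ref{prop.lax-radj-pres-tabs} applies verbatim and yields that $f_*$ preserves tabulating 2-cells and two-sided discrete fibrations.

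The genuinely new content is the claim for $f^*$, and this is where I expect the main obstacle: $f^*$ is a \emph{left} adjoint, whereas tabulations are a kind of limit, so one cannot simply invoke preservation of limits by right adjoints. The idea I would use to circumvent this is that the limits computing tabulations in $\CCAT_\infty(\E)$ are \emph{finite}, while $f^*$ is obtained by post-composition with the inverse image $f^* : \F \rightarrow \E$, which is left exact. Concretely, I would recall from \ref{ex.int-ccat-dbl-limits} that the $[1]_h$-shaped limits (the tabulations) in $\CCAT_\infty(\E)$ exist and are computed level-wise by the right Kan extensions $p_{j,*}$ along the pushforwards $p_j : \Delta/([j] \times [1]) \rightarrow \Delta/[j]$, where the value of $p_{j,*}X$ at a map $a : [n] \rightarrow [j]$ is the limit $\lim_{g \in \map_{\Cat_\infty}([n], [1])} X([n] \xrightarrow{(g,a)} [j] \times [1])$. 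The observation to record is that $\map_{\Cat_\infty}([n],[1])$ is a finite set, so each of these is a finite limit.

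With this in hand I would finish as follows. Limits in the functor $\infty$-categories $\fun((\Delta/K)^\op, -)$ are computed pointwise, and $f^*$ preserves finite limits; hence post-composition with $f^*$ commutes with each $p_{j,*}$, and trivially with the restriction functors $p_j^*$. It follows that $f^*$ intertwines the tabulation functors of $\CCAT_\infty(\F)$ and $\CCAT_\infty(\E)$, so it carries a tabulating 2-cell of a proarrow $F$ to a tabulating 2-cell of $f^*F$ (using also that $f^*$ is strict, so it sends $\id_e$ to $\id_{f^*e}$). Finally, since a two-sided discrete fibration is by definition a span admitting a tabulating 2-cell (see \ref{def.tabulation} and the definition following it), preservation of tabulating 2-cells immediately gives preservation of two-sided discrete fibrations for both $f^*$ and $f_*$.
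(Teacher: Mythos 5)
Your proposal is correct and follows essentially the same route as the paper: $f_*$ is handled by citing \ref{prop.lax-radj-pres-tabs} for normal lax right adjoints, and $f^*$ is handled by observing that the tabulations of \ref{ex.int-ccat-dbl-limits} are computed level-wise as limits over the finite sets $\map_{\Delta}([n],[1])$, which are preserved by the left exact inverse image functor. The extra detail you supply (the explicit $p_{j,*}$ formula and the strictness of $f^*$) is consistent with, and merely expands on, the paper's argument.
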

\begin{proof}
	Since $f_*$ is a normal lax right adjoint, this case follows from \ref{prop.lax-radj-pres-tabs}.  For the case of 
	$f^*$, note that the relevant tabulations are computed level-wise by limits shaped by the finite sets $\map_{\Delta}([n], [1])$, $[n] \in \Delta$, in light of \ref{ex.int-ccat-dbl-limits}. Such limits 
	are preserved by inverse image functors.
\end{proof}

Moreover, we have the following closure property:

\begin{proposition}\label{prop.tabs-cotensor}
	Let $\P$ be a tabular locally complete double $\infty$-category. If $X$ is an $\infty$-category, then the vertical cotensor product 
	$[X, \P]$ is again tabular and locally complete. Moreover, a 2-cell 
	\[
		\alpha = \begin{tikzcd}
			e \arrow[r,equal]\arrow[d] & e \arrow[d] \\ 
			x \arrow[r, "F"] & y
		\end{tikzcd}	
	\] in $[X,\P]$ is tabulating if and only if the evaluation $\alpha_\xi$ is a tabulating 2-cell in $\P$ for all $\xi \in X$.
\end{proposition}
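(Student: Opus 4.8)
The plan is to reduce both assertions to the level-wise description $[X,\P]_n \simeq \fun(X, \P_n)$ of \ref{rem.cotensor-infty-cat} together with the characterization of lax adjunctions in \ref{prop.lax-double-adjunctions-char}. Local completeness of $[X,\P] = \FFUN(X_v, \P)$ is immediate from \cite[Proposition 3.16]{FunDblCats}, since $\P$ is assumed locally complete, so the real content lies in the tabularity and in the point-wise criterion for tabulating cells.

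To see that $[X,\P]$ is tabular, I would first use the exponential law for the cartesian closed structure on $\DblCat_\infty$ together with the commutativity of products to produce a canonical equivalence
\[
	\{[1], [X,\P]\} = \FFUN([1]_h, \FFUN(X_v, \P)) \simeq \FFUN(X_v, \FFUN([1]_h, \P)) = [X, \{[1],\P\}],
\]
under which the diagonal $\Delta : [X,\P] \to \{[1], [X,\P]\}$ is identified with the functor $[X, \Delta^\P]$ induced by the diagonal $\Delta^\P : \P \to \{[1],\P\}$ of $\P$; both are induced by the collapse map $[1]_h \to [0,0]$, and the identification of the two diagonals is the naturality statement one must check. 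By \ref{rem.cotensor-infty-cat}, at each level $n$ this functor is the post-composition $\fun(X, \Delta^\P_n) : \fun(X, \P_n) \to \fun(X, \{[1],\P\}_n)$. Since $\P$ is tabular, \ref{prop.lax-double-adjunctions-char} supplies right adjoints $t_n$ to the (strict) functors $\Delta^\P_n$ together with invertible mate transformations for the inert structure maps; applying the $2$-functor $\fun(X,-)$ produces right adjoints $\fun(X, t_n)$ and mates that are again invertible, because a $2$-functor preserves adjunctions, composites and invertible $2$-cells. Characterization (3) of \ref{prop.lax-double-adjunctions-char} then yields a lax right adjoint $t^{[X,\P]}$ to $[X,\Delta^\P]$, i.e.\ $[X,\P]$ is tabular. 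The same argument shows that $t^{[X,\P]}$, and hence the tabulation objects and the tabulating (counit) $2$-cells, are computed point-wise: $t^{[X,\P]}(F)_\xi \simeq t(F_\xi)$, and $\ev_\xi$ carries the counit $\epsilon_F$ of $(\Delta, t^{[X,\P]})$ to the counit at $F_\xi$, since the counit of $(\fun(X,L), \fun(X,R))$ is the image under $\fun(X,-)$ of the original counit.

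For the final claim I would use this point-wise computation. If $\alpha$ (on the proarrow $F$, with object $e$) is tabulating, then by uniqueness of representing objects there is an equivalence $e \simeq t^{[X,\P]}(F)$ identifying $\alpha$ with $\epsilon_F$; evaluating at $\xi \in X$ gives $e_\xi \simeq t(F_\xi)$ and $\alpha_\xi \simeq \epsilon_{F_\xi}$, so $\alpha_\xi$ is tabulating. Conversely, suppose each $\alpha_\xi$ is tabulating. The cell $\alpha$, viewed as a vertical arrow $\Delta(e) \to F$ in $\{[1],[X,\P]\}$, transposes under $(\Delta, t^{[X,\P]})$ to a comparison arrow $e \to t^{[X,\P]}(F)$ in $[X,\P]_0 = \fun(X, \P_0)$. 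Because transposition is natural and the adjunction is point-wise, the evaluation of this arrow at $\xi$ is the comparison arrow $e_\xi \to t(F_\xi)$ associated with $\alpha_\xi$, which is an equivalence precisely because $\alpha_\xi$ is tabulating (\ref{def.tabulation}). Since a morphism in $\fun(X, \P_0)$ is an equivalence as soon as it is a point-wise equivalence, the comparison $e \to t^{[X,\P]}(F)$ is an equivalence, so $\alpha$ is (equivalent to) the counit $\epsilon_F$ and is therefore tabulating.

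The main obstacle is bookkeeping rather than any conceptual difficulty: one must verify that the exponential equivalence $\{[1],[X,\P]\} \simeq [X,\{[1],\P\}]$ genuinely intertwines the two diagonals, and that $\fun(X,-)$ transports both the mate conditions of \ref{prop.lax-double-adjunctions-char} and the adjunction counits level-wise. Once these naturality facts are secured, the point-wise detection of equivalences in $\fun(X, \P_0)$ closes the argument with no further computation.
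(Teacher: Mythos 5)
Your proof is correct and takes essentially the same route as the paper: the paper's (very terse) proof cites exactly the three ingredients you use, namely local completeness via \cite[Proposition 3.16]{FunDblCats}, the level-wise description of $[X,\P]$ from \ref{rem.cotensor-infty-cat}, and the level-wise characterization of lax adjunctions in \ref{prop.lax-double-adjunctions-char} combined with the fact that $\fun(X,-)$ preserves adjunctions. Your write-up merely supplies the bookkeeping the paper leaves implicit (the exponential-law identification of the two diagonals, the transported mates, and point-wise detection of equivalences in $\fun(X,\P_0)$), all of which checks out.
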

\begin{proof}
	The locally complete assertion follows from \cite[Proposition 3.16]{FunDblCats}. The tabular part and the characterization of tabulations may be deduced from \ref{prop.lax-double-adjunctions-char}, \ref{rem.cotensor-infty-cat} and the 
	fact that $\fun(X,-)$ preserves adjunctions. 
\end{proof}

\section{Fibrational \texorpdfstring{$\infty$}{∞}-equipments}\label{section.fib-equipments}

The goal of this section 
is to axiomatize and study a class of $\infty$-equipments that support a good theory of fibrations. The key feature of these equipments should be 
that there is no information loss when passing from proarrows to their associated two-sided discrete fibrations.

\subsection{Span representations}
We will demonstrate that one can pass from proarrows to two-sided discrete fibrations in a coherent fashion:

\begin{theorem}\label{thm.span-rep}
	Suppose that $\P$ is a tabular locally complete double $\infty$-category with all pullbacks. Then there exists a lax 
	functor 
	$$
	\rho : \P \rightarrow \SSPAN(\Vert(\P)^{(1)})
	$$
	that lifts the identity functor on $\Vert(\P)^{(1)}$ and carries each horizontal arrow $F:x \rightarrow y$ to the span $(p,q) : e\rightarrow x\times y$ associated to its tabulating 2-cell 
	\[
		\begin{tikzcd}
			e \arrow[r,equal]\arrow[d, "p"'] & e \arrow[d, "q"] \\ 
			x \arrow[r, "F"] & y.
		\end{tikzcd}
	\] 
	Moreover, if $\P$ is an $\infty$-equipment, then $\rho$ is locally horizontally reflective if and only if the following conditions are met: 
	\begin{enumerate}
		\item all tabulating 2-cells are cocartesian,
		\item for two composable proarrows $F : x \rightarrow y$ and $G : y \rightarrow z$ in $\P$ 
		with (cocartesian) tabulating 2-cells 
		\[
			\begin{tikzcd}
				e \arrow[r,equal]\arrow[d] & e \arrow[d] \\ 
				x \arrow[r, "F"] & y
			\end{tikzcd}
			\text{ and }
			\begin{tikzcd}
				e' \arrow[r,equal]\arrow[d] & e' \arrow[d] \\ 
				y \arrow[r, "G"] & z,
			\end{tikzcd}
		\]
		the composite of the two 2-cells 
		\[
			\begin{tikzcd}
				e \times_y e' \arrow[r,equal]\arrow[d] & e \times_y e' \arrow[d]\arrow[r,equal]& e \times_y e' \arrow[d] \\ 
				x \arrow[r, "F"] & y \arrow[r, "G"] & z
			\end{tikzcd}
		\]
		obtained by whiskering these tabulating 2-cells with the projections $e \times_y e' \rightarrow e$ and $e\times_y e' \rightarrow e'$, is again cocartesian.
	\end{enumerate}
	In this case, the right adjoint to the fully faithful functor
	$$
	\rho_{x,y} : \Hor(\P)(x,y) \rightarrow \Vert(\P)^{(1)}/(x\times y)
	$$
	carries a span $(p,q) : e \rightarrow x \times y$ to the cocartesian pushforward ${q}_\circledast{p}^\circledast$.
\end{theorem}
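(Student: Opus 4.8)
The plan is to produce $\rho$ from the universal property of the span double $\infty$-category of Haugseng \cite{HaugsengSpans2}, which reduces the construction of a lax functor into $\SSPAN(\Vert(\P)^{(1)})$ to the specification of its underlying vertical functor, the span attached to each proarrow (functorially in $2$-cells), and the laxity data for horizontal composition. I would take $\rho$ to be the identity on $\Vert(\P)^{(1)}$ and send a proarrow $F:x\to y$ to the span $(p,q):tF\to x\times y$ read off from its tabulating $2$-cell, with functoriality in $2$-cells supplied by the tabulation functor $t:\{[1],\P\}\to\P$. Unitality holds because the tabulating $2$-cell of $\id_x$ and its universal property force $t(\id_x)\simeq x$ with identity legs. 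The laxity comparison for a composable pair $F:x\to y$, $G:y\to z$ is the map $tF\times_y tG\to t(GF)$ classified, via the universal property of the tabulation of $GF$, by the $2$-cell $\id_{tF\times_y tG}\to GF$ obtained by whiskering the two tabulating cells with the projections out of the pullback and composing horizontally. This step uses only that $\P$ is tabular with pullbacks.

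For the characterization, assume $\P$ is an equipment. For any span $(p,q):e\to x\times y$ the proarrow ${q}_\circledast{p}^\circledast=\cocart{q}{\id_e}{p}$ is defined, and I claim $(p,q)\mapsto {q}_\circledast{p}^\circledast$ is the reflector onto $\Hor(\P)(x,y)$. Combining the adjunction $\cocart{q}{(-)}{p}\dashv\cart{q}{(-)}{p}$ of \ref{rem.lax-adj-hor}, the tabulation universal property $\map_{\P_0}(e,tF)\simeq\map_{\P_1}(\id_e,F)$, and the universal property of cartesian cells identifying $2$-cells $\id_e\to F$ of frame $(p,q)$ with globular $2$-cells $\id_e\to\cart{q}{F}{p}$, one gets a natural chain
\[
\map({q}_\circledast{p}^\circledast,F)\;\simeq\;\map(\id_e,\cart{q}{F}{p})\;\simeq\;\map_{\Vert(\P)^{(1)}/(x\times y)}\big((p,q),\,\rho_{x,y}(F)\big).
\]
Thus $\rho_{x,y}$ admits the stated adjoint unconditionally, and by the standard criterion it is fully faithful exactly when the counit ${q}_\circledast{p}^\circledast\to F$ (with $(p,q)$ the tabulation span of $F$) is invertible for all $F$. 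By \ref{prop.cart-comp-conj} this counit is precisely the comparison exhibiting $F$ as the cocartesian pushforward of its tabulating cell, so its invertibility for all $F$ is exactly condition (1); hence (1) is equivalent to each $\rho_{x,y}$ being a reflective inclusion, i.e.\ to clause (1) of \ref{def.hor-loc-refl}.

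It then remains to match clause (2) of \ref{def.hor-loc-refl} with condition (2). Under $\rho$ the composition lax naturality cell is the comparison $\rho(G)\circ\rho(F)\to\rho(GF)$, i.e.\ the map $tF\times_y tG\to t(GF)$ built above; applying the reflector identifies it with the comparison ${r}_\circledast{\ell}^\circledast\to GF$, where $(\ell,r)$ are the legs of the composed span $tF\times_y tG$. By \ref{prop.cart-comp-conj} this comparison is invertible precisely when the horizontally composed, whiskered tabulating $2$-cell of condition (2) is cocartesian, so the lax naturality cells are local equivalences with respect to $\rho_{x,z}$ if and only if condition (2) holds. Together with the previous paragraph this yields both directions, and the displayed equivalence records the adjoint.

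I expect the main obstacle to be the coherent construction of $\rho$ in the first paragraph: producing the laxity cells for every composable string and verifying the associativity and unit coherences is exactly the bookkeeping that the universal property of \cite{HaugsengSpans2} is meant to absorb, so the genuine work lies in checking that the tabulation data satisfies the hypotheses of that universal property. By contrast, once $\P$ is an equipment the reflector and both equivalences reduce to formal manipulations with \ref{rem.lax-adj-hor} and \ref{prop.cart-comp-conj}.
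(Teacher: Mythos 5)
The ``moreover'' half of your proposal is correct and is essentially the paper's own argument: your chain of equivalences exhibiting $(p,q)\mapsto\cocart{q}{}{p}$ as the (left) adjoint to $\rho_{x,y}$ (the paper runs the same computation through the universal property of cocartesian cells rather than the adjunction of \ref{rem.lax-adj-hor}; the two moves are interchangeable), the criterion that $\rho_{x,y}$ is fully faithful if and only if the counit $\cocart{q}{}{p}\rightarrow F$ is invertible, which is exactly condition (1), and the identification of the image of the composition laxity cell under the reflector with the comparison governed by condition (2). The only blemishes here are cosmetic: the relevant input is the universal property of (co)cartesian cells rather than \ref{prop.cart-comp-conj} itself, and this half is conditional on the constructed $\rho$ actually having the laxity cells you describe.

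The genuine gap is in the construction of $\rho$, which is where the paper's real work lies (\ref{lem.span-rep}). The universal property of \cite{HaugsengSpans2} does not accept ``underlying vertical functor, a span per proarrow, and laxity data for composition'' as input --- if it did, you would still owe all the higher associativity coherences, which is precisely the bookkeeping it is meant to absorb. In the fibrational picture of \ref{rem.lax-fun-fib}, that universal property says that lax functors $\P\rightarrow\SSPAN(\C)$ correspond to \emph{single functors} $\mathrm{Un}(\P)\rightarrow\C$ out of the total space of the cocartesian unstraightening of $\P$. So the work your proposal never confronts is producing a functor $\mathrm{Un}(\P)\rightarrow\P_0=\Vert(\P)^{(1)}$ restricting to tabulation in simplicial degree one. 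The paper obtains it as a fibered right adjoint to the inclusion $l:\P_0\times\Delta^\op\rightarrow\mathrm{Un}(\P)$, which requires every degeneracy $\P_0\rightarrow\P_n$ to admit a right adjoint: degree $1$ is tabularity, and degrees $n\geq 2$ are assembled from tabulations by iterated pullbacks in $\P_0$ using the Segal condition --- this is where the pullback hypothesis really enters the construction, not merely in forming the binary composite spans. Relatedly, your unitality claim is false: $t(\id_x)$ is not $x$ with identity legs but the arrow object $x^{[1]}$ (in $\CCAT_\infty$ one has $t(\id_{\C})\simeq\fun([1],\C)$, cf.\ \ref{rem.arrh-vert} and \ref{def.commas}); this is exactly why $\rho$ is lax rather than normal, and why normality appears as an extra condition in \ref{cor.char-span-dbl-cats}. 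Since a lax functor needs no invertible unit comparison the statement survives, but it shows that the data you intended to feed into the universal property is not of the right shape.
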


\begin{definition}
	The functor $\rho$ from \ref{thm.span-rep} will be called the \textit{span representation}.
\end{definition}

\begin{remark}
	The construction of span representations for strict double categories
	has been studied by Grandis and Par\'e \cite{GrandisPareSpan} and Niefield \cite{Niefield}. Condition (1) of \ref{thm.span-rep} has been 
	considered earlier by Koudenburg \cite{Koudenburg} for strict equipments.
\end{remark}

We defer the proof of the theorem to the next subsection. Let us first study the class of equipments with horizontally locally reflective span representations.

\begin{definition}\label{def.fib-equipment}
	An $\infty$-equipment $\P$ is called \textit{fibrational} if it is tabular and admits all pullbacks, and 
	conditions (1) and (2) of \ref{thm.span-rep} are met.
\end{definition}

The prototypical example of a fibrational $\infty$-equipment that we will study in this paper, is the following:

\begin{proposition}\label{prop.int-ccat-fib}
The $\infty$-equipment $\CCAT_\infty(\E)$ is fibrational for every $\infty$-topos $\E$, and hence 
it admits a horizontally locally reflective span representation
$$
\rho:\CCAT_\infty(\E) \rightarrow \SSPAN(\Cat_\infty(\E)).
$$
\end{proposition}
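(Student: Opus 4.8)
The plan is to verify directly that $\CCAT_\infty(\E)$ meets the three requirements of \ref{def.fib-equipment}: that it is tabular, admits all pullbacks, and satisfies conditions (1) and (2) of \ref{thm.span-rep}. The first two are already at hand from \ref{ex.int-ccat-dbl-limits}, which provides all $I_v$- and $I_h$-shaped limits, in particular the $[1]_h$-shaped limits witnessing tabularity and the span-shaped pullbacks. Once conditions (1) and (2) are established, \ref{thm.span-rep} immediately produces a horizontally locally reflective span representation $\rho : \CCAT_\infty(\E) \to \SSPAN(\Vert(\CCAT_\infty(\E))^{(1)}) = \SSPAN(\Cat_\infty(\E))$, which is the asserted conclusion. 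Thus the whole content of the proposition lies in verifying conditions (1) and (2).

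I would establish these via the reduction principle of \ref{ex.geom-emb-cotensor}. A geometric embedding $i : \E \to \PSh(T)$ yields a reflective inclusion $i_* : \CCAT_\infty(\E) \to [T^\op, \CCAT_\infty]$, using the identification $[T^\op, \CCAT_\infty] \simeq \CCAT_\infty(\PSh(T))$ of \ref{ex.cotensors-ccat}. Now $i_*$ is a normal lax right adjoint, so by \ref{prop.lax-radj-pres-tabs} it carries tabulating $2$-cells to tabulating $2$-cells; being a reflective inclusion, its underlying vertical functor is a fully faithful right adjoint and hence preserves the vertical pullbacks $e \times_y e'$ together with their projections, so it also carries the whiskered composites occurring in condition (2) to the corresponding whiskered composites. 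Finally, by \ref{prop.refl-incl-formal-cat-theory}(2) the functor $i_*$ reflects cocartesian cells. Combining these: given a tabulating cell (resp.\ a whiskered composite) in $\CCAT_\infty(\E)$, its $i_*$-image is a tabulating cell (resp.\ whiskered composite) in $[T^\op, \CCAT_\infty]$, which is cocartesian once conditions (1)/(2) are known there, and reflection then forces the original to be cocartesian. Hence conditions (1) and (2) for $\CCAT_\infty(\E)$ follow from the presheaf case.

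For $[T^\op, \CCAT_\infty]$ the cotensor is governed point-wise. By \ref{prop.tabs-cotensor} a $2$-cell is tabulating exactly when each evaluation at $\xi \in T^\op$ is tabulating in $\CCAT_\infty$; by \ref{rem.cotensor-infty-cat} the relevant pullbacks are computed point-wise; and by \ref{prop.cart-cotensors} a $2$-cell is cocartesian precisely when it is so point-wise. Therefore both conditions (1) and (2) for $[T^\op, \CCAT_\infty]$ reduce, evaluation by evaluation, to the same conditions for $\CCAT_\infty$, and the entire statement comes down to the base case $\CCAT_\infty$ (the case $\E = \S$, presheaves on the point).

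For the base case I would argue through the identification of correspondences with two-sided discrete fibrations. For a correspondence $F : \C \to \D$ classified by $E \to [1]$, the tabulating $2$-cell has underlying span $(\ev_1, \ev_0) : \fun_{/[1]}([1], E) \to \C \times \D$, which is exactly the two-sided discrete fibration of $F$ under the equivalence reviewed in \cite[Subsection 4.1]{EquipI} and \cite[Section 4]{AyalaFrancis}. Using \ref{prop.cart-comp-conj} to identify the cocartesian $2$-cell out of the identity proarrow along this span with the pushforward $\cocart{q}{}{p}$, condition (1) becomes the assertion that a correspondence is recovered as the cocartesian pushforward (i.e.\ the profunctor) of its two-sided discrete fibration, while condition (2) becomes the assertion that composition of correspondences (the coend over $\D$) agrees with pullback composition of the associated spans followed by pushforward. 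I expect this base case to be the main obstacle: one must genuinely verify that $\cocart{q}{}{p}$ recovers $F$ and is compatible with composition, which is precisely where the concrete structure of $\CCAT_\infty$ and the Ayala--Francis dictionary enter; the general topos case then follows formally from the two reductions above.
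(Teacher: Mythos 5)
Your two reduction steps are exactly the paper's: a geometric embedding $i : \E \rightarrow \PSh(T)$ gives the reflective inclusion $i_* : \CCAT_\infty(\E) \rightarrow [T^\op, \CCAT_\infty]$, which (via preservation of tabulations by normal lax right adjoints and reflection of cocartesian cells, \ref{prop.refl-incl-formal-cat-theory}) reduces the claim to the presheaf case, and then \ref{prop.tabs-cotensor}, \ref{rem.cotensor-infty-cat} and \ref{prop.cart-cotensors} reduce everything point-wise to $\CCAT_\infty$. One small inaccuracy there: since $i_*$ is only a \emph{lax} functor, its image of the whiskered composite in condition (2) of \ref{thm.span-rep} is not the corresponding whiskered composite of the images --- the two differ by the laxity comparison cell $i_*G \circ i_*F \rightarrow i_*(G \circ F)$. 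The clean fix (implicit in the paper's appeal to \ref{prop.refl-incl-formal-cat-theory}) is to run the transfer through the \emph{strict} left adjoint $L = \CCAT_\infty(i^*)$: form the condition-(2) composite for $i_*F$, $i_*G$ in the presheaf equipment, note it is cocartesian there, and apply $L$, which is strict, preserves cocartesian cells, and carries that composite back to the original one since $Li_* \simeq \id$.

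The genuine gap is the base case $\CCAT_\infty$, which you correctly flag as ``the main obstacle'' but then do not prove; in the paper this is where all the mathematical content lies, and it does not follow formally from the Ayala--Francis dictionary. Condition (1) is precisely the statement that for every correspondence $E \rightarrow [1]$ the canonical map
$$
E_1 \cup_{[1] \times E_1} [1] \times \fun_{/[1]}([1], E) \cup_{[1] \times E_0} E_0 \rightarrow E
$$
is an equivalence; this is a theorem of Lurie \cite[Proposition B.3.17]{HA} and Ayala--Francis \cite[Lemma 4.1]{AyalaFrancis}, which must be invoked (or reproved). Condition (2) is harder still: your reformulation (coend composition of correspondences agrees with pullback-then-pushforward of the associated spans) is a correct unwinding, but verifying it is a real argument. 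The paper translates it into the assertion that for every Conduch\'e fibration $E \rightarrow [2]$ an analogous comparison map into $d_1^*E$ is an equivalence, uses condition (1) to rewrite the target as the corresponding pushout built from $\fun_{/[1]}([1], d_1^*E)$, checks essential surjectivity directly, and proves full faithfulness by combining a dedicated cocomma lemma (\ref{lem.cocomma}, computing mapping spaces in cocommas of regular spans as classifying spaces of fibers, whose own proof needs exactness of cocomma squares and properness/smoothness of (co)cartesian fibrations) with characterization (6) of \cite[Lemma 5.16]{AyalaFrancisRozenblyum}. Without these inputs, or some substitute for them, your argument establishes only the reductions and not the proposition itself.
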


\begin{remark}
	For $\E = \S$, this gives a double categorical answer to a question posed by Ayala and Francis \cite[Problem 0.17]{AyalaFrancis} about the 
	existence of such a functor.
\end{remark}

To prove \ref{prop.int-ccat-fib} for all $\infty$-toposes, we can reduce to the prototypical $\infty$-topos of spaces. We will need the following observations:

\begin{lemma}\label{lem.cocomma}
	Suppose that $(p,q) : E \rightarrow \C \times \D$ is a span of $\infty$-categories.
	Let us write $P \rightarrow [1]$ for the cocomma correspondence that is defined by the pushout 
	$$
	P := \C \cup_{\{1\} \times E} [1] \times E \cup_{\{0\} \times E} \D.
	$$
	Then the following is true:
	\begin{enumerate}
		\item the canonical lax square \[
			\begin{tikzcd}
				E \arrow[r,"q"]\arrow[d,"p"'] &|[alias=f]| \D \arrow[d,"j"] \\ 
				|[alias=t]|\C \arrow[r, "i"'] & P 
				\arrow[from=f,to=t, Rightarrow]
			\end{tikzcd}
		\]
		is exact, 
		\item there is a canonical map $E_{c,d} \rightarrow \map_P(d,c)$
		which exhibits $\map_P(d,c)$ as the classifying space of the fiber $E_{c,d}$ if $(p,q)$ is regular.
	\end{enumerate}
\end{lemma}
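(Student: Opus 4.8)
The plan is to reduce both assertions to a single computation of the mapping spaces of the cocomma $P$, and then dispatch them in turn. First I would record the structure of $P$: the three pieces $\C$, $[1]\times E$ and $\D$ map compatibly to $[1]$, so the pushout carries a functor $P\to[1]$ with fibre $\C$ over $1$ and $\D$ over $0$; thus $P$ is a correspondence, and $i:\C\to P$, $j:\D\to P$ are its fibre inclusions. For each $e\in E$ the edge $[1]\times\{e\}\hookrightarrow[1]\times E\to P$ is a morphism $jq(e)\to ip(e)$, and assembling these over $E$ gives the natural transformation filling the square; this is the canonical lax square of the statement.

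For assertion (1), I would use the characterisation of exactness through the cartesian factorisation employed in the proof of \ref{prop.refl-incl-formal-cat-theory}: the square is exact precisely when the top cell of its factorisation is cocartesian, i.e.\ when the comparison
$$\cocart{q}{}{p}\longrightarrow\cart{j}{}{i}$$
in $\Hor(\CCAT_\infty)(\C,\D)$ is invertible. Both proarrows are detected by their values as profunctors: $\cart{j}{}{i}$, being the comma proarrow of the two fibre inclusions, has value the cross-fibre mapping space $\map_P(d,c)$ at a pair $c\in\C$, $d\in\D$, while $\cocart{q}{}{p}$ has value the coend $\int^{e\in E}$ of the relevant hom-spaces of $\C$ and $\D$, i.e.\ the classifying space $|G_{c,d}|$ of the $\infty$-category $G_{c,d}$ whose objects are points $e\in E$ equipped with connecting morphisms between $p(e)$ and $c$ and between $q(e)$ and $d$. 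So (1) becomes the identity $\map_P(d,c)\simeq|G_{c,d}|$, natural in $c,d$.

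The technical heart is therefore the computation of $\map_P(d,c)$ in the pushout. Since $P$ is obtained by gluing the cylinder $[1]\times E$ to $\C$ and $\D$ along the cofibration $\{0,1\}\times E\hookrightarrow[1]\times E$ via $p$ and $q$, any morphism from the level-$0$ object $d$ to the level-$1$ object $c$ must traverse the cylinder exactly once; organising this decomposition expresses $\map_P(d,c)$ as the coend above, and hence yields (1) for \emph{all} spans. I expect this to be the main obstacle: mapping spaces in $\infty$-categorical pushouts are in general badly behaved, so one must check that for the \emph{directed} cylinder gluing, and for objects lying in the glued-in pieces, there are no higher corrections. I would establish this either by the marked/simplicial saturation combinatorics of the kind used in \ref{lem.retraction-phin}, filtering the relevant simplicial subsets cell by cell, or by appealing to an oplax-colimit mapping-space formula.

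Finally, for assertion (2) I would exhibit the canonical map as the composite $E_{c,d}\to G_{c,d}\to|G_{c,d}|\simeq\map_P(d,c)$, where $E_{c,d}\to G_{c,d}$ sends a point $e$ of the fibre (so $p(e)\simeq c$ and $q(e)\simeq d$) to the cylinder edge $jq(e)\to ip(e)$. It then remains to prove that this inclusion is inverted by $|{-}|$ when $(p,q)$ is regular. Here I would use the defining (co)cartesian structure of \ref{def.regular-span}: starting from a general object of $G_{c,d}$, the cartesian pullback along the cartesian fibration $q$ and the cocartesian pushforward along the cocartesian fibration $p$ transport $e$ into the fibre $E_{c,d}$, and because these (co)cartesian arrows lie over equivalences in $\C$ and $\D$ respectively, each step leaves the other coordinate fixed up to equivalence. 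This realises $E_{c,d}\hookrightarrow G_{c,d}$ as a (co)reflective inclusion, hence a localisation, giving $|E_{c,d}|\simeq|G_{c,d}|\simeq\map_P(d,c)$. This cofinality step is exactly the straightening of regular spans (curved orthofibrations) of \cite{HHLN}, which I would cite to avoid redoing it.
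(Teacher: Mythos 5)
Your reduction of (1) to the pointwise statement $\map_P(d,c)\simeq|G_{c,d}|$ is a correct reformulation, but your proof stops exactly where the lemma's content begins, and the reformulation is closer to circular than it appears. In the correspondence model of $\CCAT_\infty$, the universal property of the pushout identifies the cocartesian pushforward $\cocart{q}{}{p}$ with the correspondence $P\to[1]$ itself (mapping \emph{out} of $P$ is free, since $P$ is a colimit), and under the straightening of correspondences to profunctors, the values of the proarrow $P$ \emph{are} the mapping spaces $\map_P(d,c)$. Hence the "standard coend description of $\cocart{q}{}{p}$" that you invoke and the computation of $\map_P(d,c)$ you defer are one and the same assertion; granting the former to prove the latter reduces nothing. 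That computation of mapping spaces in an $\infty$-categorical pushout is the genuine obstacle, and gesturing at "saturation combinatorics in the style of \ref{lem.retraction-phin}" or an unnamed oplax-colimit formula does not discharge it (and even granting it pointwise, you would still need the equivalence to be implemented by the canonical comparison 2-cell, naturally in $(c,d)$).

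The paper's proof shows the computation can be avoided altogether, which is the real lesson here. It factors the lax square as $[1]\times E\to P\to[1]\times P$ in $\Cat_\infty/[1]$: the first map is a cocartesian 2-cell by the pushout universal property, so exactness amounts to the second map being cartesian; and \emph{that} is proven for an arbitrary correspondence from $\C$ to $\D$, by restricting along the fully faithful embedding $\Cat_\infty/[1]\to\PSh(\Delta/[1])$ to representables $\sigma:[n]\to[1]$, where it collapses to a cube of pullback squares using only the discreteness of $\map([n],[1])$. No mapping space of $P$ is ever identified. Your sketch for (2) --- transporting an object of $G_{c,d}$ into the fiber $E_{c,d}$ by a $q$-cartesian lift followed by a $p$-cocartesian lift, each adjoint to an inclusion and hence a classifying-space equivalence, with regularity ensuring the second step stays over $d$ --- is essentially sound, and genuinely different from the paper's argument for (2) (which pastes exact squares, using part (1) together with properness of cocartesian fibrations and smoothness of cartesian ones); but as written it is conditional on the coend identification from (1), so the gap propagates. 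Also note that citing the HHLN straightening of curved orthofibrations as doing "exactly" this step is loose; what you actually need is the pair of fiberwise adjunctions you describe, which you should spell out rather than outsource.
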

\begin{proof}
	Let us first show (1). To show exactness \cite[Definition 6.24]{EquipI}, 
	we first produce a (essentially unique) factorization 
	\[
		\begin{tikzcd}
			E \arrow[r,equal] \arrow[d,"p"'] & E \arrow[d, "q"] \\
			\C\arrow[d,"i"'] \arrow[r, "q_\circledast p^\circledast"] & \D\arrow[d,"j"] \\
			P \arrow[r,equal] & P
		\end{tikzcd}
	\]
	of the lax square in $\CCAT_\infty$, where the top 2-cell is cocartesian. Then we must show that the bottom 2-cell is cartesian.
	From the viewpoint of correspondences, this factorization is supplied by the composite
	$$
	[1]\times E \rightarrow P \rightarrow [1] \times P
	$$
	in $\Cat_\infty/[1]$. This follows because the first arrow classifies a cocartesian arrow of the source-target projection $(\{1\}, \{0\})^* : \Cat_\infty/[1] \rightarrow \Cat_\infty^{\times 2}$ of $\CCAT_\infty$, as one can verify using the universal property of the pushout. 
	We thus have to show that the arrow $P\rightarrow [1] \times P$ is cartesian with respect to $(\{1\}, \{0\})^*$. This entails verifying that the commutative square 
	\[
		\begin{tikzcd}
		\map_{\Cat_\infty/[1]}(X, P) \arrow[r]\arrow[d] & \map_{\Cat_\infty}(X, P) \arrow[d] \\
		\map_{\Cat_\infty}(X_1, \C) \times \map_{\Cat_\infty}(X_0, \D) \arrow[r, "{i_* \times j_*}"] & \map_{\Cat_\infty}(X_1, P) \times \map_{\Cat_\infty}(X_0, P)	
		\end{tikzcd}
	\]
	is a pullback square for every correspondence $X \rightarrow [1]$. Note that the bottom arrow 
	may be identified with the arrow 
	$$
	\map_{\Cat_\infty/[1]}(X_1, P) \times \map_{\Cat_\infty/[1]}(X_0, P) \rightarrow \map_{\Cat_\infty}(X_1, P) \times \map_{\Cat_\infty}(X_0, P)	
	$$
	induced by the forgetful functor. 
	
	In fact, we may show that the above diagram is a pullback square for any correspondence 
	$P$ from $\C$ to $\D$. To verify this, it will be sufficient to check for $\sigma : [n] \rightarrow [1]$ in $\Delta/[1]$, since there is fully faithful inclusion $\Cat_\infty/[1] \rightarrow \PSh(\Delta)/[1] \simeq \PSh(\Delta/[1])$, see the proof of \cite[Proposition 5.19]{EquipI} for instance.
	In this case, the above square fits as the top square in a commutative cube
	\[
		\begin{tikzcd}[column sep = 0pt, row sep = small]
			\map_{/[1]}([n], P) \arrow[dr]\arrow[rr]\arrow[dd] && \map([n], P)\arrow[dr]\arrow[dd] \\
			& |[alias=f]|\map_{/[1]}([n]_1 \sqcup [n]_0, P)\arrow[rr, crossing over] && \map([n]_1 \sqcup [n]_0, P)\arrow[dd]\\ 
			\ast \arrow[dr, equal]\arrow[rr ,"\{\sigma\}"] && \map([n], [1])\arrow[dr] \\
			& |[alias=t]| \ast \arrow[rr] &&  \map([n]_1 \sqcup [n]_0, [1]).
			\arrow[from=f,to=t, crossing over]
		\end{tikzcd}
	\]
	Here $[n]_1$ and $[n]_0$ denote the fibers of $\sigma$ above $1$ and $0$ respectively. 
	The front and back squares are pullback squares. The bottom square is clearly a pullback square as well, hence the result follows.

	Let us show (2). Suppose that $(p,q)$ is regular.
	Consider the following lax commutative diagram
	\[
		\begin{tikzcd}
			E_{c,d} \arrow[r]\arrow[d]& E \times_\D \{d\} \arrow[d] \arrow[r] & \{d\}\arrow[d]\\ 
			\{c\} \times_\C E \arrow[d] \arrow[r]  &E \arrow[r,"q"]\arrow[d,"p"'] &|[alias=f]|  \D \arrow[d,"j"] \\ 
			\{c\} \arrow[r]  & |[alias=t]| \C \arrow[r, "i"'] & P 
			\arrow[from=f,to=t, Rightarrow]	
		\end{tikzcd}
	\]
	of $\infty$-categories. This produces a mate transformation 
	$$
	E_{c,d}[E_{c,d}^{-1}] \times (-)  \rightarrow \{c\}^*i^*j_!\{d\}_!(-)
	$$ 
	between endofunctors on $\S = \fun(\{d\}, \S)$. Evaluating at the point $\ast$, we obtain a canonical map 
	$E_{c,d}[E_{c,d}^{-1}] \rightarrow \map_P(d,c)$. On account of \cite[Corollary 6.27]{EquipI}, the mate is an equivalence if the outer lax square in the above diagram
	is exact. But the right bottom square is exact by assertion (1). To show that the 
	bottom pullback square is exact, it suffices to verify that $p$ is proper 
	(see \cite[Example 6.28]{EquipI}). But this follows from 
	the assumption that $p$ is cocartesian and \cite[Proposition 4.1.2.15]{HTT}.\footnote{We will use the proper/smooth terminology of Cisinski \cite{Cisinski}. Unfortunately, this is precisely the opposite of the conventions used by Lurie \cite{HTT}.} 
	Similarly, we can show that the outer (pullback) square of the two top squares is exact. 
	This follows from the fact that the composite $\{c\} \times_\C E \rightarrow \D$ is cartesian by \cite[Proposition 2.3.3]{HHLN}, hence smooth. 
	From this, the result follows.
\end{proof}

\begin{proof}[Proof of \ref{prop.int-ccat-fib}]
		We may pick a geometric embedding $i : \E \rightarrow \PSh(T)$ for some small $\infty$-category $T$. 
		This induces a reflective lax inclusion $$i_* : \CCAT_\infty(\E) \rightarrow [T^\op, \CCAT_\infty].$$
		Now in light of \ref{prop.geom-pres-tabs} and \ref{prop.refl-incl-formal-cat-theory}, it is necessary and sufficient to show that 
		$[T^\op, \CCAT_\infty]$ is fibrational. In turn, \ref{prop.tabs-cotensor} and \ref{prop.cart-cotensors} 
		imply that we may reduce to showing that $\CCAT_\infty$ is fibrational.
		
		In this case, condition (1) of \ref{thm.span-rep} has been demonstrated 
		by Lurie in \cite[Proposition B.3.17]{HA} and by Ayala-Francis in \cite[Lemma 4.1]{AyalaFrancis}, where it is shown that the canonical map 
		$$
		E_1 \cup_{[1] \times E_1} [1] \times \fun_{/[1]}([1], E) \cup_{[1] \times E_0} E_0 \rightarrow E 
		$$ 
		is an equivalence for every correspondence $E \rightarrow [1]$. 
		Condition (2) is precisely the assertion that the canonical map 
		$$
		E_2 \cup_{[1] \times E_2} [1] \times \fun_{/[2]}([2], E) \cup_{[1] \times E_0} E_0 \rightarrow d_1^*E 
		$$
		is an equivalence in $\Cat_\infty/[1]$ for every Conduch\'e fibration $E \rightarrow [2]$. By the above, 
		this is true precisely when the functor 
		$$
		E_2 \cup_{[1] \times E_2} [1] \times \fun_{/[2]}([2], E) \cup_{[1] \times E_0} E_0 \rightarrow E_2 \cup_{[1] \times E_2} [1] \times \fun_{/[1]}([1], d_1^*E) \cup_{[1] \times E_0} E_0
		$$
		is an equivalence. It is clearly essentially surjective, 
		so it suffices to show that it is fully faithful. This follows directly from the computation 
		(2) of 
		\ref{lem.cocomma} and characterization (6) of \cite[Lemma 5.16]{AyalaFrancisRozenblyum}.
\end{proof}

The span representations can be used to obtain the following double $\infty$-categorical version of a theorem of Carboni-Kasangian-Street \cite[Theorem 4]{CarboniKasangianStreet} about recognizing span bicategories.

\begin{corollary}\label{cor.char-span-dbl-cats}
	Let $\P$ be a double $\infty$-category. Then the following are equivalent:
	\begin{enumerate}
		\item $\P$ is equivalent to $\SSPAN(\C)$, where $\C$ is some $\infty$-category with pullbacks,
		\item $\P$ is a fibrational $\infty$-equipment and  every span in $\Vert(\P)^{(1)}$ is a two-sided discrete fibration (w.r.t.\ $\P$),
		\item the span representation for $\P$ is strict and an equivalence.
	\end{enumerate}
\end{corollary}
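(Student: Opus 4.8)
The plan is to prove the cycle of implications $(3) \Rightarrow (1) \Rightarrow (2) \Rightarrow (3)$, with \ref{thm.span-rep} as the central tool. The implication $(3) \Rightarrow (1)$ is immediate: if the span representation $\rho : \P \rightarrow \SSPAN(\Vert(\P)^{(1)})$ is a strict equivalence, then setting $\C := \Vert(\P)^{(1)}$ exhibits $\P$ as $\SSPAN(\C)$, and $\C$ has pullbacks because $\P$ does.

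For $(1) \Rightarrow (2)$, I would work directly with $\P = \SSPAN(\C)$ for an $\infty$-category $\C$ with pullbacks. First recall that $\SSPAN(\C)$ is a locally complete $\infty$-equipment, the companion and conjoint of a morphism $f$ being the spans $(\id, f)$ and $(f, \id)$, that it admits (strict) pullbacks, and that it is tabular: the tabulation of a span $(p,q) : e \rightarrow x \times y$ is its apex $e$, since a map of spans $\id_z \rightarrow (p,q)$ is precisely a morphism $z \rightarrow e$ over $\C$. In particular every span is tautologically a two-sided discrete fibration. It then remains to verify conditions (1) and (2) of \ref{thm.span-rep}, namely that tabulating $2$-cells in $\SSPAN(\C)$ are cocartesian and that their whiskered composite along a pullback $e \times_y e'$ is again cocartesian; both are concrete checks in the span double $\infty$-category, where horizontal composition is computed by pullback. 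Equivalently, one observes that the span representation of $\SSPAN(\C)$ is (equivalent to) the identity functor, which is trivially horizontally locally reflective, so that conditions (1) and (2) hold by \ref{thm.span-rep}. Hence $\SSPAN(\C)$ is a fibrational $\infty$-equipment in which every span is a two-sided discrete fibration.

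The substance is in $(2) \Rightarrow (3)$. Assuming $\P$ fibrational, \ref{thm.span-rep} provides $\rho$ and guarantees it is horizontally locally reflective, so each $\rho_{x,y} : \Hor(\P)(x,y) \rightarrow \Vert(\P)^{(1)}/(x\times y)$ is fully faithful and admits an adjoint. The hypothesis that every span is a two-sided discrete fibration says exactly that every object of $\Vert(\P)^{(1)}/(x \times y)$ is the tabulation of some proarrow, i.e.\ lies in the essential image of $\rho_{x,y}$; thus each $\rho_{x,y}$ is essentially surjective and therefore an equivalence. Consequently the reflections are trivial, so the lax comparison $2$-cells of $\rho$ — which by \ref{def.hor-loc-refl}(2) and condition (2) of \ref{thm.span-rep} are local equivalences with respect to the $\rho_{x,y}$ — are genuine equivalences; together with the fact that the tabulation of a unit proarrow is its apex (yielding identity spans, and so the unit coherences), a reduction over the generating inert, degenerate and active maps of $\Delta$ shows that $\rho$ is strict. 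Finally, to upgrade $\rho$ to an equivalence of double $\infty$-categories I would check that it is a levelwise equivalence of the underlying simplicial $\infty$-categories, which by the Segal conditions reduces to levels $0$ and $1$: it is the identity on objects and on vertical arrows, an equivalence on horizontal arrows by the $\rho_{x,y}$, and an equivalence on $2$-cells because $\rho$ preserves cartesian cells (\ref{cor.lax-fun-eq-pres-cart-cells}), so that every $2$-cell space with fixed boundary, computed by the universal property of cartesian cells as a mapping space $\map_{\Hor(\P)(a,b)}(F, \cart{g}{G}{f})$, is carried equivalently to the corresponding $2$-cell space of $\SSPAN(\Vert(\P)^{(1)})$.

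The main obstacle will be the strictness claim in $(2) \Rightarrow (3)$: horizontal local reflectivity as formulated in \ref{def.hor-loc-refl} only directly controls the composition coherence $2$-cells, so showing that \emph{all} lax naturality squares of $\rho$ — in particular the unit and degeneracy coherences constituting normality — become invertible once the $\rho_{x,y}$ are equivalences requires carefully tracking the lax structure of $\rho$ as produced by Haugseng's universal property of the span double $\infty$-category, rather than invoking a single formal statement.
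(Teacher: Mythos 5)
Your proposal is correct and follows essentially the same route as the paper: (3)$\Rightarrow$(1) trivially, (1)$\Rightarrow$(2) by direct verification in span double $\infty$-categories, and (2)$\Rightarrow$(3) by noting that $\rho_0$ is the identity, that each $\rho_{x,y}$ is fully faithful (from \ref{thm.span-rep} / \ref{prop.hor-refl-char-equipments}) and essentially surjective by hypothesis, hence an equivalence, so the lax coherence cells become invertible and an induction over the structure maps of $\Delta$ yields strictness and the equivalence. One caution: ``the tabulation of a unit proarrow is its apex'' is \emph{not} a general fact (it fails already for $\CCAT_\infty$, where the tabulation of $\id_\C$ is the arrow category) — it is exactly where hypothesis (2) enters, via the identity span $x \rightarrow x \times x$ being two-sided discrete, which is precisely how the paper justifies normality of $\rho$.
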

\begin{proof}
	It is clear that (3) implies (1) and it is straightforward to verify that every double $\infty$-category of spans meets condition (2). It remains to show that (2) implies (3). 
	Let $$\rho : \P \rightarrow \SSPAN(\Vert(\P)^{(1)})$$ be the span representation for $\P$. Then $\rho_0$ is the identity and $\rho_1$ is fully faithful in light of \ref{prop.hor-refl-char-equipments}.
	It is essentially surjective by the assumption that every span in $\P$ is a two-sided discrete fibration. Thus $\rho_1$ is an equivalence. Now 
	\ref{prop.hor-refl-char-equipments} implies that the lax naturality square associated to $\alpha = d_1 : [1] \rightarrow [2]$ strictly commutes. 
	In turn, one can then inductively show that the lax naturality square associated with every face map $\alpha$ in $\Delta$ commutes. Similarly, 
	one shows that span representation is also normal, since for every $x \in \P$, the span 
	$x \rightarrow x \times x$ is two-sided discrete. Thus $\rho$ is strict.
\end{proof}

\subsection{The existence of span representations}
To prove \ref{thm.span-rep}, we will formulate a technical and intermediate step that takes care of the construction of the span representation. We will make use of a certain universal property for mapping into span double categories 
exhibited by Haugseng  \cite{HaugsengSpans2}.

\begin{lemma}\label{lem.span-rep}
	Suppose that $\P$ is a double $\infty$-category with the property that $\P_0$ admits pullbacks 
	and the structure map $\P_0 \rightarrow \P_1$ admits a right adjoint. Then there exists a lax functor 
	$$\rho: \P \rightarrow \SSPAN(\Vert(\P)^{(1)})$$ 
	so that: 
	\begin{enumerate}
		\item The functor $\rho_0$ can be identified with the identity functor.
		\item If we have a 2-cell \[
			\begin{tikzcd}
				e \arrow[r,equal]\arrow[d, "p"'] & e \arrow[d,"q"] \\ 
				x \arrow[r, "F"] & y
			\end{tikzcd}
		\] in $\P$ that is tabulating, then $\rho$ carries $F$ 
		to the span $(p,q) : e \rightarrow x\times y$.
		\item If we have tabulating 2-cells 
		\[
			\begin{tikzcd}
				e \arrow[r,equal]\arrow[d] & e \arrow[d] \\ 
				x \arrow[r, "F"] & y, 
			\end{tikzcd}
			\begin{tikzcd}
				e' \arrow[r,equal]\arrow[d] & e' \arrow[d] \\ 
				y \arrow[r, "G"] & z
			\end{tikzcd}
			\text{ and }
			\begin{tikzcd}
				e'' \arrow[r,equal]\arrow[d] & e'' \arrow[d] \\ 
				x \arrow[r, "GF"] & z
			\end{tikzcd}
		\] 
		in $\P$, then the lax comparison 2-cell 
		$e \times_y e' \rightarrow  e''$ in $\Vert(\P)^{(1)}/(x \times z)$ provided by $\rho$ is obtained by applying 
		the universal property of tabulating 2-cells to
		the composite 2-cell $\id_{e \times_y e'} \rightarrow G\circ F$ described in condition (3) of \ref{def.fib-equipment}.
	\end{enumerate}
\end{lemma}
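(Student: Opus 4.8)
The plan is to invoke the universal property of the double $\infty$-category of spans established by Haugseng in \cite{HaugsengSpans2}, feeding it the data extracted from the right adjoint $t$ to the degeneracy $s_0 \colon \P_0 \to \P_1$. First I would record that the underlying $\infty$-category $\Vert(\P)^{(1)}$ is equivalent to $\P_0$, so the hypothesis that $\P_0$ admits pullbacks guarantees that $\SSPAN(\Vert(\P)^{(1)})$ is defined. The second hypothesis, that $s_0$ admits a right adjoint $t$, is precisely object-level tabularity: for each object $F$ of $\P_1$, classifying a horizontal arrow $F \colon x \to y$, the adjunction equivalence $\map_{\P_0}(z, tF) \simeq \map_{\P_1}(s_0 z, F) = \map_{\P_1}(\id_z, F)$ exhibits the counit $\epsilon_F \colon s_0 tF = \id_{tF} \to F$ as a tabulating $2$-cell in the sense of \ref{def.tabulation}. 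Its two vertical boundary arrows $p \colon tF \to x$ and $q \colon tF \to y$ (the images of $\epsilon_F$ under the two face maps $d_1, d_0 \colon \P_1 \to \P_0$) assemble into a span $(p,q) \colon tF \to x \times y$. Since tabulating $2$-cells are counits and hence essentially unique, this assignment automatically yields property (2), and setting $\rho_0 = \id$ gives property (1).

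Next I would recall the precise form of Haugseng's universal property, which reduces the construction of a lax functor $\rho \colon \P \to \SSPAN(\C)$ to the data of the functor $\rho_0 \colon \P_0 \to \C$ on objects and vertical arrows, together with a compatible assignment of spans to horizontal arrows (functorial in $2$-cells) and the accompanying lax comparison data for horizontal composition and units. Taking $\C = \Vert(\P)^{(1)}$ and $\rho_0 = \id$, the span assignment is exactly $F \mapsto (tF \to x \times y)$ produced above, functorial in $2$-cells by the functoriality of $t$. The decisive advantage of routing the construction through the universal property is that it manufactures all the higher simplicial coherences of the lax functor from this generating data, so that I need only exhibit the generating spans and their comparison cells rather than verify an infinite tower of coherences directly. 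I would also confirm, via the fibrational description of lax functors in \ref{rem.lax-fun-fib}, that $\rho$ preserves cocartesian lifts of inert maps; this is immediate because the span construction and $t$ are compatible with the source-target projections and hence with the Segal decompositions.

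For property (3) the content lies in identifying the lax comparison $2$-cell. Given composable $F \colon x \to y$ and $G \colon y \to z$ with tabulations $e, e'$ and with $GF$ having tabulation $e''$, composition in $\SSPAN(\Vert(\P)^{(1)})$ is computed by the pullback $e \times_y e'$, so the lax comparison is a map of spans $e \times_y e' \to e''$ over $x \times z$, i.e.\ an arrow of $\Vert(\P)^{(1)}/(x \times z)$. Whiskering the two tabulating $2$-cells with the projections $e \times_y e' \to e$ and $e \times_y e' \to e'$ produces the $2$-cell $\id_{e \times_y e'} \to G \circ F$ described in \ref{def.fib-equipment}; applying the universal property of the tabulating $2$-cell of $GF$ to this $2$-cell yields the comparison arrow $e \times_y e' \to e''$. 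I would then check that this comparison coincides with the lax structure map that the universal property attaches to $\rho$ on the composite, by unwinding the triangle identities for $s_0 \dashv t$ against the counit description of span composition.

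The hard part will be matching the abstract data demanded by Haugseng's universal property with the concrete data furnished by the adjunction $s_0 \dashv t$: namely, organizing the spans $F \mapsto (tF \to x \times y)$ into a morphism of exactly the shape the universal property requires, and verifying that the resulting lax structure maps are the comparison cells of property (3). Once this dictionary is in place, properties (1) and (2) follow immediately from the choice of $\rho_0$ and the counit description of the tabulating cells, and (3) reduces to the naturality and universal-property bookkeeping sketched above.
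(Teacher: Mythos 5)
Your proposal picks the right tool --- the universal property of $\SSPAN$ from \cite{HaugsengSpans2} --- and correctly identifies the counit of the adjunction $s_0 \dashv t$ as the tabulating $2$-cell, but it mis-states what that universal property takes as input, and this is where the proof breaks. Haugseng's result (Corollary 3.10 of \cite{HaugsengSpans2}, as used in the paper) does \emph{not} accept ``a functor $\rho_0$ on objects and vertical arrows, a span assignment on horizontal arrows functorial in $2$-cells, and lax comparison data'' and then manufacture the higher coherences; that would essentially presuppose a presentation of lax-functor data by generators, which is not what the result says. The actual input is a single functor $s' \colon \mathrm{Un}(\P) \to \C$ out of the \emph{total $\infty$-category} of the cocartesian fibration $\mathrm{Un}(\P) \to \Delta^\op$ unstraightening $\P$; the output is then a map $\mathrm{Un}(\P) \to \mathrm{Un}(\SSPAN(\C))$ over $\Delta^\op$ preserving cocartesian lifts of inerts, i.e.\ a lax functor. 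So the entire content of the lemma is the construction of this functor $s'$, which is absent from your proposal.

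In the paper this is done by showing that the inclusion $l \colon \P_0 \times \Delta^\op \to \mathrm{Un}(\P)$ of the underlying vertical $\infty$-category admits a \emph{fibered} right adjoint over $\Delta^\op$. By the fiberwise criterion \cite[Proposition 7.3.2.6]{HA} this amounts to each degeneracy $l_n \colon \P_0 \to \P_n$ having a right adjoint: $l_0$ is the identity, $l_1$ has one by hypothesis (this is your $t$), but for $n \geq 2$ one must prove representability of $\map_{\P_n}(l_n(-), y)$, which the paper does via the Segal condition, rewriting it as
$$
\map_{\P_n}(l_n(-), y) \simeq \lim_{i \colon [k] \to [n] \in (\mathbb{G}/[n])^\op} \map_{\P_0}(-, s(i^*y)),
$$
a finite limit of representable presheaves --- and it is exactly here that the hypothesis that $\P_0$ admits pullbacks is used. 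In your proposal, pullbacks in $\P_0$ appear only to make $\SSPAN(\Vert(\P)^{(1)})$ well-defined, which is a signal that the core construction is missing: without the higher-level right adjoints there is no functor $s'$ to feed into the universal property, and your flagged ``hard part'' (matching the data) cannot be carried out in the form you describe, since the data you propose to match against is not what the universal property asks for. Properties (1)--(3) of the lemma are then, in the paper, read off by tracing through Haugseng's proof rather than checked against triangle identities as you suggest; your description of the comparison cell in (3) is consistent with the statement, but it is a restatement of the target, not an argument.
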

\begin{proof}
	We will provide a construction of the functor $\rho$ from the fibrational perspective of \ref{rem.lax-fun-fib}.
	Consider the canonical inclusion $l : \Vert(\P)^{(1)}_v = (\P_0)_v \rightarrow \P$ of the underlying vertical $\infty$-category. Passing to unstraightenings, this gives rise 
	to a map 
	$$
	l : \P_0 \times \Delta^\op \rightarrow \mathrm{Un}(\P)
	$$
	of cocartesian fibrations over $\Delta^\op$. On account of \cite[Proposition 7.3.2.6]{HA}, this is a left adjoint in $\CAT_\infty/\Delta^\op$ if and only if 
	for every $n$, the degeneracy functor $l_n : \P_0 \rightarrow \P_n$ induced by the terminal map $[n] \rightarrow [0]$ is a left adjoint. We claim that this is the case. 
	Note that $l_0$ is trivially a left adjoint (it is the identity) and that $l_1$ is a left adjoint by assumption.

	Let $i : \mathbb{G}^\op \rightarrow \Delta^\op$ denote the inclusion of the full subcategory spanned by $[0]$ and $[1]$. By the same reasoning, there exists a right adjoint 
	$i^*\mathrm{Un}(\P) \rightarrow \P_0 \times \mathbb{G}^\op$ for $i^*l$ in $\CAT_\infty/\mathbb{G}^\op$.
	Let us write $s : i^*\mathrm{Un}(\P) \rightarrow \P_0$ for this adjoint followed by the projection $\P_0 \times \mathbb{G}^\op \rightarrow \P_0$. 
	We then obtain a natural equivalence
	$$
	\map_{\mathrm{Un}(\P)}(x,y) = \map_{\mathrm{Un}(\P)}(l_0(x),y) \simeq \map_{\P_0}(x, s(y)) \times \map_{\mathbb{G}^\op}([0], [n]) =  \map_{\P_0}(x, s(y))
	$$ 
	for $x \in \P_0$ and $y \in \P_n$ with $n=0,1$. We will use this to show that $l_n$ admits a left adjoint.

	Suppose that $y\in \P_n$. Then we have to show that the presheaf 
	$$
	\map_{\P_n}(l_n(-), y) : \P_0^\op \rightarrow \S
	$$
	is representable. We may construct a natural transformation $(-) \rightarrow l_n(-)$ in $\mathrm{Un}(\P)$ that, component-wise, 
	consists of 
	cocartesian lifts over the arrow $[0] \rightarrow [n]$ in $\Delta^\op$. Then, for $x \in \P_0$, we obtain a natural commutative diagram
	\[
		\begin{tikzcd}
			\map_{\P_n}(l_n(x), y) \arrow[r]\arrow[d] &\map_{\mathrm{Un}(\P)}(l_n(x), y) \arrow[r]\arrow[d] & \map_{\mathrm{Un}(\P)}(x, y) \arrow[d] \\
			\{\id_{[n]}\} \arrow[r] & \map_{\Delta^\op}([n], [n]) \arrow[r] & \map_{\Delta^\op}([0], [n])
		\end{tikzcd}
	\] 
	in which every square is a pullback square.
	Since $\P$ is a double $\infty$-category, we have a further pullback square  
	\[
		\begin{tikzcd}
			\map_{\mathrm{Un}(\P)}(x, y) \arrow[r]\arrow[d] & \lim_{ i : [k] \rightarrow [n] \in (\mathbb{G}/[n])^\op}\map_{\mathrm{Un}(\P)}(x, i^*y) \arrow[d] \\
			\map_{\Delta^\op}([0], [n]) \arrow[r] &\lim_{ i : [k] \rightarrow [n] \in (\mathbb{G}/[n])^\op}\map_{\Delta^\op}([0], [k]),
		\end{tikzcd}
	\] 
	see item (iii) of \cite[Definition 2.6]{HaugsengSpans2}.
	Thus we deduce that 
	\begin{align*}
	\map_{\P_n}(l_n(-), y) &\simeq \lim_{ i : [k] \rightarrow [n] \in (\mathbb{G}/[n])^\op}\map_{\mathrm{Un}(\P)}(-, i^*y) \\&\simeq  \lim_{ i : [k] \rightarrow [n] \in (\mathbb{G}/[n])^\op}\map_{\P_0}(-, s(i^*y)).
	\end{align*}
	Since $\P_0$ admits all pullbacks, the latter presheaf is representable as desired.

	Thus we conclude that $l$ admits a fibered right adjoint over $\Delta^\op$. Let us write 
	$$
	s' : \mathrm{Un}(\P) \rightarrow \P_0
	$$
	for this right adjoint followed by the projection $\P_0 \times \Delta^\op \rightarrow \P_0$. By the reasoning of above, 
	we see that $s'$ may be computed as 
	$$
	s'(y) \simeq \lim_{ i : [k] \rightarrow [n] \in (\mathbb{G}/[n])^\op}s(i^*y)
	$$
	for $y \in \P_n$.
	Hence, the universal property of spans proven by Haugseng in \cite[Corollary 3.10]{HaugsengSpans2} together with its succeeding remark, 
	asserts that $s'$ has a uniquely associated map 
	$$
	\rho : \mathrm{Un}(\P) \rightarrow \mathrm{Un}(\SSPAN(\P_0))
	$$
	over $\Delta^\op$ that preserves cococartesian morphisms over inerts. Tracing through the proof of \cite[Corollary 3.10]{HaugsengSpans2}, we deduce the following:
	\begin{enumerate}[label=(\roman*)]
		\item The functor $\rho_0$ is the identity.
		\item The functor $\rho_1$ carries a horizontal arrow $F:x \rightarrow y$ to the associated span $s(F) \rightarrow x \times y$.
		\item If $F : x \rightarrow y$ and $G : y \rightarrow z$ are  
		horizontal arrows, then the image of the cocartesian lift $(G,F) \rightarrow G\circ F$ of $d_1 : [2] \rightarrow [1]$ in $\Delta^\op$ factors in 
		$ \mathrm{Un}(\SSPAN(\P_0))$ as a composite 
		$$
		\rho(G,F) \rightarrow d_1^*(\rho(G,F)) \rightarrow \rho(G \circ F)
		$$
		where the first arrow is cocartesian. The second arrow lives in the fiber $\SSPAN(\P_0)_1$ and is precisely computed as in (3).
	\end{enumerate}
	Thus $\rho$ straightens to the desired lax functor in light of the discussion in  \cite[Remark 5.24]{FunDblCats}.
\end{proof}

\begin{proof}[Proof of \ref{thm.span-rep}]
	Since $\P$ is tabular, \ref{lem.span-rep} produces the candidate lax functor
	$$
	\rho : \P \rightarrow \SSPAN(\Vert(\P)^{(1)}).
	$$
	which has the property that $\rho_0$ recovers the identity. 
	
	Let us now assume that $\P$ is an $\infty$-equipment. It remains to show that $\rho$ is horizontally locally reflective if and only if conditions (1) and (2) are met.
	Let $x, y$ be objects of $\P$. Then we have to show that the functor
	$$\rho_{x,y}:\Hor(\P)(x,y) \rightarrow \Vert(\P)^{(1)}/(x\times y)$$
	has a right adjoint. I.e., we have to show that for every span $(p,q) : e \rightarrow x\times y$ the copresheaf 
	$$
	\map_{\P_0/(x\times y)}(e,\rho_{x,y}(-)) : \Hor(\P)(x,y) \rightarrow \S
	$$ 
	is corepresentable. If $F : x \rightarrow y$ is a horizontal arrow in $\P$, then we have a natural pullback square 
	\[
		\begin{tikzcd}
		\map_{\P_0/(x\times y)}(e,\rho_{x,y}F) \arrow[r]\arrow[d] & \map_{\P_0}(e, \rho_{x,y}F) \arrow[d] \\
		\{(p,q)\} \arrow[r] & \map_{\P_0}(e,x) \times \map_{\P_0}(e,y).
		\end{tikzcd}
	\]
	This can be rewritten as the natural pullback square
	\[
		\begin{tikzcd}
		\map_{\P_0/(x\times y)}(e,\rho_{x,y}F) \arrow[r]\arrow[d] & \map_{\P_1}(\id_e, F) \arrow[d] \\
		\{(p,q)\} \arrow[r] & \map_{\P_0}(e,x) \times \map_{\P_0}(e,y)
		\end{tikzcd}
	\]
	using the universal property of tabulating 2-cells.
	Since $\P$ admits all cocartesian 2-cells, we obtain that
	$$
	\map_{\P_0/(x\times y)}(e,\rho_{x,y}F) \simeq \map_{\Hor(\P)(x,y)}(\cocart{q}{}{p}, F)
	$$
	naturally in $F$. Thus $\rho_{x,y}$ always has a right adjoint $L_{x,y}$ that carries a span $(p,q) : e\rightarrow x\times y$ to the cocartesian lift $\cocart{q}{}{p}$. Now, $\rho_{x,y}$ is fully faithful if and only if the counit 
	$L_{x,y}\rho_{x,y}F \rightarrow F$ is an equivalence for every horizontal arrow $F : x \rightarrow y$. But this is precisely the condition (1).
	Moreover, using the description given in \ref{lem.span-rep} of the lax compatibility 2-cells of $\rho$,  it can be readily verified 
	that condition (2) is met if and only if $\rho$ meets condition (2) of \ref{def.hor-loc-refl}.
\end{proof}

\subsection{Basic theory of fibrations} 
We will now fix a fibrational $\infty$-equipment $\P$ and demonstrate that such an equipment supports a good theory of fibrations.

By \ref{thm.span-rep}, there is a horizontally locally reflective span representation $$\rho : \P \rightarrow \SSPAN(\Vert(\P)^{(1)})$$ 
so that we locally obtain an adjunction 
$$
L_{x,y} : \Vert(\P)^{(1)}/(x\times y) \rightleftarrows \Hor(\P)(x,y) : \rho_{x,y}
$$
for objects $x$ and $y$. The functor $\rho_{x,y}$ is fully faithful in this case. The local objects of this adjunction are precisely 
the two-sided discrete fibrations. The left adjoint $L_{x,y}$ carries a span $(p,q) : e \rightarrow x \times y$ 
to the cocartesian lift $\cocart{q}{}{p}$. 

\begin{convention}
	We say that a two-sided discrete fibration $(p,q) : e \rightarrow x\times y$ \textit{classifies} a profunctor $F : x \rightarrow y$
	if $\cocart{q}{}{p} \simeq F,$ i.e.\ when there exists a (necessarily cocartesian) tabulating 2-cell
	\[
		\begin{tikzcd}
			e \arrow[r,equal]\arrow[d, "p"'] & e \arrow[d, "q"] \\ 
			x \arrow[r, "F"] & y.
		\end{tikzcd}
	\] 
\end{convention}

\begin{proposition}\label{prop.tsdfib-closed-pb}
	Suppose that $e \rightarrow x\times y$ is a two-sided discrete fibration that classifies a proarrow $F : x\rightarrow y$. 
	Given arrows $f : x'\rightarrow x$ and $g : y' \rightarrow y$, we can form the pullback
	\[
		\begin{tikzcd}
			e' \arrow[r] \arrow[d] & e \arrow[d] \\ 
			x'\times y' \arrow[r, "f\times g"] & x \times y,
		\end{tikzcd}
	\]
	and the left vertical arrow is again a two-sided discrete fibration that classifies the profunctor $\cart{g}{F}{f} : x' \rightarrow y'$. 
\end{proposition}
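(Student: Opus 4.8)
The plan is to identify the pullback span $e'\to x'\times y'$ with the span $\rho_{x',y'}(\cart{g}{F}{f})$ obtained from the tabulation of the restricted proarrow $\cart{g}{F}{f}$. Granting this, both assertions follow formally: since $\rho_{x',y'}$ is fully faithful with essential image the two-sided discrete fibrations (\ref{thm.span-rep}), the span $e'\to x'\times y'$ is a two-sided discrete fibration; and because $\P$ is fibrational, condition (1) of \ref{thm.span-rep} ensures that the tabulating 2-cell of $\cart{g}{F}{f}$ is cocartesian, so that $e'\to x'\times y'$ classifies $\cart{g}{F}{f}$ in the sense of our convention.

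To produce this identification I would compare the functors $\map_{\P_0}(z,-)$ represented by the two objects, naturally in $z\in\P$ and over the projection to $x'\times y'$, and conclude by the Yoneda lemma in $\Vert(\P)^{(1)} = \P_0$. On the one hand, as $e\to x\times y$ classifies $F$, its (cocartesian) tabulating 2-cell gives $\map_{\P_0}(z,e)\simeq\map_{\P_1}(\id_z, F)$, under which the legs $(p,q)$ are carried to the source-target projection onto $\map_{\P_0}(z,x)\times\map_{\P_0}(z,y)$. Since $\map_{\P_0}(z,-)$ preserves the defining pullback of $e'$, this yields
\[
\map_{\P_0}(z, e') \simeq \map_{\P_1}(\id_z, F)\times_{\map_{\P_0}(z,x)\times\map_{\P_0}(z,y)}\bigl(\map_{\P_0}(z,x')\times\map_{\P_0}(z,y')\bigr).
\]

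On the other hand, \ref{prop.cart-comp-conj} exhibits $\cart{g}{F}{f}$ as the cartesian lift of $F$ along $(f,g)$ for the source-target projection $\P_1\to\P_0^{\times 2}$, which is a cartesian fibration since $\P$ is an equipment (\cite[Corollary 3.28]{EquipI}). The universal property of a cartesian morphism then identifies the right-hand side of the last display with $\map_{\P_1}(\id_z, \cart{g}{F}{f})$, compatibly with the source-target legs, and the universal property of tabulations (\ref{def.tabulation}) rewrites this as $\map_{\P_0}(z, t(\cart{g}{F}{f}))$. Chaining the equivalences and applying Yoneda gives $e'\simeq t(\cart{g}{F}{f})$ over $x'\times y'$, which is the required identification.

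The argument is a bookkeeping of three universal properties (the pullback defining $e'$, the cartesian lift $\cart{g}{F}{f}$, and the tabulations of $F$ and $\cart{g}{F}{f}$), so the main point deserving care is that every identification is compatible with the projections to $x'$ and $y'$. Concretely, one must track that the source-target legs of $\map_{\P_1}(\id_z,-)$ correspond throughout to the pullback projections $(p',q')$, so that the Yoneda comparison is carried out in the slice $\P_0/(x'\times y')$ and produces an equivalence of spans rather than merely of underlying objects. As each of the three universal properties is natural and fibered over $\P_0^{\times 2}$, this compatibility is routine but is the step I would write out in full.
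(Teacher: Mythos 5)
Your proof is correct, but it takes a genuinely different route from the paper's. The paper's own proof is a two-line argument exploiting machinery already in place: it applies the span representation $\rho$ --- a lax functor between $\infty$-equipments --- to the cartesian 2-cell exhibiting $\cart{g}{F}{f}$ as the cartesian lift of $F$ along $(f,g)$, and invokes \ref{cor.lax-fun-eq-pres-cart-cells} (lax functors between equipments preserve cartesian cells); since cartesian 2-cells in $\SSPAN(\Vert(\P)^{(1)})$ are precisely pullback squares of spans, this immediately identifies $e'$ with $\rho(\cart{g}{F}{f})$, which gives both assertions at once. You instead unwind the identification by hand, chaining three universal properties --- the pullback defining $e'$, the universal property of the cartesian 2-cell with respect to the source-target projection $\P_1 \rightarrow \P_0^{\times 2}$, and the universal property of tabulations --- and concluding by Yoneda over $x'\times y'$. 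What your approach buys: it is self-contained, needing neither the lax-functorial coherence of $\rho$ established in \ref{thm.span-rep} and \ref{lem.span-rep}, nor \ref{prop.lax-pres-cart-cells}, nor any knowledge of cartesian cells in span double $\infty$-categories; indeed it only uses that $\P$ is a tabular equipment with the relevant pullback, with fibrationality entering solely through condition (1) of \ref{thm.span-rep} to justify the word ``classifies.'' What the paper's approach buys: brevity, and the conceptual point that this closure property is an instance of the general principle that cartesian cells are preserved by arbitrary lax functors between equipments. Your flagged bookkeeping issue --- compatibility of each equivalence with the legs to $x'$ and $y'$ --- is indeed the only point requiring care, and it does go through since each of the three universal properties is natural and fibered over $\P_0^{\times 2}$.
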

\begin{proof}
	The span representation $\rho$ carries the cartesian 2-cell
	\[
	\begin{tikzcd}
		x' \arrow[d,"f"'] \arrow[r, "\cart{g}{F}{f}"] & y' \arrow[d,"g"] \\
		x \arrow[r,"F"] & y
	\end{tikzcd}
	\]
	to a cartesian 2-cell in $\SSPAN(\Vert(\P)^{(1)})$ on account of \ref{cor.lax-fun-eq-pres-cart-cells}. The result follows directly from this observation. 
\end{proof}

We have some suggestively named important examples of two-sided discrete fibrations:

\begin{definition}\label{def.commas}
	Motivated by \ref{rem.arrh-vert}, we will we write $$(\ev_1, \ev_0) : a^{[1]} \rightarrow a^{\times 2}$$ for the two-sided discrete fibration that classifies the identity proarrow $\id_a$, with $a \in \P$. Moreover, 
	for arrows $f : x \rightarrow a$ and $g : y \rightarrow a$,  we define the \textit{comma fibration} $$g \downarrow f \rightarrow x\times y$$ to be the two-sided discrete fibration that fits in the pullback square 
	\[
		\begin{tikzcd}
			g\downarrow f \arrow[r]\arrow[d] & a^{[1]}  \arrow[d, "{(\ev_1, \ev_0)}"] \\ 
			x \times y \arrow[r, "f \times g"] & a \times a,
		\end{tikzcd}
	\]
	i.e.\ it is the one that classifies the proarrow $\cart{g}{}{f}$.
\end{definition}

We can easily obtain a fibrational variant of the Yoneda lemma, cf.\ \cite[Corollary 16]{StreetFib} and \cite[Theorem 5.7.1]{RiehlVerity}:

\begin{proposition}\label{prop.fib-yoneda}
	Suppose that $f : x \rightarrow y$ is an arrow in $\P$. Then the canonical map of spans  
	\[
	\begin{tikzcd}
		x \arrow[rr]\arrow[dr, "{(\id, f)}"'] && \id_y \downarrow f \arrow[dl] \\
		& x\times y
	\end{tikzcd}
	\]
	obtained by factoring the 2-cell
	\[
		\begin{tikzcd}
			x \arrow[r, equal] \arrow[d,equal] & x \arrow[d, "f"] \\
			x \arrow[r, "{f}_\circledast"] & y
		\end{tikzcd}
	\]
	through the tabulating 2-cell of ${f}_\circledast$,
	induces an equivalence on mapping spaces
	$$
	\map_{\Vert(\P)^{(1)}/(x \times y)}(\id_y \downarrow f, e) \xrightarrow{\simeq} \map_{\Vert(\P)^{(1)}/(x \times y)}(x, e)
	$$
	for every two-sided discrete fibration $e \rightarrow x \times y$.
\end{proposition}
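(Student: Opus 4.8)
The plan is to recognize the canonical map $\eta\colon x \to \id_y \downarrow f$ as the unit of the reflective adjunction $L_{x,y} \dashv \rho_{x,y}$ of \ref{thm.span-rep}, and then to invoke the standard fact that precomposition with a unit induces an equivalence on mapping spaces into local objects. The local objects of this adjunction are exactly the two-sided discrete fibrations (the essential image of the fully faithful $\rho_{x,y}$), so any such $e \to x\times y$ may be written as $e \simeq \rho_{x,y}(G)$ for the proarrow $G\colon x\to y$ it classifies, and the map in the statement is precisely precomposition with $\eta$.

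First I would locate the two objects inside this adjunction. By \ref{def.commas} the comma fibration $\id_y\downarrow f$ is the two-sided discrete fibration classifying $\cart{\id_y}{}{f}\simeq f_\circledast$, so $\id_y\downarrow f\simeq \rho_{x,y}(f_\circledast)$. On the other hand, the proof of \ref{thm.span-rep} shows that $L_{x,y}$ sends a span $(p,q)$ to the cocartesian lift $\cocart{q}{}{p}$; applied to $(\id_x, f)\colon x\to x\times y$ this gives $\cocart{f}{}{\id_x}\simeq f_\circledast$. Hence the target of $\eta$ is canonically $\rho_{x,y}L_{x,y}(x)$, which is exactly where a unit would land.

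The crux — and the step I expect to require the most care — is to verify that $\eta$ really is the unit, i.e.\ that under the natural equivalence $\map_{\Vert(\P)^{(1)}/(x\times y)}(x,\rho_{x,y}F)\simeq \map_{\Hor(\P)(x,y)}(f_\circledast, F)$ from the proof of \ref{thm.span-rep} the map $\eta$ corresponds to $\id_{f_\circledast}$. Tracing that equivalence, $\eta$ is obtained by factoring the 2-cell
\[
\begin{tikzcd}
x \arrow[r, equal] \arrow[d,equal] & x \arrow[d, "f"] \\
x \arrow[r, "{f}_\circledast"] & y
\end{tikzcd}
\]
through the cocartesian cell $\id_x\to\cocart{f}{}{\id_x}$ defining $L_{x,y}(x)$. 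But this 2-cell is the companionship unit of $f$, which by \ref{prop.cart-comp-conj} is itself cocartesian and hence coincides with the defining cocartesian cell; therefore the factorization is the identity $\id_{f_\circledast}$. This identifies $\eta$ with the unit at the span $x$.

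Finally I would conclude by the usual localization argument: for a local object $e\simeq\rho_{x,y}(G)$ one has natural equivalences $\map_{\Vert(\P)^{(1)}/(x\times y)}(\rho_{x,y}L_{x,y}(x),e)\simeq \map_{\Hor(\P)(x,y)}(L_{x,y}(x),G)\simeq \map_{\Vert(\P)^{(1)}/(x\times y)}(x,e)$, using full faithfulness of $\rho_{x,y}$ and the adjunction respectively, whose composite is precisely precomposition with the unit $\eta$. Since $\id_y\downarrow f\simeq\rho_{x,y}L_{x,y}(x)$, this is exactly the asserted equivalence, and the only genuine work is the bookkeeping of the previous paragraph.
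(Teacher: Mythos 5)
Your proof is correct and takes essentially the same approach as the paper: the paper's one-line proof likewise identifies the canonical map with the component of the unit of the reflective adjunction $(L_{x,y},\rho_{x,y})$ at the span $(\id,f)$ and concludes that it is local with respect to the two-sided discrete fibrations. Your verification that the map really is the unit (via cocartesianness of the companionship unit, so that it coincides with the defining cocartesian cell of $L_{x,y}(x)$) simply makes explicit what the paper asserts without proof.
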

\begin{proof}
	This canonical map of spans is precisely the component of the unit
	of the adjunction $(L_{x,y}, \rho_{x,y})$ at the span $(\id, f) : x \rightarrow x \times y$,
	which is hence local with respect to the two-sided discrete fibrations.
\end{proof}

\begin{definition}\label{def.left-right-fib}
	Suppose that $\P$ is pointed, and let $x \in \P$. A two-sided discrete fibration from 
	$x$ to $\ast$ is called a \textit{left fibration}. Dually, a two-sided discrete fibration 
	from $\ast$ to $x$ is called a \textit{right fibration}.
\end{definition}

\begin{proposition}
	Suppose that $\P$ is cartesian closed and let $i \in \P$.
	 If $e \rightarrow x \times y$ is a two-sided discrete fibration 
	classifying $F$, then $[i, e] \rightarrow [i,x] \times [i,y]$ is the two-sided discrete fibration that 
	classifies $[i,F]$. In particular, we deduce that 
	$$
	[i,x^{[1]}] \simeq [i,x]^{[1]}.
	$$
\end{proposition}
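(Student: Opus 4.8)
The plan is to exploit that cartesian closedness makes $[i,-] : \P \to \P$ a normal lax right adjoint to $i \times - = i \otimes -$ (by \ref{def.mon-closed}), together with the fact that normal lax right adjoints preserve tabulating $2$-cells between tabular locally complete double $\infty$-categories (\ref{prop.lax-radj-pres-tabs}). Since a fibrational $\infty$-equipment is in particular tabular and locally complete, this proposition applies to $[i,-] : \P \to \P$, whose source and target are both $\P$.

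First I would record that, by hypothesis, the two-sided discrete fibration $e \to x \times y$ classifying $F$ is witnessed by a (cocartesian) tabulating $2$-cell whose bottom horizontal arrow is $F$, whose vertical legs are $p$ and $q$, and whose top horizontal arrow is $\id_e$. Applying $[i,-]$ and invoking \ref{prop.lax-radj-pres-tabs}, the image is again a tabulating $2$-cell. Its bottom horizontal arrow is $[i,F]$, its vertical legs are $[i,p]$ and $[i,q]$, and --- crucially using that $[i,-]$ is normal, so that $[i,\id_e]\simeq\id_{[i,e]}$ --- its top horizontal arrow is the identity. The pair of legs assembles into a span $([i,p],[i,q]) : [i,e]\to[i,x]\times[i,y]$, and since $\P$ is fibrational, condition (1) of \ref{thm.span-rep} guarantees that this tabulating $2$-cell is automatically cocartesian. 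Hence it exhibits $[i,e]\to[i,x]\times[i,y]$ as the two-sided discrete fibration classifying $[i,F]$, which is the first assertion. I would note here that the legs $[i,p],[i,q]$ already furnish the map to the product $[i,x]\times[i,y]$ via the universal property of the (strict finite) product in $\Vert(\P)^{(1)}$, so no separate check that $[i,-]$ preserves products of the targets is required.

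For the displayed consequence I would specialize to $F=\id_x$ and $y=x$. By \ref{def.commas}, the object $x^{[1]}$ is precisely the two-sided discrete fibration classifying the identity proarrow $\id_x$, so the first part gives that $[i,x^{[1]}]\to[i,x]\times[i,x]$ classifies $[i,\id_x]$. Normality of $[i,-]$ yields $[i,\id_x]\simeq\id_{[i,x]}$, and the two-sided discrete fibration classifying $\id_{[i,x]}$ is by definition $[i,x]^{[1]}$; comparing the two classifying objects then produces the equivalence $[i,x^{[1]}]\simeq[i,x]^{[1]}$.

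The argument is essentially a direct application of the preservation of tabulations by normal lax right adjoints, so I do not expect a serious obstacle. The only points demanding care are the two invocations of normality --- used to identify the top horizontal arrows of the image $2$-cells with the relevant identity proarrows --- and the appeal to fibrationality (condition (1) of \ref{thm.span-rep}) to promote the image tabulating $2$-cell to a cocartesian one, which is exactly what licenses the word \emph{classifies}.
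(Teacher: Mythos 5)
Your proposal is correct and follows essentially the same route as the paper, whose entire proof is that $[i,-]$ is a normal lax right adjoint (by \ref{def.mon-closed}) and hence preserves tabulating 2-cells by \ref{prop.lax-radj-pres-tabs}. The extra details you supply --- normality identifying $[i,\id_e]$ with $\id_{[i,e]}$, fibrationality making the image 2-cell cocartesian, and essential uniqueness of classifying fibrations for the displayed equivalence --- are exactly the points the paper leaves implicit.
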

\begin{proof}
	This follows directly from the fact that $[i,-]$ is a normal lax right adjoint, and hence preserves 
	tabulating 2-cells.
\end{proof}

Using the comma fibrations, we can make sense of (co)slice objects in suitable equipments:

\begin{definition}\label{def.slice-fibs}
	Suppose that we are in the case that $\P$ is pointed and cartesian closed. Associated to an arrow $f : i \rightarrow x$ in $\P$,
	we can consider the following familiar commas:
	\begin{enumerate}
		\item the slice object
		 $$x/f := \Delta \downarrow (\ast \xrightarrow{\{f\}} [i,x]),$$
		\item the coslice object
		$$f/x := (\ast \xrightarrow{\{f\}} [i,x]) \downarrow \Delta,$$
	\end{enumerate}
	where $\Delta : x \rightarrow [i,x]$ denotes the diagonal arrow. Both slice and coslice objects are canonically fibered over $x$ as a right and left fibration respectively.
\end{definition}

We will now demonstrate that particular weighted colimits (see \cite[Subsection 6.2]{EquipI}) can be computed as conical colimits (see \cite[Definition 6.30]{EquipI}). This recovers a 
known fact that was proven in the case that $\P = \CCAT_\infty$ by Haugseng \cite[Theorem 1.2]{HaugsengEnds}, cf.\ the discussion in \cite[Example 6.14]{EquipI}. 

\begin{proposition}\label{prop.fib-cones-slice}
	Suppose that $\P$ is pointed and cartesian closed. Let $f : i \rightarrow x$ be an arrow in $\P$ and 
	$W : \ast \rightarrow i$ be a weight corresponding to a right fibration
	$
	p : w \rightarrow i.
	$ Then the proarrow $W$-cones under $f$ exists 
	and it is classified by the left fibration $fp/x \rightarrow x$.
\end{proposition}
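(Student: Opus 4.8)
The plan is to compute the proarrow classified by the left fibration $fp/x$ and verify directly that it satisfies the universal property of the proarrow of $W$-cones under $f$ (see \cite[Definition 6.9]{EquipI}); this simultaneously establishes existence and the desired classification. By \ref{def.slice-fibs} and \ref{def.commas}, the coslice $fp/x$ is the comma fibration $(\ast \xrightarrow{\{fp\}} [w,x]) \downarrow \Delta$, where $\Delta : x \to [w,x]$ is the diagonal. Hence $fp/x \to x$ is the two-sided discrete fibration from $x$ to $\ast$ classifying the proarrow $\{fp\}^\circledast \Delta_\circledast : x \to \ast$. So it suffices to show that this proarrow corepresents $W$-cones, i.e.\ that $\map_{\Hor(\P)(x,\ast)}(F, \{fp\}^\circledast\Delta_\circledast) \simeq \map_{\Hor(\P)(x,i)}(W \circ F, f^\circledast)$ naturally in $F : x \to \ast$.

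First I would rewrite the right-hand side in terms of the composite arrow $fp : w \to x$. Writing $t : w \to \ast$ for the terminal arrow, the fact that the right fibration $p$ classifies $W$ unwinds to an equivalence $W \simeq p_\circledast t^\circledast$. Applying the companion--conjoint adjunction $p_\circledast \circ (-) \dashv p^\circledast \circ (-)$ of \ref{rem.lax-adj-hor}, together with the composition law for conjoints $(fp)^\circledast \simeq p^\circledast f^\circledast$, I obtain
\[
\map_{\Hor(\P)(x,i)}(W \circ F, f^\circledast) \simeq \map_{\Hor(\P)(x,i)}(p_\circledast(t^\circledast\circ F), f^\circledast) \simeq \map_{\Hor(\P)(x,w)}(t^\circledast\circ F, (fp)^\circledast),
\]
which is exactly the universal property of the proarrow of $t^\circledast$-cones under $fp$.

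Then I would match the left-hand side against this. The point is that $\{fp\}^\circledast \Delta_\circledast$ is the degenerate case of the proarrow produced in \ref{prop.computation-proarrow-lke}, obtained by taking the object there to be the monoidal unit $\ast$ (which is terminal, as $\P$ is pointed and cartesian closed), its arrow to be $t : w \to \ast$, and its diagram to be $fp : w \otimes \ast \simeq w \to x$; here $[\ast,x] \simeq x$, the restriction $t^*$ is the diagonal $\Delta$, and $fp$ is adjunct to $\{fp\}$. The computation in the proof of \ref{prop.computation-proarrow-lke} --- which, prior to invoking the cone universal property, only uses \ref{prop.lax-adj-hor} applied to the closed-structure adjunction $(w \otimes -, [w,-])$, \ref{rem.lax-adj-hor}, and the fact that $(-)\otimes(-)$ preserves conjoints --- then specializes (with the simplifications $\ast \otimes x \simeq x$ and $\epsilon^\circledast \simeq \id$) to yield
\[
\map_{\Hor(\P)(x,\ast)}(F, \{fp\}^\circledast \Delta_\circledast) \simeq \map_{\Hor(\P)(x,w)}(t^\circledast\circ F, (fp)^\circledast),
\]
naturally in $F$. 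Combining the two displays shows that $\{fp\}^\circledast \Delta_\circledast$ is the proarrow of $W$-cones under $f$, which therefore exists and is classified by $fp/x$.

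The main obstacle is the bookkeeping in the last step: one must carefully track the adjuncts $\{fp\} = (fp)^\sharp$ and $\Delta = \pi^\sharp$ (for the projection $\pi : w \otimes x \to x$) through the closed-structure adjunction and check that the degenerate terms collapse as claimed. This is purely formal but delicate, and since it is precisely the content already carried out in the (non-degenerate) proof of \ref{prop.computation-proarrow-lke}, I would lean on that computation rather than reproduce it in full.
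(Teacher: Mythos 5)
Your proof is correct, but it takes a genuinely different route from the paper's. The paper argues through the span representation: it first observes that composition with the span $\rho W$ admits a right adjoint ${}^{\rho W}(-)$ on $\Vert(\P)^{(1)}/x$, given by an explicit pullback against $\Delta \times \{p\}$ into the cotensor $[w,-]$ (this uses cartesian closure of $\Vert(\P)$, citing \cite[Example 7.5]{FunDblCats}); it then invokes \ref{prop.hor-refl-strong} to transfer this adjoint along the horizontally locally reflective functor $\rho$, obtaining a right adjoint to $W \circ (-) : \Hor(\P)(x,\ast) \rightarrow \Hor(\P)(x,i)$ outright, and finally evaluates at $f^\circledast$ to compute $\cart{\{p\}}{[w,f^\circledast]}{\Delta} \simeq \cart{\{fp\}}{}{\Delta}$. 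You instead verify corepresentability directly: you unwind what $fp/x$ classifies via \ref{def.commas} and \ref{def.slice-fibs}, use $W \simeq p_\circledast t^\circledast$, the adjunction of \ref{rem.lax-adj-hor} and composition of conjoints to identify maps into $W$-cones under $f$ with maps into conical cones under $fp$, and then identify the latter with $\{fp\}^\circledast\Delta_\circledast$ by specializing the computation inside the proof of \ref{prop.computation-proarrow-lke} to the degenerate case $j = y = \ast$; all the simplifications there ($[\ast,x]\simeq x$, $\epsilon_x \simeq \id$, $t^* \simeq \Delta$) are legitimate. The paper's route buys more: it yields a right adjoint ${}^{W}(-)$ on the entire horizontal mapping $\infty$-category, so proarrows of $W$-cones exist under arbitrary proarrows (not only conjoints $f^\circledast$), with an explicit fibrational formula. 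Your route is leaner: it avoids \ref{prop.hor-refl-strong} and the cartesian-closure input on slices of $\Vert(\P)^{(1)}$ entirely, and it establishes the weighted-versus-conical comparison (the content of \ref{cor.fib-weighted-vs-conical}) as the engine of the argument rather than as a corollary. The one point requiring care --- which you correctly flag --- is that \ref{prop.computation-proarrow-lke} as stated presupposes the existence of the cone proarrow $C$, so it cannot be cited as a black box; you need the intermediate natural equivalence from its proof, which does hold without that hypothesis. In a final write-up it would be cleanest to isolate that intermediate equivalence as a standalone lemma and cite it in both places.
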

\begin{proof}
	In this case, the functor $$\rho W \circ (-): \Vert(\P)^{(1)}/x \rightarrow \Vert(\P)^{(1)}/(x\times i)$$
	admits a right adjoint ${}^{\rho W}{(-)}$ since $\Vert(\P)$ is cartesian closed. This is explained in \cite[Example 7.5]{FunDblCats}.
	The right adjoint carries a span $e \rightarrow x \times i$ to the left arrow in the pullback square
	\[
		\begin{tikzcd}
			e' \arrow[r] \arrow[d] & {[w, e]} \arrow[d]   \\
			x \arrow[r, "{\Delta \times \{p\}}"] & {[w, x]} \times {[w,i]} 
		\end{tikzcd}
	\]
	in $\Vert(\P)^{(1)}$ where $\Delta : x \rightarrow [w,x]$ denotes the diagonal. 
	In particular, it follows that if $e \rightarrow x\times i$ is a two-sided discrete fibration classifying a proarrow $F$, 
	then $e' \rightarrow x$ is a left fibration classifying $\cart{\{p\}}{[w, F]}{\Delta}$. It follows 
	from \ref{prop.hor-refl-strong} that the functor
	$$W \circ (-) : \Hor(\P)(x, \ast) \rightarrow \Hor(\P)(x,i)$$ has a right adjoint as well, 
	which is restricted from ${}^{\rho W}{(-)}$. 
	The proarrow of $W$-cones under $f$ thus exists and is given by $\cart{\{p\}}{[w, f^\circledast]}{\Delta} \simeq \cart{\{p\}}{[w, f]^\circledast}{\Delta} \simeq \cart{\{fp\}}{}{\Delta}$. This is precisely 
	the proarrow classified by $fp/x \rightarrow x$.
\end{proof}

\begin{corollary}\label{cor.fib-weighted-vs-conical}
	Suppose that $\P$ is pointed and cartesian closed. Let $f : i \rightarrow x$ be an arrow in $\P$ and 
	$W : \ast \rightarrow i$ be a weight corresponding to a right fibration
	$
	p : w \rightarrow i.
	$ Then the proarrow of $W$-cones under $f$ coincides with the proarrow of conical cones under  the composite $fp$, and both exist.
\end{corollary}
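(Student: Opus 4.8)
The plan is to read the statement off from \ref{prop.fib-cones-slice} by applying that proposition twice. That proposition already establishes that the proarrow of $W$-cones under $f$ exists and is classified by the left fibration $fp/x \rightarrow x$; equivalently, it is the proarrow $\cart{\{fp\}}{}{\Delta}$. Hence it will suffice to show that the proarrow of conical cones under the composite $fp : w \rightarrow x$ is classified by this very same left fibration.

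First I would recall that conical cones under a diagram $g : w \rightarrow x$ are, by definition, the cones weighted by the terminal weight $W_0 : \ast \rightarrow w$. This terminal weight is the conjoint $(t_w)^\circledast$ of the terminal arrow $t_w : w \rightarrow \ast$, and it corresponds to the identity right fibration $\id_w : w \rightarrow w$: the span $(t_w, \id_w) : w \rightarrow \ast \times w$ classifies $\cocart{\id_w}{}{t_w} \simeq (t_w)^\circledast$, and it is a cocartesian tabulation of $(t_w)^\circledast$ since the latter is a conjoint (cf.\ \ref{prop.cart-comp-conj}). Applying \ref{prop.fib-cones-slice} to the diagram $fp : w \rightarrow x$ and the weight $W_0$ corresponding to the right fibration $\id_w$, I conclude that the proarrow of conical cones under $fp$ exists and is classified by the left fibration $(fp \circ \id_w)/x = fp/x \rightarrow x$. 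As this is the same left fibration that classifies the $W$-cones under $f$, the two proarrows coincide and both exist.

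The only genuinely delicate point is the identification of the terminal weight with the identity right fibration. To sidestep it, one may instead argue purely at the level of proarrows. Since $W$ is classified by $p$, the convention for classifying two-sided discrete fibrations gives $W \simeq p_\circledast (t_w)^\circledast$; combining the proarrow adjunction $p_\circledast \circ (-) \dashv p^\circledast \circ (-)$ of \ref{rem.lax-adj-hor} with the composition law $p^\circledast f^\circledast \simeq (fp)^\circledast$ for conjoints then yields, writing $C$ for the proarrow of $W$-cones under $f$, a natural chain
\begin{align*}
	\map_{\Hor(\P)(x, \ast)}(F, C) &\simeq \map_{\Hor(\P)(x,i)}(W \circ F, f^\circledast) \\
	&\simeq \map_{\Hor(\P)(x,w)}((t_w)^\circledast \circ F, (fp)^\circledast).
\end{align*}
The right-hand side is exactly the defining universal property (see \cite[Definition 6.9]{EquipI}) of the proarrow of conical cones under $fp$, so the two proarrows agree by the Yoneda lemma, and the existence of the conical cones follows from that of $C$. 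This second route uses only the universal property of weighted cones and is therefore the more robust of the two.
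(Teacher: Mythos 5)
Your proposal is correct, and its second route is the one that does the real work. For the record, the paper gives no written proof of this corollary: it is meant to be read off from \ref{prop.fib-cones-slice} exactly as in your first paragraph, by applying that proposition a second time to the diagram $fp$ and the coconical weight, viewed as the weight classified by the identity right fibration $\id_w$ --- the same identification the paper invokes in \ref{cor.int-ccat-weighted-rfib} and uses implicitly in \ref{cor.colim-initial-slice}. So your first route coincides with the paper's intended argument, while your second route --- comparing the two cone proarrows directly through their universal properties, using $W \simeq p_\circledast (t_w)^\circledast$, the adjunction of \ref{rem.lax-adj-hor}, and the composition rule $(fp)^\circledast \simeq p^\circledast f^\circledast$ --- is a complete, self-contained alternative that never mentions tabulations at all, and is correct as written.

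The one caveat concerns your justification, in the first route, that $(t_w, \id_w) : w \rightarrow \ast \times w$ is a cocartesian tabulation of $(t_w)^\circledast$ ``since the latter is a conjoint (cf.\ \ref{prop.cart-comp-conj})''. That citation only yields that the conjunction unit $\id_w \rightarrow (t_w)^\circledast$ is a \emph{cocartesian} 2-cell, and cocartesian cells of this shape need not be tabulating: for a general arrow $g : a \rightarrow b$ the companion unit $\id_a \rightarrow g_\circledast$ is always cocartesian, yet the tabulation of $g_\circledast$ is the comma object $\id_b \downarrow g$ rather than $a$ --- indeed, \ref{prop.fib-yoneda} is precisely the statement that the comparison $a \rightarrow \id_b \downarrow g$ is a local equivalence, not an equivalence. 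What rescues the special case $g = t_w$ is pointedness, not conjointness: by \ref{def.commas} the tabulation of $(t_w)^\circledast = \cart{t_w}{}{\id_\ast}$ is the comma $t_w \downarrow \id_\ast \simeq \ast^{[1]} \times w$, so the claim amounts to $\ast^{[1]} \simeq \ast$, i.e.\ $\map_{\P_1}(\id_z, \id_\ast) \simeq \ast$ for all $z$, which is exactly what the pointedness hypothesis of \cite[Definition 6.30]{EquipI} supplies; combined with the cartesianness of the conjoint counit this gives $\map_{\P_1}(\id_z, (t_w)^\circledast) \simeq \map_{\P_0}(z,w)$, as required. Since you flagged this step as the delicate one and made the universal-property argument primary, the proposal stands; if you keep the first route, replace the appeal to \ref{prop.cart-comp-conj} by this pointedness computation.
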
 

\begin{corollary}\label{cor.fib-pke}
	Suppose that $\P$ is pointed and cartesian closed.
	Let $f : i \rightarrow x$ and $w : i \rightarrow j$ be arrows in $\P$. Suppose that $C$ is the proarrow of $w^\circledast$-cones under 
	$f$. Consider an element $\gamma :\ast \rightarrow j$ of $j$, then $\gamma^\circledast C$ is the proarrow of conical cones under the composite
	$$
	i/\gamma \xrightarrow{p} i \xrightarrow{f} x,
	$$
	where $p$ denotes canonical right fibration 	$$i/\gamma := w \downarrow \gamma  \rightarrow i,
	$$ or, equivalently, the one that sits in a pullback square 
	\[
		\begin{tikzcd}
			i/\gamma \arrow[r]\arrow[d,"p"'] & j/\gamma \arrow[d] \\
			i \arrow[r, "w"] & j.
		\end{tikzcd}
	\]
	In particular, if the left Kan extension $g$ of $f$ along $w$ exists, then it is given by
	$$
	g(\gamma) = \colim (i/\gamma \xrightarrow{p} i \xrightarrow{f} x).
	$$
\end{corollary}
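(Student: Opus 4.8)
The plan is to read off the universal property of $\gamma^\circledast C$ by hand and then recognise it through \ref{cor.fib-weighted-vs-conical}. First I would record the defining property of $C$: being the proarrow of $w^\circledast$-cones under $f$, it is the value at $f^\circledast$ of the right adjoint to $w^\circledast\circ(-)$, so there is a natural equivalence
$$
\map_{\Hor(\P)(x,j)}(F, C) \simeq \map_{\Hor(\P)(x,i)}(w^\circledast\circ F, f^\circledast)
$$
for proarrows $F : x \rightarrow j$. Restricting along $\gamma$ is achieved with \ref{rem.lax-adj-hor}: taking the arrows there to be $\id_x$ and $\gamma : \ast \rightarrow j$, the resulting adjunction $\gamma_\circledast(-)\dashv\gamma^\circledast(-)$ between $\Hor(\P)(x,\ast)$ and $\Hor(\P)(x,j)$ gives, for $F' : x \rightarrow \ast$,
$$
\map_{\Hor(\P)(x,\ast)}(F', \gamma^\circledast C) \simeq \map_{\Hor(\P)(x,j)}(\gamma_\circledast F', C) \simeq \map_{\Hor(\P)(x,i)}(w^\circledast\circ\gamma_\circledast\circ F', f^\circledast).
$$
Since $w^\circledast\circ\gamma_\circledast\circ F' \simeq (w^\circledast\gamma_\circledast)\circ F'$ by associativity of proarrow composition, this exhibits $\gamma^\circledast C$ as the proarrow of $W'$-cones under $f$ for the weight $W' := w^\circledast\gamma_\circledast : \ast \rightarrow i$.

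The decisive identification is that $W'$ is precisely the weight corresponding to the right fibration $p : i/\gamma \rightarrow i$. Indeed, by \ref{def.commas} the comma fibration $i/\gamma = w\downarrow\gamma \rightarrow i$ classifies the proarrow $\cart{w}{}{\gamma} = w^\circledast\gamma_\circledast = W'$, which is exactly the pullback square displayed in the statement. With this in hand, \ref{cor.fib-weighted-vs-conical}, applied to the diagram $f : i \rightarrow x$ together with the right fibration $p$, identifies the proarrow of $W'$-cones under $f$ with the proarrow of conical cones under the composite $fp : i/\gamma \xrightarrow{p} i \xrightarrow{f} x$, and guarantees that both exist. This settles the first assertion. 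The step I expect to be the main obstacle is the variance bookkeeping: ensuring that the side on which $\gamma$ restricts, the order of the factors in $w^\circledast\gamma_\circledast$, and the direction of the comma right fibration all line up so that $W'$ meets the ``corresponds to a right fibration'' hypothesis of \ref{cor.fib-weighted-vs-conical} on the nose.

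For the final ``in particular'' claim, suppose the left Kan extension $g$ of $f$ along $w$ exists. By the definition of left Kan extensions its conjoint $g^\circledast$ is the proarrow of $w^\circledast$-cones under $f$, so $C \simeq g^\circledast$ and hence
$$
\gamma^\circledast C \simeq \gamma^\circledast g^\circledast \simeq (g\gamma)^\circledast = g(\gamma)^\circledast,
$$
using that conjoints are contravariantly functorial. Combined with the first part, $g(\gamma)^\circledast$ is the proarrow of conical cones under $fp$; since an arrow $c : \ast \rightarrow x$ is the conical colimit of a diagram exactly when its conjoint is the associated proarrow of conical cones (the conical colimit being the left Kan extension along the terminal arrow $i/\gamma \rightarrow \ast$), we conclude $g(\gamma) \simeq \colim(i/\gamma \xrightarrow{p} i \xrightarrow{f} x)$, as claimed.
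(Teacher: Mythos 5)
Your proposal is correct and follows essentially the same route as the paper's proof: use the defining property of $C$ together with \ref{rem.lax-adj-hor} to identify $\gamma^\circledast C$ as the proarrow of cones under $f$ weighted by $\cart{w}{}{\gamma}$, observe via \ref{def.commas} that this weight is classified by the right fibration $p : i/\gamma \rightarrow i$, and conclude with \ref{cor.fib-weighted-vs-conical}. Your treatment merely spells out the adjunction manipulations and the final ``in particular'' step (via $C \simeq g^\circledast$ and contravariant functoriality of conjoints) in more detail than the paper does.
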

\begin{proof}
	Note that ${\gamma^\circledast}{C}$ is the proarrow of cones under $f$ weighted by the proarrow $\cart{w}{}{\gamma}$. This 
	can be deduced from the defining property of the proarrow of cones $C$ and \ref{rem.lax-adj-hor}.  
	Now the weight  $\cart{w}{}{\gamma}$ is classified by the right fibration $p$ so that
	\ref{cor.fib-weighted-vs-conical} asserts that $\gamma^\circledast C$ is the proarrow of conical cones under the composite $fp$.
\end{proof}

\subsection{Comprehensive factorizations}\label{ssection.compr-fact}

Recall that every functor between $\infty$-categories can be factored  
into an initial functor followed by a left fibration.\footnote{See \cite[Subsection 4.1.1]{HTT} for instance: in the model of quasi-categories, such a factorization can be obtained by factoring a functor into a left anodyne followed by a left fibration.} %
For ordinary categories, 
this is called the \textit{comprehensive factorization} of a functor \cite{StreetWaltersFact}. 
Since the classes of initial functors 
and left fibrations are orthogonal to each other, these are part of an
 orthogonal factorization system (see \cite[Definition 5.2.8.8]{HTT} for a definition). 

Now, for fibrational and pointed $\infty$-equipments, we also have a notion 
 of left fibrations and initial arrows (see \cite[Definition 6.34]{EquipI}). Hence, 
 we can pose the question:\ under what conditions 
 does such a comprehensive factorization system exist in these more general equipments? We first establish the following:

\begin{proposition}\label{prop.ortho-initial-lfib}
	Initial maps are left orthogonal to left fibrations
	in a pointed and fibrational $\infty$-equipment.   
\end{proposition}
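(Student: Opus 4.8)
The plan is to prove the stated orthogonality by reducing it, via the span representation, to a statement about the terminal object $\ast$ that is governed directly by the definition of an initial arrow. Fix an initial arrow $i : a \to b$, a left fibration $p : e \to y$, and a lifting problem presented by a commutative square in $\Vert(\P)^{(1)}$. Since orthogonality is the assertion that the square of mapping spaces
\[
	\begin{tikzcd}
		\map(b,e) \arrow[r]\arrow[d] & \map(b,y)\arrow[d] \\
		\map(a,e) \arrow[r] & \map(a,y)
	\end{tikzcd}
\]
(taken in $\Vert(\P)^{(1)}$) is a pullback, I would first pass to fibres over a point $\beta : b \to y$. Using that $\P$ is pointed, so that $y \times \ast \simeq y$, this reduces the problem to showing that the restriction map
\[
	\map_{\Vert(\P)^{(1)}/y}((b,\beta), e) \longrightarrow \map_{\Vert(\P)^{(1)}/y}((a,\beta i), e)
\]
along $i$ is an equivalence for every $\beta$. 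Note that the uniqueness half of orthogonality is then automatic, as we obtain an equivalence of mapping spaces rather than merely a bijection on components.

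Next I would translate this into the horizontal world. By \ref{thm.span-rep}, the left fibration $p$ is a local object and classifies a proarrow $F : y \to \ast$ with $\rho_{y,\ast}F \simeq e$, and the universal property of tabulating $2$-cells identifies $\map_{\Vert(\P)^{(1)}/y}((h,!), e)$ with $\map_{\Hor(\P)(y,\ast)}(\cocart{!}{}{h}, F)$. Using functoriality of companions and conjoints, namely $(\beta i)^\circledast \simeq i^\circledast\beta^\circledast$ together with ${!_a}_\circledast \simeq {!_b}_\circledast i_\circledast$ (since $!_a = {!_b}\, i$), the restriction map is seen to be precomposition with the $2$-cell ${!_b}_\circledast i_\circledast i^\circledast \to {!_b}_\circledast$ induced by the companionship--conjunction counit $i_\circledast i^\circledast \to \id_b$, whiskered by $\beta^\circledast$.

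The third step is to strip off both $\beta$ and $i$ using base change and the local hom-adjunctions. Applying the adjunction of \ref{rem.lax-adj-hor} to move $\beta^\circledast$ across, together with \ref{prop.tsdfib-closed-pb}, replaces $F$ by the proarrow $F' := F\beta_\circledast$ classifying the left fibration $\beta^*e$ over $b$; a second application across $i^\circledast$ then identifies the two sides with $\map_{\Hor(\P)(b,\ast)}({!_b}_\circledast, F')$ and $\map_{\Hor(\P)(a,\ast)}({!_a}_\circledast, F' i_\circledast)$ respectively. Thus the whole lifting problem reduces to showing that, for every proarrow $F'$ classifying a left fibration over $b$, the comparison
\[
	\map_{\Hor(\P)(b,\ast)}({!_b}_\circledast, F') \longrightarrow \map_{\Hor(\P)(a,\ast)}({!_a}_\circledast, F' i_\circledast)
\]
is an equivalence --- a statement living entirely over the terminal object.

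Finally, I would observe that this reduced statement is exactly the requirement that the canonical comparison ${!_a}_\circledast i^\circledast \to {!_b}_\circledast$ be a local equivalence with respect to the reflective localization onto two-sided discrete fibrations, which is precisely what it means for $i$ to be initial (see \cite[Definition 6.34]{EquipI}); this closes the argument. I expect the main obstacle to be this last identification: matching the intrinsic definition of an initial arrow in the pointed equipment with the companion/conjoint comparison produced by the span-representation computation, and in particular verifying that the whiskering by $\beta^\circledast$ --- that is, the passage from $\ast$ to a general base $y$ --- is correctly absorbed by the base-change step \ref{prop.tsdfib-closed-pb}, so that no additional hypotheses on $\beta$ are required.
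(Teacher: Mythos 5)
Your proof is correct and takes essentially the same route as the paper: reduce the orthogonality square to its fibres over a map $\beta : b \rightarrow y$, use the span-representation adjunction $L_{y,\ast} \dashv \rho_{y,\ast}$ to recast the fibrewise restriction map as precomposition with the canonical 2-cell ${!_a}_{\circledast}(\beta i)^{\circledast} \rightarrow {!_b}_{\circledast}\beta^{\circledast}$, and conclude from initiality of $i$. The only divergence is cosmetic: instead of transposing $\beta^{\circledast}$ and $i^{\circledast}$ across the adjunctions of \ref{rem.lax-adj-hor} (together with \ref{prop.tsdfib-closed-pb}) and finishing with a Yoneda-type locality argument, the paper observes directly that this 2-cell is the initiality equivalence ${!_a}_{\circledast}i^{\circledast} \rightarrow {!_b}_{\circledast}$ whiskered with $\beta^{\circledast}$, hence invertible --- which also dissolves the ``main obstacle'' you flag at the end, since in $\Hor(\P)(b,\ast)$ every proarrow classifies a left fibration, so ``local equivalence with respect to two-sided discrete fibrations'' and ``equivalence'' agree there, and being an equivalence is precisely how \cite[Definition 6.34]{EquipI} characterizes initial arrows.
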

\begin{proof}
	Let $\P$ be a pointed and fibrational $\infty$-equipment.
	Suppose that $f : i \rightarrow j$ is an initial map, and that $p : e \rightarrow x$ is a left fibration 
	in a pointed and fibrational $\infty$-equipment $\P$. 
	Then we have to show that the commutative square 
	\[
		\begin{tikzcd}
			\map_{\Vert(\P)^{(1)}}(j, e) \arrow[r]\arrow[d] & \map_{\Vert(\P)^{(1)}}(i, e) \arrow[d] \\
			\map_{\Vert(\P)^{(1)}}(j, x) \arrow[r] & \map_{\Vert(\P)^{(1)}}(i, x)
		\end{tikzcd}
	\]
	of mapping spaces is a pullback square.
	To this end, it suffices to check that for any map $q : j \rightarrow x$, the induced map 
	$$
	\map_{\Vert(\P)^{(1)}}(j,e) \times_{\map_{\Vert(\P)^{(1)}}(j,x)} \{q\} \rightarrow \map_{\Vert(\P)^{(1)}}(i,e) \times_{\map_{\Vert(\P)^{(1)}}(i,x)} \{qf\}
	$$
	is an equivalence.
	But this is precisely the map 
	$$
	\map_{\Vert(\P)^{(1)}/x}(j,e) \rightarrow \map_{\Vert(\P)^{(1)}/x}(i,e)
	$$
	given by restriction along $f$ that is now viewed as a map fibered over $x$. 
	Thus we have to show that $f$ is local with respect to all left fibrations over $x$. 
	In other words, we have to check that the canonical 2-cell
	$$
	\cocart{(tf)}{}{(qf)} \rightarrow \cocart{t}{}{q}
	$$
	is an equivalence in $\Hor(\P)(x, \ast)$, where $t$ denotes the unique map $j \rightarrow \ast$. 
	This 2-cell is obtained by whiskering the canonical 2-cell $\cocart{(tf)}{}{f} \rightarrow {t}_\circledast$ with 
	$q^\circledast$, which is already an equivalence by the assumption that $f$ is initial.
\end{proof}

\begin{definition}
	We say that a pointed and fibrational $\infty$-equipment $\P$ has \textit{comprehensive factorizations} 
	if the left class of initial arrows and right class of left fibrations 
	form an orthogonal factorization system on $\Vert(\P)^{(1)}$.
\end{definition}

We can give a simpler description of those equipments that admit such factorizations:

\begin{proposition}\label{prop.compr-equipments}
	Suppose that $\P$ is a pointed and fibrational $\infty$-equipment, then the following assertions are equivalent:
	\begin{enumerate}
		\item $\P$ has comprehensive factorizations,
		\item the class of left fibrations is closed under compositions and retracts.
	\end{enumerate}
\end{proposition}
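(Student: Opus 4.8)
The plan is to recognize the left fibrations over a fixed object as the local objects of a reflective localization of the slice $\Vert(\P)^{(1)}/x$, and to manufacture the comprehensive factorizations out of these slice reflections, using closure under composition to control the left factor and closure under retracts to pin down the right class. The direction (1) $\Rightarrow$ (2) is formal: if the initial arrows and the left fibrations assemble into an orthogonal factorization system, then the left fibrations are its right class, and the right class of any orthogonal factorization system is closed under composition and retracts (see \cite[Section 5.2.8]{HTT}).

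For (2) $\Rightarrow$ (1), the first step is to record the available structure on the class $R$ of left fibrations. By \ref{prop.tsdfib-closed-pb}, two-sided discrete fibrations — hence left fibrations — are stable under base change. Specializing \ref{thm.span-rep} to $y = \ast$ (and using that $x \times \ast \simeq x$ since $\P$ is pointed), the horizontally locally reflective span representation gives an adjunction $L_{x,\ast} \dashv \rho_{x,\ast}$ whose associated idempotent monad $\ell_x := \rho_{x,\ast}L_{x,\ast}$ on $\Vert(\P)^{(1)}/x$ has the left fibrations over $x$ as its local objects. Thus every arrow $f : i \rightarrow x$, viewed as an object of $\Vert(\P)^{(1)}/x$, comes with a reflection unit $\eta : i \rightarrow e$ over $x$, where $p : e \rightarrow x = \ell_x(f)$ is a left fibration and $p \eta = f$. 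This is the candidate factorization, and the whole problem reduces to showing that $\eta$ is initial.

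The crux is exactly this claim, and I expect it to be the main obstacle. Using the full faithfulness of $\rho_{e,\ast}$ together with the characterization of initial arrows from \cite[Definition 6.34]{EquipI} that is exploited in the proof of \ref{prop.ortho-initial-lfib} (namely that $\eta$ is initial precisely when the canonical $2$-cell $\cocart{(!\eta)}{}{\eta} \to \cocart{!}{}{\id_e}$ is invertible, $! : e \to \ast$), I would reduce initiality of $\eta$ to showing that the comparison $\ell_e(\eta) \rightarrow \ell_e(\id_e)$ of reflections in $\Vert(\P)^{(1)}/e$ is an equivalence; equivalently, that $\eta$, regarded as the morphism from $(i \xrightarrow{\eta} e)$ to $\id_e$ in $\Vert(\P)^{(1)}/e$, is a local equivalence for the left-fibration reflection $\ell_e$. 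Unwinding, this amounts to proving that for every left fibration $r : d \rightarrow e$ the restriction map $\Gamma_e(r) \rightarrow \Gamma_i(\eta^* r)$ on spaces of sections (induced by $\eta$) is an equivalence, where $\eta^* r$ is again a left fibration by base-change stability. Here closure under composition enters decisively: composing $r$ with $p$ produces a left fibration $\bar r := p r : d \rightarrow x$, and I would identify $\Gamma_e(r)$ with the homotopy fibre of $r_* : \map_{\Vert(\P)^{(1)}/x}(p, \bar r) \rightarrow \map_{\Vert(\P)^{(1)}/x}(p, p)$ over $\id_p$, and $\Gamma_i(\eta^* r)$ with the homotopy fibre of $r_* : \map_{\Vert(\P)^{(1)}/x}(f, \bar r) \rightarrow \map_{\Vert(\P)^{(1)}/x}(f, p)$ over $\eta$. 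Since $\bar r$ and $p$ are left fibrations over $x$, the reflection universal property of $\ell_x$ makes precomposition with $\eta$ an equivalence on both base and total mapping spaces, compatibly with $r_*$ and carrying $\id_p$ to $\eta$; matching the two homotopy fibres then yields the desired equivalence. The delicate points will be the careful identification of the sections with these fibres and the bookkeeping of base points.

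Once $\eta$ is shown to be initial, the factorization $f = p\eta$ of every arrow as an initial map followed by a left fibration is in hand, and orthogonality is provided by \ref{prop.ortho-initial-lfib}. To conclude that $(\text{initial}, R)$ is an orthogonal factorization system I would argue in the standard way: any arrow in $R^{\perp\,\text{-left}} = {}^{\perp}R$ factors as $p'\eta'$ with $\eta'$ initial and $p' \in R$, and orthogonality of $\eta'$ against $p'$ forces $p'$ to be an equivalence, so ${}^{\perp}R$ is exactly the class of initial arrows (in particular closed under retracts, being a left orthogonal class); dually, any arrow right-orthogonal to the initial arrows is, by lifting against its own factorization, a retract of its left-fibration part, hence a left fibration by the assumed closure of $R$ under retracts. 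This last step is where closure under retracts is used, and together these identifications exhibit the pair $(\text{initial}, \text{left fibrations})$ as an orthogonal factorization system, i.e.\ $\P$ has comprehensive factorizations.
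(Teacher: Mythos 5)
Your proposal is correct, and at the top level it follows the same skeleton as the paper: the direction (1) $\Rightarrow$ (2) is the same formal observation, and for (2) $\Rightarrow$ (1) both arguments take the unit $\eta : i \to e$ of the slice reflection $L_{x,\ast} \dashv \rho_{x,\ast}$ supplied by \ref{thm.span-rep} as the candidate factorization, prove that $\eta$ is initial, and then assemble the orthogonal factorization system using \ref{prop.ortho-initial-lfib}. Where you genuinely diverge is in the proof of the crux. The paper isolates it as \ref{lem.compr-equipments}: for a left fibration $p : e \to x$ whose composites with left fibrations over $e$ are again left fibrations, the functor $(-)\circ p^\circledast : \Hor(\P)(e,\ast) \to \Hor(\P)(x,\ast)$ is conservative --- proved by showing that the a priori lax square relating the span representation $\rho$ to the pushforward $p_!$ actually commutes, which uses the comma span $p \downarrow \id_x$, the fibrational Yoneda lemma \ref{prop.fib-yoneda}, and a base-change argument for $\rho$-local $2$-cells --- and then transfers the initiality condition $\cocart{(t\eta)}{}{\eta} \to t_\circledast$ along this conservative functor to identify initiality with locality over $x$. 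You instead note that initiality of $\eta$ is, essentially by definition, locality of $\eta$ with respect to left fibrations over $e$ (this is the trivial case $p = \id_e$ of the paper's lemma), and verify that locality directly: for each left fibration $r : d \to e$ you identify both section spaces $\Gamma_e(r)$ and $\Gamma_i(\eta^\ast r)$ with homotopy fibres of postcomposition maps between mapping spaces in $\Vert(\P)^{(1)}/x$, and match them via the universal property of the unit. Both proofs invoke composition-closure at exactly the same point --- the paper needs $p_!\rho F$ to be a left fibration over $x$, you need $p r$ to be one --- so neither gets it for free; but your argument is more elementary and self-contained (no commas, no Yoneda lemma, no pushforward of local $2$-cells), whereas the paper's lemma proves strictly more (the commutative square and the conservativity statement), which it reuses later, e.g.\ in \ref{prop.compr-conjoint-slice} and in the proof of the internal Quillen's Theorem A (\ref{prop.int-ccat-quilen-A}). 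A minor cosmetic difference: you obtain retract-closure of the initial arrows from their identification with the left orthogonal complement of the left fibrations, whereas the paper simply records it as readily verified; both are fine.
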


\begin{corollary}\label{cor.int-ccat-compr}
	The $\infty$-equipment $\CCAT_\infty(\E)$ admits comprehensive factorizations 
	for every $\infty$-topos $\E$.
\end{corollary}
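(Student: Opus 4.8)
The plan is to verify the criterion of \ref{prop.compr-equipments}. Since $\CCAT_\infty(\E)$ is fibrational by \ref{prop.int-ccat-fib}, and pointed with point the terminal internal category (the unit of the cartesian closed structure of \ref{ex.int-ccat-cart-closed}), it suffices to show that the class of left fibrations in $\CCAT_\infty(\E)$ is closed under composition and retracts. I will transport this assertion to $[T^\op, \CCAT_\infty] \simeq \CCAT_\infty(\PSh(T))$ (see \ref{ex.cotensors-ccat}) along a geometric embedding $i : \E \rightarrow \PSh(T)$, with $T$ small, detect left fibrations point-wise there, and so reduce to the closure properties of left fibrations in $\CCAT_\infty$.

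First I would record that the lax adjunction $i^* \dashv i_*$ of \ref{ex.dbl-adj-geom-morph} detects left fibrations in both directions. Recall from \ref{ex.geom-emb-cotensor} that $i_* : \CCAT_\infty(\E) \rightarrow [T^\op, \CCAT_\infty]$ is level-wise fully faithful with $i^* i_* \simeq \id$, and that $i^*$, being left exact, preserves the terminal object, as does the right adjoint $i_*$. By \ref{prop.geom-pres-tabs}, both $i^*$ and $i_*$ preserve tabulating $2$-cells, hence two-sided discrete fibrations. Consequently an arrow $p$ in $\CCAT_\infty(\E)$ is a left fibration if and only if $i_*(p)$ is a left fibration in $[T^\op, \CCAT_\infty]$: the forward direction is preservation under $i_*$, while the converse follows by applying $i^*$ to a tabulating $2$-cell witnessing $i_*(p)$ and using $i^* i_* \simeq \id$ together with $i^*(\ast) \simeq \ast$.

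Next I would establish that left fibrations in $[T^\op, \CCAT_\infty]$ are detected point-wise. For a span in the cotensor, its associated cocartesian cell is computed point-wise by \ref{prop.cart-cotensors} and its tabulation point-wise by \ref{prop.tabs-cotensor}; since equivalences in
\[
\Vert([T^\op, \CCAT_\infty])^{(1)} \simeq \fun(T^\op, \Vert(\CCAT_\infty)^{(1)})
\]
are detected point-wise, a span $(p,q)$ is a two-sided discrete fibration exactly when each evaluation $\ev_t(p,q)$ is one in $\CCAT_\infty$. Because the evaluations preserve the terminal object, this specializes to left fibrations. In $\CCAT_\infty$ the two-sided discrete fibrations internal to the equipment coincide with the classical two-sided discrete fibrations of \ref{def.tsdfib} (this is the content of the tabulation computation for correspondences), so that left fibrations are precisely the classical left fibrations of $\infty$-categories, which are closed under composition and retracts. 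As composites and retracts in $\fun(T^\op, \Vert(\CCAT_\infty)^{(1)})$ are formed point-wise, it follows that left fibrations in $[T^\op, \CCAT_\infty]$ are closed under composition and retracts as well.

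Finally I would assemble the pieces: given composable left fibrations (or a retract datum) in $\CCAT_\infty(\E)$, their images under the functor $\Vert(i_*)^{(1)}$ are composable left fibrations (resp.\ a retract datum of left fibrations) in $[T^\op, \CCAT_\infty]$, whose composite (resp.\ retract) is again a left fibration by the previous paragraph; since $\Vert(i_*)^{(1)}$ preserves composites and retracts, this composite is $i_*$ of the composite formed in $\CCAT_\infty(\E)$, and the detection criterion of the second paragraph returns it to a left fibration in $\CCAT_\infty(\E)$. I expect the only delicate point to be the point-wise detection of left fibrations in the cotensor and the verification that $\Vert(i_*)^{(1)}$ converts composition in $\CCAT_\infty(\E)$ into the point-wise composition used above; both are routine given \ref{prop.geom-pres-tabs}, \ref{prop.tabs-cotensor} and \ref{prop.cart-cotensors}, so no serious obstacle remains.
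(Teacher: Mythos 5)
Your proof is correct and takes essentially the same route as the paper: the paper's (very terse) proof likewise applies the criterion of \ref{prop.compr-equipments} and reduces, via the geometric embedding into a presheaf topos and point-wise detection in the cotensor exactly as in the proof of \ref{prop.int-ccat-fib}, to the case $\E = \S$, where closure of left fibrations under composition and retracts is classical. Your write-up merely spells out the detection arguments (via \ref{prop.geom-pres-tabs}, \ref{prop.tabs-cotensor} and \ref{prop.cart-cotensors}) that the paper leaves implicit.
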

\begin{proof}
	Reasoning similarly as in the proof of \ref{prop.int-ccat-fib}, we may use 
	the characterization of \ref{prop.compr-equipments} to reduce to the case that $\E = \S$, where it is already known to be true.
\end{proof}

\begin{lemma}\label{lem.compr-equipments}
	Suppose that $p : e \rightarrow x$ is a left fibration in a pointed 
	and fibrational $\infty$-equipment $\P$ with the property that for any left fibration $q : e' \rightarrow e$, the composite 
	$pq$ is again a left fibration. Then the following holds true:
	\begin{enumerate}
		\item there exists a commutative square
		\[
			\begin{tikzcd}
				\Hor(\P)(e, \ast) \arrow[r, "(-){p}^\circledast"]\arrow[d] & \Hor(\P)(x, \ast) \arrow[d] \\
				\Vert(\P)^{(1)}/e \arrow[r, "p_!"] & \Vert(\P)^{(1)}/x,
			\end{tikzcd}
		\]
		\item the functor 
		${(-)}{p}^\circledast : \Hor(\P)(e, \ast) \rightarrow \Hor(\P)(x,\ast)$ 
		is conservative,
		\item for any commutative triangle 
		\[
			\begin{tikzcd}
				a \arrow[dr] \arrow[rr, "f"] && e\arrow[dl, "p"] \\ 
				& x
			\end{tikzcd}
		\]
		of arrows in $\P$, the map $f$ is initial if and only if $f$, viewed as a map in $\Vert(\P)^{(1)}/x$, is an equivalence local to all the left fibrations over $x$.
	\end{enumerate}
\end{lemma}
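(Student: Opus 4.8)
The plan is to deduce all three assertions from the local span-representation adjunctions of \ref{thm.span-rep}. Since $\P$ is pointed we have $e \times \ast \simeq e$ and $x \times \ast \simeq x$, so for $y = e$ and $y = x$ there are reflective adjunctions $L_{y,\ast} : \Vert(\P)^{(1)}/y \rightleftarrows \Hor(\P)(y,\ast) : \rho_{y,\ast}$ in which $\rho_{y,\ast}$ is fully faithful with essential image the left fibrations over $y$ (\ref{def.left-right-fib}), the reflector $L_{y,\ast}$ carries a span $s : c \to y$ to the cocartesian lift $\cocart{t_c}{}{s} = t_{c,\circledast}s^\circledast$ (with $t_c : c \to \ast$ the terminal arrow), and $\rho_{y,\ast}$ carries a proarrow to the (essentially unique) left fibration classifying it. I will read the unlabelled vertical maps of the square in (1) as these classification functors $\rho_{e,\ast}$ and $\rho_{x,\ast}$, and I will freely use that companions and conjoints are functorial, so that $(pr)^\circledast \simeq r^\circledast p^\circledast$ for composable arrows.

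For assertion (1) the claim then becomes the natural equivalence $p_! \circ \rho_{e,\ast} \simeq \rho_{x,\ast} \circ ((-)p^\circledast)$. Starting from a proarrow $G : e \to \ast$, I would take $\rho_{e,\ast}(G)$ to be a left fibration $r : d \to e$ carrying a cocartesian tabulating 2-cell that exhibits $G \simeq t_{d,\circledast}r^\circledast$. Applying $p_!$ produces the composite $pr : d \to x$, which is again a left fibration \emph{exactly by the standing hypothesis on $p$}; by functoriality of conjoints this left fibration classifies $t_{d,\circledast}(pr)^\circledast \simeq t_{d,\circledast}r^\circledast p^\circledast \simeq Gp^\circledast$. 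Since a left fibration is determined up to equivalence by the proarrow it classifies (full faithfulness of $\rho_{x,\ast}$), I conclude $p_!\rho_{e,\ast}(G) \simeq \rho_{x,\ast}(Gp^\circledast)$, naturally in $G$. This is the one place where the hypothesis on composites of left fibrations enters, and it is the conceptual core of the lemma.

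For assertion (2) I would first record that $p_! : \Vert(\P)^{(1)}/e \to \Vert(\P)^{(1)}/x$ is conservative: it commutes with the forgetful functors down to $\Vert(\P)^{(1)}$, and these detect equivalences, so any edge inverted by $p_!$ is already inverted in $\Vert(\P)^{(1)}$ and hence in the slice. Now if $(-)p^\circledast$ inverts a 2-cell $\phi$, then applying the equivalence-preserving $\rho_{x,\ast}$ and the square of (1) shows $p_!\rho_{e,\ast}(\phi)$ is invertible; conservativity of $p_!$ makes $\rho_{e,\ast}(\phi)$ invertible, and full faithfulness (hence conservativity) of $\rho_{e,\ast}$ makes $\phi$ invertible. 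Thus $(-)p^\circledast$ is conservative.

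Finally, for assertion (3) my plan is to unwind both conditions into the same 2-cell and invoke (2). As recalled in the proof of \ref{prop.ortho-initial-lfib}, the arrow $f : a \to e$ is initial precisely when the canonical 2-cell $\theta_f : \cocart{(t_ef)}{}{f} = t_{a,\circledast}f^\circledast \to t_{e,\circledast}$ is invertible in $\Hor(\P)(e,\ast)$. On the other hand, viewing $f$ as the map $(a,\,pf) \to (e,\,p)$ in $\Vert(\P)^{(1)}/x$, locality with respect to all left fibrations over $x$ is equivalent to $L_{x,\ast}(f)$ being an equivalence, because the left fibrations are exactly the local objects of the reflection $L_{x,\ast} \dashv \rho_{x,\ast}$; and a short computation identifies $L_{x,\ast}(f)$ with the map $t_{a,\circledast}f^\circledast p^\circledast \to t_{e,\circledast}p^\circledast$, which is precisely $(-)p^\circledast(\theta_f)$. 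Since $(-)p^\circledast$ is conservative by (2), $\theta_f$ is invertible if and only if $(-)p^\circledast(\theta_f)$ is, giving the desired equivalence ``$f$ initial'' $\Leftrightarrow$ ``$f$ local with respect to the left fibrations over $x$''. I expect the main obstacle to be purely organizational: matching the definition of initiality and the localization-theoretic reading of locality to this single 2-cell $\theta_f$, so that the conservativity of (2) applies verbatim.
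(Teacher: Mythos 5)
Your overall architecture coincides with the paper's: you read the unlabelled vertical functors as $\rho_{e,\ast}$ and $\rho_{x,\ast}$, you deduce (2) from (1) via conservativity of $p_!$ together with full faithfulness of $\rho_{e,\ast}$ and $\rho_{x,\ast}$, and you deduce (3) from (2) by rewriting initiality as invertibility of the 2-cell $\theta_f : (t_ef)_\circledast f^\circledast \to t_{e,\circledast}$ and locality as invertibility of $L_{x,\ast}(f) \simeq \theta_f p^\circledast$ --- all of which is exactly what the paper does. You also correctly isolate the one place where the standing hypothesis on composites of left fibrations enters.

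The gap is in your proof of (1), concentrated in the words ``naturally in $G$''. What you actually produce is, for each \emph{fixed} $G$, an equivalence $p_!\rho_{e,\ast}(G) \simeq \rho_{x,\ast}(Gp^\circledast)$, by observing that both are left fibrations classifying the same proarrow. An object-wise family of equivalences does not assemble into a commutative square of functors, and it is precisely the functorial statement that (2) and (3) consume: there you push \emph{morphisms} ($\phi$, respectively $f$) through the square, not objects. ``Functoriality of conjoints'' gives $(pr)^\circledast \simeq r^\circledast p^\circledast$ for each $r$, but upgrading this to an equivalence natural in $r = \rho_{e,\ast}(G)$ is the real content of (1), and it is what the paper's proof is devoted to. There, one builds an explicit \emph{natural} comparison $p_!\rho F \to \rho(Fp^\circledast)$ as the composite of the Yoneda comparison $\beta_F : \rho F \circ (p,\id_e) \to \rho F \circ (p\downarrow\id_x)$ coming from \ref{prop.fib-yoneda} --- whose $\rho$-locality itself requires an argument: it is exhibited as the pushforward along the projection $q : x \times e' \to x$ of a local map, using that $q^*$ preserves two-sided discrete fibrations so that $q_!$ preserves local equivalences --- with the lax-structure 2-cell $\alpha_F : \rho F \circ (p\downarrow\id_x) \to \rho(Fp^\circledast)$ of the span representation, which is $\rho$-local by condition (2) of \ref{def.hor-loc-refl}, i.e.\ by fibrationality of $\P$. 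The composite is then a natural $\rho$-local map whose source is a local object by your standing hypothesis and whose target is local by construction, hence an equivalence. Without this construction (or an equivalent one supplying the naturality), part (1) --- and with it (2) and (3) --- is not established.
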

\begin{proof}
	Throughout this proof, let us write $\rho : \P \rightarrow \SSPAN(\Vert(\P)^{(1)})$ for the span representation of $\P$.
	Note that $p_!$ is conservative, so that (2) follows from (1). In turn, (3) follows from (2). To wit, the map 
	$f$ is initial if and only if the canonical 2-cell $\cocart{(tf)}{}{f} \rightarrow t_\circledast$ is an equivalence, where 
	$t : e \rightarrow \ast$ denotes the terminal arrow. In light of (3), this is true if and only if the induced 
	2-cell $\cocart{(tf)}{}{(pf)} \rightarrow \cocart{t}{}{p}$ is an equivalence. But this is precisely the condition for 
	$f$ to be a $\rho$-local 2-cell in $\Vert(\P)/x$. 
	
	Let us prove (1).
	A priori, the functor $\rho$ only provides  a \textit{lax} commutative square
	\[
		\begin{tikzcd}
			\Hor(\P)(e, \ast) \arrow[r, "{(-)}p^\circledast"]\arrow[d] & |[alias=f]|\Hor(\P)(x, \ast) \arrow[d] \\
			|[alias=t]|\Vert(\P)^{(1)}/e \arrow[r, "(p \downarrow \id_x)^*"'] & \Vert(\P)^{(1)}/x,
			\arrow[from=f, to=t, Rightarrow]
		\end{tikzcd}
	\]
	where $p \downarrow \id_x = \rho(p^\circledast)$. This means that for any proarrow $F : e \rightarrow \ast$ 
	we have a natural $\rho$-local 2-cell
	$$
	\alpha_F : \rho F \circ (p \downarrow \id_x) \rightarrow \rho ({F}{p}^\circledast)
	$$
	in $\Vert(\P)^{(1)}/x$. Recall that we have a canonical map 
	$
	(p, \id_e) \rightarrow (p \downarrow \id_x),
	$
	see also \ref{prop.fib-yoneda}. Precomposing with $\rho F$ yields a comparison map 
	$$
	\beta_F : p_!\rho F = \rho F \circ (p,\id_e)\rightarrow \rho F \circ (p \downarrow \id_x)
	$$
	that is natural in $F$. We claim that this map is a $\rho$-local 2-cell. Note that this 
	would finish the proof: then the composite $\beta_F\alpha_F : p_!\rho F \rightarrow \rho (Fp^\circledast)$ 
	will be an equivalence since both domain (by assumption) and codomain are left fibrations.

	Let us verify the remaining claim. One readily checks that $\beta_F$ is precisely 
	the pushforward of the canonical $\rho $-local 2-cell 
	$$
	(p \circ \rho F, \id_{e'}) \rightarrow (p \circ \rho F) \downarrow \id_x
	$$
	over $x \times e'$ along the projection $q : x \times e' \rightarrow x$. Since 
	the right adjoint $q^*$ 
	preserves two-sided discrete fibrations, the left adjoint $q_!$ preserves $\rho $-local equivalences 
	and hence $\beta_F$ is a $\rho $-local equivalence. 
\end{proof}

\begin{proof}[Proof of \ref{prop.compr-equipments}]
	It is clear that (2) implies (1), see also \cite[Corollary 5.2.8.13]{HTT}. Conversely, suppose that (2) holds. It is readily 
	verified that initial arrows are closed under retracts in any pointed $\infty$-equipment. Hence, it remains to check 
	that every map $g : a \rightarrow x$
	can be factored into an initial arrow followed by a left fibration. Since $\P$ is fibrational, we can find a commutative diagram 
	\[
			\begin{tikzcd}
				a \arrow[dr,"g"'] \arrow[rr, "f"] && e\arrow[dl, "p"] \\ 
				& x
			\end{tikzcd}
		\]
	of arrows in $\P$ so that $p$ is a left fibration and $f$ is an equivalence local to all left fibrations over $x$. Hence $f$ is initial by \ref{lem.compr-equipments} and 
	we obtain the desired factorization $g \simeq pf$.
\end{proof}

These are direct consequences of \cite[Proposition 5.2.8.11]{HTT} and \cite[Proposition 5.8.6]{HTT}:

\begin{corollary}\label{cor.fib-cancellation}
	Suppose that $\P$ is a pointed and fibrational $\infty$-equipment with comprehensive factorizations, then:
	\begin{enumerate}
		\item the class of left fibrations satisfy a cancellation property: if $p$ is a left fibration, then $pf$ is a left fibration if and only if $f$ is a left fibration,
		\item the class of initial arrows is closed under cobase change.
	\end{enumerate}
\end{corollary}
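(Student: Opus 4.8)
The plan is to treat this corollary as a purely formal consequence of the orthogonal factorization system that comprehensive factorizations provide, so that no further equipment-theoretic input is needed beyond what has already been established. By the very definition preceding \ref{prop.compr-equipments}, asserting that $\P$ has comprehensive factorizations means precisely that the class of initial arrows (as left class) together with the class of left fibrations (as right class) constitutes an orthogonal factorization system on the $\infty$-category $\Vert(\P)^{(1)}$. Granting this, both assertions become instances of the standard closure properties enjoyed by the two classes of any orthogonal factorization system.

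For assertion (1), I would apply the closure-under-composition and cancellation properties of the \emph{right} class to the left fibrations. If both $p$ and $f$ are left fibrations, then $pf$ is a left fibration because the right class is closed under composition. Conversely, if $p$ and $pf$ are left fibrations, I would factor $f$ as a left fibration preceded by an initial arrow; since the right class is closed under composition, $pf$ then receives a second factorization through the right class, and the essential uniqueness of factorizations forces the initial part of $f$ to be an equivalence, so that $f$ itself is a left fibration. Both implications are subsumed by \cite[Proposition 5.2.8.11]{HTT} specialized to our system.

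For assertion (2), I would use that the \emph{left} class of an orthogonal factorization system---here the initial arrows, characterized as those maps left orthogonal to all left fibrations---is stable under cobase change, since any class defined by a left lifting (orthogonality) property is closed under pushouts via the usual pasting argument for lifting squares; this is recorded in \cite[Proposition 5.8.6]{HTT}. The main substance of the result was in fact already discharged upstream, in \ref{prop.ortho-initial-lfib} and \ref{prop.compr-equipments}, where the orthogonal factorization system is produced; consequently the present statement presents no genuine obstacle, and the only point needing care is the bookkeeping of conventions---initial arrows are the left class and left fibrations the right class, so cancellation in (1) is applied on the right and cobase-change stability in (2) on the left.
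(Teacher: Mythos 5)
Your proposal is correct and matches the paper's own treatment: the paper derives both assertions directly from the orthogonal factorization system (initial arrows, left fibrations) furnished by \ref{prop.ortho-initial-lfib} and \ref{prop.compr-equipments}, citing exactly \cite[Proposition 5.2.8.11]{HTT} for the cancellation property of the right class and \cite[Proposition 5.8.6]{HTT} for cobase-change stability of the left class. Your added details (the uniqueness-of-factorization argument for cancellation, and the orthogonality argument for pushouts) are just the content of those cited results spelled out.
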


\begin{proposition}\label{prop.compr-conjoint-slice}
	Suppose that $\P$ is a fibrational and cartesian closed $\infty$-equipment with comprehensive factorizations. Let $\xi : \ast \rightarrow x$ 
	be an element of an object $x$ of $\P$. 
	For a left fibration $p : e\rightarrow x$, the following assertions are equivalent:
	\begin{enumerate}
		\item $p$ classifies the conjoint ${\xi}^\circledast : x \rightarrow \ast$,
		\item there exists an equivalence $e \simeq \xi/x$ above $x$,
		\item $\xi$ factors as 
		$$
		\ast \rightarrow e \rightarrow x
		$$
		so that the first map is initial. 
	\end{enumerate}
\end{proposition}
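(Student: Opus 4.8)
The plan is to run the equivalences as (1) $\Leftrightarrow$ (2) and then (2) $\Leftrightarrow$ (3), the bridge between the two halves being the observation that the canonical map $\ast \to \xi/x$ is initial.

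First I would settle (1) $\Leftrightarrow$ (2) by identifying what $\xi/x$ classifies. Since $\ast$ is the monoidal unit, the cotensor $[\ast, x]$ is canonically $x$ and the diagonal $\Delta : x \to [\ast, x]$ becomes $\mathrm{id}_x$; hence by \ref{def.slice-fibs} the coslice $\xi/x = (\ast \xrightarrow{\{\xi\}} [\ast,x]) \downarrow \Delta$ is the comma fibration $\xi \downarrow \mathrm{id}_x$, which by \ref{def.commas} classifies $\cart{\xi}{}{\mathrm{id}_x} = \xi^\circledast(\mathrm{id}_x)_\circledast \simeq \xi^\circledast$. By \ref{thm.span-rep} the functor $\rho_{x,\ast} : \Hor(\P)(x,\ast) \to \Vert(\P)^{(1)}/(x \times \ast)$ is a fully faithful inclusion of a reflective subcategory whose local objects are exactly the two-sided discrete fibrations from $x$ to $\ast$, i.e.\ the left fibrations over $x$; consequently every left fibration is canonically equivalent over $x$ to $\rho_{x,\ast}$ of the proarrow it classifies. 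As $p$ is a left fibration, this yields that $p$ classifies $\xi^\circledast$ if and only if $e \simeq \rho_{x,\ast}(\xi^\circledast) \simeq \xi/x$ over $x$, which is precisely (1) $\Leftrightarrow$ (2).

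The crux, which I expect to be the main obstacle, is to show that the canonical map $\ast \to \xi/x$ over $x$ is initial. I would identify it, exactly as in the proof of \ref{prop.fib-yoneda}, with the component at the span $(\xi, \mathrm{id}_\ast) : \ast \to x \times \ast$ of the unit of the reflective adjunction $L_{x,\ast} \dashv \rho_{x,\ast}$: indeed $L_{x,\ast}(\xi, \mathrm{id}_\ast) = \cocart{\mathrm{id}_\ast}{}{\xi} \simeq \xi^\circledast$, so the unit is a map of spans $\ast \to \rho_{x,\ast}(\xi^\circledast) = \xi/x$ over $x \times \ast \simeq x$, and as the unit of a reflective localization it is an equivalence local to all two-sided discrete fibrations, hence to all left fibrations over $x$. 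Since $\P$ has comprehensive factorizations, \ref{prop.compr-equipments} ensures that left fibrations are closed under composition, so the left fibration $\xi/x \to x$ satisfies the hypotheses of \ref{lem.compr-equipments}; applying part (3) of that lemma to the triangle $\ast \to \xi/x \to x$ then turns the displayed $\rho$-locality into initiality. The care needed here lies in matching the span/leg conventions so that the unit is recognised correctly.

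Finally I would deduce (2) $\Leftrightarrow$ (3). The previous step exhibits $\ast \to \xi/x \to x$ as a factorization of $\xi$ into an initial arrow followed by a left fibration, i.e.\ a comprehensive factorization. Condition (3) asserts that the given left fibration $p : e \to x$ participates in such a factorization $\ast \to e \xrightarrow{p} x$ of $\xi$; by the essential uniqueness of comprehensive factorizations (the defining property of the orthogonal factorization system on $\Vert(\P)^{(1)}$) this holds if and only if $(e,p)$ is equivalent over $x$ to $(\xi/x, \xi/x \to x)$, that is, $e \simeq \xi/x$ over $x$, which is (2). Conversely, transporting the initial map $\ast \to \xi/x$ along an equivalence $e \simeq \xi/x$ over $x$ produces the required factorization of $\xi$, closing the cycle.
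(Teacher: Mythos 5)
Your proof is correct and follows essentially the same route as the paper: (1) $\Leftrightarrow$ (2) via the classification of $\xi/x$ as $\rho_{x,\ast}(\xi^\circledast)$, then the key observation that the unit map $\ast \to \xi/x$ is local with respect to left fibrations over $x$ (the \ref{prop.fib-yoneda} argument), converted into initiality by \ref{lem.compr-equipments}(3) using closure of left fibrations under composition. The only cosmetic difference is that you conclude (3) $\Rightarrow$ (2) by uniqueness of factorizations in the orthogonal factorization system, whereas the paper phrases it as uniqueness of reflections onto the local objects; these are interchangeable here.
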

\begin{proof}
	It is immediate that (1) and (2) are equivalent. Let us show that (2) and (3) are equivalent. As reasoned in \ref{prop.fib-yoneda}, 
	the map 
	$$
	\begin{tikzcd}
		\ast \arrow[rr]\arrow[dr,"\xi"'] && \xi/x \arrow[dl] \\
		& x 
	\end{tikzcd}
	$$
	is local with respect to left fibrations over $x$. Hence, (2) is equivalent to providing a triangle  
	$$
	\begin{tikzcd}
		\ast \arrow[rr]\arrow[dr,"\xi"'] && e \arrow[dl, "p"] \\
		& x 
	\end{tikzcd}
	$$
	so that the top map is local with respect to left fibrations over $x$. In light of \ref{lem.compr-equipments}, 
	this map is local if and only if it is initial.
\end{proof}

\begin{corollary}\label{cor.colim-initial-slice}
Suppose that $\P$ is a fibrational and cartesian closed $\infty$-equipment with comprehensive factorizations. Let 
 $\xi : \ast \rightarrow x$ be an object of $x \in \P$. Then the following assertions are equivalent:
\begin{enumerate}
	\item $\xi$ is the colimit of $f$,
	\item there exists an equivalence $f/x \simeq \xi/x$  between  left fibrations above $x$,
	\item $\xi$ factors as
	$$
	\ast \rightarrow f/x \rightarrow x 
	$$
	so that the first arrow is initial.
\end{enumerate}
\end{corollary}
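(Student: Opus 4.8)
The plan is to deduce the entire statement from \ref{prop.compr-conjoint-slice}, applied to the canonical left fibration $p : f/x \rightarrow x$ of \ref{def.slice-fibs}. The point is that, for the choice $e = f/x$, conditions (2) and (3) of the present corollary are \emph{verbatim} conditions (2) and (3) of \ref{prop.compr-conjoint-slice}. Hence it suffices to match condition (1) here, that $\xi$ is the colimit of $f$, with condition (1) there, that the left fibration $f/x$ classifies the conjoint $\xi^\circledast : x \rightarrow \ast$. (Note that $\P$ is in particular pointed, as is implicit in the statement, so that these slice objects and left fibrations make sense.) Once this single identification is established, feeding it into \ref{prop.compr-conjoint-slice} produces the chain (1) $\Leftrightarrow$ (2) $\Leftrightarrow$ (3) directly.

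To set up that identification I would first determine which proarrow the coslice $f/x$ classifies. Applying \ref{prop.fib-cones-slice} to the terminal weight $W : \ast \rightarrow i$, which corresponds to the identity right fibration $\id_i : i \rightarrow i$ (so that its total space is $i$ and $fp = f$), exhibits $f/x \rightarrow x$ as the left fibration classifying the proarrow of $W$-cones under $f$. By \ref{cor.fib-weighted-vs-conical}, this proarrow is precisely the proarrow $C$ of conical cones under $f$. Thus $f/x$ classifies $C$.

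The second ingredient is the definition of conical colimits in \cite[Definition 6.30]{EquipI}: the element $\xi$ is the colimit of $f$ if and only if the proarrow $C$ of conical cones under $f$ is the conjoint $\xi^\circledast$. Combined with the previous paragraph, this is exactly the assertion that $f/x$ classifies $\xi^\circledast$, i.e.\ condition (1) of \ref{prop.compr-conjoint-slice} for $e = f/x$; invoking that proposition then yields the desired equivalences. The argument is therefore essentially pure assembly, and the only steps requiring care are the two bookkeeping translations just made: that the terminal weight corresponds to $\id_i$ (so that $fp/x$ degenerates to $f/x$), and that the representability condition defining $\colim f$ coincides with the statement that $C$ is a conjoint. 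Neither is a genuine obstacle, as both are supplied by the cited results; the real content of the corollary already resides in \ref{prop.compr-conjoint-slice} and \ref{prop.fib-cones-slice}.
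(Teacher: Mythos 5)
Your proof is correct and takes essentially the same approach as the paper, whose entire proof is the citation of \ref{prop.compr-conjoint-slice} and \ref{prop.fib-cones-slice}. You have simply made explicit the bookkeeping the paper leaves implicit: the coconical weight is the one classified by the identity right fibration $\id_i$, so \ref{prop.fib-cones-slice} (together with \ref{cor.fib-weighted-vs-conical}) exhibits $f/x$ as classifying the proarrow of conical cones under $f$, and $\xi$ being the colimit of $f$ means exactly that this proarrow is the conjoint $\xi^\circledast$, which is condition (1) of \ref{prop.compr-conjoint-slice} for $e = f/x$.
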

\begin{proof}
	This follows from \ref{prop.compr-conjoint-slice} and \ref{prop.fib-cones-slice}.
\end{proof}

\section{Internal category theory via \texorpdfstring{$\infty$}{∞}-equipments}\label{section.int-cat-theory}

In this final section, we will fix an $\infty$-topos $\E$ and investigate the formal category theory 
associated with the equipment 
$$
\CCAT_\infty(\E)
$$
that was constructed in \cite{EquipI}. Note that this equipment has the following properties:
\begin{itemize}
	\item first of all, it is pointed,
	\item it is cartesian closed (\ref{ex.int-ccat-cart-closed}),
	\item it is horizontally closed (\ref{cor.int-ccat-hor-closed}),
	\item it is fibrational (\ref{prop.int-ccat-fib}),
	\item it has comprehensive factorizations (\ref{cor.int-ccat-compr}).
\end{itemize}

One may find a concise overview of the aspects of internal category theory that we will cover here in the introduction (\ref{ssection.intro-int-ccat}).
We stress again that most of these aspects have also been studied previously by Martini and Wolf in \cite{Martini} and \cite{MartiniWolf}, but using a completely different approach.

\begin{notation}
	For a category $\C$ internal to $\E$, we will write $\fun_\E(\C, -)$ for the normal lax right adjoint to $\C \times (-)$. In particular,
for another category $\D$ internal to $\E$, $\fun_\E(\C,\D)$ is the internal hom object in the $\infty$-category $\Cat_\infty(\E)$. We will suppress the notation of $\E$ if it is clear from the context.
\end{notation}

\begin{notation}
	For an object $E \in \E$, we will write $\pi_E : \E \rightarrow \E/E$ for the \'etale geometric morphism. We have associated adjunctions
	\[ 
		\begin{tikzcd}[column sep = large]
			\CCAT_\infty(\E/E) \arrow[rr, bend left = 50pt, "{\pi_{E,!}}"name=t]\arrow[rr, bend right = 50pt, "{\pi_{E,*}}"name=b] && \arrow[ll, "\pi_E^* = E \times(-)"'name=m] \CCAT_\infty(\E), 
			\arrow[from=t,to=m, "\bot", phantom]
			\arrow[from=m,to=b, "\bot", phantom]
		\end{tikzcd}
	\]
	where the top two functors are strict, and the bottom functor is normal and lax (see \ref{ex.dbl-adj-geom-morph}).
\end{notation}

We have the following push-pull formula on the level of equipments:

\begin{proposition}\label{prop.int-ccat-push-pull}
	For $E \in \E$, there exists a commutative diagram of functors
	\[
		\begin{tikzcd}
			\CCAT_\infty(\E) \times \CCAT_\infty(\E/E) \arrow[r, "\id \times \pi_{E,!}"]\arrow[d, "\pi_{E,*} \times \id"'] & \CCAT_\infty(\E) \times \CCAT_\infty(\E) \arrow[d, "-\times -"] \\
			\CCAT_\infty(\E/E) \times \CCAT_\infty(\E/E) \arrow[r,"\pi_{E,!}(-\times-)"] & \CCAT_\infty(\E)
		\end{tikzcd}
	\]
	In particular, for a category $\C$ internal to $\E$, we have an equivalence 
	$$
	E \times \fun_{\E}(\C, -) \simeq \fun_{\E/E}(E\times \C, E\times(-))
	$$
	between lax functors $\CCAT_\infty(\E) \rightarrow \CCAT_\infty(\E/E)$.
\end{proposition}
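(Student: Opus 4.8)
The plan is to read the square as an instance of the projection (Frobenius) formula for the étale morphism $\pi_E$, and then to extract the statement about internal homs purely by uniqueness of lax adjoints. Unwinding the two legs of the square, it asserts a natural equivalence
\[
\pi_{E,!}\big(\pi_E^*X \times Y\big) \simeq X \times \pi_{E,!}Y
\]
of functors $\CCAT_\infty(\E) \times \CCAT_\infty(\E/E) \to \CCAT_\infty(\E)$, where $\pi_E^* = E\times(-)$ is base change and $\pi_{E,!}$ is the projection, both of which are \emph{strict} (see \ref{ex.dbl-adj-geom-morph}). First I would establish this equivalence, and afterwards deduce the final clause from it.

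To prove the projection formula I would reduce it to the level of the topos $\E$. Recall from \cite[Proposition 5.17]{EquipI} (cf.\ \ref{ex.dbl-adj-geom-morph}) that $\pi_E^*$ and $\pi_{E,!}$ are induced level-wise on Conduché modules by post-composition with the inverse image $E\times(-):\E\to\E/E$ and with the projection $\pi_{E,!}:\E/E\to\E$ respectively, while the cartesian product of \ref{ex.int-ccat-cart-closed} is computed objectwise in $\E$ (products of $\E$-valued diagrams are formed pointwise). Consequently both legs of the square are, at each simplicial level and at each object of the indexing category, given by the two endofunctor expressions $\pi_{E,!}(\pi_E^*(-)\times(-))$ and $(-)\times\pi_{E,!}(-)$ evaluated in $\E$. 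The required comparison is then the ordinary projection formula in the $\infty$-topos $\E$: for $X\in\E$ and $(Y\to E)\in\E/E$ the canonical map
\[
\pi_{E,!}\big(\pi_E^*X \times Y\big) \simeq (X\times E)\times_E Y \simeq X\times Y \simeq X\times\pi_{E,!}Y
\]
is an equivalence, naturally in $X$ and $Y$.

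The main obstacle I anticipate is the assembly step: one must check that this pointwise topos-level equivalence is coherent enough to descend to a genuine equivalence of functors of double $\infty$-categories, rather than merely an objectwise one. I would handle this by first packaging the projection formula as a single natural equivalence of functors $\E \times (\E/E) \to \E$, and then applying the functoriality of the Conduché-module construction $\CCON(-;-)$ of \ref{ssection.con-modules} in the topos variable, so that the coherence is inherited from the topos level instead of being verified by hand.

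For the final clause I would argue by uniqueness of adjoints in $\mathrm{DBLCAT}_\infty^\lax$. By \ref{ex.int-ccat-cart-closed} the functor $\fun_\E(\C,-)$ is the normal lax right adjoint of the strict functor $\C\times(-)$, and $\fun_{\E/E}(\pi_E^*\C,-)$ is the normal lax right adjoint of $\pi_E^*\C\times(-)$; moreover $\pi_{E,!}\dashv\pi_E^*$ is a strict adjunction (\ref{ex.dbl-adj-geom-morph}). Composing these adjunctions in $\mathrm{DBLCAT}_\infty^\lax$ exhibits
\[
\pi_E^*\circ\fun_\E(\C,-) = E\times\fun_\E(\C,-)
\]
as a lax right adjoint of $Y\mapsto \C\times\pi_{E,!}Y$, and
\[
\fun_{\E/E}(\pi_E^*\C,-)\circ\pi_E^* = \fun_{\E/E}(E\times\C, E\times(-))
\]
as a lax right adjoint of $Y\mapsto\pi_{E,!}(\pi_E^*\C\times Y)$. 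Specialising the projection formula to the first variable $\C$ gives an equivalence between these two left adjoints (a priori of strict functors, hence of their images in $\mathrm{DBLCAT}_\infty^\lax$). Since adjoints are essentially unique in any $(\infty,2)$-category, the two lax right adjoints agree, which is precisely the asserted equivalence $E\times\fun_\E(\C,-)\simeq\fun_{\E/E}(E\times\C, E\times(-))$.
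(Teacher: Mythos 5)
Your proof is correct and takes essentially the same route as the paper's: the square is deduced from the projection (push-pull) formula for the \'etale morphism $\pi_E$ at the level of $\infty$-toposes (the paper simply cites \cite[Remark 6.3.5.12]{HTT}) and then transported through the internal-category construction, and the internal-hom identity follows by composing adjunctions and invoking uniqueness of right adjoints in $\mathrm{DBLCAT}_\infty^{\lax}$. Your reading of the left-hand vertical arrow as $\pi_E^* \times \id$ (rather than the misprinted $\pi_{E,*} \times \id$) is indeed the intended one, and your extra care about assembling the level-wise equivalence of Conduch\'e modules into an equivalence of functors is a reasonable expansion of what the paper leaves implicit.
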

\begin{proof}
	The first assertion follows from the fact that there exists similar commutative diagram of $\infty$-toposes before applying $\CCAT_\infty(-)$. This is a special case of the push-pull formula for \'etale morphisms \cite[Remark 6.3.5.12]{HTT}. 
	The second assertion follows from unicity of right adjoints in the $(\infty,2)$-category $\mathrm{DBLCAT}^\lax_\infty$.
\end{proof}

\subsection{Generalized objects}\label{ssection.int-cat-gen-obj} As we have already observed in \cite[Example 6.38]{EquipI}, the $\infty$-equipment of internal categories 
is virtually never strongly pointed. Whenever an equipment was strongly pointed, we had an efficient way of testing whether proarrows are companions or conjoints.
We commence our study of internal category theory by giving a generalization of this principle.

To this end, we note that we have an adjunction 
$$
\mathrm{incl} : \E \rightleftarrows \Cat_\infty(\E) : (-)^{\simeq}
$$
induced by the adjunction of categories
$$
\Delta \rightleftarrows \{[0]\}.
$$
Note that this adjunction is natural in the $\infty$-topos $\E$.
The right adjoint $(-)^{\simeq}$ assigns to each internal category its \textit{core}. The left adjoint 
is fully faithful and the objects in the essential image of this inclusion are called \textit{internal groupoids}. We will leave this inclusion 
implicit from now on, and view every object of $\E$ as an internal groupoid via this functor.

\begin{remark}\label{rem.int-groupoids}
	It can be verified that a category $\C$ internal to $\E$ is an internal groupoid if and only if it 
	is a groupoid representably (cf.\ \cite[Definition 1.7]{StreetFib2}): i.e.\ for every category $X$ internal to $\E$, the mapping $\infty$-category 
	$$
	\CAT_\infty(\E)(X,\C)
	$$
	is a space. 
\end{remark}

\begin{definition}
	Let $\C$ be a category internal to $\E$. A \textit{generalized object} 
	of $\C$ is a functor $x : E \rightarrow \C$ so that $E \in \E$, i.e.\ $E$ is an internal groupoid.
	If $E = \ast$, then $x$ is simply called an \textit{object} of $\C$. 
	
	We say that a set $\mathcal{U}$ of generalized objects of $\C$ 
	 form a \textit{groupoidal cover} of $\C$ if the following map 
	$$
	\textstyle \coprod_{E \rightarrow \C \in \mathcal{U}} E \rightarrow \C^\simeq
	$$
	in $\E$, that is induced by the adjunct morphisms, is an effective epimorphism.
\end{definition}

\begin{remark}\label{rem.gen-objects}
	By adjunction, a generalized object $x : E = \pi_{E,!}(\ast) \rightarrow \C$ 
	of an internal category $\C$ corresponds to an object of the category $E \times \C$ internal to $\E/E$. We will (with abuse of notation) denote this latter adjunct object again by $$x : E \rightarrow E \times \C.$$
\end{remark}

\begin{remark}
	Let $f : \E \rightarrow \F$ be a geometric morphism. Then 
	the inverse image $f^* : \F \rightarrow \E$ preserves effective epimorphisms.
	Consequently, if $\mathcal{U}$ is a groupoidal cover for 
	a category $\C$ internal to $\F$, then $f^*\mathcal{U} := \{f^*x \mid x  \in \mathcal{U}\}$ is again a groupoidal cover for $f^*\C$.
\end{remark}

We can now formulate the following recognition theorem for companions and conjoints in the $\infty$-equipment 
of categories internal to $\E$:

\begin{theorem}\label{thm.comp-conj-topos}
	Let  $\mathcal{U}$ be a groupoidal cover for a category $\C$ internal to $\E$. Suppose 
	that $F : \C \rightarrow \D$ is a proarrow between internal categories. Then the following assertions are equivalent 
	\begin{enumerate}
		\item $F$ is a companion,
		\item for every generalized object $x:E \rightarrow \C$ in $\mathcal{U}$, the restricted proarrow 
		$$
		{(E \times F)}{x_\circledast} : E \rightarrow E \times \D
		$$
		in $\CCAT_\infty(\E)/E \simeq \CCAT_\infty(\E/E)$ is a companion.
	\end{enumerate}
	\noindent Dually, 
	the following assertions are equivalent for a proarrow $G : \D \rightarrow \C$ between internal categories:
	\begin{enumerate}
		\item $G$ is a conjoint,
		\item for every generalized object $x:E \rightarrow \C$ in $\mathcal{U}$, the restricted proarrow 
		$$
		x^\circledast{(E \times G)} : E \times \D \rightarrow E
		$$
		in $\CCAT_\infty(\E)/E \simeq \CCAT_\infty(\E/E)$ is a conjoint.
	\end{enumerate}
\end{theorem}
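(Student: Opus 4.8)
The plan is to treat the companion statement in full; the conjoint statement then follows by the evident dual argument, obtained by reversing the direction of proarrows (which interchanges companions and conjoints while leaving the vertical arrows, the generalized objects, and the notion of groupoidal cover untouched). Throughout I will use two standard facts about $\infty$-equipments: companions are closed under composition, and strict functors preserve them. Since $\pi_E^* = E \times (-)$ is the inverse image of the étale geometric morphism $\pi_E$, hence a strict functor $\CCAT_\infty(\E) \to \CCAT_\infty(\E/E)$ (see \ref{ex.dbl-adj-geom-morph}), and since $x_\circledast$ is by definition a companion, the implication $(1) \Rightarrow (2)$ is immediate: if $F \simeq f_\circledast$ then
$$
(E \times F)\, x_\circledast \simeq (E \times f)_\circledast\, x_\circledast \simeq \big((E \times f)\circ x\big)_\circledast,
$$
which is again a companion.

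For the converse $(2) \Rightarrow (1)$ I would first reorganize the hypothesis. Writing $G := \coprod_{x \in \mathcal{U}} E_x$ and letting $\xi : G \to \C$ be the generalized object assembled from $\mathcal{U}$, the canonical equivalences $\E/G \simeq \prod_{x} \E/E_x$ and $\CCAT_\infty(\E/G) \simeq \prod_x \CCAT_\infty(\E/E_x)$ identify $(G \times F)\,\xi_\circledast$ with the tuple of the proarrows $(E_x \times F)\,x_\circledast$. As companions in a product equipment are exactly the tuples of componentwise companions, condition $(2)$ is equivalent to the single statement that $(G \times F)\,\xi_\circledast$ is a companion in $\CCAT_\infty(\E/G)$, where now $\xi : G \to \C^{\simeq}$ is an effective epimorphism in $\E$ by the definition of a groupoidal cover. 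Thus it suffices to prove the following locality statement: if the restriction of $F$ along an effective epimorphism $\xi : G \to \C^{\simeq}$ is a companion, then so is $F$.

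To establish this I would proceed in two steps. The base computation is carried out in $\CCAT_\infty$ itself, where a proarrow is a companion precisely when the corresponding profunctor is corepresentable, a condition visible on objects. Passing from $\E/G$ to a presheaf topos $\PSh(T)$ via a geometric embedding and the reflective inclusion of \ref{ex.geom-emb-cotensor}, one obtains $\CCAT_\infty(\E/G) \hookrightarrow [T^\op, \CCAT_\infty] \simeq \CCAT_\infty(\PSh(T))$; here being a companion is both preserved and reflected, since the strict left adjoint $L$ satisfies $L i_* \simeq \id$ and preserves companions, while \ref{prop.refl-incl-formal-cat-theory} controls the cartesian cells. Inside the cotensor $[T^\op, \CCAT_\infty]$ the characterization of companions by \emph{companionable} $2$-cells (\cite[Theorem 4.6]{FunDblCats}, \cite[Definition 4.4]{FunDblCats}) reduces the problem to the pointwise statement in $\CCAT_\infty$ together with a cartesianness condition on the transition cells. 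The second step is the descent argument: the effective epimorphism $\xi$ exhibits $\C^{\simeq}$ as the colimit of the Čech nerve of $\xi$, and I would use this together with the étale strict functors attached to the stages of this nerve to glue the local corepresenting data over the cover into a global arrow $f : \C \to \D$ and a cocartesian comparison cell exhibiting $F \simeq f_\circledast$.

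The main obstacle is precisely this last gluing step. Checking corepresentability after restriction along $\xi$ only produces the companion locally, over $G$; promoting this to a companion over $\C$ requires descending the local corepresenting objects and the coherence witnessing $F \simeq f_\circledast$ along the Čech nerve of the cover. Concretely, the difficulty is to show that the restriction functors attached to a groupoidal cover are jointly conservative on the comparison $2$-cell that detects companion-ness — equivalently, that the relevant cartesian cell over $\C$ may be recognized after pullback along the effective epimorphism — which is where descent in the $\infty$-topos $\E$, rather than any purely formal equipment-theoretic manipulation, does the essential work.
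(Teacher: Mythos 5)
Your direction $(1)\Rightarrow(2)$ is correct and matches the paper (which likewise treats it as the trivial direction), and consolidating the cover into a single effective epimorphism $\xi : G \rightarrow \C^{\simeq}$ is a harmless reformulation. But the proposal has a genuine gap, and you concede it yourself: the whole content of the theorem is $(2)\Rightarrow(1)$, and your ``Step 2'' --- gluing the local corepresenting data along the \v{C}ech nerve of $\xi$ into a functor $f : \C \rightarrow \D$ together with a cell exhibiting $F \simeq f_\circledast$ --- is left as an acknowledged obstacle rather than carried out. Two concrete reasons this route is problematic. First, a groupoidal cover only covers the core $\C^{\simeq}$, so even granting descent for the corepresenting objects you would a priori only produce a map out of $\C^{\simeq}$; functoriality in the non-invertible arrows of $\C$ is not something gluing over the core can deliver by itself. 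Second, the formal descent that is actually available in this setting --- the decomposition $\CCAT_\infty(\E) \simeq \lim_{[n]} \CCAT_\infty(\E/E^{\times (n+1)})$ over a \v{C}ech nerve, together with levelwise detection of companions in limits of equipments --- applies to the \emph{pullback} $E \times F$ of a proarrow, not to the restricted proarrow ${(E \times F)}{x_\circledast}$ appearing in hypothesis $(2)$; so the ``joint conservativity'' you ask for does not follow from topos-theoretic descent by any purely formal manipulation.

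The paper's proof avoids the gluing entirely, and its structure is worth comparing with your outline. It establishes three lemmas: first (\ref{lem.comp-conj-topos-1}), pullback along an effective epimorphism reflects companions and conjoints --- this is exactly the formal descent statement above, applied to pullbacks, where it \emph{is} formal; second (\ref{lem.comp-conj-topos-2}), condition $(2)$ is therefore independent of the chosen groupoidal cover; and third (\ref{lem.comp-conj-topos-3}), if the theorem holds for $\F$ and $i : \E \rightarrow \F$ is a geometric embedding, then it holds for $\E$. In this last step the hypothesis is transported by the \emph{direct image} normal lax functor $i_*$, using that normal lax functors between equipments preserve cartesian cells and hence companions and conjoints (\ref{prop.lax-pres-cart-cells}, \ref{cor.normal-lax-fun-eq-pres-compconj}), and using that $i_*$ carries the core cover of $\C$ to the core cover of $i_*\C$ --- a point your sketch does not address, since you embed the slice $\E/G$ rather than transporting the hypothesis for $\E$ itself. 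This reduces everything to $\E = \PSh(T)$, where $\CCAT_\infty(\PSh(T)) \simeq [T^\op, \CCAT_\infty]$ and where, by cover-independence, one may take the Yoneda cover $\{x : y_t \rightarrow \C\}$. For that cover, hypothesis $(2)$ becomes precisely the input to the companionability criterion of \cite[Theorem 4.6]{FunDblCats}: pointwise, each $F_t$ is corepresentable at every object of $\C(t)$, hence a companion because $\CCAT_\infty$ is strongly pointed, and companionability of the composite $2$-cells makes the naturality mates invertible. So the descent you were missing is replaced by (a) levelwise detection of companions in limits of equipments, used only for cover-independence, and (b) the already-established functor-double-category theorem, which does the assembly in the presheaf case. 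To complete your outline you would need precisely these two ingredients in place of your unproved gluing step.
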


To prove the above theorem, we will show that it holds for presheaf $\infty$-toposes via double categorical methods using 
the theory developed in \cite{FunDblCats}. 
We then bootstrap this result to general $\E$. The bootstrapping 
is achieved through three subsequent observations which we will present as lemmas:

\begin{lemma}\label{lem.comp-conj-topos-1}
	Suppose that $f : E \rightarrow X$ is an effective epimorphism in $\E$. Then the pullback functor 
	$$
	f^* : \CCAT_\infty(\E/X) \rightarrow \CCAT_\infty(\E/E)
	$$
	reflects companions and conjoints.
\end{lemma}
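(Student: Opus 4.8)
The plan is to deduce the reflection statement from descent along the effective epimorphism $f$, after first recording that $f^*$ already \emph{preserves} the relevant structure. Indeed, $f \colon E \to X$ induces an \'etale geometric morphism $\E/E \to \E/X$ whose inverse image is the pullback functor $f^* \colon \E/X \to \E/E$; as in \ref{ex.dbl-adj-geom-morph}, post-composition with $f^*$ preserves geometric realizations, so $\CCAT_\infty(f^*)$ is a \emph{strict} functor of $\infty$-equipments. By \ref{cor.normal-lax-fun-eq-pres-compconj} it therefore preserves companions and conjoints, and the companion (resp.\ conjoint) embedding $\Vert \to \Hor$ is fully faithful by \cite[Corollary 3.13]{EquipI}. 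Thus the only content is that the property of \emph{being} a companion is reflected. Since a conservative functor need not reflect membership in the essential image of a fully faithful subfunctor, plain conservativity of $f^*$ will not suffice by itself, and I would instead exploit the full descent datum supplied by the effective epimorphism.

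First I would invoke descent in the $\infty$-topos $\E$. Writing $\check{C}_\bullet$ for the \v{C}ech nerve of $f$, with $n$-th term $\check{C}_n = E \times_X \dotsb \times_X E$ ($n+1$ factors), the hypothesis that $f$ is an effective epimorphism gives $X \simeq \colim_{[n] \in \Delta^\op} \check{C}_n$, and descent yields an equivalence $\E/X \simeq \lim_{[n] \in \Delta} \E/\check{C}_n$ \cite{HTT}. Next I would transport this along $\CCAT_\infty(-)$. Since internal categories are cut out of simplicial objects by limit conditions (Segal and completeness) and $\fun(S,-)$ commutes with limits for each indexing category $S$, the construction $\CCAT_\infty(-)$ carries the above to an equivalence of double $\infty$-categories $\CCAT_\infty(\E/X) \simeq \lim_{[n] \in \Delta} \CCAT_\infty(\E/\check{C}_n)$ in which every structure functor is strict and $f^*$ is the leg at $[0]$. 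Passing to vertical and horizontal fragments, which preserve limits, presents $\Vert$ and $\Hor$ of $\CCAT_\infty(\E/X)$ as the corresponding limits as well.

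With this in hand the assembly is formal. The companions in each $\CCAT_\infty(\E/\check{C}_n)$ span a full subcategory $\mathrm{Comp}_n \subseteq \Hor(\CCAT_\infty(\E/\check{C}_n))$, and because the companion embedding is fully faithful and the strict structure functors preserve companions, these assemble to a full subcategory $\lim_{[n]} \mathrm{Comp}_n \subseteq \Hor(\CCAT_\infty(\E/X))$ which is exactly the companions of $\CCAT_\infty(\E/X)$. Now suppose $f^* F$ is a companion for a proarrow $F$ of $\CCAT_\infty(\E/X)$. Each projection $\check{C}_n \to X$ factors through $f$, so the component $F_n$ of $F$ at $[n]$ is the pullback of $f^* F$ along a strict functor, hence again a companion; therefore $F$ lands in $\lim_{[n]} \mathrm{Comp}_n$ and is a companion. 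The conjoint case is identical, replacing the companion embedding by the conjoint embedding.

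The hard part, and the only step beyond formal bookkeeping, is the limit-preservation in the second paragraph: verifying that $\CCAT_\infty(-)$, as a functor valued in double $\infty$-categories rather than merely in underlying $\infty$-categories, sends the descent limit to a limit, and that the fully faithful companion embeddings are compatible with these limits so that ``companion'' is a limit-stable condition. I expect this to follow from the diagrammatic nature of the defining conditions together with the fact that a companion structure on a fixed proarrow forms a contractible space, which renders all higher coherence in the assembly automatic.
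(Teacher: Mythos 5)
Your proposal takes essentially the same route as the paper's proof: descent along the \v{C}ech nerve of $f$, applying $\CCAT_\infty(-)$ to present $\CCAT_\infty(\E/X)$ as a limit of $\infty$-equipments, level-wise detection of companions and conjoints in that limit, and reduction to the single leg $f^*$ using that the projections factor through $f$ (the paper merely simplifies notation by first reducing to $X = \ast$). The step you flag as the hard part---that being a companion is a limit-stable condition---is exactly what the paper disposes of by citation, namely \cite[Hypothesis 3.10]{EquipI} (cf.\ \cite[Proposition 4.27]{FunDblCats}), and your heuristic (contractibility of the space of companionship data on a fixed proarrow) is precisely the idea behind that cited result.
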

\begin{proof}
	Without loss of generality, we may assume that $X = \ast$, the terminal object of $\E$. By assumption, we have that $\ast$ is given by the 
	\v{C}ech nerve of $f$, 
	$$
	\ast \simeq \colim_{[n]\in \Delta^\op} E^{\times (n+1)}.
	$$ 
	Since $\E$ is an $\infty$-topos, the functor $\E/(-) : \E^\op \rightarrow \widehat{\Cat}_\infty$ 
	preserves small limits. Thus we obtain an equivalence
	$$
	\E \simeq \lim_{[n] \in \Delta} \E/E^{\times (n+1)}.
	$$
	This gives rise to the following limit description 
	$$
	\CCAT_\infty(\E) \simeq \lim_{[n] \in \Delta} \CCAT_\infty(\E/E^{\times (n+1)})
	$$
	on the level of $\infty$-equipments.
	In light of \cite[Hypothesis 3.10]{EquipI} (cf.\ \cite[Proposition 4.27]{FunDblCats}), this entails that a proarrow $F$ in $\CCAT_\infty(\E)$ is a companion (resp.\ conjoint) 
	if for any $n \geq 0$, the proarrow $E^{\times (n+1)} \times F$ is a companion (resp.\ conjoint) in $\CCAT_\infty(\E/E^{\times (n+1)})$. This, in turn, can be reduced to $n=1$, since the terminal map 
	$E^{\times (n+1)} \rightarrow \ast$ factors over $E \rightarrow \ast$ via one of the projections.
\end{proof}

\begin{lemma}\label{lem.comp-conj-topos-2}
	Suppose that $F : \C \rightarrow \D$ is a proarrow between internal categories. If 
	there exists a groupoidal cover $\mathcal{U}$ of $\C$ 
	so that condition (2) of \ref{thm.comp-conj-topos} is met, then this condition holds with 
	respect to any groupoidal cover $\mathcal{V}$ of $\C$.
\end{lemma}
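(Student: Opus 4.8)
The plan is to compare the two covers by pulling one back along the other and invoking the reflection property of \ref{lem.comp-conj-topos-1}. First I would use the adjunction $\mathrm{incl} : \E \rightleftarrows \Cat_\infty(\E) : (-)^\simeq$ to identify a generalized object $x : E \to \C$ (with $E$ an internal groupoid, i.e.\ $E \in \E$) with a map $E \to \C^\simeq$ in $\E$; under this identification, the condition that a set of generalized objects forms a groupoidal cover is exactly the condition that the induced map $\coprod E \to \C^\simeq$ is an effective epimorphism. I will treat only the companion statement, the conjoint statement being entirely dual.

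Now fix a generalized object $y : E' \to \C$ in $\mathcal{V}$, corresponding to $y : E' \to \C^\simeq$; I must show that $(E' \times F)\,y_\circledast$ is a companion in $\CCAT_\infty(\E/E')$. Pulling back the effective epimorphism $\coprod_{x \in \mathcal{U}} E \to \C^\simeq$ along $y$ produces an effective epimorphism $g : E'' \to E'$, and, since $\E$ is an $\infty$-topos and colimits there are universal, $E'' \simeq \coprod_{x \in \mathcal{U}} E''_x$ with $E''_x := E' \times_{\C^\simeq} E$; write $s_x : E''_x \to E'$ and $t_x : E''_x \to E$ for the two projections. By \ref{lem.comp-conj-topos-1}, the base-change functor $g^* : \CCAT_\infty(\E/E') \to \CCAT_\infty(\E/E'')$ reflects companions, so it suffices to prove that $g^*\big((E' \times F)\,y_\circledast\big)$ is a companion in $\CCAT_\infty(\E/E'')$.

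Here I would exploit the equivalence $\E/E'' \simeq \prod_{x} \E/E''_x$ (again by universality of coproducts), which yields $\CCAT_\infty(\E/E'') \simeq \prod_x \CCAT_\infty(\E/E''_x)$, where a proarrow is a companion if and only if each of its components is. The $x$-component of $g^*$ is the base change $s_x^*$ along the first projection. Since base-change functors between slice equipments are inverse images of geometric morphisms and hence strict (see \ref{ex.dbl-adj-geom-morph}), they preserve companions by \ref{cor.normal-lax-fun-eq-pres-compconj} and commute with the operations $E' \times (-)$ and $(-)_\circledast$, so the $x$-component is $(E''_x \times F)\,(s_x^* y)_\circledast$. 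The defining pullback square for $E''_x$ shows that the composite $E''_x \xrightarrow{s_x} E' \xrightarrow{y} \C^\simeq$ agrees with $E''_x \xrightarrow{t_x} E \xrightarrow{x} \C^\simeq$; thus $s_x^* y \simeq t_x^* x$ as generalized objects over $E''_x$, and the $x$-component is identified with $t_x^*\big((E \times F)\,x_\circledast\big)$. By the hypothesis on $\mathcal{U}$, each $(E \times F)\,x_\circledast$ is a companion, and applying the strict functor $t_x^*$ keeps it a companion; hence every component is a companion, which completes the reduction.

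The conjoint case is formally identical, replacing companions by conjoints and the composites $(E\times F)\,x_\circledast$ by $x^\circledast\,(E \times G)$ throughout. The main bookkeeping obstacle is the careful identification $s_x^* y \simeq t_x^* x$ of the two base-changed generalized objects via the pullback square defining $E''_x$; once this compatibility is verified, the remaining steps are formal consequences of the universality of colimits in $\E$, the splitting of the slice equipment over a coproduct, and the fact that strict base-change functors preserve companions and conjoints.
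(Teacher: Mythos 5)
Your proof is correct and takes essentially the same route as the paper's: pull back the cover $\mathcal{U}$ along a member of $\mathcal{V}$, identify the two base-changed proarrows via the commutation $y s_x \simeq x t_x$ from the defining pullback square, use that strict base-change functors preserve companions, and conclude by the reflection property of \ref{lem.comp-conj-topos-1} along the resulting effective epimorphism. The only difference is organizational: the paper first reduces without loss of generality to single-object covers and forms one pullback, whereas you keep the full covers and justify that reduction explicitly through the splitting $\CCAT_\infty(\E/E'') \simeq \prod_x \CCAT_\infty(\E/E''_x)$ and componentwise detection of companions, thereby filling in a detail the paper leaves implicit.
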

\begin{proof}
	We just handle the case for companions since the dual case for conjoints is similar.
	Without loss of generality, we may assume that $\mathcal{U}$ and $\mathcal{V}$ both consist 
	of just one generalized object $x : E \rightarrow \C$ and $y : E' \rightarrow \C$ respectively. 
	We can now define an internal groupoid $E'' \in \E$ by the pullback square 
	\[
		\begin{tikzcd}
			E'' \arrow[r,"p"]\arrow[d,"q"'] & E \arrow[d,"x"] \\
			E' \arrow[r, "y"] & \C.
		\end{tikzcd}
	\]
	Since effective epimorphisms are closed under base change, 
	$p$ and $q$ are effective epimorphisms. Now, one can readily compute that 
	$$
	p^*({(E \times F)}x_\circledast) \simeq {(E'' \times F)}(xp)_\circledast \simeq q^*((E' \times F)y_\circledast)
	$$
	Thus the result follows from \ref{lem.comp-conj-topos-1}.
\end{proof}

\begin{lemma}\label{lem.comp-conj-topos-3}
	Suppose that $i : \E \rightarrow \F$ is a geometric embedding between $\infty$-toposes. 
	If \ref{thm.comp-conj-topos} holds for $\F$, then 
	it holds for $\E$ as well.
\end{lemma}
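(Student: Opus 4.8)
The plan is to exploit that the geometric embedding induces, via \ref{ex.dbl-adj-geom-morph}, a reflective inclusion $i_* : \CCAT_\infty(\E) \rightarrow \CCAT_\infty(\F)$ whose left adjoint $i^*$ is strict and satisfies $i^*i_* \simeq \id$. As a normal lax functor between $\infty$-equipments, $i_*$ preserves companions and conjoints by \ref{cor.normal-lax-fun-eq-pres-compconj}, and so does the strict retraction $i^*$; together with $i^*i_* \simeq \id$ this shows that $i_*$ \emph{reflects} companions and conjoints. Hence a proarrow $F : \C \rightarrow \D$ in $\CCAT_\infty(\E)$ is a companion if and only if $i_*F$ is a companion in $\CCAT_\infty(\F)$, and dually for conjoints. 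I will only treat the companion half, the conjoint half being entirely dual. The implication (1) $\Rightarrow$ (2) is the easy one and uses neither $\F$ nor the cover: if $F$ is a companion, then for any generalized object $x : E \rightarrow \C$ the proarrow $(E\times F)x_\circledast$ is the restriction of the companion $E \times F$ (a companion, since $E \times (-)$ is strict) along $x_\circledast$, hence again a companion.

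For the substantive implication (2) $\Rightarrow$ (1), I would show that condition (2) for $F$ in $\E$ forces condition (2) for $i_*F$ in $\F$ relative to an arbitrary groupoidal cover of $i_*\C$; the theorem for $\F$ then yields that $i_*F$ is a companion, whence $F$ is a companion by the reflection above. The hard part is that the direct image $i_*$ does not preserve effective epimorphisms, so the given cover of $\C$ cannot simply be pushed forward to a cover of $i_*\C$. I therefore fix an \emph{arbitrary} groupoidal cover $\{y : Y \rightarrow i_*\C\}$ of $i_*\C$ in $\F$ and verify condition (2) for $i_*F$ at each $y$ by passing to slices.

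Each geometric embedding slices to a geometric embedding $i_Y : \E/i^*Y \rightarrow \F/Y$, inducing a reflective inclusion $(i_Y)_* : \CCAT_\infty(\E/i^*Y) \rightarrow \CCAT_\infty(\F/Y)$ with strict left adjoint $(i_Y)^*$ computed by base change. Using that the localization is stable under \'etale base change, the objects $Y \times i_*\C$, $Y \times i_*\D$, the proarrow $Y \times i_*F$ and the companion $y_\circledast$ all lie in the essential image of $(i_Y)_*$; writing $\bar y := (i_Y)^*y : i^*Y \rightarrow \C$, a generalized object of $\C$, one has $(i_Y)^*(Y \times i_*F) \simeq i^*Y \times F$ and $(i_Y)^* y_\circledast \simeq \bar y_\circledast$. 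Now $(Y \times i_*F)y_\circledast$ is obtained from this local data by the cartesian cell expressing restriction along $y$; since reflective inclusions preserve and reflect cartesian cells by \ref{prop.refl-incl-formal-cat-theory}, uniqueness of cartesian cells gives $(Y \times i_*F)y_\circledast \simeq (i_Y)_*\bigl((i^*Y \times F)\bar y_\circledast\bigr)$. By \ref{lem.comp-conj-topos-2} the $\E$-side hypothesis holds for \emph{every} generalized object of $\C$, in particular for $\bar y$, so $(i^*Y \times F)\bar y_\circledast$ is a companion; applying $(i_Y)_*$, which preserves companions, shows that $(Y \times i_*F)y_\circledast$ is a companion in $\CCAT_\infty(\F/Y)$.

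This establishes condition (2) for $i_*F$ relative to the chosen cover, and the theorem for $\F$ then gives that $i_*F$ is a companion, completing the argument. I expect the main obstacle to be precisely the failure of $i_*$ to transport covers: the whole reverse implication must be rerouted through an arbitrary cover of $i_*\C$ and reduced fiberwise to the $\E$-side condition via the slice embeddings $i_Y$. The key technical inputs making this work are that reflective inclusions preserve and reflect cartesian cells (\ref{prop.refl-incl-formal-cat-theory}), the cover-independence of condition (2) (\ref{lem.comp-conj-topos-2}), and the stability of the reflective localization under \'etale base change, which is what guarantees that the relevant slice objects and proarrows are local.
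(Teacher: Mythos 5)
Your proof is correct, and its core mechanism is the same as the paper's: reduce to showing that $i_*F$ is a companion (via $F \simeq i^*i_*F$, with the strict functor $i^*$ preserving companions), transport slice-level companion proarrows through the direct image of a sliced geometric embedding using preservation of cartesian cells and companions (\ref{cor.lax-fun-eq-pres-cart-cells}, \ref{cor.normal-lax-fun-eq-pres-compconj}), and invoke the cover-independence of \ref{lem.comp-conj-topos-2}. Where you genuinely differ is in how the cover mismatch --- the failure of $i_*$ to preserve effective epimorphisms, which you correctly identify as the obstacle --- gets resolved. The paper never pulls a cover back: it uses \ref{lem.comp-conj-topos-2} to replace $\mathcal{U}$ by the canonical core cover $\{x : \C^\simeq \rightarrow \C\}$, slices $i$ over the $\E$-object $\C^\simeq$, and applies the single direct image $(i/\C^\simeq)_*$ to the companion $(\C^\simeq \times F)x_\circledast$; the trick that makes this forward transport work is that $i_*$, being computed levelwise and commuting with cores, carries this cover exactly to the canonical cover $(i_*\C)^\simeq \rightarrow i_*\C$, so condition (2) for $i_*F$ is verified at that one cover in a single stroke. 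You instead transport covers backwards: you fix an arbitrary cover $\{y : Y \rightarrow i_*\C\}$ in $\F$, pull each member back along $i^*$ to a generalized object $\bar y$ of $\C$ (where the hypothesis applies by cover-independence, since inverse images preserve effective epimorphisms), and push only the companions $(i^*Y \times F)\bar y_\circledast$ forward through the slice embeddings $i_Y : \E/i^*Y \rightarrow \F/Y$. This works, but it costs one slice embedding per cover element plus the locality verification you flag (that $Y \times i_*\C$, $Y \times i_*F$ and $y$ lie in the essential image of $(i_Y)_*$, i.e.\ the stability of the subtopos under \'etale base change --- which indeed holds by the same levelwise computation), all of which the paper's choice of the core cover renders unnecessary. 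In short: same skeleton and same key lemmas, but the paper's proof is the more economical specialization, while yours trades that economy for never having to observe that $i_*$ preserves any particular cover.
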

\begin{proof}
	We just handle the case for companions since the dual case for conjoints is similar.
	Suppose that $F : \C \rightarrow \D$ is a proarrow between categories internal to $\E$ so that 
	condition (2) of \ref{thm.comp-conj-topos} is met. In light of \ref{lem.comp-conj-topos-2}, this condition 
	does not depend on the chosen cover $\mathcal{U}$. In particular, we may assume that $\mathcal{U} = \{x : \C^{\simeq} \rightarrow \C\}$. 
	We must check that $F$ is a companion. Since $F \simeq i^*i_*F$, it will be necessary and sufficient to check that 
	$i_*F$ is a companion in $\CCAT_\infty(\F)$. 

	 Note that $i$ induces 
	a geometric morphism on slices that fits in a commutative diagram of 
	geometric morphisms 
	\[
		\begin{tikzcd}
			\E \arrow[r, "i"]\arrow[d] & \F \arrow[d] \\
			\E/\C^{\simeq} \arrow[r, "i/\C^{\simeq}"] & \F/i_*\C^{\simeq}.
		\end{tikzcd}
	\] 
	If we apply the normal lax functor $(i/\C^{\simeq})_*$ to the companion proarrow
	${(\C^{\simeq} \times F)}x_\circledast$, 
	then \ref{prop.lax-pres-cart-cells} asserts that we recover
	$$
	{(i_*\C^{\simeq} \times i_*F)}{(i_*x)_\circledast} : i_*\C^\simeq \rightarrow i_*\C^{\simeq} \times i_*\D.
	$$
	Thus this proarrow is again a companion by \ref{cor.normal-lax-fun-eq-pres-compconj}. Since $i_*x$ is precisely the canonical cover $(i_*\C)^\simeq \rightarrow i_*\C$, the assumption on $\F$ implies 
	that $i_*F$ is a companion.
\end{proof}

\begin{proof}[Proof of \ref{thm.comp-conj-topos}]
	On account of \ref{lem.comp-conj-topos-3}, we may reduce to the case that $\E = \PSh(T)$ for some small $\infty$-category $T$. 
	Let us show the only non-trivial assertion that (2) implies (1) for the case of companions. 
	Note that we can identify 
	$\CCAT_\infty(\E)$ with the cotensor $[T^\op, \CCAT_\infty]$.  In light of \cite[Theorem 4.6]{FunDblCats}, we have to check 
	that for each arrow $\phi : s \rightarrow t$ of $T$, the 2-cell 
	\[
		\begin{tikzcd}
			\C(t)\arrow[r, "{F_t}"]\arrow[d, "\phi^*"'] & \D(t) \arrow[d, "\phi^*"] \\
			\C(s) \arrow[r, "{F_s}"] & \D(s)
		\end{tikzcd}
	\]
	in $\CCAT_\infty$ is \textit{companionable}. That entails checking the following conditions:
	\begin{enumerate}[label=(\roman*)]
		\item the proarrow $F_t$ is the companion of a functor $f_t : \C(t) \rightarrow \D(t)$ for every $t \in T$, 
		\item and moreover, the induced natural transformation $\phi^*f_t \rightarrow f_s\phi^*$  by the pasting
		\[
		\begin{tikzcd}
			\C(t) \arrow[d,equal] \arrow[r,equal] & \C(t)\arrow[d,"f_t"] \\
			\C(t)\arrow[r, "{F_t}"]\arrow[d, "\phi^*"'] & \D(t) \arrow[d, "\phi^*"] \\
			\C(s) \arrow[r, "{F_s}"]\arrow[d,"f_s"'] & \D(s)\arrow[d,equal] \\
			\D(s) \arrow[r,equal] & \D(s)
		\end{tikzcd}
		\] is an equivalence. Here the top and bottom 2-cell are the companionship unit and counit respectively.
	\end{enumerate}

	By the Yoneda lemma, any object $x \in \C(t)$ corresponds to a map 
	$$
	 y_t := \map_T(-, t) \rightarrow \C,
	$$
	which we will again denote by $x$. This gives rise to a groupoidal cover 
	$$
	\{x : y_t \rightarrow \C \mid t \in T, x \in \C(t)\}
	$$
	of $\C$.
	By what we have proven in \ref{lem.comp-conj-topos-2}, we may assume that this is the cover $\mathcal{U}$. Let $x : y_t \rightarrow \C \in \mathcal{U}$. Note that the 
	\'etale morphism $\pi_{y_t} : \PSh(T) \rightarrow \PSh(T/t)$ is induced by the projection $\pi_t : (T/t)^\op \rightarrow T^\op$. Consequently, under the identifications of \ref{ex.cotensors-ccat},
	the functor $\pi_{y_t}^* = y_t \times (-)$ is given by the functor
	$$\pi_t^* : [T^\op, \CCAT_\infty] \rightarrow [(T/t)^\op, \CCAT_\infty].$$
	So, by assumption ${\pi_t^*F}{x_\circledast}$ is a companion in $[(T/t)^\op, \CCAT_\infty]$, where we now view $x$ as a functor $\ast \rightarrow \pi_t^*\C$ indexed by $T/t$ (cf.\ \ref{rem.gen-objects}). 
	Consequently, for any arrow $\phi : s \rightarrow t$, the composite 2-cell 
	\[
		\begin{tikzcd}
			\ast \arrow[d, equal]\arrow[r, "x_\circledast"]& \C(t)\arrow[r, "F_t"]\arrow[d, "\phi^*"] & \D(t) \arrow[d, "\phi^*"] \\
			\ast \arrow[r, "(\phi^*x)_\circledast"] & \C(s)  \arrow[r, "F_s"] & \D(s)
		\end{tikzcd}
	\]
	is companionable in $\CCAT_\infty$.

	We can now conclude the proof as follows. In particular, the above implies that $(F_t)x_\circledast$ is a companion for every $t\in T$ and $x \in \C(t)$, so that (i) holds
	by the Yoneda lemma, see \cite[Example 6.38]{EquipI}. Moreover, from the companionability of the above composite 2-cells, it can be deduced that the component 
	$$
	\phi^*f_t(x) \rightarrow f_s\phi^*(x)
	$$ of the natural transformation in (ii) at $x \in \C(t)$ 
	is an equivalence. 
\end{proof}

\subsection{Internal fibrations and colimits} Internal $\infty$-categories 
support a notion of two-sided discrete, left and right fibrations on account of \ref{section.fib-equipments}. In this instance, this a representable notion:

\begin{proposition}\label{prop.int-ccat-tsdfib-rep}
	Let $(p,q) : A \rightarrow \C \times \D$ be a span of categories internal to $\E$. Then the following assertions are equivalent:
	\begin{enumerate}
		\item the span $(p,q)$ is a two-sided discrete fibration of categories internal to $\E$,
		\item for every category $X$ internal to $\E$, the span 
		$$
		(p,q)_* : \CAT_\infty(\E)(X,A) \rightarrow \CAT_\infty(X,\C) \times \CAT_\infty(X,\D)
		$$
		is a two-sided discrete fibration of $\infty$-categories,
		\item condition (2) holds for all groupoids $X \in \E$.
	\end{enumerate}
\end{proposition}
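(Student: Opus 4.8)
The plan is to prove the cyclic chain of implications $(1) \Rightarrow (2) \Rightarrow (3) \Rightarrow (1)$. The first implication is immediate from the representability result \ref{prop.tsdfib-rep}: since $\CCAT_\infty(\E)$ is tabular and locally complete, that proposition applied with $z = X$ shows that a two-sided discrete fibration internal to $\CCAT_\infty(\E)$ induces a two-sided discrete fibration on $\Vert(\CCAT_\infty(\E))(X,-) = \CAT_\infty(\E)(X,-)$ for every internal category $X$. The implication $(2) \Rightarrow (3)$ is a trivial specialization, since every groupoid is in particular an internal category.

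The content of the statement is therefore the converse $(3) \Rightarrow (1)$, which I would handle by reducing to presheaf $\infty$-toposes, in the spirit of the proofs of \ref{prop.int-ccat-fib} and \ref{thm.comp-conj-topos}. Fix a geometric embedding $i : \E \rightarrow \PSh(T)$ into a presheaf topos, yielding the reflective lax inclusion $i_* : \CCAT_\infty(\E) \rightarrow [T^\op, \CCAT_\infty]$ with strict left adjoint $i^*$ satisfying $i^*i_* \simeq \id$. I would first record the base case: for the equipment $[T^\op, \CCAT_\infty]$, the characterization of tabulating $2$-cells in vertical cotensors (\ref{prop.tabs-cotensor}) shows that a span is a two-sided discrete fibration internal to $[T^\op,\CCAT_\infty]$ if and only if its evaluation at each $t \in T$ is one internal to $\CCAT_\infty$; and for $\CCAT_\infty$ the latter coincides with the $\infty$-categorical notion of \ref{def.tsdfib} by the identification of correspondences with two-sided discrete fibrations of Ayala--Francis together with the fibrationality of $\CCAT_\infty$ (\ref{prop.int-ccat-fib}).

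Next I would check that condition $(3)$ for $\E$ forces $i_*(p,q)$ to be pointwise an $\infty$-categorical two-sided discrete fibration. The key is the identification of mapping $\infty$-categories. By the $(\infty,2)$-categorical Yoneda lemma, evaluation gives $\FUN(T^\op,\CAT_\infty)(y_t, -) \simeq \ev_t$, so that $\CAT_\infty(\PSh(T))(y_t, i_*A) \simeq (i_*A)(t)$, naturally in $A$, and under this equivalence the induced span $(i_*p, i_*q)_*$ over the representable $y_t$ becomes the pointwise span at $t$. On the other hand, $y_t \in \PSh(T)$ is a groupoid and $i^*y_t$ is a groupoid in $\E$; applying \ref{prop.lax-adj-vert} to the normal lax adjunction $(i^*, i_*)$ yields a natural equivalence $\CAT_\infty(\PSh(T))(y_t, i_*A) \simeq \CAT_\infty(\E)(i^*y_t, A)$ carrying $(i_*p,i_*q)_*$ to $(p,q)_*$ over $i^*y_t$. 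Since condition $(3)$ for $\E$ applies to the groupoid $i^*y_t$, the latter span is an $\infty$-categorical two-sided discrete fibration; combining the two identifications shows that the pointwise span $((i_*p)_t,(i_*q)_t)$ is one for every $t$.

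Finally, the base case then shows $i_*(p,q)$ is a two-sided discrete fibration internal to $[T^\op,\CCAT_\infty]$, hence admits a tabulating $2$-cell; applying the strict functor $i^*$, which preserves tabulating $2$-cells by \ref{prop.geom-pres-tabs}, and using $i^*i_* \simeq \id$, produces a tabulating $2$-cell exhibiting $(p,q)$ as a two-sided discrete fibration internal to $\E$, which is condition $(1)$. The main obstacle I anticipate is the bookkeeping of the compatibility of the two mapping-category identifications (the Yoneda equivalence and the lax-adjunction equivalence of \ref{prop.lax-adj-vert}) with the span structure, i.e.\ verifying that both are natural in the vertical arrows $p$ and $q$ so that the conclusion about the pointwise span is legitimate.
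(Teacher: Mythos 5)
Your proof is correct and follows essentially the same route as the paper: reduce along a geometric embedding $i : \E \rightarrow \PSh(T)$ to the cotensor $[T^\op, \CCAT_\infty]$, detect tabulating 2-cells pointwise there, and identify the pointwise spans with the representable spans at the groupoids $y_t$ (resp.\ $i^*y_t$) via Yoneda and the adjunction equivalence on vertical mapping $\infty$-categories. The only organizational difference is that the paper phrases the reduction as a two-way transfer of conditions (1) and (3) across $i_*$, whereas you transfer (3) forward and return via $i^*$ preserving tabulating 2-cells and $i^*i_* \simeq \id$ --- the same ingredients in a slightly different order.
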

\begin{proof}
	It directly follows from \ref{prop.tsdfib-rep} that (1) implies (2). It is also clear that (2) implies (3).  Thus it remains to show that (3) implies (1). To this end, 
	we first observe the following. Suppose that $i : \E \rightarrow \F$ is a geometric embedding.
	Then for every $Y \in \F$, we have a commutative diagram 
	\[
		\begin{tikzcd}
			\CAT_\infty(\E)(i^*Y, A)\arrow[d, "\simeq"']  \arrow[r, "{(p,q)_*}"] & \CAT_\infty(\E)(i^*Y, \C) \times \CAT_\infty(\E)(i^*Y, \D)\arrow[d, "\simeq"] \\
			\CAT_\infty(\F)(Y, i_*A) \arrow[r, "{(i_*p, i_*q)_*}"] & \CAT_\infty(\F)(Y, i_*\C) \times \CAT_\infty(\F)(Y, i_*\D).
		\end{tikzcd}
	\]
	Thus if (3) holds for $(p,q)$, then (3) holds for $(i_*p,i_*q)$ as well. Taking $Y = i_*X$ with $X \in \E$, we see that the converse implication is true as well. 
	Moreover, since $i_*$ reflects tabulations (see \ref{prop.tabs-closure}), (1) holds for $(p,q)$ if and only if it holds for $(i_*p, i_*q)$.

	Now, it follows from the above that it is necessary and sufficient to show that (3) implies (1) in the prototypical case that $\E = \PSh(T)$ for some small $\infty$-category $T$. Then 
	$\CCAT_\infty(\E) \simeq [T^\op, \CCAT_\infty]$. We have to check that the cocartesian 2-cell $\id_A \rightarrow \cocart{q}{}{p}$ is tabulating 
	and this can be checked point-wise on account of \ref{prop.tabs-closure}. Consequently, we deduce that $(p,q)$ is a two-sided discrete fibration if and only if 
	$(p(t), q(t)) : A(t) \rightarrow \C(t) \times \D(t)$ is a two-sided discrete fibration. But this can be identified with the post-composition functor $(p,q)_*$ at the internal groupoid $X := \map_T(-,t)$.
\end{proof}

We also have the following generalization of \cite[Proposition 4.4.11]{Cisinski}, see also \cite[Proposition 4.4.7]{Martini}:

\begin{proposition}\label{prop.int-ccat-lfib-proper}
	Left fibrations are proper and right fibrations are smooth in $\CCAT_\infty(\E)$. 
\end{proposition}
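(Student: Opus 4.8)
The plan is to prove the statement for left fibrations; the case of right fibrations is entirely dual, with $\ast$ appearing as the source rather than the target and the words \emph{proper} and \emph{cocartesian} replaced by \emph{smooth} and \emph{cartesian} throughout. I would proceed by the same reduction to the prototype that was used for \ref{prop.int-ccat-fib} and \ref{cor.int-ccat-compr}: first pass from a general $\infty$-topos to a presheaf topos along a geometric embedding, and then pass pointwise to $\CCAT_\infty$.

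First I would settle the two base cases. In $\CCAT_\infty$ a left fibration $p : e \rightarrow x$, being a two-sided discrete fibration from $x$ to $\ast$, is exactly a left fibration of $\infty$-categories by characterization (3) of \ref{def.tsdfib}; by the example following \ref{def.proper-smooth-arrows} an arrow of $\CCAT_\infty$ is proper precisely when it is Cisinski-proper, and left fibrations are Cisinski-proper by \cite[Proposition 4.4.11]{Cisinski} (cf.\ the use of \cite[Proposition 4.1.2.15]{HTT} in the proof of \ref{lem.cocomma}). For $\E = \PSh(T)$ with $T$ small I would use the identification $\CCAT_\infty(\PSh(T)) \simeq [T^\op, \CCAT_\infty]$ of \ref{ex.cotensors-ccat}: since the evaluation functors are strict, preserve the point, and detect tabulating and cocartesian cells pointwise by \ref{prop.tabs-cotensor} and \ref{prop.cart-cotensors}, an arrow $p$ of $[T^\op,\CCAT_\infty]$ is a left fibration if and only if each $p(\xi)$ is a left fibration in $\CCAT_\infty$, hence proper by the prototype, so that \ref{ex.proper-point-wise} upgrades this to properness of $p$.

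Finally, for a general $\E$ I would choose a geometric embedding $i : \E \rightarrow \PSh(T)$, yielding the reflective lax inclusion $i_* : \CCAT_\infty(\E) \rightarrow \CCAT_\infty(\PSh(T))$ of \ref{ex.geom-emb-cotensor}. Given a left fibration $p$ in $\CCAT_\infty(\E)$, the normal lax right adjoint $i_*$ preserves the terminal object and products and carries two-sided discrete fibrations to two-sided discrete fibrations by \ref{prop.lax-radj-pres-tabs}, so $i_* p$ is a left fibration and therefore proper by the previous step. To transport this back I would verify the pullback-square criterion of \ref{prop.proper-smooth-arrows}(2): applying $i_*$ to a stacked pair of pullback squares with right column $p$ produces a stacked pair of pullback squares with right column $i_* p$ (because the direct image preserves limits and pullbacks of internal categories are computed levelwise, $i_*$ preserves these pullbacks), whose left image square is exact by properness of $i_* p$; since $i_*$ reflects exact squares by \ref{prop.refl-incl-formal-cat-theory}(4), the original left square is exact, so $p$ is proper.

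The genuine mathematical content lives entirely in the prototype and is available off the shelf; the real work is the transfer across the lax, non-strict inclusion $i_*$, and this is the step I expect to demand the most care. The three things to check there are that $i_*$ sends the two-sided discrete fibration $p$, together with its cocartesian tabulating $2$-cell, to a two-sided discrete fibration, that it preserves the pullbacks entering the criterion of \ref{prop.proper-smooth-arrows}, and that the exactness it produces can be reflected. Each of these is supplied by a result established earlier, so the residual difficulty is bookkeeping rather than new input.
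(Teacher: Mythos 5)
Your proposal is correct and takes essentially the same route as the paper: reduction along a geometric embedding $i_* : \CCAT_\infty(\E) \rightarrow [T^\op, \CCAT_\infty]$ using that reflective inclusions reflect exact squares (\ref{prop.refl-incl-formal-cat-theory}), pointwise detection in the cotensor (\ref{prop.cart-cotensors}, \ref{ex.proper-point-wise}), and Cisinski's result that left fibrations of $\infty$-categories are proper. The only difference is organizational and immaterial: the paper pushes a given pullback square down and verifies its exactness at the bottom (using closure of left fibrations under pullback to reduce to single squares), whereas you establish properness of the arrow at each stage and transfer it upward.
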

\begin{proof}
	We will just handle the case of left fibrations; the other case is similar.
	Let $f : X \rightarrow \C$ be a left fibration in $\CCAT_\infty(\E)$. Since left fibrations are closed under pullbacks, it is necessary and sufficient 
	to check that every pullback square
	\[
		\begin{tikzcd}
			A \arrow[d] \arrow[r] & X \arrow[d] \\ 
			B \arrow[r] & \C
		\end{tikzcd}
	\]
	of categories internal to $\E$ is exact. We may choose a presentation $i : \E \rightarrow \PSh(T)$ so that \ref{prop.refl-incl-formal-cat-theory} asserts that it suffices to check that 
	the pullback square
	\[
		\begin{tikzcd}
			i_*A \arrow[d] \arrow[r] & i_*X \arrow[d] \\ 
			i_*B \arrow[r] & i_*\C
		\end{tikzcd}
	\]
	is exact in $[T^\op, \CCAT_\infty]$. But this may be checked level-wise at objects $t\in T$ on account of \ref{prop.cart-cotensors}, see also the argument in 
	\ref{ex.proper-point-wise}. Thus 
	the desired result follows
	from the fact that left fibrations of $\infty$-categories are proper, see \cite[Proposition 4.4.11]{Cisinski} or \cite[Proposition 4.1.2.15]{HTT}. 
\end{proof}

Note that for an internal category, the comma two-sided discrete 
fibration $\C^{[1]} \rightarrow \C^{\times 2}$ of \ref{def.commas} is computed as 
$$
(\ev_1, \ev_0) : \fun([1], \C) \rightarrow \C^{\times 2}.
$$
Consequently, the coslice $f/\C$ associated with a functor $f : I \rightarrow \C$ 
sits in a pullback square 
\[
	\begin{tikzcd}
		f/\C \arrow[d]\arrow[r] & \fun([1] \times I, \C) \arrow[d, "{(\ev_1, \ev_0)}"] \\
		\C \arrow[r] & \fun(I, \C) \times \fun(I,\C),
	\end{tikzcd}
\]
and the left arrow is the associated left fibration. We can now interpret \ref{cor.colim-initial-slice}:

\begin{corollary}\label{cor.int-ccat-weighted-rfib}
	Let $f : I \rightarrow \C$ be an internal functor. Suppose that $W : \ast \rightarrow I$ is a weight classified by a 
	right fibration $p : E \rightarrow I$. Let $x$ be an object of $\C$. Then the following assertions are equivalent: 
	\begin{enumerate}
			\item $x$ is the $W$-weighted colimit of $f$,
			\item there exists an equivalence $fp/\C \simeq x/\C$ over $\C$,
			\item there exists an initial object of $fp/\C$ that lies over $x$. 
	\end{enumerate}
\end{corollary}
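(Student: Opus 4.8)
The plan is to deduce the statement by combining \ref{cor.fib-weighted-vs-conical} with \ref{cor.colim-initial-slice}. First I would record that $\CCAT_\infty(\E)$ is pointed, cartesian closed, fibrational, and admits comprehensive factorizations, as noted at the start of this section, so that the hypotheses of both results are in force. The remainder of the argument is then a matter of matching up the three assertions of the two corollaries after passing from the weighted setting to the conical one.

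The first step is to reduce assertion (1) to a statement about conical colimits. Since the weight $W : \ast \rightarrow I$ is classified by the right fibration $p : E \rightarrow I$, \ref{cor.fib-weighted-vs-conical} shows that the proarrow of $W$-cones under $f$ agrees with the proarrow of conical cones under the composite $fp : E \rightarrow \C$, and that both exist. As both the $W$-weighted colimit of $f$ and the conical colimit of $fp$ are characterized by this common proarrow of cones, it follows that $x$ is the $W$-weighted colimit of $f$ if and only if $x$ is the conical colimit of $fp$. This identifies assertion (1) with the first assertion of \ref{cor.colim-initial-slice}, applied to the diagram $fp : E \rightarrow \C$ and the object $x : \ast \rightarrow \C$.

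The second step is to invoke \ref{cor.colim-initial-slice} for $fp$. This gives that $x$ being the conical colimit of $fp$ is equivalent both to the existence of an equivalence $fp/\C \simeq x/\C$ over $\C$, which is exactly assertion (2), and to $x$ admitting a factorization $\ast \rightarrow fp/\C \rightarrow \C$ whose first arrow is initial. The only point that requires a little care, and the closest thing to an obstacle here, is to unwind this last factorization into assertion (3): a factorization of $x$ through the left fibration $fp/\C \rightarrow \C$ is exactly the datum of an object of $fp/\C$ lying over $x$, and demanding that the first arrow be initial is demanding that this be an initial object. With this translation the three assertions coincide, which completes the proof.
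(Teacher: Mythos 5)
Your proposal is correct and is essentially the paper's own argument: the paper states this corollary without a separate proof, presenting it as the interpretation of \ref{cor.colim-initial-slice} in $\CCAT_\infty(\E)$ (after recording that this equipment is pointed, cartesian closed, fibrational, and has comprehensive factorizations, and after identifying the internal coslice $fp/\C$ concretely), with the weighted-to-conical reduction supplied by \ref{cor.fib-weighted-vs-conical} exactly as you do. Your explicit unwinding of assertion (3) --- a factorization $\ast \rightarrow fp/\C \rightarrow \C$ with initial first arrow being the same datum as an initial object of $fp/\C$ over $x$ --- is the intended reading and closes the only translation step the paper leaves implicit.
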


In case that $W$ corresponds to the coconical weight \cite[Definition 6.30]{EquipI} classified by $p = \id_I$, this was used as the definition of (conical)
colimits by  Martini and Wolf \cite{MartiniWolf}. Consequently, we obtain:

\begin{corollary}\label{cor.int-ccat-colim-mw}
	The notion of colimits in $\CCAT_\infty(\E)$ coincides with the notion developed by Martini and Wolf.
\end{corollary}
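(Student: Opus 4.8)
The plan is to obtain this as an immediate specialization of \ref{cor.int-ccat-weighted-rfib}. Recall from \cite[Definition 6.30]{EquipI} that the conical colimit of an internal functor $f : I \rightarrow \C$ in $\CCAT_\infty(\E)$ is by definition its $W$-weighted colimit for the \emph{coconical} weight $W : \ast \rightarrow I$, and that this weight is classified by the right fibration $p = \id_I$, as already noted in the discussion preceding this corollary. First I would substitute $p = \id_I$ into \ref{cor.int-ccat-weighted-rfib}: since then the composite $fp$ reduces to $f$ itself, the left fibration $fp/\C$ becomes the coslice $f/\C$, which by the pullback square displayed just before \ref{cor.int-ccat-weighted-rfib} is the internal coslice of $\C$ under $f$.

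With this substitution, \ref{cor.int-ccat-weighted-rfib} directly yields that an object $x$ of $\C$ is the conical colimit of $f$ if and only if there is an equivalence $f/\C \simeq x/\C$ of left fibrations over $\C$, equivalently, if and only if the coslice $f/\C$ admits an initial object lying over $x$. This is precisely a universal property phrased entirely in terms of the internal coslice $f/\C$ and the notion of initial object internal to $\CCAT_\infty(\E)$ (see \cite[Definition 6.34]{EquipI}), so the computation of $\colim f$ is thereby reduced to locating an initial object in a concrete internal coslice.

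It then remains to match this characterization with the definition adopted by Martini and Wolf in \cite{MartiniWolf}. I would set up a dictionary between the two formalisms: identify their coslice $\infty$-category under $f$ with our $f/\C$, and their notion of a colimit cocone exhibiting $x = \colim f$ with the existence of an initial object of $f/\C$ over $x$ (equivalently, with the comparison map $x/\C \rightarrow f/\C$ being an equivalence over $\C$). Granting this identification, the two notions agree by the specialization above. The main obstacle is exactly this bookkeeping step: the two approaches model internal $\infty$-categories differently, so one must verify that the Martini--Wolf coslice construction agrees with ours and that their initiality condition coincides with condition (3) of \ref{cor.int-ccat-weighted-rfib}. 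Once the constructions are identified no further analysis is needed, since the equivalence of the two universal properties is then formal.
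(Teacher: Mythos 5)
Your proposal is correct and follows the paper's own route: the paper likewise obtains this corollary by specializing \ref{cor.int-ccat-weighted-rfib} to the coconical weight classified by $p = \id_I$, and then observes that the resulting characterization (an initial object of the internal coslice $f/\C$ lying over $x$, equivalently an equivalence $f/\C \simeq x/\C$ over $\C$) is exactly what Martini and Wolf take as their definition of colimit. The only difference is emphasis: the paper treats the identification of the two coslice constructions and initiality notions as immediate from the preceding discussion of $\C^{[1]} = \fun([1],\C)$ and the pullback description of $f/\C$, whereas you flag it explicitly as a bookkeeping step to be verified.
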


We may also characterize initial and final functors using an internal version of Quillen's theorem A (cf.\ \cite[Proposition 6.39]{EquipI}). This result was obtained by Martini as well \cite[Corollary 4.4.8]{Martini}, using a different method.
To formulate this, we need the fact that the inclusion
$$
\mathrm{incl} : \E \rightarrow \Cat_\infty(\E)
$$
admits a left adjoint as well that is natural in the $\infty$-topos $\E$. The left adjoint is described by 
$$
(-)[(-)^{-1}] : \Cat_\infty(\E) \rightarrow \E : \C \mapsto \colim_{[n] \in \Delta^\op} \C_n,
$$
and was discussed in the text following Corollary 3.2.12 of \cite{Martini}.
The image $\C[\C^{-1}]$ of $\C \in \Cat_\infty(\E)$ is usually called the \textit{classifying groupoid} of $\C$.
	
This adjunction can also be described as follows. 
Note that by \ref{rem.int-groupoids} and \ref{prop.int-ccat-tsdfib-rep}, the reflective inclusion
$$
\rho_{\ast, \ast} : \Hor(\CCAT_\infty(\E))(\ast, \ast) \rightarrow \Cat_\infty(\E)
$$
provided by the span representation
is an equivalence onto the internal groupoids. Hence, we obtain a factorization 
\[
	\begin{tikzcd}
		\Hor(\CCAT_\infty(\E))(\ast, \ast) \arrow[r, "\rho_{\ast,\ast}"]\arrow[d, dotted, "\simeq"'] & \Cat_\infty(\E). \\
		\E \arrow[ur, "\mathrm{incl}"']
	\end{tikzcd}
\]
Recall that $\rho_{\ast, \ast}$ has a left adjoint $L_{\ast, \ast}$
with a concrete description (see \ref{thm.span-rep}) that allows us to conclude that the terminal 
map $\C[\C^{-1}] \rightarrow \ast$ is precisely the two-sided discrete fibration classifying $\cocart{t}{}{t} = L_{\ast,\ast}(t)$ 
where
$t : \C \rightarrow \ast$ denotes the terminal arrow. 

\begin{proposition}[Quillen's theorem A internally]\label{prop.int-ccat-quilen-A}
	Let $J$ be an internal category with groupoidal cover $\mathcal{U}$.
	Then a functor $f : I \rightarrow J$ between internal categories is final if and only if for every generalized 
	object $x : E \rightarrow J$ in $\mathcal{U}$, we have
	$$
	x/(E \times I)[(x/(E \times I))^{-1}] \simeq  E
	$$
	in $\E/E$.
\end{proposition}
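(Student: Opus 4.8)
The plan is to reformulate finality as the invertibility of a single comparison between proarrows $\ast \to J$, and then to show that this invertibility may be tested on any groupoidal cover of $J$, where it specializes to the stated contractibility of commas.

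First I would recall that, dually to the characterization of initial arrows used in \ref{prop.ortho-initial-lfib} (and as in \cite[Proposition 6.39]{EquipI}), an arrow $f : I \to J$ is final precisely when the canonical comparison $\theta_f : f_\circledast t_I^\circledast \to t_J^\circledast$ of proarrows $\ast \to J$ is an equivalence; here $t_I : I \to \ast$ and $t_J : J \to \ast$ are the terminal arrows, $t_I^\circledast \simeq f^\circledast t_J^\circledast$, and $\theta_f$ is induced by the companion--conjoint counit $f_\circledast f^\circledast \to \id_J$. Next I would analyse the effect of restricting $\theta_f$ along a generalized object $x : E \to J$ in $\mathcal{U}$. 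Base changing along the étale morphism $\pi_E$ is strict and preserves companions and conjoints by \ref{cor.normal-lax-fun-eq-pres-compconj}, so $E\times\theta_f$ is the comparison $(E\times f)_\circledast t_{E\times I}^\circledast \to t_{E\times J}^\circledast$ in $\CCAT_\infty(\E/E)$ (using \ref{prop.int-ccat-push-pull} to identify the base changes of the terminal maps). Whiskering with the conjoint $x^\circledast$ and unwinding the definitions shows that its target becomes $(t_{E\times J}x)^\circledast \simeq \id_E$, while its source becomes $x^\circledast(E\times f)_\circledast t_{E\times I}^\circledast$. The factors $x^\circledast(E\times f)_\circledast$ are exactly the proarrow classified by the comma $x/(E\times I)$, and whiskering with $t_{E\times I}^\circledast$ collapses this, by the computation of the form $t_{\D,\circledast}t_\D^\circledast$ recorded before the statement, to the proarrow whose associated two-sided discrete fibration over the unit $E$ is the classifying groupoid $x/(E\times I)[(x/(E\times I))^{-1}]$. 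Thus $\theta_f$ restricted at $x$ is an equivalence if and only if $x/(E\times I)[(x/(E\times I))^{-1}] \simeq E$ in $\E/E$.

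It remains to prove that $\theta_f$ itself is an equivalence if and only if its restriction along every $x \in \mathcal{U}$ is; this is the heart of the argument. One direction is immediate since base change preserves $\theta_f$. For the converse I would show that restriction along a groupoidal cover is jointly conservative on proarrows with target $J$, following the same three-step bootstrapping as in the proof of \ref{thm.comp-conj-topos}: cover independence as in \ref{lem.comp-conj-topos-2}, effective-epi descent $\CCAT_\infty(\E) \simeq \lim_{[n]}\CCAT_\infty(\E/E^{\times(n+1)})$ as in \ref{lem.comp-conj-topos-1}, and reduction along a geometric embedding $i : \E \to \PSh(T)$ as in \ref{lem.comp-conj-topos-3} (here $i_*$ reflects equivalences of proarrows and carries $\mathcal{U}$ to a cover). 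In the presheaf case $\PSh(T) = [T^\op, \CCAT_\infty]$ of \ref{ex.cotensors-ccat} I would take $\mathcal{U}$ to consist of the representables $y_t \to \C$; since $\pi_{y_t}^* = y_t\times(-)$ is the restriction along $T/t \to T$, \ref{prop.cart-cotensors} lets me detect everything pointwise, so that testing $\theta_f$ along the representables amounts to testing the pointwise comparison on honest objects of each copy of $\CCAT_\infty$. As $\CCAT_\infty$ is strongly pointed, \cite[Proposition 6.39]{EquipI} (ordinary Quillen's theorem A) shows that these pointwise tests detect the pointwise equivalence, and hence $\theta_f$ itself. Combining the three steps with the identification of the previous paragraph then yields the proposition. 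The main obstacle is precisely this conservativity step: one must ensure that the contractibility condition is never transported directly across $i_*$ (which does not preserve the geometric realization computing the classifying groupoid), but that only the equivalence $\theta_f$ of proarrows is transported, with the translation into commas and classifying groupoids performed within each fixed slice topos $\E/E$.
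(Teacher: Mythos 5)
Your proposal is correct in substance, but it organizes the argument differently from the paper in both of its main steps, so it is worth comparing the two. Locally (over a fixed $x : E \to J$), the paper identifies the restricted proarrow $G = x^\circledast(E\times f)_\circledast(E\times t)^\circledast$ with the proarrow classified by the classifying groupoid of $x/(E\times I)$ by invoking the locality statement extracted from the proof of \ref{lem.compr-equipments}; you instead compute directly: the comma fibration $(p,\tau) : x/(E\times I) \to (E\times I)\times E$ classifies $x^\circledast(E\times f)_\circledast \simeq \tau_\circledast p^\circledast$, and since conjoints compose, whiskering with $(E\times t)^\circledast$ yields $\tau_\circledast((E\times t)p)^\circledast \simeq \tau_\circledast\tau^\circledast$, which the discussion preceding the proposition identifies with the classifying groupoid. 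That is a perfectly valid, arguably cleaner, route. Globally, however, the paper phrases finality as conjointness of $f_\circledast t^\circledast$ and then simply cites the already-established recognition theorem \ref{thm.comp-conj-topos}; because you phrase finality as invertibility of the comparison $\theta_f$, you are forced to prove a new lemma --- joint conservativity of restriction along a groupoidal cover on $2$-cells between proarrows $\ast \to J$ --- by re-running the three-step bootstrap. Your sketch of that lemma is viable, and its descent step is in fact easier than the conjoint-detection analogue \ref{lem.comp-conj-topos-1}, since equivalences in a limit of $\infty$-categories are detected by the projections; but it duplicates machinery you could avoid entirely by translating ``$\theta_f$ invertible'' into ``$f_\circledast t^\circledast$ is a conjoint'' (legitimate, since $t_J$ is the unique arrow $J \to \ast$) and citing \ref{thm.comp-conj-topos}, as the paper does. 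Two small imprecisions in your sketch: the parenthetical claim that $i_*$ ``carries $\mathcal{U}$ to a cover'' is not true as stated, since $i_*$ preserves neither coproducts nor effective epimorphisms; as in \ref{lem.comp-conj-topos-3}, one must first use cover-independence to pass to the canonical cover $J^\simeq \to J$, whose image under $i_*$ is the canonical cover of $i_*J$ because $i_*$ commutes with cores. And in the presheaf case, what you actually need is only that a $2$-cell between proarrows $\ast \to J(t)$ in $\CCAT_\infty$, i.e.\ a map of presheaves, is an equivalence if and only if its evaluations at all objects are; the appeal to strong pointedness and \cite[Proposition 6.39]{EquipI} is a detour around this elementary fact.
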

\begin{proof}
	By definition (see \cite[Definition 6.34]{EquipI}), $f$ is final if and only if the proarrow $\cocart{f}{}{t} : \ast \rightarrow J$ is a conjoint where 
	$t : I \rightarrow \ast$ denotes the terminal arrow.
	On account of \ref{thm.comp-conj-topos}, this is equivalent to asking that the proarrow
	$$
	G := x^\circledast{(E \times \cocart{f}{}{t})} \simeq x^\circledast(E\times f)_\circledast(E\times t)^\circledast: E \rightarrow E 
	$$
	is a conjoint in $\CCAT_\infty(\E/E)$ or every $x : E \rightarrow J$ in $\mathcal{U}$. 
	Note that $\cart{x}{}{(E\times f)}$ is classified by the left fibration $x/(E\times I) \rightarrow  E\times I$ internal to $\E/E$. 
	In the proof of \ref{lem.compr-equipments}, we have shown that there is a map from the terminal arrow
	$$
	\tau : x/(E\times I) \rightarrow E \times I \xrightarrow{E\times t} E
	$$
	to the left fibration that classifies $G$, which is local with respect to all left fibrations.
	Hence, we deduce from the discussion preceding the proposition that $G$ is classified by the classifying 
	groupoid of $x/(E \times I) \in \Cat_\infty(\E/E)$. It is a conjoint precisely if it is the conjoint of the unique functor $E \rightarrow E$, which is classified by $E$.
\end{proof}

\subsection{Internal Kan extensions} As a concluding application of the developed theory of equipments, we will study point-wise Kan extensions \cite[Section 6.3]{EquipI}
internal to $\CCAT_\infty(\E)$.  Using our methods, we are able to give a quick proof of the following result for the existence and computation of left Kan extensions internal to $\E$, which was 
earlier established by Shah \cite{Shah} in the case that $\E$ is a presheaf topos:

\begin{proposition}\label{prop.internal-pke}
	Let $f : I \rightarrow \C$ and $w : I \rightarrow J$ be internal functors.
	Suppose that $\mathcal{U}$ is a groupoidal cover for $J$.
	Then the following assertions are equivalent:
	\begin{enumerate}
		\item the left Kan extension of $f$ along $w$ exists,
		\item for each generalized element $x : E \rightarrow J$ in $\mathcal{U}$, 
		the colimit of the functor 
		$$
		(E\times I)/x := (E \times J)/x \times_{E \times J} (E\times I) \rightarrow E \times I \xrightarrow{E \times f} E\times \C
		$$
		internal to $\E/E$ exists.
	\end{enumerate}
	In this case, the left Kan extension $g$ is given on 
	a generalized object $x : E \rightarrow J$ by 
	the colimit
	$$
	(E \times g)(x) = \colim ((E \times I)/x \rightarrow E \times I \rightarrow E\times \C)
	$$
	internal to $\E/E$.
\end{proposition}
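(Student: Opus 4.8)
The plan is to reduce the existence of the left Kan extension to a single conjointness question in $\CCAT_\infty(\E)$, transport that question into the slices $\CCAT_\infty(\E/E)$ via the groupoidal cover using \ref{thm.comp-conj-topos}, and there recognize the relevant proarrow as a proarrow of conical cones so that conjointness becomes the existence of a colimit. First I would recall from \cite[Definition 6.19]{EquipI} that a pointwise left Kan extension $g$ of $f$ along $w$ is precisely an arrow $g : J \rightarrow \C$ whose conjoint $g^\circledast$ is the proarrow of $w^\circledast$-cones under $f$. Since $\CCAT_\infty(\E)$ is horizontally closed (\ref{cor.int-ccat-hor-closed}), the post-composition functor $w^\circledast \circ (-)$ on horizontal mapping $\infty$-categories admits a right adjoint, so the proarrow $C : \C \rightarrow J$ of $w^\circledast$-cones under $f$ exists unconditionally. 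Hence assertion (1) is equivalent to the bare statement that $C$ is a conjoint.

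Next I would apply the recognition theorem \ref{thm.comp-conj-topos}: the proarrow $C$ is a conjoint if and only if, for every generalized object $x : E \rightarrow J$ in $\mathcal{U}$, the restricted proarrow $x^\circledast(E \times C)$ is a conjoint in $\CCAT_\infty(\E)/E \simeq \CCAT_\infty(\E/E)$. It then remains to identify this restricted proarrow, and the crucial point is that base change $\pi_E^* = E \times (-)$ preserves proarrows of cones. I would establish this by observing that, as $E$ is an internal groupoid, the terminal map $E \rightarrow \ast$ is a left fibration (its representable mapping $\infty$-categories are spaces, by \ref{rem.int-groupoids} and \ref{prop.int-ccat-tsdfib-rep}), hence a proper arrow by \ref{prop.int-ccat-lfib-proper}; equivalently, the functor $\pi_E^* = (E \rightarrow \ast)^*$ is proper in the sense of \ref{def.proper-smooth-arrows}. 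Since $\pi_E^*$ is strict (so it preserves conjoints, cf.\ \ref{cor.normal-lax-fun-eq-pres-compconj}), \ref{prop.proper-pres-cones} then shows that $E \times C$ is the proarrow of $(E\times w)^\circledast$-cones under $E \times f$ inside $\CCAT_\infty(\E/E)$.

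With this identification in hand, I would invoke \ref{cor.fib-pke}, applied inside the pointed and cartesian closed equipment $\CCAT_\infty(\E/E)$ (cartesian closure by \ref{ex.int-ccat-cart-closed}) to the element $x : \ast \rightarrow E \times J$ associated with the generalized object $x$ (\ref{rem.gen-objects}). This identifies $x^\circledast(E \times C)$ with the proarrow of conical cones under the composite $(E\times I)/x \rightarrow E \times I \xrightarrow{E \times f} E \times \C$, where $(E\times I)/x$ is exactly the stated pullback $(E \times J)/x \times_{E \times J} (E \times I)$. By the definition of conical colimits (\cite[Definition 6.30]{EquipI}), such a proarrow of conical cones is a conjoint precisely when the conical colimit of this composite exists, in which case it is the conjoint of that colimit. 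Chaining these equivalences yields (1)$\iff$(2); and when they hold, reading off the object representing $x^\circledast(E \times C) = x^\circledast (E \times g)^\circledast = \big((E\times g)(x)\big)^\circledast$ gives the formula $(E\times g)(x) = \colim\big((E\times I)/x \rightarrow E \times I \rightarrow E \times \C\big)$.

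The main obstacle is the middle step: verifying that $\pi_E^* = E \times (-)$ preserves proarrows of $w^\circledast$-cones, i.e.\ that it is a proper functor. This is exactly where the recognition of $E \rightarrow \ast$ as a left fibration and the properness of left fibrations (\ref{prop.int-ccat-lfib-proper}) are indispensable, since without passing the cone computation through the cover one cannot access the generalized objects $x : E \rightarrow J$ at all.
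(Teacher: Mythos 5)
Your proposal is correct and follows essentially the same route as the paper's proof: existence of the cone proarrow $C$ via horizontal closure (\ref{cor.int-ccat-hor-closed}), properness of $\pi_E^* = E \times (-)$ via \ref{prop.int-ccat-lfib-proper} so that $E \times C$ remains a proarrow of $(E\times w)^\circledast$-cones (\ref{prop.proper-pres-cones}), identification of $x^\circledast(E\times C)$ with a proarrow of conical cones via \ref{cor.fib-pke}, and conclusion by the recognition theorem \ref{thm.comp-conj-topos}. Your explicit justification that $E \rightarrow \ast$ is a left fibration (via \ref{rem.int-groupoids} and \ref{prop.int-ccat-tsdfib-rep}) merely spells out a step the paper leaves implicit, so there is no substantive difference.
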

\begin{proof}
	Since $\CCAT_\infty(\E)$ is horizontally closed (see \ref{cor.int-ccat-hor-closed}), there exists a proarrow $C : J \rightarrow \C$  of $w^\circledast$-cones under $f$. 
	Let $x : E \rightarrow J$ be a generalized object. Then the functor $E \times (-) = \pi_{E}^\ast$ is proper, see \ref{prop.int-ccat-lfib-proper}. 
	Consequently, $E\times C$ is a proarrow of $(E\times w)^\circledast$-cones under $E \times f$. Hence, it follows from 
	\ref{cor.fib-pke} that $x^\circledast{(E\times C)}$ is the proarrow of conical cones under the composite $(E \times I)/x \rightarrow E \times I \rightarrow E\times \C$.
	The desired conclusions can now be deduced from \ref{thm.comp-conj-topos}.
\end{proof}

The above formula for the left Kan extension also appears in \cite[Remark 6.3.6]{MartiniWolf}. Our next goal 
is to study the internal functors of Kan extensions using the analysis of \ \ref{ssection.pke-arrows}. We will see later that the following equivalent conditions are sufficient for the existence of these:

\begin{lemma}\label{lem.internal-pke-functor}
	Let $\C$ be a category and $w : I \rightarrow J$ be a functor internal to $\E$. 
	Then the following assertions are equivalent:
	\begin{enumerate}
		\item There exists a groupoidal cover $\mathcal{U}$ of $\fun(I,\C)$ so that for every $f : E \rightarrow \fun(I,\C) \in \mathcal{U}$, 
		the left Kan extension of $f$, viewed as a functor $E \times I \rightarrow E\times \C$, along $E \times w$ internal to $\E/E$ exists.
		\item There exists a groupoidal cover $\mathcal{U}$ of $\fun(I,\C)$ so that for every $f : E \rightarrow \fun(I,\C) \in \mathcal{U}$,
		there exists a groupoidal cover $\mathcal{V}_f$ of the category $E\times J$ internal to $\E/E$ with the property that
		the colimit 
		$$
		(E' \times E \times I)/x \rightarrow E' \times E \times I \xrightarrow{E' \times f} E' \times E \times \C 
		$$
		internal to $\E/(E' \times E)$ exists for any $x : E' \rightarrow E\times J$ in $\mathcal{V}_f$.
		\item Assertion (1) holds for any choice of groupoidal cover $\mathcal{U}$ of $\fun(I,\C)$.
		\item Assertion (2) holds for any choice of groupoidal cover $\mathcal{U}$ of $\fun(I,\C)$ and family of groupoidal covers $\{\mathcal{V}_f\}_{f\in \mathcal{U}}$.
	\end{enumerate}
\end{lemma}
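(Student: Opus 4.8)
The plan is to attach to every generalized object $f : E \to \fun(I,\C)$ the single, cover-free property $P(f)$, namely that the left Kan extension of $E\times f$ along $E\times w$ exists in $\E/E$, and to recognise all four assertions as restatements of ``$P(f)$ holds for every $f$ in a groupoidal cover of $\fun(I,\C)$''. First I would record the dictionary of \ref{prop.int-ccat-push-pull} and \ref{rem.gen-objects}: a generalized object $f : E \to \fun(I,\C)$ is the same datum as a genuine internal functor $E\times I \to E\times\C$ of $\E/E$, so that the clause in assertion (1) is exactly $P(f)$.

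Next I would prove (1) $\iff$ (2) and (3) $\iff$ (4) simultaneously by applying \ref{prop.internal-pke} inside the slice topos $\E/E$ to the functors $E\times f$ and $E\times w$ together with a groupoidal cover $\mathcal{V}_f$ of $E\times J$. For any chosen $\mathcal{V}_f$, that proposition gives that $P(f)$ holds if and only if, for every $x : E' \to E\times J$ in $\mathcal{V}_f$, the displayed colimit exists; here one matches the colimit produced by \ref{prop.internal-pke} in $\E/E$ with the one written in (2) through the layered \'etale-slice identification $(\E/E)/E' \simeq \E/(E'\times E)$, which is routine bookkeeping. Since $P(f)$ involves no choice of cover, this equivalence is insensitive to $\mathcal{V}_f$: ``the colimits exist for some $\mathcal{V}_f$'' and ``the colimits exist for every $\mathcal{V}_f$'' are both equivalent to $P(f)$. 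With the outer cover fixed, this yields (1) $\iff$ (2) and (3) $\iff$ (4) at once.

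It then remains to establish cover-independence of the outer cover, i.e.\ (1) $\iff$ (3); as (3) $\Rightarrow$ (1) is immediate, the content is (1) $\Rightarrow$ (3), and I would argue this in the style of \ref{lem.comp-conj-topos-2}. After reducing to singleton covers $\{f : E \to \fun(I,\C)\}$ and $\{f' : E' \to \fun(I,\C)\}$ (coproducts of groupoids are groupoids and $P$ splits over coproducts), form the pullback $E'' := E \times_{\fun(I,\C)^\simeq} E'$, which is again an internal groupoid and yields effective epimorphisms $p : E'' \to E$ and $q : E'' \to E'$. The key reformulation is that, since $\CCAT_\infty(\E)$ is horizontally closed (\ref{cor.int-ccat-hor-closed}), the proarrow $C$ of $(E\times w)^\circledast$-cones under $E\times f$ exists, and $P(f)$ holds exactly when $C$ is a conjoint (by the definition of point-wise left Kan extension, \cite[Definition 6.19]{EquipI}). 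Because $p$ is a map of internal groupoids it is a left fibration (\ref{prop.int-ccat-tsdfib-rep}), so its \'etale inverse image $p^* = (-)\times_E E''$ is proper (\ref{prop.int-ccat-lfib-proper}) and strict; by \ref{prop.proper-pres-cones} it therefore carries $C$ to the proarrow of $(E''\times w)^\circledast$-cones under the internal functor over $E''$ determined by the common restriction $fp = f'q$, and it preserves conjoints (\ref{cor.normal-lax-fun-eq-pres-compconj}). The same applies to $q^*$, and from uniqueness of proarrows of cones one obtains $p^*C \simeq q^*C'$, where $C'$ is the corresponding proarrow for $E'\times f'$. Assuming $P(f)$, the proarrow $q^*C' \simeq p^*C$ is then a conjoint, so \ref{lem.comp-conj-topos-1} (effective-epi pullbacks reflect conjoints) forces $C'$ to be a conjoint, i.e.\ $P(f')$.

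The main obstacle is this final step: one must faithfully package ``the left Kan extension exists'' as the single conjoint condition ``$C$ is a conjoint'', verify that the proper strict functor $p^*$ really sends the proarrow of cones to the proarrow of cones while preserving conjointness, and confirm that $p^*C$ and $q^*C'$ agree, so that the reflection argument of \ref{lem.comp-conj-topos-1} applies verbatim. The indexing of the inner colimits over generalized objects of the slices, and the matching of the two slice conventions $(\E/E)/E' \simeq \E/(E'\times E)$, is the part that requires the most care.
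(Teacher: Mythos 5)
Your proof is correct and takes essentially the same approach as the paper's: the equivalences (1)$\Leftrightarrow$(2) and (3)$\Leftrightarrow$(4) via \ref{prop.internal-pke} applied in the slice toposes, the trivial implication (3)$\Rightarrow$(1), and (1)$\Rightarrow$(3) by reducing to singleton covers, forming the pullback groupoid $E''$, and using that base change along maps of internal groupoids (left fibrations, hence proper by \ref{prop.int-ccat-lfib-proper}) preserves proarrows of cones (\ref{prop.proper-pres-cones}) and conjoints, while \ref{lem.comp-conj-topos-1} reflects conjointness along effective epimorphisms. The only cosmetic difference is that the paper works with the cone proarrow attached to the second cover's element and identifies its pullback with the image of the first cover's cone proarrow, rather than phrasing this as the uniqueness statement $p^*C \simeq q^*C'$.
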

\begin{proof}
	Note that the assertions (1) and (2), and the assertions (3) and (4) are equivalent on account of \ref{prop.internal-pke}. Clearly, (3) implies (1) and 
	hence it remains to show that (1) implies (3). So, assume that (1) holds. Let $\mathcal{U}'$ be another 
	cover of $\fun(I,\C)$. We may assume that $\mathcal{U}$ and $\mathcal{U}'$ consist of one generalized element $f : E \rightarrow \fun(I,\C)$ 
	and $g : E' \rightarrow \fun(I,\C)$ respectively.  Consider the internal groupoid $E'' \in \E$ with the defining feature that it sits in a pullback square
	\[
		\begin{tikzcd}
			E'' \arrow[r, "p"]\arrow[d,"q"'] & E \arrow[d,"f"] \\
			E' \arrow[r, "g"] & \fun(I,\C).
		\end{tikzcd}
	\]
	Let $C$ be the proarrow of $(E' \times w)^\circledast$-cones under $g : E' \times I \rightarrow E' \times \C$ in $\CCAT_\infty(\E/E')$. As 
	$q$ is an effective epimorphism, $C$ will be a conjoint if 
	$q^*C$ is a conjoint in $\CCAT_\infty(\E/E'')$ (see \ref{lem.comp-conj-topos-1}). But $q$ is proper (see \ref{prop.int-ccat-lfib-proper})
	and this means that $q^*C$ is the proarrow 
	of $(E'' \times w)^\circledast$-cones under $q^*g \simeq p^*f$ on account of \ref{prop.proper-pres-cones}. By the same argument,  $q^*C$ is the image of the proarrow of $(E\times w)^\circledast$-cones 
	under $f : E \times I \rightarrow E \times \C$ in $\CCAT_\infty(\E/E)$, which was a conjoint by assumption.
\end{proof}

Next, we will now show the existence of functors of Kan extensions under these conditions. This was 
shown before by Shah \cite[Theorem 10.5]{Shah} in case that $\E$ is a presheaf topos, and by Martini and Wolf \cite[Theorem 6.3.5]{MartiniWolf} in the general case.

\begin{proposition}\label{prop.internal-pke-functor}
	Let $\C$ be a category and $w : I \rightarrow J$ be a functor internal to $\E$.  
	If the equivalent conditions of \ref{lem.internal-pke-functor} are met,
	then the internal functor
	$$
	w^* : \fun(J, \C) \rightarrow \fun(I, \C)
	$$
	admits a left adjoint $$w_! : \fun(I, \C) \rightarrow \fun(J, \C).$$

	The value of the left adjoint on generalized elements of $\fun(I,\C)$ can be described as follows. For every groupoid $E \in \E$, 
	the functor $$E \times w_! : \fun_{\E/E}(E \times I, E\times \C) \rightarrow \fun_{\E/E}(E \times J, E \times \C)$$ internal to $\E/E$ carries 
	a functor $f : E \times I \rightarrow E \times \C$ to its left Kan extension along $E \times w$ internal to $\E/E$. 
\end{proposition}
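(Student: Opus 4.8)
The plan is to construct $w_!$ from the local Kan-extension analysis of \ref{ssection.pke-arrows} together with the recognition theorem \ref{thm.comp-conj-topos}, all carried out inside the slice equipments $\CCAT_\infty(\E/E)$. By \cite[Corollary 6.6]{EquipI}, the restriction functor $w^* : \fun(J,\C) \to \fun(I,\C)$ admits a left adjoint exactly when its companion $w^*_\circledast$ is also a conjoint, so it suffices to prove that $w^*_\circledast$ is a conjoint in $\CCAT_\infty(\E)$. I would detect this with the conjoint half of \ref{thm.comp-conj-topos}, applied to the groupoidal cover $\mathcal{U}$ of the \emph{target} $\fun(I,\C)$ provided by condition (1) of \ref{lem.internal-pke-functor}. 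The task then reduces to showing that for each generalized object $f : E \to \fun(I,\C)$ in $\mathcal{U}$ the restricted proarrow $f^\circledast(E\times w^*_\circledast)$ is a conjoint in $\CCAT_\infty(\E)/E \simeq \CCAT_\infty(\E/E)$.

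The central step is to identify this restricted proarrow with one appearing in \ref{prop.computation-proarrow-lke}. Since $\pi_E^* = E\times(-)$ is strict (\ref{ex.dbl-adj-geom-morph}), it preserves companions, and combined with the push--pull equivalence $E\times\fun(-,\C)\simeq\fun_{\E/E}(E\times-,E\times\C)$ of \ref{prop.int-ccat-push-pull} it carries $w^*$ to the slice restriction functor $(E\times w)^*$; hence $E\times w^*_\circledast = ((E\times w)^*)_\circledast$. Under \ref{rem.gen-objects} the generalized object $f$ becomes the object of $\fun_{\E/E}(E\times I, E\times\C)$ adjunct to the internal functor $f : E\times I \to E\times\C$, whose conjoint is $f^\circledast$, so the restricted proarrow is precisely the proarrow $f^\circledast((E\times w)^*)_\circledast$ treated in \ref{prop.computation-proarrow-lke} for the cartesian closed equipment $\CCAT_\infty(\E/E)$ (\ref{ex.int-ccat-cart-closed}). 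As $\CCAT_\infty(\E/E)$ is horizontally closed (\ref{cor.int-ccat-hor-closed}), the proarrow of $(E\times w)^\circledast$-cones under $f$ exists, and condition (1) supplies a left Kan extension $g : E\times J \to E\times\C$ of $f$ along $E\times w$. Then \ref{prop.computation-proarrow-lke} exhibits $f^\circledast((E\times w)^*)_\circledast$ as the conjoint of $g^\sharp$, so it is a conjoint. Feeding this back through \ref{thm.comp-conj-topos} shows that $w^*_\circledast$ is a conjoint, and \cite[Corollary 6.6]{EquipI} then furnishes the left adjoint $w_!$.

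For the value formula I would fix an arbitrary groupoid $E \in \E$ and internal functor $f : E\times I \to E\times\C$. Adjoining $f$, viewed as a generalized object $E \to \fun(I,\C)$, to any groupoidal cover of $\fun(I,\C)$ still yields a cover, so condition (3) of \ref{lem.internal-pke-functor} guarantees that the left Kan extension $g$ of $f$ along $E\times w$ exists for this $f$ as well. Strictness of $\pi_E^*$ ensures that the induced $2$-functor on vertical fragments preserves the adjunction $w_!\dashv w^*$, giving $E\times w_! \dashv (E\times w)^*$ with $E\times w_!$ as left adjoint in $\CCAT_\infty(\E/E)$. Applying \ref{prop.internal-restr-vertical} there with $y=\ast$ then identifies $g^\sharp$ with the composite $(E\times w_!)(f^\sharp)$, that is $(E\times w_!)(f)\simeq g$, which is the asserted description.

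The main obstacle I anticipate is the identification carried out in the second paragraph: one must carefully pass to the slice equipment and use strictness of $E\times(-)$ and the push--pull formula to check both that $E\times w^*_\circledast$ is the companion of the slice restriction functor $(E\times w)^*$ and that the generalized object $f$ corresponds to the adjunct internal functor $E\times I\to E\times\C$. Once this translation is in place, the rest is a direct appeal to the already-established theory of Kan-extension arrows in \ref{ssection.pke-arrows} and to the recognition theorem \ref{thm.comp-conj-topos}.
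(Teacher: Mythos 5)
Your proposal is correct and follows essentially the same route as the paper's proof: reduce via \cite[Corollary 6.6]{EquipI} to showing $w^*_\circledast$ is a conjoint, apply the conjoint half of \ref{thm.comp-conj-topos} over a groupoidal cover of $\fun(I,\C)$, identify $E\times w^*$ with $(E\times w)^*$ via the push--pull formula \ref{prop.int-ccat-push-pull}, and conclude with \ref{prop.computation-proarrow-lke}. Your third paragraph merely spells out the value formula (via \ref{prop.internal-restr-vertical} and the cover-enlargement observation) which the paper leaves implicit in its final appeal to \ref{prop.computation-proarrow-lke}.
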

\begin{proof}
	The functor $w^*$ admits a left adjoint if the proarrow
	$$
	w^*_\circledast : \fun(J,\C) \rightarrow \fun(I,\C)
	$$
	is a conjoint in light of \cite[Corollary 6.6]{EquipI}. In light of \ref{thm.comp-conj-topos}, it suffices to check that for any $f : E \rightarrow \fun_\E(I,\C)$, the proarrow 
	$$
	f^\circledast(E \times w)^*_\circledast : \fun_{\E/E}(E \times J, E\times \C) \rightarrow E
	$$
	is a conjoint in $\Cat_\infty(\E/E)$. Here we used that the arrow $E\times w^*$ can be identified 
	with $(E\times w)^\ast$ under the push-pull identifications of \ref{prop.int-ccat-push-pull}. The desired result follows now from \ref{prop.computation-proarrow-lke}.
\end{proof}

\begin{remark}
	In the context of \ref{lem.internal-pke-functor}, note that assertion (2) is met whenever there exists a groupoidal cover $\mathcal{U}$ of $\C$ with the following property:
	for every $x : E \rightarrow J \in \mathcal{U}$, $E \times \C$ admits those colimits internal to $\E/E$ that are shaped by the internal category $(E \times I)/x$. This condition does not depend on $\mathcal{U}$. This is essentially 
	the assumption that the authors of \cite{Shah} and \cite{MartiniWolf} put on $\C$ in their respective versions of \ref{prop.internal-pke}.
\end{remark}

Recall that we have a notion of fully faithful functors internal to $\E$, by interpreting the abstract notion defined in \cite[Definition 6.1]{EquipI} in the equipment $\CCAT_\infty(\E)$. 
This gives the following expected result:

\begin{proposition}
	In the situation of \ref{prop.internal-pke-functor}, the resulting functor $w_!$ will be fully faithful if $w$ is fully faithful.
\end{proposition}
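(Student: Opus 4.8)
The plan is to prove that the unit of the adjunction $w_! \dashv w^*$ supplied by \ref{prop.internal-pke-functor} is invertible, and to deduce full faithfulness of $w_!$ from this by a formal manipulation of companions and conjoints. First I would reduce full faithfulness of $w_!$ to the invertibility of the unit $\eta : \id_{\fun(I,\C)} \to w^*w_!$. Recall from \cite{EquipI} that the adjunction $w_! \dashv w^*$ in $\CAT_\infty(\E) = \Vert(\CCAT_\infty(\E))$ is witnessed by an equivalence $(w_!)_\circledast \simeq (w^*)^\circledast$ between the companion of $w_!$ and the conjoint of $w^*$. Since conjoints compose contravariantly, the restricted identity proarrow governing full faithfulness (see \cite[Definition 6.1]{EquipI}) is computed as
\[
(w_!)^\circledast (w_!)_\circledast \simeq (w_!)^\circledast (w^*)^\circledast \simeq (w^* w_!)^\circledast,
\]
and under this identification the canonical comparison $\id_{\fun(I,\C)} \to (w_!)^\circledast (w_!)_\circledast$ is the conjoint of $\eta$. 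As conjoints of equivalences are equivalences, invertibility of $\eta$ forces this comparison to be invertible, so that $w_!$ is fully faithful.

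Second, I would verify that $\eta$ is invertible by testing it on a groupoidal cover, so that it suffices to check, for each generalized object $f : E \to \fun(I,\C)$ — equivalently a functor $f : E \times I \to E\times\C$ internal to $\E/E$ (see \ref{rem.gen-objects}) — that the component $\eta_f : f \to w^* w_! f$ is an equivalence. By \ref{prop.internal-pke-functor}, the functor $w_! f$ is the left Kan extension of $f$ along $E\times w$ internal to $\E/E$, and $w^* w_! f = (w_! f)\circ (E \times w)$. Evaluating at a generalized object $y$ of $E\times I$ and invoking the colimit formula of \ref{prop.internal-pke}, the value $(w^* w_! f)(y) = (w_! f)\bigl((E\times w)y\bigr)$ is the colimit over the comma $(E\times I)/(E\times w)y$ of the composite with $E\times f$, with $\eta_f$ at $y$ the canonical comparison from $f(y)$ into this colimit.

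The key step, which I expect to be the main obstacle, is to show this comparison is an equivalence using full faithfulness of $w$. Since $\pi_E^* = E \times (-)$ is a strict functor of $\infty$-equipments, it preserves companions and conjoints (\ref{cor.normal-lax-fun-eq-pres-compconj}), hence carries the fully faithful arrow $w$ to a fully faithful arrow $E\times w$. Full faithfulness then identifies the comma $(E\times I)/(E\times w)y$ with the slice $(E\times I)/y$, which carries a terminal object $(y, \id_y)$. The inclusion of this final object is a final arrow, so by internal cofinality (cf.\ \ref{prop.int-ccat-quilen-A}) the colimit defining $(w^* w_! f)(y)$ is computed as the value $f(y)$, and the comparison $\eta_f$ at $y$ is exactly the resulting equivalence. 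Letting $y$ range over a groupoidal cover of $E\times I$ and $f$ over one of $\fun(I,\C)$, this shows $\eta$ is invertible, which completes the argument. The delicate points to nail down are the identification of the comma category under the full faithfulness of $E \times w$ and the verification that the colimit comparison genuinely realizes the component of the unit $\eta$.
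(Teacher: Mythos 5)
Your overall strategy---reduce to groupoids $E$, use that $E \times w$ remains fully faithful, and compute the unit of $w_! \dashv w^*$ pointwise via the colimit formula---is sound in outline, but two steps need repair. The lesser one is the companion/conjoint bookkeeping in your first reduction: in the paper's conventions (see how \cite[Corollary 6.6]{EquipI} is used in the proof of \ref{prop.internal-pke-functor}), the adjunction $w_! \dashv w^*$ is encoded by the equivalence $(w_!)^\circledast \simeq (w^*)_\circledast$, i.e.\ the \emph{conjoint} of the left adjoint agrees with the \emph{companion} of the right adjoint, not $(w_!)_\circledast \simeq (w^*)^\circledast$ as you write; moreover the conjoint construction is contravariant on 2-cells, so ``the conjoint of $\eta$'' would go from $(w^*w_!)^\circledast$ to the identity, the wrong way. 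The repaired computation runs through companions: $(w_!)^\circledast (w_!)_\circledast \simeq (w^*)_\circledast (w_!)_\circledast \simeq (w^*w_!)_\circledast$, and the fully-faithfulness comparison is then $\eta_\circledast$, which is invertible whenever $\eta$ is. (Alternatively, \ref{prop.internal-ff} gives this reduction with no bookkeeping: if $\eta$ is invertible, every representable functor $(w_!)_*$ is a left adjoint of $\infty$-categories with invertible unit, hence fully faithful.) The serious gap is your twice-used principle that invertibility of a 2-cell---first $\eta$ itself, restricted along a groupoidal cover of $\fun(I,\C)$, and then the transformations $\eta_f$, tested at generalized objects $y$ of $E\times I$---can be detected on groupoidal covers. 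This is not established anywhere in the paper: the available detection results (\ref{thm.comp-conj-topos}, \ref{prop.internal-ff}, \ref{prop.int-ccat-tsdfib-rep}) concern companions, conjoints, two-sided discrete fibrations and fully faithfulness, not invertibility of arbitrary natural transformations of internal functors. The statement is true, but it requires its own bootstrapping lemma (the core inclusion detects invertibility, plus descent along the effective epimorphism supplied by the cover), so as written your proof rests on an unproven assertion.

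For comparison, the paper's proof sidesteps both issues. By \ref{prop.internal-ff}, $w_!$ is fully faithful if and only if $(w_!)_* : \CAT_\infty(\E)(E, \fun(I,\C)) \rightarrow \CAT_\infty(\E)(E, \fun(J,\C))$ is fully faithful for every groupoid $E \in \E$; by \ref{prop.internal-restr-vertical} this functor is left adjoint to restriction along $E \times w$ between external mapping $\infty$-categories; and since $E \times (-)$ preserves fully faithfulness, the formal results of \cite{EquipI} invoked there (Corollary 6.21 and Proposition 6.22) show that this left adjoint, whose values are left Kan extensions along the fully faithful arrow $E \times w$, is fully faithful. Your final step---identifying the comma $(E\times I)/((E\times w)y)$ with the slice $(E\times I)/y$ and invoking the terminal object plus internal cofinality---is in effect a re-derivation of exactly that formal result; each ingredient (finality of the inclusion of the terminal object, invariance of conical colimits under final arrows, and the identification of the colimit comparison with the unit component, which is the content of \ref{prop.internal-restr-vertical}) would need separate internal justification. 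Once the detection lemma above is in place, the representable route is both shorter and already fully supported by the paper.
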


To this end, we will need some analysis of the internal fully faithful functors. In particular, we will show that it coincides Martini's notion \cite[Definition 3.8.1]{Martini}.

\begin{proposition}\label{prop.internal-ff}
	Let $f : \C \rightarrow \D$ be a functor internal to $\E$. Then the following are equivalent:
	\begin{enumerate}
		\item the functor $f$ is fully faithful,
		\item the functor $f$ is representably fully faithful, i.e.\ 
	for every category $X$ internal to $\E$, 
	the functor 
	$$
	f_* : \CAT_\infty(\E)(X, \C) \rightarrow \CAT_\infty(\E)(X, \D)
	$$
	between $\infty$-categories is fully faithful,
		\item condition (2) holds for all groupoids $X \in \E$.
	\end{enumerate}
\end{proposition}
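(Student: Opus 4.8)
The plan is to prove the cycle (1)\,$\Rightarrow$\,(2)\,$\Rightarrow$\,(3)\,$\Rightarrow$\,(1), where (2)\,$\Rightarrow$\,(3) is the trivial restriction of the test objects to internal groupoids, and only the last implication carries real content. For (1)\,$\Rightarrow$\,(2) I would argue purely equipment-theoretically. Recall that $f$ is fully faithful precisely when the canonical 2-cell $\id_{\C,\circledast} \to \cart{f}{}{f}$ is invertible. For arrows $g,h : X \rightarrow \C$, the universal properties of companions and conjoints (see \cite[Corollary 3.21]{EquipI}) identify the mapping space $\map_{\Vert(\CCAT_\infty(\E))(X,\C)}(g,h)$ with $\map_{\Hor(\CCAT_\infty(\E))(X,X)}(\id_{X,\circledast}, h^\circledast g_\circledast)$, naturally. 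Since companions and conjoints of composites compose, $(fg)_\circledast \simeq f_\circledast g_\circledast$ and $(fh)^\circledast \simeq h^\circledast f^\circledast$, so under these identifications $f_*$ becomes postcomposition with the 2-cell $h^\circledast g_\circledast \to h^\circledast f^\circledast f_\circledast g_\circledast$ obtained by whiskering the fully faithfulness comparison on both sides. When $f$ is fully faithful this comparison is invertible, hence so is the induced map on mapping spaces, giving (2) for every internal $X$.

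For (3)\,$\Rightarrow$\,(1) I would follow the reduction strategy of \ref{prop.int-ccat-tsdfib-rep}. Pick a geometric embedding $i : \E \rightarrow \PSh(T)$, inducing a reflective lax inclusion $i_* : \CCAT_\infty(\E) \rightarrow \CCAT_\infty(\PSh(T)) \simeq [T^\op, \CCAT_\infty]$ by \ref{ex.geom-emb-cotensor}. By \ref{prop.refl-incl-formal-cat-theory}(3), $i_*$ reflects fully faithful arrows, so it suffices to show $i_* f$ is fully faithful. First I would check that $i_* f$ still satisfies (3): for a groupoid $Y \in \PSh(T)$, the equivalence of \ref{prop.lax-adj-vert} identifies $\CAT_\infty(\PSh(T))(Y, i_*\C)$ with $\CAT_\infty(\E)(i^*Y, \C)$, carrying $(i_*f)_*$ to $f_*$ at the test object $i^*Y$; since $i^*$ preserves finite limits and internal groupoids are detected representably (\ref{rem.int-groupoids}), $i^*Y$ is a groupoid in $\E$, so (3) for $f$ gives fully faithfulness of $(i_*f)_*$. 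This reduces us to the case $\E = \PSh(T)$.

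In the presheaf case I would detect fully faithfulness pointwise. By \cite[Theorem 4.6]{FunDblCats} the companions, conjoints and hence the comparison $\id_{\C,\circledast} \to \cart{f}{}{f}$ in the cotensor $[T^\op, \CCAT_\infty]$ are computed pointwise and preserved by the evaluation functors, so $f$ is fully faithful if and only if each $f(t) : \C(t) \rightarrow \D(t)$ is fully faithful in $\CCAT_\infty$. Finally, using $\Vert([T^\op, \CCAT_\infty]) \simeq \FUN(T^\op, \CAT_\infty)$ from \cite[Proposition 3.23]{FunDblCats} together with the weighted-limit form of the Yoneda lemma, the representable $y_t = \map_T(-,t)$ — which is space-valued and hence an internal groupoid — yields a natural equivalence $\CAT_\infty(\PSh(T))(y_t, \C) \simeq \C(t)$ sending $f_*$ to $f(t)$, exactly as in the pointwise identification used in \ref{prop.int-ccat-tsdfib-rep}. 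Thus (3) applied to the groupoids $y_t$ forces each $f(t)$ to be fully faithful, completing the argument. The hard part will be this last identification $\CAT_\infty(\PSh(T))(y_t,\C) \simeq \C(t)$ and confirming that the representable groupoids suffice to witness pointwise fully faithfulness; a secondary care point is the clean transfer of condition (3) across $i_*$, which rests on $i^*$ preserving internal groupoids.
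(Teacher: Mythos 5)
Your proposal is correct and follows essentially the same route as the paper: (2)$\Rightarrow$(3) is trivial, (3)$\Rightarrow$(1) reduces via the reflective lax inclusion $i_*$ (which reflects cartesian cells and hence fully faithfulness) to the presheaf case $[T^\op,\CCAT_\infty]$, where fully faithfulness is detected pointwise and $f(t)$ is identified with $f_*$ at the representable groupoid $\map_T(-,t)$. The only cosmetic difference is in (1)$\Rightarrow$(2): the paper simply cites \cite[Proposition 6.4]{EquipI}, whereas you re-derive that representability statement from the companion/conjoint calculus, which is exactly the content of the cited result.
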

\begin{proof}
	Note that (1) implies (2) on account of \cite[Proposition 6.4]{EquipI}. It is clear that (2) implies (3). 
	To show that (3) implies (1), we note that reflective inclusions of double $\infty$-categories
	reflect cartesian cells, and hence fully faithfulness. Thus a similar argument as \ref{prop.int-ccat-tsdfib-rep} applies 
	so that may reduce to the 
	prototypical case that $\E = \PSh(T)$ for some small $\infty$-category $T$. Then 
	$\CCAT_\infty(\E) \simeq [T^\op, \CCAT_\infty]$. Since cartesian cells 
	are detected point-wise in this cotensor (see \ref{prop.cart-cotensors}), $f$ is fully faithful if and only if $f(t) : \C(t) \rightarrow \D(t)$ is fully faithful. But this can be identified with the post-composition functor $f_*$ at the internal groupoid $X = \map_T(-,t)$, 
	which is fully faithful by assumption.
\end{proof}

\begin{corollary}
	The notion of internal fully faithful functors coincides with that of Martini \cite{Martini}.
\end{corollary}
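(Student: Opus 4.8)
The plan is to recall Martini's \cite[Definition 3.8.1]{Martini} and to match it with characterization (3) of \ref{prop.internal-ff}. In Martini's model an internal $\infty$-category $\C$ is presented as a sheaf of $\infty$-categories on $\E$, and a functor $f : \C \rightarrow \D$ is declared fully faithful precisely when, for every $A \in \E$, the induced functor $\Gamma_A(f) : \Gamma_A(\C) \rightarrow \Gamma_A(\D)$ on the $\infty$-categories of $A$-indexed sections is fully faithful in $\Cat_\infty$. Since the objects of $\E$ are exactly the internal groupoids under the fully faithful inclusion $\E \rightarrow \Cat_\infty(\E)$ (see \ref{rem.int-groupoids}), Martini's quantifier ranges over precisely the groupoids $X$ appearing in assertion (3) of \ref{prop.internal-ff}.

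The one substantive point is therefore to produce a natural identification of the section $\infty$-category $\Gamma_A(\C)$ with the mapping $\infty$-category $\CAT_\infty(\E)(A, \C)$. I would first recall that $\Gamma_A(\C)$ is the $\infty$-category of global sections of $\pi_A^*\C = A \times \C$ over the slice topos $\E/A$, so that $\Gamma_A(\C) \simeq \CAT_\infty(\E/A)(\ast, \pi_A^*\C)$. The \'etale geometric morphism $\pi_A$ induces a strict (hence normal lax) adjunction $\pi_{A,!} \dashv \pi_A^*$ between the $\infty$-equipments $\CCAT_\infty(\E/A)$ and $\CCAT_\infty(\E)$ (see \ref{ex.dbl-adj-geom-morph}), with $\pi_{A,!}(\ast) \simeq A$. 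Applying \ref{prop.lax-adj-vert} to this adjunction at the terminal object $\ast$ then yields a natural equivalence
\[
\CAT_\infty(\E)(A, \C) \simeq \CAT_\infty(\E/A)(\ast, \pi_A^*\C) \simeq \Gamma_A(\C),
\]
compatible with post-composition by $f$, as both sides are computed by the same right adjoint $\pi_A^*$.

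With this identification in hand, Martini's condition is literally assertion (3) of \ref{prop.internal-ff}, so the equivalence of (1) and (3) established there shows that the abstract notion of fully faithfulness of \cite[Definition 6.1]{EquipI}, interpreted in $\CCAT_\infty(\E)$, coincides with Martini's. I expect the main obstacle to lie in the comparison of models underlying the displayed equivalence: one must know that the sheaf value $\Gamma_A(\C)$ of Martini's model agrees with the slice global sections $\CAT_\infty(\E/A)(\ast, \pi_A^*\C)$ computed in our Conduch\'e-module model, which rests on the compatibility of the two constructions of the $(\infty,2)$-category $\CAT_\infty(\E)$ on objects and mapping $\infty$-categories. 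Granting this compatibility, the remainder of the argument is purely formal.
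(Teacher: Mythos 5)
Your overall route is the paper's: reduce everything to characterization (3) of \ref{prop.internal-ff} and match it against the condition that, for every $A \in \E$, the induced functor on $A$-indexed sections is fully faithful. The one substantive inaccuracy is your recollection of Martini's definition. \cite[Definition 3.8.1]{Martini} is not stated sectionwise; it is an internal, diagrammatic condition on the functor itself. The statement that fully faithfulness can be tested on section $\infty$-categories over all objects $A \in \E$ is one of the equivalent characterizations Martini establishes afterwards, namely characterization (3) of \cite[Proposition 3.8.7]{Martini} --- and citing that equivalence is precisely the paper's entire proof: characterization (3) of \ref{prop.internal-ff} is characterization (3) of that proposition. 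So, as written, your argument compares against a statement that is not Martini's definition; the gap is repaired by a single citation of Martini's own result, which is exactly the citation the paper uses.

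Beyond that, your formal steps inside the paper's framework are correct and go slightly further than the paper does. Using \ref{prop.lax-adj-vert} for the adjunction $\pi_{A,!} \dashv \pi_A^{*}$ (both functors are strict, hence the proposition applies) to identify $\CAT_\infty(\E)(A,\C)$ with $\CAT_\infty(\E/A)(\ast, A \times \C)$ is a valid and useful explication of something the paper's one-line proof treats as immediate, namely why mapping $\infty$-categories out of groupoids in the Conduch\'e-module model should agree with Martini's sheaf values. You are also right to flag that what remains after this reduction is a compatibility between the two models of internal categories (sheaves of $\infty$-categories versus complete Segal objects in $\E$) on objects and section categories; note, however, that the paper's proof silently relies on the same compatibility when it declares the two characterizations to be ``precisely'' equal, so your proposal makes an existing dependence explicit rather than introducing a new one.
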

\begin{proof}
	This follows because characterization (3) of \ref{prop.internal-ff} is precisely characterization (3) of \cite[Proposition 3.8.7]{Martini}.
\end{proof}

\begin{proof}
	On account of \ref{prop.internal-ff}, $w_!$ is fully faithful if and only if for every $E \in \E$, the induced functor 
	$$
	(w_!)_* : \CAT_\infty(\E)(E, \fun(I, \C)) \rightarrow \CAT_\infty(\E)(E, \fun(J, \C))
	$$ 
	is fully faithful. On account of \ref{prop.internal-restr-vertical}, this is left adjoint to the (external) restriction functor $(E\times w)^* :  \CAT_\infty(\E)(E \times J, \C) \rightarrow \CAT_\infty(\E)(E \times I, \C)$, 
	which, by adjunction, is given by
	$
  	(E \times w)^* : \CAT_\infty(\E/E)(E \times J, E\times \C) \rightarrow \CAT_\infty(\E/E)(E \times I, E\times \C).
	$
	The result now follows from Corollary 6.21 and Proposition 6.22 of \cite{EquipI}, because the functor $E \times (-)$ preserves fully faithfulness, hence 
	the functor $E\times w$ is again fully faithful internal to $\E/E$.
\end{proof}

\nocite{*}
\bibliographystyle{amsalpha}
\bibliography{EquipmentsII}

\end{document}